\numberwithin{equation}{section}
\newcommand{\ds}{\displaystyle}
\newcommand{\bgamma}{{\boldsymbol\gamma}}
\newcommand{\bLambda}{{\boldsymbol\Lambda}}
\newcommand{\bbeta}{{\boldsymbol\eta}}
\newcommand{\bdelta}{{\boldsymbol\delta}}
\newcommand{\bomega}{{\boldsymbol\omega}}
\newcommand{\bsi}{{\boldsymbol\sigma}}
\newcommand{\bphi}{{\boldsymbol\phi}}
\newcommand{\bvarphi}{{\boldsymbol\varphi}}
\newcommand{\btheta}{{\boldsymbol\theta}}
\newcommand{\bpsi}{{\boldsymbol\psi}}
\newcommand{\btau}{{\boldsymbol\tau}}
\newcommand{\bzeta}{{\boldsymbol\zeta}}
\newcommand{\bchi}{{\boldsymbol\chi}}
\newcommand{\bxi}{{\boldsymbol\xi}}
\newcommand{\brho}{{\boldsymbol\rho}}
\newcommand{\ubsi}{\underline{\bsi}}
\newcommand{\ubtau}{\underline{\btau}}
\newcommand{\ubu}{\underline{\bu}}
\newcommand{\ubv}{\underline{\bv}}
\newcommand{\ubvarphi}{\ul{\boldsymbol\varphi}}
\newcommand{\ubpsi}{\ul{\boldsymbol\psi}}
\newcommand{\bv}{{\mathbf{v}}}
\newcommand{\bw}{{\mathbf{w}}}
\newcommand{\f}{\mathbf{f}}
\newcommand{\bp}{\mathbf{p}}
\newcommand{\bq}{\mathbf{q}}
\newcommand{\br}{\mathbf{r}}
\newcommand{\bu}{\mathbf{u}}
\newcommand{\bt}{{\mathbf{t}}}
\newcommand{\bn}{{\mathbf{n}}}
\newcommand{\be}{{\mathbf{e}}}
\newcommand{\0}{{\mathbf{0}}}
\def\bA{\mathbf{A}}
\def\bF{\mathbf{F}}
\def\bG{\mathbf{G}}
\def\bK{\mathbf{K}}
\def\bI{\mathbf{I}}
\def\bX{\mathbf{X}}
\def\bY{\mathbf{Y}}
\def\bZ{\mathbf{Z}}
\def\bV{\mathbf{V}}
\def\bM{\mathbf{M}}
\def\bP{\mathbf{P}}
\def\bS{\mathbf{S}}
\def\bx{\mathbf{x}}
\newcommand{\bL}{\mathbf{L}}
\newcommand\bH{\mathbf{H}}
\newcommand\bbM{\mathbb{M}}
\newcommand\bbP{\mathbb{P}}
\newcommand\bbQ{\mathbb{Q}}
\newcommand\bbH{\mathbb{H}}
\newcommand\bbW{\mathbb{W}}
\newcommand\bbX{\mathbb{X}}
\newcommand\bbL{\mathbb{L}}
\newcommand{\cA}{\mathcal{A}}
\newcommand{\cB}{\mathcal{B}}
\newcommand{\cC}{\mathcal{C}}
\newcommand{\cE}{\mathcal{E}}
\newcommand{\cN}{\mathcal{N}}
\newcommand{\cM}{\mathcal{M}}
\newcommand{\cT}{\mathcal{T}}
\newcommand{\cD}{\mathcal{D}}
\newcommand{\cO}{\mathcal{O}}
\def\P{\mathrm{P}}
\def\R{\mathrm{R}}
\def\Q{\mathrm{Q}}
\def\H{\mathrm{H}}
\def\L{\mathrm{L}}
\def\M{\mathrm{M}}
\def\V{\mathrm{V}}
\def\W{\mathrm{W}}
\def\X{\mathrm{X}}
\def\rd{\mathrm{d}}
\def\rD{\mathrm{D}}
\def\rN{\mathrm{N}}
\def\rP{\mathrm{P}}
\def\rp{\mathrm{p}}
\def\rs{\mathrm{s}}
\def\rt{\mathrm{t}}
\def\esssup{\mathrm{ess\,sup}}
\def\bBDM{\mathbf{BDM}}
\def\bbBDM{\mathbb{BDM}}
\def\BJS{\mathtt{BJS}}
\def\dc{\mathrm{dc}}
\def\bdiv{\mathbf{div}}
\def\tr{\mathrm{tr}}
\def\div{\mathrm{div}}
\def\dist{\mathrm{dist}\,}
\def\pil{\left<}
\def\pir{\right>}
\def\sk{\mathrm{sk}}
\def\skf{{\mathrm{sk},f}}
\def\skp{{\mathrm{sk},p}}
\def\qin{{\quad\hbox{in}\quad}}
\def\qon{{\quad\hbox{on}\quad}}
\def\qan{{\quad\hbox{and}\quad}}
\def\ov{\overline}
\def\ul{\underline}
\def\wt{\widetilde}
\def\wh{\widehat}
\newtheorem{thm}{Theorem}[section]
\newtheorem{rem}{Remark}[section]
\newtheorem{lem}[thm]{Lemma}
\newtheorem{cor}[thm]{Corollary}
\newenvironment{proof}{\noindent{\it Proof.}}{\hfill$\square$}
\numberwithin{equation}{section}
\numberwithin{figure}{section}
\numberwithin{table}{section}
\title{A multipoint stress-flux mixed finite element method for the Stokes-Biot model}
\author{{\sc Sergio Caucao}\thanks{Departamento de Matem\'atica y F\'isica Aplicadas, Universidad Cat\'olica de la Sant\'isima Concepci\'on, Casilla 297, Concepci\'on, Chile, email: {\tt scaucao@ucsc.cl}. Supported in part by ANID-Chile through the project PAI77190084 of the PAI Program: Convocatoria Nacional Subvenci\'on a la Instalaci\'on en la Academia (convocatoria 2019) and Department of Mathematics, University of Pittsburgh}
\quad
{\sc Tongtong Li}\thanks{Department of Mathematics, University of Pittsburgh, Pittsburgh, PA 15260, USA, email: {\tt tol24@pitt.edu, yotov@math.pitt.edu}. Supported in part by NSF grant DMS 1818775}	
\quad
{\sc Ivan Yotov$^\dag$}}
\date{\today}
\begin{document}

\maketitle

\begin{abstract}
\noindent In this paper we present and analyze a fully-mixed
formulation for the coupled problem arising in the interaction between
a free fluid and a flow in a poroelastic medium.  The flows are
governed by the Stokes and Biot equations, respectively, and the
transmission conditions are given by mass conservation, balance of
stresses, and the Beavers-Joseph-Saffman law.  We apply dual-mixed
formulations in both domains, where the symmetry of the Stokes and
poroelastic stress tensors is imposed by setting the vorticity and
structure rotation tensors as auxiliary unknowns.  In turn, since the
transmission conditions become essential, they are imposed weakly,
which is done by introducing the traces of the fluid velocity,
structure velocity, and the poroelastic media pressure on the
interface as the associated Lagrange multipliers.  The existence and
uniqueness of a solution are established for the continuous weak
formulation, as well as a semidiscrete continuous-in-time formulation
with non-matching grids, together with the corresponding stability
bounds.  In addition, we develop a new multipoint stress-flux mixed
finite element method by involving the vertex quadrature rule, which
allows for local elimination of the stresses, rotations, and Darcy
fluxes.  Well-posedness and error analysis with corresponding rates of
convergence for the fully-discrete scheme are complemented by several
numerical experiments.
\end{abstract}

%

\maketitle


\section{Introduction}

The interaction of a free fluid with a deformable porous medium,
referred to as fluid-poroelastic structure interaction (FPSI), is a
challenging multiphysics problem. It has applications to predicting
and controlling processes arising in gas and oil extraction from
naturally or hydraulically fractured reservoirs, modeling arterial flows, and designing
industrial filters, to name a few.  For this physical
phenomenon, the free fluid region can be modeled by the Stokes (or
Navier--Stokes) equations, while the flow through the deformable
porous medium is modeled by the Biot system of
poroelasticity.  In the latter, the volumetric
deformation of the elastic porous matrix is complemented with the
Darcy equation that describes the average velocity of the fluid in the
pores.  The two regions are coupled via dynamic and kinematic
interface conditions, including balance of forces, continuity of
normal velocity, and a no slip or slip with friction tangential
velocity condition. The model exhibits features of both coupled
Stokes-Darcy flows and fluid-structure interaction (FSI).

To the authors' knowledge, one of the first works in analyzing the
Stokes-Biot coupled problem is \cite{s2005}, where well-posedness for
the fully dynamic problem is established by developing an appropriate
variational formulation and using semigroup methods. One of the first
numerical studies is presented in \cite{bqq2009}, where monolithic and
iterative partitioned methods are developed for the solution of the
coupled system. A non-iterative operator splitting scheme with a
non-mixed Darcy formulation is developed in \cite{byz2015}. Finite
element methods for mixed Darcy formulations, where the continuity of
normal flux condition becomes essential, are considered in
\cite{byzz2015} using Nitsche's coupling and in \cite{akyz2018} using
a pressure Lagrange multiplier. More recently, a nonlinear quasi-static
Stokes--Biot model for non-Newtonian fluids is studied in \cite{aeny2019}.
The authors establish well-posedness of the weak formulation in Banach
space setting, along with stability and convergence of the finite
element approximation. In \cite{cesm2017}, the fully dynamic coupled
Navier-Stokes/Biot system with a pressure-based Darcy formulation is
analyzed. Additional works include optimization-based decoupling
method \cite{Cesm-etal-optim}, a second order in time split scheme
\cite{Kunwar-etal}, various discretization methods
\cite{Wen-He,Bergkamp-etal,Cesm-Chid},
dimensionally reduced model for flow through fractures
\cite{Buk-Yot-Zun-fracture}, and coupling with transport
\cite{fpsi-transport}. All of the above mentioned works are based on
displacement formulations for the elasticity equation. In a recent
work \cite{fpsi-mixed-elast}, the first mathematical and numerical
study of a stress-displacement mixed elasticity
formulation for the Stokes-Biot model is presented.

The goal of the present paper is to develop a new fully mixed
formulation of the quasi-static Stokes-Biot model, which is based on
dual mixed formulations for all three components - Darcy, elasticity,
and Stokes. In particular, we use a velocity-pressure Darcy
formulation, a weakly symmetric stress-displacement-rotation elasticity
formulation, and a weakly symmetric stress-velocity-vorticity Stokes
formulation. This formulation exhibits multiple advantages, including
local conservation of mass for the Darcy fluid, local poroelastic and
Stokes momentum conservation, and accurate approximations with
continuous normal components across element edges or faces for the
Darcy velocity, the poroelastic stress, and the free fluid stress. In
addition, dual mixed formulations are known for their locking-free
properties and robustness with respect to the physical parameters,
including the regimes of almost incompressible materials, low
poroelastic storativity, and low permeability \cite{Lee-Biot-five-field,Yi-Biot-locking}.

Our five-field dual mixed Biot formulation is based on the model
developed in \cite{Lee-Biot-five-field} and studied further in
\cite{msfmfe-Biot}. It is also considered in \cite{fpsi-mixed-elast}
for the Stokes-Biot problem. Our analysis also extends to the strongly
symmetric mixed four-field Biot formulation developed in
\cite{Yi-Biot-mixed}. Our three-field dual mixed Stokes formulation is
based on the models developed in \cite{gos2011,gmor2014}. In
particular, we introduce the stress tensor and subsequently eliminate
the pressure unknown, by utilizing the deviatoric stress.  In order to
impose the symmetry of the Stokes stress and poroelastic stress
tensors, the vorticity and structure rotation, respectively, are
introduced as additional unknowns.  The transmission conditions
consisting of mass conservation, conservation of momentum, and the
Beavers--Joseph--Saffman slip with friction condition are imposed
weakly via the incorporation of additional Lagrange multipliers: the
traces of the fluid velocity, structure velocity and the poroelastic
media pressure on the interface.  The resulting variational system of
equations is then ordered so that it shows a twofold saddle point
structure. 
The well-posedness and uniqueness of both the continuous
and semidiscrete continuous-in-time formulations are proved by 
employing some classical results for parabolic problems
\cite{Showalter,s2010} and monotone operators, and an abstract theory
for twofold saddle point problems \cite{ghm2003,adgm2019}.
In the
discrete problem, for the three components of the model we consider
suitable stable mixed finite element spaces on non-matching grids
across the interface, coupled through either conforming or
non-conforming Lagrange multiplier discretizations. We develop
stability and error analysis, establishing rates of convergence to the
true solution. The estimates we establish are uniform in the limit of
the storativity coefficient going to zero.

Another main contribution of this paper is the development of a new
mixed finite element method for the Stokes-Biot model that can be
reduced to a positive definite cell-centered
pressure-velocities-traces system. We recall the multipoint flux mixed
finite element (MFMFE) method for Darcy flow developed in
\cite{iwy2010,wxy2012,wy2006,Brezzi.F;Fortin.M;Marini.L2006}, where
the lowest order Brezzi-Douglas-Marini $\bbBDM_1$ velocity spaces
\cite{bdm1985,Nedelec86,Brezzi-Fortin} and piecewise constant pressure
are utilized. An alternative formulation based on a broken
Raviart-Thomas velocity space is developed in
\cite{Klausen-Winther-2006a}. The use of the vertex quadrature rule for
the velocity bilinear form localizes the interaction between velocity
degrees of freedom around mesh vertices and leads to a block-diagonal
mass matrix.  Consequently, the velocity can be locally eliminated,
resulting in a cell-centered pressure system.  In turn, the multipoint
stress mixed finite element (MSMFE) method for elasticity is developed in
\cite{msmfe-simpl,msmfe-quads}. It utilizes stable
weakly symmetric elasticity finite element triples with $\bbBDM_1$
stress spaces
\cite{BBF-reduced,FarFor,lee2016towards,msmfe-quads,afw2007,awanou2013}.
Similarly to the MFMFE method, an application of the vertex quadrature
rule for the stress and rotation bilinear forms allows for local
stress and rotation elimination, resulting in a cell-centered
displacement system. We also refer the reader to the related finite
volume multipoint stress approximation (MPSA) method for elasticity
\cite{Jan-IJNME,nordbotten2015convergence,keilegavlen2017finite}.
Recently, combining the MSMFE and MFMFE methods, a multipoint
stress-flux mixed finite element (MSFMFE) method for the Biot
poroelasticity model is developed in \cite{msfmfe-Biot}. There, the
dual mixed finite element system is reduced to a cell-centered
displacement-pressure system. The reduced system is comparable in 
cost to the finite volume method developed in \cite{Jan-SINUM-Biot}.

In this paper we note for the first time that the MSMFE method for
elasticity can be applied to the weakly symmetric
stress-velocity-vorticity Stokes formulation from
\cite{gos2011,gmor2014} when $\bbBDM_1$-based stable finite element
triples are utilized. With the application of the vertex quadrature
rule, the fluid stress and vorticity can be locally eliminated,
resulting in a positive definite cell-centered velocity system. To the
best of our knowledge, this is the first such scheme for Stokes in the
literature.

Finally, we combine the MFMFE method for Darcy with the
MSMFE methods for elasticity and Stokes to develop a multipoint
stress-flux mixed finite element for the Stokes-Biot system.  We
analyze the stability and convergence of the semidiscrete formulation.
We further consider the fully discrete system with backward Euler time
discretization and show that the algebraic system on each time step
can be reduced to a positive definite cell-centered
pressure-velocities-traces system.

The rest of this work is organized as follows.  The remainder of this
section describes standard notation and functional spaces to be
employed throughout the paper.  In Section~\ref{sec:model-problem} we
introduce the model problem and in Section~\ref{sec:weak-formulation}
we derive a fully-mixed variational formulation, which is written as a
degenerate evolution problem with a twofold saddle point structure.
Next, existence, uniqueness and stability of the solution of the weak
formulation are obtained in Section~\ref{sec:well-posedness-model}.
The corresponding semidiscrete continuous-in-time approximation is
introduced and analyzed in Section~\ref{sec:semidiscrete-formulation},
where the discrete analogue of the theory used in the continuous case
is employed to prove its well-posedness.  Error estimates and rates of
convergence are also derived there.  In
Section~\ref{sec:multipoint-fem}, the multipoint stress-flux mixed
finite element method is presented and the corresponding rates of
convergence are provided, along with the analysis of the reduced
cell-centered system.  Finally, numerical experiments illustrating the
accuracy of our mixed finite element method and its applications to
coupling surface and subsurface flows and flow through poroelastic
medium with a cavity are reported in
Section~\ref{sec:numerical-results}.



\medskip

We end this section by introducing some definitions and fixing some
notations.  Let $\cO\subset \R^n$, $n\in \{2,3\}$, denote a domain
with Lipschitz boundary.  For $\rs\geq 0$ and $\rp\in
[1,+\infty]$, we denote by $\L^\rp(\cO)$ and $\W^{\rs,\rp}(\cO)$ the usual
Lebesgue and Sobolev spaces endowed with the norms
$\|\cdot\|_{\L^\rp(\cO)}$ and $\|\cdot\|_{\W^{\rs,\rp}(\cO)}$, respectively.
Note that $\W^{0,\rp}(\cO) = \L^\rp(\cO)$.  If $\rp=2$ we write $\H^\rs(\cO)$
in place of $\W^{\rs,2}(\cO)$, and denote the corresponding norm by
$\|\cdot\|_{\H^\rs(\cO)}$. Similar notation is used for a section $\Gamma$
of the boundary of $\cO$. 
By $\bM$ and $\bbM$ we will denote the corresponding
vectorial and tensorial counterparts of a generic scalar functional
space $\M$. The $\L^2(\cO)$ inner product for scalar, vector, or tensor valued functions
is denoted by $(\cdot,\cdot)_{\cO}$. The $\L^2(\Gamma)$ inner product or duality pairing
is denoted by $\pil\cdot,\cdot\pir_\Gamma$. For any vector field $\bv=(v_i)_{i=1,n}$, we set
the gradient and divergence operators, as
\begin{equation*}
\nabla\bv:=\left(\frac{\partial v_i}{\partial x_j}\right)_{i,j=1,n} \qan 
\div(\bv):=\sum_{j=1}^n \frac{\partial v_j}{\partial x_j}.
\end{equation*}
For any tensor fields $\btau:=(\tau_{ij})_{i,j=1,n}$ and
$\bzeta:=(\zeta_{ij})_{i,j=1,n}$, we let $\bdiv(\btau)$ be the
divergence operator $\div$ acting along the rows of $\btau$, and
define the transpose, the trace, the tensor inner product, and the
deviatoric tensor, respectively, as
\begin{equation*}
\btau^\rt := (\tau_{ji})_{i,j=1,n},\quad \tr(\btau):=\sum_{i=1}^n \tau_{ii},\quad \btau:\bzeta:=\sum_{i,j=1}^n \tau_{ij}\zeta_{ij},\qan \btau^\rd:=\btau-\frac{1}{n}\,\tr(\btau)\,\bI,
\end{equation*}
where $\bI$ is the identity matrix in $\R^{n\times n}$.
In addition, we recall the Hilbert space
\begin{equation*}
\bH(\div;\cO):=\Big\{ \bv\in\bL^2(\cO) :\quad \div(\bv)\in \L^2(\cO) \Big\},
\end{equation*}
equipped with the norm $\|\bv\|^2_{\bH(\div;\cO)} :=
\|\bv\|^2_{\bL^2(\cO)} + \|\div(\bv)\|^2_{\L^2(\cO)}$.
The space of matrix
valued functions whose rows belong to $\bH(\div;\cO)$ will be denoted
by $\bbH(\bdiv;\cO)$ and endowed with the norm
$\|\btau\|^2_{\bbH(\bdiv;\cO)} := \|\btau\|^2_{\bbL^2(\cO)} +
\|\bdiv(\btau)\|^2_{\bL^2(\cO)}$.  Finally, given a separable Banach
space $\V$ endowed with the norm $\| \cdot \|_{\V}$, we let
$\L^{\rp}(0,T;\V)$ be the space of classes of functions $f : (0,T)\to
\V$ that are Bochner measurable and such that
$\|f\|_{\L^{\rp}(0,T;\V)} < \infty$, with
\begin{equation*}
\|f\|^{\rp}_{\L^{\rp}(0,T;\V)} \,:=\, \int^T_0 \|f(t)\|^{\rp}_{\V} \,dt,\quad
\|f\|_{\L^\infty(0,T;\V)} \,:=\, \mathop{\esssup}\limits_{t\in [0,T]} \|f(t)\|_{\V}.
\end{equation*}
%

\section{The model problem}\label{sec:model-problem}

Let $\Omega\subset\R^n$, $n\in\{2,3\}$, be a Lipschitz domain, which is
subdivided into two non-overlapping and possibly non-connected
regions: fluid region $\Omega_f$ and poroelastic region $\Omega_p$.
Let $\Gamma_{fp} = \partial\Omega_f\cap\partial\Omega_p$ denote the
(nonempty) interface between these regions and let $\Gamma_f =
\partial\Omega_f\setminus\Gamma_{fp}$ and $\Gamma_p =
\partial\Omega_p\setminus \Gamma_{fp}$ denote the external parts on
the boundary $\partial\Omega$.  We denote by $\bn_f$ and $\bn_p$ the
unit normal vectors that point outward from $\partial\Omega_f$ and
$\partial\Omega_p$, respectively, noting that $\bn_f = - \bn_p$ on
$\Gamma_{fp}$.  Let $(\bu_\star,p_\star)$ be the velocity-pressure
pair in $\Omega_\star$ with $\star\in\{f,p\}$, and let $\bbeta_p$ be
the displacement in $\Omega_p$.  Let $\mu>0$ be the fluid viscosity,
let $\f_\star$ be the body force terms, and let $q_\star$ be external
source or sink terms.

We assume that the flow in $\Omega_f$ is governed by the Stokes
equations, which are written in the following stress-velocity-pressure
formulation:
\begin{equation}\label{eq:Stokes-1} 
\begin{array}{c}
\ds \bsi_f \,=\, -p_f\,\bI + 2\,\mu\,\be(\bu_f),\quad 
-\,\bdiv(\bsi_f) \,=\, \f_f,\quad 
\div(\bu_f) \,=\, q_f \qin \Omega_f\times (0,T], \\ [2ex]
\ds \bsi_f\bn_f \,=\, \0 \qon \Gamma^\rN_f\times (0,T],\quad 
\bu_f \,=\, \0 \qon \Gamma^\rD_f\times (0,T],
\end{array}
\end{equation}
where $\bsi_f$ is the stress tensor, $\be(\bu_f) :=
\dfrac{1}{2}\,\left( \nabla\bu_f + (\nabla\bu_f)^\rt \right)$ stands
for the deformation rate tensor, $\Gamma_f = \Gamma^\rN_f\cup
\Gamma^\rD_f$, and $T>0$ is the final time.  Next, we adopt the
approach from \cite{gmor2014,adgm2019}, and include as a new variable
the vorticity tensor $\bgamma_f$,
\begin{equation*}
\bgamma_f \,:=\, \frac{1}{2}\,\left( \nabla\bu_f - (\nabla\bu_f)^\rt \right).
\end{equation*}
In this way, owing to the fact that $\tr(\be(\bu_f)) = \div(\bu_f) = q_f$, we find that \eqref{eq:Stokes-1} can be rewritten, equivalently, as the set of equations with unknowns $\bsi_f, \bgamma_f$ and $\bu_f$, given by
\begin{equation}\label{eq:Stokes-2}
\begin{array}{c}
\ds \frac{1}{2\,\mu}\,\bsi^\rd_f \,=\, \nabla\bu_f - \bgamma_f - \frac{1}{n}\,q_f\,\bI,\quad
-\,\bdiv(\bsi_f) \,=\, \f_f \qin \Omega_f\times (0,T], \\ [2ex]
\ds \bsi_f \,=\, \bsi^\rt_f,\quad 
p_f \,=\, -\frac{1}{n}\,\left( \tr(\bsi_f) - 2\,\mu\,q_f \right) \qin \Omega_f\times (0,T], \\ [3ex]
\ds \bsi_f\bn_f \,=\, \0 \qon \Gamma^\rN_f\times (0,T],\quad 
\bu_f \,=\, \0 \qon \Gamma^\rD_f\times (0,T].
\end{array}
\end{equation}
Notice that the fourth equation in \eqref{eq:Stokes-2} has allowed us
to eliminate the pressure $p_f$ from the system and provides a formula
for its approximation through a post-processing procedure. For
simplicity we assume that $|\Gamma^\rN_f| > 0$, which will allow us to
control $\bsi_f$ by $\bsi_f^\rd$. The case $|\Gamma^\rN_f| = 0$
can be handled as in \cite{gmor2014,gos2011,gos2012} by introducing an additional variable corresponding to the
mean value of $\tr(\bsi_f)$.

In turn, let $\bsi_e$ and $\bsi_p$ be the elastic and poroelastic stress tensors,
respectively, satisfying
\begin{equation}\label{eq:bsie-bsip-definitions}
A \, \bsi_e \,= \be(\bbeta_p) \qan
\bsi_p \,:=\, \bsi_e - \alpha_p\,p_p\,\bI \qin \Omega_p\times (0,T],
\end{equation}
where $0 < \alpha_p \leq 1$ is the
Biot--Willis constant, and $A$ is the symmetric and positive definite compliance
tensor, which in the isotropic case has the form, for all tensors $\btau$,
\begin{equation}\label{eq:operator-A-definition}
A(\btau) := \frac{1}{2\,\mu_p}\,\left(\btau - \frac{\lambda_p}{2\,\mu_p + n\,\lambda_p}\,\tr(\btau)\,\bI\right),\quad\mbox{with}\quad
A^{-1}(\btau) = 2\,\mu_p\,\btau + \lambda_p\,\tr(\btau)\,\bI,
\end{equation}
satisfying
\begin{equation}\label{A-bounds}
  \forall\, \btau\in\R^{n\times n}, \quad \frac{1}{2 \mu_{\max} + n \, \lambda_{\max}}
  \, \btau : \btau \, \leq \, A(\btau):\btau \, \leq \, \frac{1}{2 \mu_{\min}} \,
  \btau : \btau \quad \forall\, \bx\in\Omega_p.
  \end{equation}
In this case, $\bsi_e \,:=\, \lambda_p\,\div(\bbeta_p)\,\bI + 2\,\mu_p\,\be(\bbeta_p)$, and $0<\lambda_{\min}\leq \lambda_p(\bx)\leq \lambda_{\max}$ and $0<\mu_{\min} \leq\mu_p(\bx) \leq\mu_{\max}$ are the Lam\'e parameters.
The poroelasticity region $\Omega_p$ is governed by the quasi-static Biot system
\cite{b1941}:
\begin{equation}\label{eq:Biot-model}
\begin{array}{c}
\ds-\,\bdiv(\bsi_p) = \f_p,\quad 
\mu\,\bK^{-1}\bu_p + \nabla\,p_p = \0,\quad
\frac{\partial}{\partial t}\left( s_0\,p_p + \alpha_p\,\div(\bbeta_p) \right) + \div(\bu_p) = q_p \qin \Omega_p\times(0,T], \\ [3ex]
\ds \bu_p\cdot\bn_p = 0 \qon \Gamma^\rN_p\times (0,T],\quad 
p_p = 0 \qon \Gamma^\rD_p\times (0,T], \\ [3ex]
\ds \bsi_p \bn_p = \0 \qon \tilde\Gamma_p^\rN\times (0,T], \quad
\bbeta_p = \0 \qon \tilde\Gamma_p^\rD\times (0,T],
\end{array}
\end{equation}
where $\Gamma_p = \Gamma^\rN_p\cup \Gamma^\rD_p = \tilde\Gamma^\rN_p\cup \tilde\Gamma^\rD_p$,
$s_0 > 0$ is a storativity coefficient
and $\bK(\bx)$ is the symmetric and uniformly positive definite rock permeability tensor,
satisfying, for some constants $0< k_{\min}\leq k_{\max}$,
\begin{equation}\label{eq:K-uniform-bound}
\forall\, \bw\in\R^n, \quad k_{\min}\,\bw\cdot\bw 
\,\leq\, (\bK \bw) \cdot \bw
\,\leq\, k_{\max}\,\bw\cdot\bw \quad \forall\, \bx\in\Omega_p.
\end{equation}
To avoid the issue with restricting the mean value of the pressure, we assume that $|\Gamma^\rD_p| > 0$.
We also assume that $\Gamma^\rD_f$, $\Gamma^\rD_p$, and $\tilde\Gamma^\rD_p$ are
not adjacent to the interface $\Gamma_{fp}$, i.e., $\exists \ s > 0$ such that
$\dist(\Gamma^\rD_f,\Gamma_{fp}) \ge s$, $\dist(\Gamma^\rD_p,\Gamma_{fp}) \ge s$, and
$\dist(\tilde\Gamma^\rD_p,\Gamma_{fp}) \ge s$. This assumption is used to simplify
the characterization of the normal trace spaces on $\Gamma_{fp}$.

Next, we introduce the following transmission conditions on the interface
$\Gamma_{fp}$ \cite{s2005,bqq2009,byzz2015,akyz2018}:
\begin{equation}\label{eq:interface-conditions}
\begin{array}{c}
\ds \bu_f\cdot\bn_f + \left(\frac{\partial\,\bbeta_p}{\partial t} + \bu_p\right)\cdot\bn_p \,=\, 0, \quad
\bsi_f\bn_f + \bsi_p\bn_p \,=\, \0 \qon \Gamma_{fp}\times (0,T], \\ [3ex]
\ds \bsi_f\bn_f + \mu\,\alpha_{\BJS}\sum^{n-1}_{j=1}\,\sqrt{\bK^{-1}_j}\left\{\left(\bu_f - \frac{\partial\,\bbeta_p}{\partial t}\right)\cdot\bt_{f,j}\right\}\,\bt_{f,j} \,=\, -\,p_p\bn_f \qon \Gamma_{fp}\times (0,T],
\end{array}
\end{equation}
where $\bt_{f,j}$, $1\leq j\leq n-1$, is an orthogonal system of unit tangent vectors on $\Gamma_{fp}$, $\bK_j = (\bK\,\bt_{f,j})\cdot\bt_{f,j}$, and $\alpha_{\BJS} \geq 0$ is an experimentally determined friction coefficient.
The first and second equations in \eqref{eq:interface-conditions} correspond to mass conservation and conservation of momentum on $\Gamma_{fp}$, respectively, whereas the third one can be decomposed into its normal and tangential components, as follows:
\begin{equation*}
(\bsi_f\bn_f)\cdot\bn_f \,=\, -\,p_p,\quad
(\bsi_f\bn_f)\cdot\bt_{f,j} \,=\, -\,\mu\,\alpha_{\BJS}\,\sqrt{\bK^{-1}_j}\left(\bu_f - \frac{\partial\,\bbeta_p}{\partial t}\right)\cdot\bt_{f,j} \qon \Gamma_{fp}\times (0,T],
\end{equation*}
representing balance of normal stress and the Beaver--Joseph--Saffman (BJS) slip with friction condition, respectively.

Finally, the above system of equations is complemented by the initial condition
$p_p(\bx,0) = p_{p,0}(\bx)$ in $\Omega_p$.
We stress that, similarly to \cite{fpsi-mixed-elast}, compatible initial data for the rest of the variables can be constructed from $p_{p,0}$ in a way that all equations in the system \eqref{eq:Stokes-2}--\eqref{eq:interface-conditions}, except for the unsteady conservation of mass equation in the first row of \eqref{eq:Biot-model}, hold at $t=0$.
This will be established in Lemma~\ref{lem:sol0-in-M-operator} below.
We will consider a weak formulation with a time-differentiated elasticity equation and compatible initial data $(\bsi_{p,0},p_{p,0})$.

\section{The weak formulation}\label{sec:weak-formulation}

In this section we proceed analogously to \cite[Section~3]{aeny2019} (see also \cite{gmor2014}) and derive a weak formulation of the coupled problem given by \eqref{eq:Stokes-2}, \eqref{eq:bsie-bsip-definitions}--\eqref{eq:Biot-model}, and \eqref{eq:interface-conditions}.

\subsection{Preliminaries}
For the stress tensor, velocity, and vorticity in the Stokes region,
we use the Hilbert spaces, respectively,
\begin{equation*}
  \bbX_f := \Big\{ \btau_f\in \bbH(\bdiv;\Omega_f) :  \btau_f\bn_f = \0
  \ \text{ on } \
  \Gamma^\rN_f \Big\},\,\,\, 
  \bV_f := \bL^2(\Omega_f),
\,\,\,
  \ds \bbQ_f := \Big\{ \bchi_f\in \bbL^2(\Omega_f) :  \bchi^\rt_f = -\,\bchi_f \Big\},
\end{equation*}
endowed with the corresponding norms
\begin{equation*}
\|\btau_f\|_{\bbX_f} := \|\btau_f\|_{\bbH(\bdiv;\Omega_f)},\quad
\|\bv_f\|_{\bV_f} := \|\bv_f\|_{\bL^2(\Omega_f)},\quad
\|\bchi_f\|_{\bbQ_f} := \|\bchi_f\|_{\bbL^2(\Omega_f)}.
\end{equation*}
For the unknowns in the Biot region we introduce the Hilbert spaces: 
\begin{equation*}
\begin{array}{c}
  \ds \bbX_p := \Big\{ \btau_p \in \bbH(\bdiv;\Omega_p): \btau_p \bn_p = \0
  \ \text{ on } \ \tilde\Gamma^\rN_p \Big\}, \,\,\,
\bV_s:=\bL^2(\Omega_p),\,\,\,
\bbQ_p := \Big\{ \bchi_p\in \bbL^2(\Omega_p) :  \bchi^\rt_p = -\,\bchi_p \Big\},\\ [2ex]
\ds \bV_p := \Big\{ \bv_p\in \bH(\div;\Omega_p) : \ \bv_p\cdot\bn_p = 0 \ \text{ on } \ \Gamma^\rN_p \Big\},\quad
\W_p := \L^2(\Omega_p),
\end{array}
\end{equation*}
endowed with the standard norms
\begin{equation*}
\begin{array}{c}
\ds \|\btau_p\|_{\bbX_p} := \|\btau_p\|_{\bbH(\bdiv;\Omega_p)},\quad
\|\bv_s\|_{\bV_s} := \|\bv_s\|_{\bL^2(\Omega_p)},\quad
\|\bchi_p\|_{\bbQ_p} := \|\bchi_p\|_{\bbL^2(\Omega_p)}, \\ [2ex]
\ds \|\bv_p\|_{\bV_p} := \|\bv_p\|_{\bH(\div;\Omega_p)},\quad
\|w_p\|_{\W_p} := \|w_p\|_{\L^2(\Omega_p)}.
\end{array}
\end{equation*}

Finally, analogously to \cite{gs2007,gmor2014,akyz2018,aeny2019,fpsi-mixed-elast} we
need to introduce the Lagrange multiplier spaces $\Lambda_p :=
(\bV_p\cdot\bn_p|_{\Gamma_{fp}})'$, $\bLambda_f :=
(\bbX_f\,\bn_f|_{\Gamma_{fp}})'$, and $\bLambda_s :=
(\bbX_p\,\bn_p|_{\Gamma_{fp}})'$.  According to the normal trace
theorem, since $\bv_p\in \bV_p\subset \bH(\div;\Omega_p)$, then
$\bv_p\cdot\bn_p\in \H^{-1/2}(\partial \Omega_p)$.  It is shown in
\cite{gs2007} that, if $\bv_p\cdot\bn_p = 0$ on
$\partial\,\Omega_p\setminus \Gamma_{fp}$, then $\bv_p\cdot\bn_p \in
\H^{-1/2}(\Gamma_{fp})$. This argument has been modified in \cite{akyz2018}
for the case $\bv_p\cdot\bn_p = 0$ on $\Gamma^\rN_p$ and 
$\dist(\Gamma^\rD_p,\Gamma_{fp}) \geq s > 0$. In particular, it holds that
\begin{equation}\label{trace-u}
  \langle \bv_p\cdot\bn_p,\xi \rangle_{\Gamma_{fp}}
  \le C \|\bv_p\|_{\bH(\div;\Omega_p)}\|\xi\|_{\H^{1/2}(\Gamma_{fp})}, \quad
  \forall \, \bv_p \in \bV_p, \, \xi \in \H^{1/2}(\Gamma_{fp}).
\end{equation}
Similarly,
\begin{equation}\label{trace-sigma}
  \langle \btau_\star \, \bn_\star,\bpsi \rangle_{\Gamma_{fp}}
  \le C \|\btau_\star\|_{\bbH(\div;\Omega_\star)}\|\bpsi\|_{\bH^{1/2}(\Gamma_{fp})},
  \quad \forall \, \btau_\star \in \bbX_\star, \,
  \bpsi \in \bH^{1/2}(\Gamma_{fp}), \ \star \in \{f,p\}.
\end{equation}
Therefore we can take $\Lambda_p := \H^{1/2}(\Gamma_{fp})$,
$\bLambda_f := \bH^{1/2}(\Gamma_{fp})$,
and $\bLambda_s := \bH^{1/2}(\Gamma_{fp})$, endowed with the norms
\begin{equation}\label{eq:norms-H-half}
\|\xi\|_{\Lambda_p} \,:=\, \|\xi\|_{\H^{1/2}(\Gamma_{fp})},\quad 
\|\bpsi\|_{\bLambda_f} \,:=\, \|\bpsi\|_{\bH^{1/2}(\Gamma_{fp})},\qan 
\|\bphi\|_{\bLambda_s} \,:=\, \|\bphi\|_{\bH^{1/2}(\Gamma_{fp})}.
\end{equation}

\subsection{Lagrange multiplier formulation}

We now proceed with the derivation of our Lagrange multiplier
variational formulation for the coupling of the Stokes and Biot
problems.  To this end, and inspired by \cite{aeny2019,gos2011}, we
begin by introducing the structure velocity $\bu_s :=
\partial_t\,\bbeta_p\in \bV_s$ satisfying $\bu_s = \0$ on
$\tilde\Gamma_p^\rD\times (0,T]$ (cf. the last equation in \eqref{eq:Biot-model}),
and three Lagrange multipliers modeling the Stokes
velocity, structure velocity and Darcy pressure on the interface,
respectively,
\begin{equation*}
\bvarphi \,:=\, \bu_f|_{\Gamma_{fp}} \in \bLambda_f,\quad 
\btheta \,:=\, \bu_s|_{\Gamma_{fp}}\in \bLambda_s,\qan
\lambda \,:=\, p_p|_{\Gamma_{fp}}\in \Lambda_p.
\end{equation*}
The reason for introducing these Lagrange multipliers is
twofold. First, $\bu_f$, $\bu_s$, and $p_p$ are all modeled in the
$\L^2$ space, thus they do not have sufficient regularity for their
traces on $\Gamma_{fp}$ to be well defined. Second, the Lagrange multipliers
are utilized to impose weakly the transmission conditions
\eqref{eq:interface-conditions}.

To impose the symmetry condition of $\bsi_p$ in a weak sense we introduce the rotation operator $\brho_p := \dfrac{1}{2}(\nabla\bbeta_p - \nabla\bbeta^{\rt}_p)$.
Notice that in the weak formulation we will use its time derivative, that is, the structure rotation velocity
\begin{equation*}
\bgamma_p \,:=\, \partial_t\brho_p = \frac{1}{2}\,\left( \nabla\bu_s - (\nabla\bu_s)^\rt \right)\in \bbQ_p.
\end{equation*}
From the definition of the elastic and poroelastic stress tensors
$\bsi_e, \bsi_p$ (cf. \eqref{eq:bsie-bsip-definitions}) and recalling
that $\bsi_e$ is connected to the displacement $\bbeta_p$ through the
relation $A(\bsi_e) = \be(\bbeta_p)$, we deduce the identities
\begin{equation}\label{eq:div-bbeta_p}
\div(\bbeta_p) \,=\, \tr (\be(\bbeta_p)) \, = \, \tr(A\bsi_e) \, = \, \tr A(\bsi_p+\alpha_p \,p_p\, \bI)
\end{equation}
and
\begin{equation}\label{eq:partial-t-A-identity}
\partial_t\,A(\bsi_p + \alpha_p\,p_p\,\bI) 
\,=\, \nabla\,\bu_s - \bgamma_p\,.
\end{equation}
Then, similarly to \cite{gos2011,gmor2014,akyz2018,aeny2019}, we test
the first equation of \eqref{eq:Stokes-2}, the second equation of
\eqref{eq:Biot-model}, and \eqref{eq:partial-t-A-identity} with
arbitrary $\btau_f\in \bbX_f, \bv_p\in \bV_p$, and $\btau_p\in
\bbX_p$, respectively, integrate by parts, utilize the fact that
$\bsi^\rd_f:\btau_f = \bsi^\rd_f:\btau^\rd_f$, test the third equation
of \eqref{eq:Biot-model} with $w_p\in \W_p$ employing
\eqref{eq:div-bbeta_p}, impose the remaining equations weakly, and
utilize the transmission conditions in \eqref{eq:interface-conditions}
to obtain the variational problem,
\begin{align}\label{eq:Stokes-Biot-formulation-1}
& \ds \frac{1}{2\mu}\,(\bsi^\rd_f,\btau^\rd_f)_{\Omega_f}
  + (\bu_f,\bdiv(\btau_f))_{\Omega_f}
  + (\bgamma_f,\btau_f)_{\Omega_f}
  - \pil\btau_f\bn_f,\bvarphi \pir_{\Gamma_{fp}}
  =  \ds -\frac{1}{n}\,(q_f \, \bI ,\btau_f)_{\Omega_f}, \nonumber \\
& \ds -\,(\bv_f,\bdiv(\bsi_f))_{\Omega_f} = \ds (\f_f,\bv_f)_{\Omega_f}, \nonumber \\ 
& \ds -\,(\bsi_f,\bchi_f)_{\Omega_f} = 0,\nonumber  \\
& \ds  (\partial_t\,A(\bsi_p + \alpha_p\,p_p\,\bI), \btau_p)_{\Omega_p}
  +\,(\bu_s, \bdiv(\btau_p))_{\Omega_p}
  + (\bgamma_p,\btau_p)_{\Omega_p}
  - \pil\btau_p\bn_p,\btheta\pir_{\Gamma_{fp}}
  = 0,\nonumber \\
& \ds -\,(\bv_s,\bdiv(\bsi_p))_{\Omega_p} = (\f_p, \bv_s)_{\Omega_p}, \nonumber \\
& \ds -\,(\bsi_p,\bchi_p)_{\Omega_p} = 0, \nonumber \\
& \ds \mu\,(\bK^{-1}\bu_p,\bv_p)_{\Omega_p}
  - (p_p,\div(\bv_p))_{\Omega_p}
  + \pil\bv_p\cdot\bn_p,\lambda\pir_{\Gamma_{fp}} = 0, \\[0.5ex]
& \ds (s_0\,\partial_t\,p_p, w_p)_{\Omega_p}
  + \alpha_p\,(\partial_t\,A(\bsi_p + \alpha_p\,p_p\,\bI), w_p\,\bI)_{\Omega_p}
\ds +\, (w_p,\div(\bu_p))_{\Omega_p} = (q_p,w_p)_{\Omega_p}, \nonumber  \\[0.5ex]
& \ds -\,\pil\bvarphi\cdot\bn_f + \left(\btheta + \bu_p\right)\cdot\bn_p,\xi\pir_{\Gamma_{fp}}
= 0, \nonumber \\[0.5ex]
& \ds \pil\bsi_f\bn_f,\bpsi\pir_{\Gamma_{fp}} + \mu\,\alpha_{\BJS}\,\sum_{j=1}^{n-1}
\pil\sqrt{\bK_j^{-1}}\left( \bvarphi - \btheta \right)\cdot\bt_{f,j},\bpsi\cdot\bt_{f,j} \pir_{\Gamma_{fp}}
\ds +\, \pil\bpsi\cdot\bn_f,\lambda\pir_{\Gamma_{fp}} = 0, \nonumber \\[0.5ex]
& \ds \pil\bsi_p\bn_p,\bphi\pir_{\Gamma_{fp}} - \mu\,\alpha_{\BJS}\,\sum_{j=1}^{n-1}
\pil\sqrt{\bK_j^{-1}}\left( \bvarphi - \btheta \right)\cdot\bt_{f,j},\bphi\cdot\bt_{f,j} \pir_{\Gamma_{fp}}
\ds +\, \pil\bphi\cdot\bn_p,\lambda\pir_{\Gamma_{fp}} = 0 \nonumber.
\end{align}
The last three equations impose weakly the transmission conditions
\eqref{eq:interface-conditions}. In particular, the equation with test
function $\xi$ imposes the mass conservation, the equation with $\bpsi$ imposes
the last equation in \eqref{eq:interface-conditions}, which is a combination
of balance of normal stress and the BJS condition, while the equation
with $\bphi$ imposes the conservation of momentum. We emphasize that this is
a new formulation. To our knowledge, this is the first fully dual-mixed formulation
for the Stokes-Biot problem.

\begin{rem}\label{rem:A-sigmap-pp-without-time-derivative}
The time differentiated equation in the fourth row of \eqref{eq:Stokes-Biot-formulation-1} allows us to eliminate the displacement variable $\bbeta_p$ and obtain a formulation that uses only $\bu_s$.
As part of the analysis we will construct suitable initial data such that, by integrating in time the fourth equation of \eqref{eq:Stokes-Biot-formulation-1}, we can recover the original equation
\begin{equation}\label{eq:A-sigmap-pp-without-time-derivative}
(A(\bsi_p + \alpha_p\,p_p\,\bI), \btau_p)_{\Omega_p}
+ (\bbeta_p, \bdiv(\btau_p))_{\Omega_p}
+ (\brho_p,\btau_p)_{\Omega_p}
- \pil\btau_p\bn_p,\bomega\pir_{\Gamma_{fp}}  
= 0,
\end{equation}
where $\bomega := \bbeta_p|_{\Gamma_{fp}}$.
\end{rem}

To simplify the notation, we set the following bilinear forms:
\begin{equation}\label{eq:bilinear-forms-1}
\begin{array}{c}
\ds a_f(\bsi_f,\btau_f) \,:=\, \frac{1}{2\,\mu}\,(\bsi^\rd_f,\btau^\rd_f)_{\Omega_f},\quad 
a_p(\bu_p,\bv_p) \,:=\, \mu\,(\bK^{-1}\bu_p,\bv_p)_{\Omega_p},  \\ [2ex]
a_e(\bsi_p, p_p; \btau_p, w_p) \,:=\, (A(\bsi_p + \alpha_p \, p_p \, \bI), \btau_p + \alpha_p \, w_p \, \bI )_{\Omega_p},  \\ [2ex]
\ds b_f(\btau_f,\bv_f) \,:=\, (\bdiv(\btau_f),\bv_f)_{\Omega_f},\quad
b_s(\btau_p,\bv_s) \,:=\, (\bdiv(\btau_p),\bv_s)_{\Omega_p}, \\ [2ex]
\ds b_p(\bv_p,w_p) \,:=\, -\,(\div(\bv_p),w_p)_{\Omega_p},\quad
b_\Gamma(\bv_p,\xi) \,:=\, \pil\bv_p\cdot\bn_p,\xi\pir_{\Gamma_{fp}}, \\ [2ex] 
\ds b_{\sk,\star}(\btau_\star,\bchi_\star) \,:=\, (\btau_\star,\bchi_\star)_{\Omega_\star},\quad
b_{\bn_\star}(\btau_\star,\bpsi) \,:=\, -\,\pil\btau_\star\bn_\star,\bpsi\pir_{\Gamma_{fp}},\mbox{ with } \star\in\big\{f,p\big\},
\end{array}
\end{equation}
and
\begin{equation}\label{eq:bilinear-forms-2}
\begin{array}{c}
\ds c_{\BJS}(\bvarphi,\btheta;\bpsi,\bphi) 
\,:=\, \mu\,\alpha_{\BJS}\,\sum^{n-1}_{j=1} \pil\sqrt{\bK_j^{-1}}(\bvarphi-\btheta)\cdot\bt_{f,j},(\bpsi-\bphi)\cdot\bt_{f,j}\pir_{\Gamma_{fp}}, \\ [3ex]
\ds c_{\Gamma}(\bpsi,\bphi;\xi) \,:=\, \pil\bpsi\cdot\bn_f,\xi\pir_{\Gamma_{fp}} + \pil\bphi\cdot\bn_p,\xi\pir_{\Gamma_{fp}}.
\end{array}
\end{equation}

There are many different ways of ordering the variables in
\eqref{eq:Stokes-Biot-formulation-1}. For the sake of the subsequent
analysis, we proceed as in \cite{gmor2014} and \cite{aeny2019}, and
adopt one leading to an evolution problem in a doubly-mixed
form. Hence, the variational formulation for the system
\eqref{eq:Stokes-Biot-formulation-1} reads: Given
\begin{equation*}
\f_f:[0,T] \to \bV_f',\quad \f_p:[0,T] \to \bV_s',\quad q_f:[0,T]\to \bbX'_f,\quad
q_p:[0,T]\to \W_p', \quad p_{p,0} \in \W_p, \quad \bsi_{p,0} \in \bbX_p,
\end{equation*}
find $(\bsi_f, \bu_p, \bsi_p, p_p, \bvarphi, \btheta, \lambda, \bu_f, \bu_s,
\bgamma_f, \bgamma_p):[0,T] \to
\bbX_{f}\times \bV_p\times \bbX_p\times \W_p\times \bLambda_f\times \bLambda_s
\times \Lambda_p\times \bV_f\times \bV_s\times \bbQ_f\times \bbQ_p$, such that
$p_p(0) = p_{p,0}$, $\bsi_p(0)=\bsi_{p,0}$ and for a.e. $t\in (0,T)$\,:\,
\begin{align}\label{eq:Stokes-Biot-formulation}
& \ds a_f(\bsi_f,\btau_f) + a_p(\bu_p,\bv_p) + 
a_e(\partial_t \, \bsi_p, \partial_t \, p_p; \btau_p, w_p) +
(s_0\,\partial_t\,p_p,w_p)_{\Omega_p} \nonumber \\[0.5ex] 
& \ds\quad +\,\,b_p(\bv_p,p_p) - b_p(\bu_p,w_p) + b_{\bn_f}(\btau_f,\bvarphi) + b_{\bn_p}(\btau_p,\btheta) + b_\Gamma(\bv_p,\lambda) \nonumber \\[0.5ex]
& \ds\quad +\,\,
b_f(\btau_f,\bu_f) + b_s(\btau_p,\bu_s) + b_\skf(\btau_f,\bgamma_f)
+ b_\skp(\btau_p,\bgamma_p) \,=\, -\,\frac{1}{n}\,(q_f \, \bI ,\btau_f)_{\Omega_f} + (q_p,w_p)_{\Omega_p}, \\[1ex]
& \ds -\,b_{\bn_f}(\bsi_f,\bpsi) - b_{\bn_p}(\bsi_p,\bphi) - b_\Gamma(\bu_p,\xi) + c_{\BJS}(\bvarphi,\btheta;\bpsi,\bphi) + c_{\Gamma}(\bpsi,\bphi;\lambda) - c_{\Gamma}(\bvarphi,\btheta;\xi) = 0, \nonumber \\[0.5ex] 
& \ds -\,b_f(\bsi_f,\bv_f) - b_s(\bsi_p,\bv_s) - b_\skf(\bsi_f,\bchi_f)
- b_\skp(\bsi_p,\bchi_p) \,=\, (\f_f,\bv_f)_{\Omega_f} + (\f_p,\bv_s)_{\Omega_p}, \nonumber
\end{align}
%
$ \forall \ \btau_f\in \bbX_{f}, \bv_p\in \bV_p, \btau_p\in \bbX_p, w_p\in \W_p, \bpsi\in \bLambda_f, \bphi\in \bLambda_s, \xi\in \Lambda_p, \bv_f\in \bV_f, \bv_s\in \bV_s, \bchi_f\in \bbQ_f, \bchi_p\in \bbQ_p$. 

Now, we group the spaces and test functions as follows:
\begin{equation*}
\begin{array}{c}
\ds \bX \,:=\, \bbX_{f}\times \bV_p\times \bbX_p\times \W_p,\quad
\bY \,:=\, \bLambda_f\times \bLambda_s\times \Lambda_p,\quad
\bZ \,:=\, \bV_f\times \bV_s\times \bbQ_f\times \bbQ_p, \\ [2ex]
\ds \ubsi \,:=\, (\bsi_f, \bu_p, \bsi_p, p_p)\in \bX,\quad
\ubvarphi \,:=\, (\bvarphi, \btheta, \lambda)\in \bY,\quad 
\ubu \,:=\, (\bu_f, \bu_s, \bgamma_f, \bgamma_p)\in \bZ, \\ [1ex]
\ds \ubtau \,:=\, (\btau_f, \bv_p, \btau_p, w_p)\in \bX,\quad
\ubpsi \,:=\, (\bpsi, \bphi, \xi)\in \bY,\quad 
\ubv \,:=\, (\bv_f, \bv_s, \bchi_f, \bchi_p)\in \bZ,
\end{array}
\end{equation*}
where the spaces $\bX, \bY$ and $\bZ$ are endowed with the norms, respectively,
\begin{equation*}
\begin{array}{l}
\|\ubtau\|_{\bX} \,:=\, \|\btau_f\|_{\bbX_f} + \|\bv_p\|_{\bV_p} + \|\btau_p\|_{\bbX_p} + \|w_p\|_{\W_p}, \quad
\|\ubpsi\|_{\bY} \,:=\, \|\bpsi\|_{\bLambda_f} + \|\bphi\|_{\bLambda_s} + \|\xi\|_{\Lambda_p}, \\ [2ex]
\|\ubv\|_{\bZ} \,:=\, \|\bv_f\|_{\bV_f} + \|\bv_s\|_{\bV_s} + \|\bchi_f\|_{\bbQ_f} + \|\bchi_p\|_{\bbQ_p}. 
\end{array}
\end{equation*}
Hence, we can write \eqref{eq:Stokes-Biot-formulation} in an operator notation
as a degenerate evolution problem in a doubly-mixed form:
\begin{equation}\label{eq:evolution-problem-in-operator-form}
\begin{array}{lllll}
\ds \frac{\partial}{\partial t}\,\cE(\ubsi(t)) + \cA(\ubsi(t)) + \cB'_1(\ubvarphi(t)) + \cB'(\ubu(t)) & = & \bF(t) & \mbox{ in } & \bX', \\ [1.5ex]
\ds -\,\cB_1(\ubsi(t)) + \cC(\ubvarphi(t)) & = & \0 & \mbox{ in } & \bY', \\ [1.5ex]
\ds -\,\cB\,(\ubsi(t)) & = & \bG(t) & \mbox{ in } & \bZ',
\end{array}
\end{equation}
where, according to \eqref{eq:bilinear-forms-1}--\eqref{eq:bilinear-forms-2},
the operators $\cA : \bX\to \bX', \cB_1 : \bX\to \bY', \cC : \bY\to \bY'$,
and $\cB : \bX\to \bZ'$, are defined by 
\begin{equation}\label{eq:operators-A-B1-C}
\begin{array}{l}
\cA(\ubsi)(\ubtau) \,:=\, a_f(\bsi_f,\btau_f) + a_p(\bu_p,\bv_p)
+ b_p(\bv_p,p_p) - b_p(\bu_p,w_p), \\ [2ex] 
\cB_1(\ubtau)(\ubpsi) \,:=\, b_{\bn_f}(\btau_f,\bpsi)
+ b_{\bn_p}(\btau_p,\bphi) + b_{\Gamma}(\bv_p,\xi), \\ [2ex] 
\cC(\ubvarphi)(\ubpsi) \,:=\, c_{\BJS}(\bvarphi,\btheta;\bpsi,\bphi) 
+ c_{\Gamma}(\bpsi,\bphi;\lambda) - c_{\Gamma}(\bvarphi,\btheta;\xi),
\end{array}
\end{equation}
and 
\begin{equation}\label{eq:operator-B}
\cB(\ubtau)(\ubv) \,:=\, b_f(\btau_f,\bv_f)
+ b_s(\btau_p,\bv_s) + b_{\skf}(\btau_f,\bchi_f) + b_{\skp}(\btau_p,\bchi_p),
\end{equation}
whereas the operator $\cE : \bX\to \bX'$ is given by
\begin{equation}\label{eq:time-operator-E}
\cE(\ubsi)(\ubtau) \,:=\, a_e(\bsi_p,p_p;\btau_p,w_p) + (s_0\,p_p,w_p)_{\Omega_p},
\end{equation}
and the functionals $\bF\in \bX'$, $\bG\in \bZ'$ are defined as
\begin{equation}\label{eq:data-F-G}
\bF(\ubtau) \,:=\, -\,\frac{1}{n}\,(q_f \, \bI ,\btau_f)_{\Omega_f} + (q_p,w_p)_{\Omega_p} \qan
\bG(\ubv) \,:=\, (\f_f,\bv_f)_{\Omega_f} + (\f_p,\bv_s)_{\Omega_p}.
\end{equation}

\section{Well-posedness of the model}\label{sec:well-posedness-model}

In this section we establish the solvability of
\eqref{eq:evolution-problem-in-operator-form} (equivalently
\eqref{eq:Stokes-Biot-formulation}).  To that end we first collect
some previous results that will be used in the forthcoming analysis.

\subsection{Preliminaries}

We begin by recalling the following key result given in
\cite[Theorem~IV.6.1(b)]{Showalter} that will be used to establish the
existence of a solution to
\eqref{eq:evolution-problem-in-operator-form}.

\begin{thm}\label{thm:solvability-parabolic-problem}
Let the linear, symmetric and monotone operator $\cN$ be given for the real vector space $E$ to its algebraic dual $E^*$, and let $E'_b$ be the Hilbert space which is the dual of $E$ with the seminorm
\begin{equation*}
|x|_b = \big(\cN\,x(x)\big)^{1/2} \quad x\in E.
\end{equation*}
Let $\cM\subset E\times E'_b$ be a relation with domain $\cD = \Big\{ x\in E \,:\, \cM(x) \neq \emptyset \Big\}$.
	
Assume $\cM$ is monotone and $Rg(\cN + \cM) = E'_b$.
Then, for each $u_0\in \cD$ and for each $f\in \W^{1,1}(0,T;E'_b)$, there is a solution $u$ of
\begin{equation}\label{eq:general-parabolic-problem}
\frac{d}{dt}\big(\cN\,u(t)\big) + \cM\big(u(t)\big) \ni f(t) \quad a.e. \ 0 < t < T,
\end{equation}
with
\begin{equation*}
\cN\,u\in \W^{1,\infty}(0,T;E'_b),\quad u(t)\in \cD,\quad \mbox{ for all }\, 0\leq t\leq T,\qan \cN\,u(0) = \cN\,u_0.
\end{equation*}
\end{thm}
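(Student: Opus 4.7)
The plan is to establish existence for the degenerate evolution inclusion \eqref{eq:general-parabolic-problem} by the classical Showalter strategy: implicit time discretization, a priori estimates using symmetry/monotonicity, and passage to the limit via a Minty-type argument. First, I would partition $[0,T]$ into $N$ steps of size $h=T/N$, set $u^0:=u_0$ and $f^k := h^{-1}\int_{(k-1)h}^{kh} f(s)\,ds$, and iteratively seek $u^k \in \cD$ and $w^k \in \cM(u^k)$ with
\begin{equation*}
\cN u^k + h\,w^k \,=\, \cN u^{k-1} + h\,f^k \qin E'_b.
\end{equation*}
Solvability of each step is equivalent to the range condition $Rg(\cN + h\cM) = E'_b$ for every $h>0$. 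The hypothesis gives this for $h=1$; the general case follows by a rescaling argument (replace $\cM$ by $h\cM$, which remains monotone) together with a Yosida-type perturbation that is standard once $\cN$ is linear, symmetric and monotone.

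The energy estimate exploits the symmetry of $\cN$: since $\cN$ is linear and symmetric, one has the telescoping inequality $(\cN u^k - \cN u^{k-1})(u^k) \,\geq\, \tfrac12 (|u^k|_b^2 - |u^{k-1}|_b^2)$. Testing the discrete inclusion with $u^k$, anchoring the monotonicity of $\cM$ at any fixed pair $(\bar u,\bar w)\in \cM$, and summing in $k$ yields a uniform $L^\infty(0,T;|\cdot|_b)$ bound on the piecewise-constant interpolant $u_h$. To obtain the stronger regularity $\cN u \in W^{1,\infty}(0,T;E'_b)$ required by the conclusion, I would subtract consecutive discrete equations and test with $\delta^k := u^k - u^{k-1}$; the monotonicity of $\cM$ contributes nonnegatively, so symmetry of $\cN$ gives a telescoping control of $|(\cN u^k - \cN u^{k-1})/h|_{E'_b}$ by the discrete difference quotients of $f$, which in turn are bounded by $\|f\|_{W^{1,1}(0,T;E'_b)}$. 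This is precisely where the $W^{1,1}$ regularity of the data is used, and where the compatibility $u_0 \in \cD$ controls the starting term at $k=1$.

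Next I would form piecewise-linear interpolants $\widehat{\cN u}_h$ and pass to the limit. The $W^{1,\infty}(0,T;E'_b)$ bound gives, by Arzel\`a--Ascoli, a subsequence with $\widehat{\cN u}_h \to \eta$ in $C([0,T];E'_b)$; the seminorm bound gives a weak-$*$ cluster point $u$ with $u(t)\in\cD$ almost everywhere, and the closedness/linearity of $\cN$ together with its symmetry identifies $\eta = \cN u$. The initial condition $\cN u(0) = \cN u_0$ is preserved by the continuity of $\widehat{\cN u}_h$ in $E'_b$. Finally, the inclusion $f(t) - \tfrac{d}{dt}\cN u(t) \in \cM(u(t))$ almost everywhere is obtained by Minty's trick: for any $(v,w)\in\cM$, the discrete inequality
\begin{equation*}
\sum_{k} h\,\bigl(w^k - w\bigr)(u^k - v) \,\geq\, 0
\end{equation*}
passes to the limit against any nonnegative test function in time, and maximality of $\cM$ delivers the inclusion.

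The main obstacle I expect is the identification step under the degeneracy of $\cN$: because $|\cdot|_b$ is only a seminorm, the weak control on $u_h$ is weaker than in the standard parabolic case, and one has no direct strong convergence of $u_h$. The remedy is to exploit the identity $|u|_b^2 = \cN u(u)$ together with the symmetry of $\cN$ to obtain the lower semicontinuity $\liminf_h \int_0^T |u_h|_b^2 \,dt \ge \int_0^T |u|_b^2 \,dt$ needed to close Minty's argument; this, combined with the strong convergence $\cN u_h \to \cN u$ in $C([0,T];E'_b)$, is what ultimately allows the limit of $\sum_k h\, w^k(u^k)$ to be controlled from below by $\int_0^T w(t)(u(t))\,dt$ in the sense required to conclude.
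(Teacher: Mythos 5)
The paper does not prove this statement: it is cited verbatim as Theorem~IV.6.1(b) of Showalter's monograph, so there is no in-paper proof to compare against. Your Rothe-method sketch is a legitimate \emph{independent} route, and it differs from Showalter's own argument, which first passes to the quotient Hilbert space determined by the seminorm $|\cdot|_b$, factors $\cN=j'j$ through $j:E\to E_b$, shows that $\cM$ induces a maximal monotone graph on $E'_b$, applies nonlinear semigroup (Brezis) theory to the resulting non-degenerate evolution on $E'_b$, and finally lifts back to $E$; there is no time discretization in that approach.

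Two of the steps you treat as routine are, however, genuine gaps. First, the claim that $Rg(\cN+\cM)=E'_b$ yields $Rg(\cN+h\cM)=E'_b$ for every $h>0$ ``by a rescaling argument'' and a ``Yosida-type perturbation'' is not a rescaling: it is precisely the degenerate form of Minty's maximality theorem, whose proof amounts to the quotient construction you are trying to bypass. Without first establishing that $\cM$ is \emph{maximal} monotone in the induced sense on $E'_b$, solvability of the discrete step is not justified. Second, the limit passage is not correct as written: since $|\cdot|_b$ is only a seminorm on $E$, a uniform bound on $|u_h|_b$ does not give a weak or weak-$*$ cluster point $u$ in $E$ (nor does Arzel\`a--Ascoli give convergence in $C([0,T];E'_b)$ in the norm topology without a compactness hypothesis; one gets at best $C_w([0,T];E'_b)$). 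One only obtains a cluster point of $\cN u_h$ in $E'_b$, and the step where one produces a measurable selection $u(t)\in\cD$ with $\cN u(t)$ equal to that cluster point, and verifies the inclusion, is exactly where maximality of $\cM$ (not mere monotonicity) is used. Your proposed lower-semicontinuity fix for Minty's trick presupposes the existence of the limit $u$ that the seminorm bound alone does not provide. The route can be completed, but the two steps you wave through are the technical substance of the theorem, and in Showalter they are resolved by the quotient-space reformulation rather than by discretization.
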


In addition, in order to show the range condition of
Theorem~\ref{thm:solvability-parabolic-problem} in our context, we
will require the following theorem whose proof can be derived
similarly to \cite[Theorem~2.2]{ghm2003} (see also
\cite[Theorem~3.13]{adgm2019} for a generalized nonlinear Banach
version).
\begin{thm}\label{thm:system_solvability}
Let $X, Y$, and $Z$ be Hilbert spaces, and let $X', Y', Z'$ be their respective duals. 
Let $A:X\to X'$, $S:Y\to Y'$, $B_1:X\to Y'$, and $B:X\to Z'$ be linear bounded operators. We also let $B'_1:Y\to X'$ and $B':Z\to X'$ be the corresponding adjoints.
Finally, we let $V$ be the kernel of $B$, that is
\begin{equation*}
V \,:=\, \Big\{ \btau\in X :\quad B(\btau)(\bv) = 0 \quad \forall\, \bv\in Z \Big\}.
\end{equation*}
	
Assume that
\begin{enumerate}
\item[(i)] $A|_{V}:V\to V'$ is elliptic, that is, there exists a constant $\alpha>0$ such that
\begin{equation*}
A(\btau)(\btau) \,\geq\, \alpha\,\|\btau\|^2_{X} \quad \forall\, \btau\in V.
\end{equation*}
		
\item[(ii)] $S$ is positive semi-definite on $Y$, that is,
\begin{equation*}
S(\bpsi)(\bpsi) \,\geq\, 0 \quad \forall\,\bpsi\in Y.
\end{equation*}
		
\item[(iii)] $B_1$ satisfies an inf-sup condition on $V\times Y$, that is, there exists $\beta_1>0$ such that
\begin{equation*}
\sup_{\0\neq \btau\in V} \frac{B_1(\btau)(\bpsi)}{\|\btau\|_{X}} 
\,\geq\, \beta_1\,\|\bpsi\|_{Y} \quad \forall\, \bpsi\in Y.
\end{equation*}
		
\item[(iv)] $B$ satisfies an inf-sup condition on $X\times Z$, that is, there exists $\beta>0$ such that
\begin{equation*}
\sup_{\0\neq \btau\in X} \frac{B(\btau)(\bv)}{\|\btau\|_X} 
\,\geq\, \beta\,\|\bv\|_{Z} \quad \forall\, \bv\in Z.
\end{equation*}
\end{enumerate}
Then, for each $(F_1, F_2,G)\in X'\times Y'\times Z'$ there exists a unique $(\bsi,\bvarphi,\bu)\in X\times Y\times Z$, such that
\begin{equation*}
\begin{array}{llll}
A(\bsi)(\btau) + B'_1(\bvarphi)(\btau) + B'(\bu)(\btau) & = & F_1(\btau) & \forall\, \btau\in X, \\ [1ex]
B_1(\bsi)(\bpsi) - S(\bvarphi)(\bpsi) & = & F_2(\bpsi) & \forall\, \bpsi\in Y, \\ [1ex]
B(\bsi)(\bv) & = & G(\bv) & \forall\, \bv\in Z.
\end{array}
\end{equation*}
Moreover, there exists $C>0$, depending only on $\alpha, \beta_1, \beta, \|A\|, \|S\|$, and $\|B_1\|$ such that
\begin{equation*}
\|(\bsi,\bvarphi,\bu)\|_{X\times Y\times Z} \,\leq\, C\,\Big\{\|F_1\|_{X'} + \|F_2\|_{Y'} + \|G\|_{Z'}\Big\}.
\end{equation*}
\end{thm}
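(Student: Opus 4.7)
The plan is to reduce the three-field system to a perturbed saddle point problem on $V\times Y$ and then recover $u$ from the inf-sup condition on $B$. First, by the surjectivity implied by (iv), pick $\sigma_G\in X$ with $B(\sigma_G)=G$ and $\|\sigma_G\|_X\le\beta^{-1}\|G\|_{Z'}$. Writing $\sigma=\sigma_0+\sigma_G$ with $\sigma_0\in V$, the third equation is automatically satisfied. Testing the first equation only against $\tau\in V$ kills the term $B'(u)(\tau)$, leaving the two-field problem: find $(\sigma_0,\varphi)\in V\times Y$ such that
\begin{align*}
A(\sigma_0)(\tau) + B_1'(\varphi)(\tau) &= F_1(\tau)-A(\sigma_G)(\tau) &&\forall\,\tau\in V,\\
B_1(\sigma_0)(\psi) - S(\varphi)(\psi) &= F_2(\psi)-B_1(\sigma_G)(\psi) &&\forall\,\psi\in Y.
\end{align*}

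Hypotheses (i)--(iii) are exactly what is needed to solve this reduced system by the perturbed Brezzi theory: $A$ is elliptic on $V$, the $(2,2)$ block $-S$ is negative semidefinite, and $B_1$ is inf-sup on $V\times Y$. A clean route is to eliminate $\sigma_0$ via the $A$-isomorphism on $V$ and obtain a single equation for $\varphi$ governed by the operator $B_1 A^{-1} B_1' + S:Y\to Y'$. The first summand is $Y$-elliptic by (i) and (iii), and adding $S\ge 0$ from (ii) only strengthens coercivity, so Lax--Milgram yields a unique $\varphi\in Y$; back-substitution gives $\sigma_0\in V$, together with
\[
\|\sigma_0\|_X+\|\varphi\|_Y \,\le\, C\,(\|F_1-A\sigma_G\|_{X'}+\|F_2-B_1\sigma_G\|_{Y'}),
\]
where $C$ depends only on $\alpha$, $\beta_1$, $\|A\|$, and $\|S\|$.

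To recover $u$, set $\ell(\tau) := F_1(\tau) - A(\sigma_0+\sigma_G)(\tau) - B_1'(\varphi)(\tau)\in X'$; by the first equation of the reduced system, $\ell$ vanishes on $V$, hence $\ell\in V^{\circ}$. The inf-sup condition (iv) is equivalent to $B':Z\to V^{\circ}$ being a topological isomorphism, so there exists a unique $u\in Z$ with $B'(u)=\ell$ and $\|u\|_Z\le\beta^{-1}\|\ell\|_{X'}$, which then makes the first equation hold for all $\tau\in X$. Combining the three estimates yields the claimed stability bound. Uniqueness is immediate: with $F_1=F_2=0$ and $G=0$, one may take $\sigma_G=0$; the reduced system then forces $\sigma_0=\varphi=0$ (testing its second equation against $\psi=\varphi$ gives $S(\varphi)(\varphi) + B_1(\sigma_0)(\varphi) = 0$, then the first equation and ellipticity of $A$ on $V$ give $\sigma_0=0$, and the inf-sup on $B_1$ forces $\varphi=0$), and finally the isomorphism $B':Z\to V^{\circ}$ gives $u=0$. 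I expect the main technical obstacle to be the perturbed saddle-point step, since the textbook Brezzi theorem assumes a zero $(2,2)$ block, together with the careful bookkeeping required to track how the final constant depends precisely on $\alpha$, $\beta_1$, $\|A\|$, $\|S\|$, and $\|B_1\|$ as stated.
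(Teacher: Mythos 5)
The paper does not include a proof of this theorem; it only points the reader to \cite{ghm2003} (Theorem 2.2) and \cite{adgm2019} (Theorem 3.13). Your reconstruction is a genuine proof, not just a paraphrase of what the paper does, and it follows the standard two-level reduction for twofold saddle point problems. The decomposition $\bsi=\bsi_0+\bsi_G$ with $\bsi_G$ lifting $G$ via (iv), the restriction of the first equation to the kernel $V$ to kill the $B'(\bu)$ term, the Schur-complement elimination of $\bsi_0$ to obtain a single elliptic equation for $\bvarphi$, and the recovery of $\bu$ from the isomorphism $B':Z\to V^\circ$ are exactly the right steps, and all hypotheses are used where they should be. One point worth making explicit, which you correctly use but could emphasize: $A$ is not assumed symmetric, so coercivity of $B_1A^{-1}B_1'$ on $Y$ is not immediate from symmetry; it follows because, with $w:=\tilde A^{-1}\tilde B_1'\bpsi\in V$, one has $(B_1\tilde A^{-1}B_1')(\bpsi)(\bpsi)=\tilde A(w)(w)\ge\alpha\|w\|_X^2$ and (iii) gives $\|w\|_X\ge(\beta_1/\|A\|)\|\bpsi\|_Y$, which you implicitly rely on.

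Two minor corrections. First, the sign in your uniqueness paragraph is off: the second equation gives $B_1(\bsi_0)(\bvarphi)-S(\bvarphi)(\bvarphi)=0$, not $S(\bvarphi)(\bvarphi)+B_1(\bsi_0)(\bvarphi)=0$; the correct sign is what makes the argument work, since then the first equation tested against $\bsi_0$ gives $A(\bsi_0)(\bsi_0)=-B_1(\bsi_0)(\bvarphi)=-S(\bvarphi)(\bvarphi)\le 0$, forcing $\bsi_0=\0$ by (i). Second, the intermediate stability constant for the $(\bsi_0,\bvarphi)$ subproblem also depends on $\|B_1\|$ (through the Schur-complement bound $\|\bvarphi\|_Y\lesssim\|B_1\|\alpha^{-1}\|\tilde F_1\|+\|\tilde F_2\|$ and the back-substitution $\|\bsi_0\|_X\lesssim\alpha^{-1}(\|\tilde F_1\|+\|B_1\|\|\bvarphi\|_Y)$), not only on $\alpha,\beta_1,\|A\|,\|S\|$; this is consistent with the final dependence listed in the theorem, so nothing is lost, but the intermediate claim as written is slightly too strong.
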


At this point we recall, for later use, that there exist positive constants $c_1(\Omega_f)$ and $c_2(\Omega_f)$, such that (see, \cite[Proposition~IV.3.1]{Brezzi-Fortin} and \cite[Lemma~2.5]{Gatica}, respectively)
\begin{equation}\label{eq:tau-d-H0div-inequality}
c_1(\Omega_f)\,\|\btau_{f,0}\|^2_{\bbL^2(\Omega_f)} 
\,\leq\, \|\btau^\rd_f\|^2_{\bbL^2(\Omega_f)} + \|\bdiv(\btau_f)\|^2_{\bL^2(\Omega_f)} \quad \forall\,\btau_f = \btau_{f,0} + \ell\,\bI\in \bbH(\bdiv;\Omega_f)
\end{equation}
and
\begin{equation}\label{eq:tau-H0div-Xf-inequality}
c_2(\Omega_f)\,\|\btau_f\|^2_{\bbX_f} 
\,\leq\, \|\btau_{f,0}\|^2_{\bbX_f} \quad \forall\,\btau_f = \btau_{f,0} + \ell\,\bI\in \bbX_f,
\end{equation}
where $\btau_{f,0}\in \bbH_0(\bdiv;\Omega_f) := \Big\{ \btau_{f}\in
\bbH(\bdiv;\Omega_f) :\quad (\tr(\btau_f),1)_{\Omega_f} = 0 \Big\}$
and $\ell\in \R$. We emphasize that \eqref{eq:tau-H0div-Xf-inequality}
holds since each $\btau_f \in \bbX_f$ satisfies the boundary condition
$\ds \btau_f\bn_f = \0$ on $\Gamma^\rN_f$ with $|\Gamma^\rN_f| > 0$.

\subsection{The resolvent  system}

Now, we proceed to analyze the solvability of
\eqref{eq:evolution-problem-in-operator-form} (equivalently
\eqref{eq:Stokes-Biot-formulation}).
First, recalling the definition of the operators $\cA, \cB_1, \cB, \cC$, and $\cE$ (cf. \eqref{eq:operators-A-B1-C}, \eqref{eq:operator-B} and \eqref{eq:time-operator-E}), we note that problem \eqref{eq:evolution-problem-in-operator-form} can be written in the form of \eqref{eq:general-parabolic-problem} with
\begin{equation}\label{eq:E-u-N-M-f}
E = \bX\times \bY\times \bZ,\quad
u = \left( \begin{array}{lll}
\ubsi \\ \ubvarphi \\ \ubu 
\end{array}\right),\quad
\cN = \left( \begin{array}{lll}
\cE & \0 & \0 \\
\0  & \0 & \0 \\
\0  & \0 & \0
\end{array}\right),\quad 
\cM = \left( \begin{array}{lll}
\cA & \cB'_1 & \cB' \\
-\cB_1 & \cC & \0 \\
-\cB & \0 & \0
\end{array}\right),\quad
f = \left( \begin{array}{lll}
\bF \\ \0 \\ \bG
\end{array}\right).
\end{equation}
In addition, the norm induced by the operator $\cE$ is
$|\ubtau|^2_{\cE} := s_0\,\|w_p\|^2_{\L^2(\Omega_p)} + \|A^{1/2}(\btau_p + \alpha_p\,w_p\,\bI)\|^2_{\bbL^2(\Omega_p)}$,
which is equivalent to $\|\btau_p\|^2_{\bbL^2(\Omega_p)} + \|w_p\|^2_{\L^2(\Omega_p)}$ since $s_0>0$.
We denote by $\bbX_{p,2}$ and $\W_{p,2}$ the closures of the spaces $\bbX_p$ and $\W_p$, respectively, with respect to the norms $\|\btau_p\|_{\bbX_{p,2}} := \|\btau_p\|_{\bbL^2(\Omega_p)}$ and $\|w_p\|_{\W_{p,2}} := \|w_p\|_{\L^2(\Omega_p)}$.
Note that $\bbX'_{p,2} = \bbL^2(\Omega_p)$ and $\W'_{p,2} = \W'_p$.
Next, denoting $\bX'_{2,0} := \0\times \0\times \bbX'_{p,2}\times \W'_{p,2}$, $\bY'_{2,0} := \0\times \0\times \0$, and $\bZ'_{2,0} := \0\times \0\times \0\times \0$,
the Hilbert space $E'_b$ and domain $\cD$ in Theorem~\ref{thm:solvability-parabolic-problem} for our context are 
\begin{equation}\label{eq:Eb-prima-D}
E'_b := \bX'_{2,0}\times \bY'_{2,0}\times \bZ'_{2,0},\quad
\cD := \Big\{ (\ubsi,\ubvarphi,\ubu)\in \bX\times \bY\times \bZ :\quad \cM(\ubsi,\ubvarphi,\ubu)\in E'_b \Big\}\,.
\end{equation}

\begin{rem}
The above definition of the space $E'_b$ and the corresponding domain $\cD$ implies that, in order to apply Theorem~\ref{thm:solvability-parabolic-problem} for our problem \eqref{eq:evolution-problem-in-operator-form}, we need to restrict $\f_f = \0, q_f = 0$, and $\f_p = \0$.
To avoid this restriction we will employ a translation argument \cite{s2010} to reduce the existence for \eqref{eq:evolution-problem-in-operator-form} to existence for the following initial-value problem:
Given initial data $(\wh{\ubsi}_0,\wh{\ubvarphi}_0,\wh{\ubu}_0)\in \cD$ and
source terms $(\wh{\f}_p,\wh{q}_p):[0,T]\to \bbX'_{p,2}\times \W'_{p,2}$, find $(\wh\ubsi,\wh\ubvarphi,\wh\ubu)\in [0,T]\to \bX\times \bY\times \bZ$ such that $(\wh\bsi_p(0),\wh p_p(0)) = (\wh{\bsi}_{p,0},\wh{p}_{p,0})$ and, for a.e. $t\in (0,T)$,
\begin{equation}\label{eq:auxiliary-evolution-problem-in-operator-form}
\begin{array}{lllll}
\ds \frac{\partial}{\partial t}\,\cE(\wh\ubsi(t)) + \cA(\wh\ubsi(t)) + \cB'_1(\wh\ubvarphi(t)) + \cB'(\wh\ubu(t)) & = & \wh{\bF}(t) & \mbox{ in } & \bX'_{2,0}, \\ [2ex]
\ds -\,\cB_1(\wh\ubsi(t)) + \cC(\wh\ubvarphi(t)) & = & \0 & \mbox{ in } & \bY'_{2,0}, \\ [2ex]
\ds -\,\cB\,(\wh\ubsi(t)) & = & \0 & \mbox{ in } & \bZ'_{2,0},
\end{array}
\end{equation}
where $\wh{\bF} = (\0,\0,\wh{\f}_p,\wh{q}_p)^\rt$.
\end{rem}

In order to apply Theorem~\ref{thm:solvability-parabolic-problem} for problem \eqref{eq:auxiliary-evolution-problem-in-operator-form}, we need to:
(1) establish the required properties of the operators $\cN$ and $\cM$,
(2) prove the range condition $Rg(\cN + \cM) = E'_b$, and
(3) construct compatible initial data $(\wh{\ubsi}_0,\wh{\ubvarphi}_0,\wh{\ubu}_0)\in \cD$.
We proceed with a sequence of lemmas establishing these results.

\begin{lem}\label{lem:N-M-properties}
  The linear operators $\cN$ and $\cM$ defined in \eqref{eq:E-u-N-M-f} are
  continuous and monotone. In addition, $\cN$ is symmetric.
\end{lem}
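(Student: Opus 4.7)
The plan is to verify continuity, monotonicity, and (for $\cN$) symmetry directly from the definitions of the building-block bilinear forms in \eqref{eq:bilinear-forms-1}--\eqref{eq:bilinear-forms-2} and the structural identities that connect $\cM$'s off-diagonal blocks.

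I would begin with $\cN$, which acts non-trivially only on the $\ubsi$-component through $\cE$ (cf. \eqref{eq:time-operator-E}). Continuity is immediate from the Cauchy--Schwarz inequality combined with the upper bound for $A$ in \eqref{A-bounds} and the fact that $s_0 > 0$ is a constant. Symmetry follows from symmetry of $A$, since
\begin{equation*}
\cE(\ubsi)(\ubtau) \,=\, \big(A(\bsi_p + \alpha_p p_p \bI),\,\btau_p + \alpha_p w_p \bI\big)_{\Omega_p} + (s_0\,p_p, w_p)_{\Omega_p}
\end{equation*}
is clearly invariant under swapping $\ubsi$ and $\ubtau$. For monotonicity, testing against $\ubsi$ gives
\begin{equation*}
\cE(\ubsi)(\ubsi) \,=\, \|A^{1/2}(\bsi_p + \alpha_p p_p \bI)\|^2_{\bbL^2(\Omega_p)} + s_0\|p_p\|^2_{\L^2(\Omega_p)} \,\geq\, 0,
\end{equation*}
using the lower bound in \eqref{A-bounds} and $s_0 > 0$. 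Extending this by zero to the full space $E = \bX \times \bY \times \bZ$ yields monotonicity of $\cN$.

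For $\cM$, continuity of each block is again routine: $\cA$ is handled by Cauchy--Schwarz plus boundedness of $\bK^{-1}$ from \eqref{eq:K-uniform-bound}; $\cB$ is bounded using the graph norms of $\bbH(\bdiv;\cdot)$ and $\bH(\div;\cdot)$; the interface operators $\cB_1$ and the $c_\Gamma$ part of $\cC$ are bounded through the normal-trace estimates \eqref{trace-u}--\eqref{trace-sigma} together with the choice of norms \eqref{eq:norms-H-half}; and the $c_{\BJS}$ term is continuous using the pointwise boundedness of $\bK_j^{-1}$ and trace inequalities into $\H^{1/2}(\Gamma_{fp}) \hookrightarrow \L^2(\Gamma_{fp})$ applied to the tangential components. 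Thus $\cM$ is a continuous linear operator from $E$ to $E^*$.

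The key observation for monotonicity of $\cM$ is the cancellation of the off-diagonal blocks when paired with a vector against itself. Evaluating
\begin{equation*}
\cM(\ubsi,\ubvarphi,\ubu)(\ubsi,\ubvarphi,\ubu) \,=\, \cA(\ubsi)(\ubsi) + \cB_1'(\ubvarphi)(\ubsi) - \cB_1(\ubsi)(\ubvarphi) + \cB'(\ubu)(\ubsi) - \cB(\ubsi)(\ubu) + \cC(\ubvarphi)(\ubvarphi),
\end{equation*}
the four off-diagonal contributions vanish by the definition of adjoints, leaving $\cA(\ubsi)(\ubsi) + \cC(\ubvarphi)(\ubvarphi)$. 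From \eqref{eq:operators-A-B1-C} the cross terms $b_p(\bu_p,p_p) - b_p(\bu_p,p_p)$ in $\cA$ cancel, giving
\begin{equation*}
\cA(\ubsi)(\ubsi) \,=\, \tfrac{1}{2\mu}\|\bsi_f^{\rd}\|^2_{\bbL^2(\Omega_f)} + \mu\,(\bK^{-1}\bu_p,\bu_p)_{\Omega_p} \,\geq\, 0
\end{equation*}
by \eqref{eq:K-uniform-bound}, while in $\cC$ the interface terms $c_\Gamma(\bvarphi,\btheta;\lambda) - c_\Gamma(\bvarphi,\btheta;\lambda)$ cancel, leaving
\begin{equation*}
\cC(\ubvarphi)(\ubvarphi) \,=\, \mu\,\alpha_{\BJS}\sum_{j=1}^{n-1}\big\|\sqrt{\bK_j^{-1}}(\bvarphi-\btheta)\cdot\bt_{f,j}\big\|^2_{\L^2(\Gamma_{fp})} \,\geq\, 0.
\end{equation*}
Hence $\cM(\ubsi,\ubvarphi,\ubu)(\ubsi,\ubvarphi,\ubu) \geq 0$ and $\cM$ is monotone. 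The only genuinely delicate step is bookkeeping the interface trace bounds for continuity of $\cB_1$ and $\cC$; everything else is algebraic and follows directly from the saddle-point structure \eqref{eq:E-u-N-M-f}.
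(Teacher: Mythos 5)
Your proposal is correct and follows essentially the same route as the paper's proof: verify continuity from Cauchy--Schwarz, the bounds on $A$ and $\bK^{-1}$, and the trace estimates \eqref{trace-u}--\eqref{trace-sigma} for $\cB_1$, then obtain monotonicity by observing that the skew-symmetric off-diagonal blocks cancel in $\cM(u)(u)$, leaving $\cA(\ubsi)(\ubsi)+\cC(\ubvarphi)(\ubvarphi)\geq 0$, and similarly $\cE(\ubsi)(\ubsi)\geq 0$ for $\cN$. One cosmetic slip: since $c_{\BJS}$ carries a single factor $\sqrt{\bK_j^{-1}}$ inside the duality pairing, $\cC(\ubvarphi)(\ubvarphi)$ equals $\mu\,\alpha_{\BJS}\sum_j \pil \sqrt{\bK_j^{-1}}(\bvarphi-\btheta)\cdot\bt_{f,j},(\bvarphi-\btheta)\cdot\bt_{f,j}\pir_{\Gamma_{fp}}$ rather than $\mu\,\alpha_{\BJS}\sum_j\|\sqrt{\bK_j^{-1}}(\bvarphi-\btheta)\cdot\bt_{f,j}\|^2_{\L^2(\Gamma_{fp})}$; the nonnegativity you need still holds (and indeed the paper's lower bound \eqref{eq:pos-sem-def-c} carries $1/\sqrt{k_{\max}}$, not $1/k_{\max}$).
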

\begin{proof}
First, from the definition of the operators $\cE, \cA, \cB_1, \cC$ and
$\cB$ (cf. \eqref{eq:operators-A-B1-C}, \eqref{eq:operator-B},
\eqref{eq:time-operator-E}) it is clear that both $\cN$ and $\cM$
(cf. \eqref{eq:E-u-N-M-f}) are linear and continuous, using
the trace inequalities \eqref{trace-u}--\eqref{trace-sigma} for the
continuity of $\cB_1$. In turn, $\cN$
is symmetric since $\cE$ is.  Finally, using
\eqref{eq:K-uniform-bound}, we have
\begin{equation}\label{eq:monotonicity-E-A}
\begin{array}{l}
\ds \cE(\ubtau)(\ubtau) = s_0\|w_p\|^2_{\L^2(\Omega_p)} + \|A^{1/2}(\btau_p + \alpha_p w_p \bI)\|^2_{\bbL^2(\Omega_p)},\\[2ex]
\ds \cA(\ubtau)(\ubtau) \geq \frac{1}{2\,\mu}\,\|\btau^\rd_f\|^2_{\bbL^2(\Omega_f)} 
+ \mu\,k^{-1}_{\max}\|\bv_p\|^2_{\bL^2(\Omega_p)} \quad \forall\,\ubtau\in \bX,
\end{array}
\end{equation}
and recalling the definition of the operator $\cC$
(cf. \eqref{eq:bilinear-forms-2}, \eqref{eq:operators-A-B1-C}), we obtain 
\begin{equation}\label{eq:pos-sem-def-c}
\cC(\ubpsi)(\ubpsi) = \mu\,\alpha_{\BJS}\sum^{n-1}_{j=1} \pil\sqrt{\bK_j^{-1}} (\bpsi - \bphi)\cdot\bt_{f,j}, (\bpsi - \bphi)\cdot\bt_{f,j}\pir_{\Gamma_{fp}} \,\geq\,
\frac{\mu\,\alpha_{\BJS}}{\sqrt{k_{\max}}}\,|\bpsi - \bphi|^2_{\BJS}\,,
\end{equation}
for all $\ubpsi =(\bpsi,\bphi,\xi)\in \bY$, where $|\bpsi - \bphi|_{\BJS}^2 := \sum^{n-1}_{j=1} \|(\bpsi - \bphi)\cdot\bt_{f,j}\|^2_{\L^2(\Gamma_{fp})}$.
Thus, combining \eqref{eq:monotonicity-E-A} and \eqref{eq:pos-sem-def-c}, and the fact that the operators $\cE, \cA, \cC$ are linear, we deduce the monotonicity of the operators $\cN$ and $\cM$ completing the proof.
\end{proof}

Next, we establish the range condition $Rg(\cN + \cM) = E'_b$, which is done by solving the related resolvent system.
In fact, we will show a stronger result by considering a resolvent system where all source terms in $\bF$ and $\bG$ may be non-zero.
This stronger result will be used in the translation argument for proving existence of the original problem \eqref{eq:evolution-problem-in-operator-form}.
More precisely, let
\begin{equation*}
\bX_2 := \bbX_f\times \bV_p\times \bbX_{p,2}\times \W_{p,2} \supset \bX
\end{equation*}
and note that $\bX'_2 = \bbX'_f\times \bV'_p\times
\bbX'_{p,2}\times \W'_{p,2} \subset \bX'$.
We consider the following resolvent system:
\begin{equation}\label{eq:resolvent-problem-hat}
\begin{array}{lllll}
\ds (\cE + \cA)(\ubsi) + \cB'_1(\ubvarphi) + \cB'(\ubu) & = & \wh{\bF} & \mbox{ in } & \bX'_2, \\ [1ex]
\ds -\,\cB_1(\ubsi) + \cC(\ubvarphi) & = & \0 & \mbox{ in } & \bY', \\ [1ex]
\ds -\,\cB\,(\ubsi) & = & \wh{\bG} & \mbox{ in } & \bZ',
\end{array}  
\end{equation}
where $\wh{\bF}\in \bX'_2$ and $\wh{\bG}\in \bZ'$ are such that
\begin{equation*}
\begin{array}{l}
\ds \wh{\bF}(\ubtau) := (\wh{\f}_{\bsi_f},\btau_f)_{\Omega_f} + (\wh{\f}_{\bu_p},\bv_p)_{\Omega_p}  + (\wh{\f}_p,\btau_p)_{\Omega_p} + (\wh{q}_p,w_p)_{\Omega_p}\,, \\[2ex]
\ds \wh{\bG}(\ubv) := (\wh{\f}_{\bu_f},\bv_f)_{\Omega_f} + (\wh{\f}_{\bu_s},\bv_s)_{\Omega_p} + (\wh{\f}_{\bgamma_f},\bchi_f)_{\Omega_f} + (\wh{\f}_{\bgamma_p},\bchi_p)_{\Omega_p}\,.
\end{array}
\end{equation*}
We next focus on proving that the resolvent system \eqref{eq:resolvent-problem-hat} is well-posed. We start with the following preliminary lemma.

\begin{lem}\label{lem:equivalent-resolvent-system}
Let $(\ubsi,\ubvarphi,\ubu)\in \bX\times \bY\times \bZ$ be a solution to \eqref{eq:resolvent-problem-hat}.	
Then, for any positive constant $\kappa$, it satisfies
\begin{equation}\label{eq:aux-resolvent-system}
\begin{array}{llcll}
\ds (\cE + \wt{\cA})(\ubsi) + \cB'_1(\ubvarphi) + \cB'(\ubu) & = & \wt{\bF} & \mbox{ in } & \bX'_2, \\ [1ex]
\ds \cB_1(\ubsi) - \cC(\ubvarphi) & = & \0 & \mbox{ in } & \bY', \\ [1ex]
\ds \cB\,(\ubsi) & = & -\,\wh{\bG} & \mbox{ in } & \bZ',
\end{array}
\end{equation}
where
\begin{equation}\label{eq:a-tilde}
\wt{\cA}(\ubsi)(\ubtau) 
\,:=\,  \cA(\ubsi)(\ubtau) + \kappa\,\Big\{ (\div(\bu_p),\div(\bv_p))_{\Omega_p} + \big(s_0\,p_p + \alpha_p\,\tr\big(A(\bsi_p + \alpha_p\,p_p\,\bI)\big),\div(\bv_p)\big)_{\Omega_p} \Big\},
\end{equation}
and
\begin{equation*}
\wt{\bF}(\ubtau) := \wh{\bF}(\ubtau) + \kappa\,\big(\wh{q}_p,\div(\bv_p)\big)_{\Omega_p} .
\end{equation*}
Conversely, if $(\ubsi,\ubvarphi,\ubu)\in \bX\times \bY\times \bZ$
is a solution to \eqref{eq:aux-resolvent-system},
then it is also a solution to \eqref{eq:resolvent-problem-hat}.
\end{lem}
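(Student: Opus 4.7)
The plan is to reduce the equivalence to a simple observation: the difference $\wt{\cA} - \cA$ acts only in the $\bv_p$-slot of the test function, and the extra source $\wt{\bF} - \wh{\bF}$ is a multiple of $\bigl(\wh{q}_p,\div(\bv_p)\bigr)_{\Omega_p}$. Once the mass conservation equation is recovered pointwise by testing with $w_p$, these two additional contributions cancel each other, modulo the rest of the system, which establishes both implications. Note first that the second and third equations of \eqref{eq:resolvent-problem-hat} and \eqref{eq:aux-resolvent-system} differ only by an overall sign, so they are trivially equivalent; all the substance lives in the first equation.

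For the forward implication, given a solution $(\ubsi,\ubvarphi,\ubu)$ of \eqref{eq:resolvent-problem-hat}, I test the first equation with $\ubtau = (0,0,0,w_p)$ for arbitrary $w_p\in\W_p$. Inspection of the operator definitions \eqref{eq:operators-A-B1-C} and \eqref{eq:time-operator-E} shows that $\cB_1$ and $\cB$ do not act on the $w_p$ component, so only $\cE$ and $\cA$ contribute, giving
\begin{equation*}
\bigl(s_0\,p_p + \alpha_p\,\tr\bigl(A(\bsi_p+\alpha_p\,p_p\,\bI)\bigr) + \div(\bu_p),\,w_p\bigr)_{\Omega_p} = (\wh{q}_p,w_p)_{\Omega_p}\quad\forall\,w_p\in\W_p,
\end{equation*}
which is the pointwise identity
\begin{equation*}
s_0\,p_p + \alpha_p\,\tr\bigl(A(\bsi_p+\alpha_p\,p_p\,\bI)\bigr) + \div(\bu_p) = \wh{q}_p \quad \mbox{a.e.\ in } \Omega_p.
\end{equation*}
Since $\div(\bv_p)\in\L^2(\Omega_p)=\W_p$ for every $\bv_p\in\bV_p$, I may choose $w_p=\kappa\,\div(\bv_p)$, which yields
\begin{equation*}
\kappa\,\bigl(s_0\,p_p + \alpha_p\,\tr(A(\bsi_p+\alpha_p\,p_p\,\bI)),\,\div(\bv_p)\bigr)_{\Omega_p} + \kappa\,(\div(\bu_p),\div(\bv_p))_{\Omega_p} = \kappa\,(\wh{q}_p,\div(\bv_p))_{\Omega_p}.
\end{equation*}
Adding this identity to the first equation of \eqref{eq:resolvent-problem-hat} tested against $\ubtau=(0,\bv_p,0,0)$ reproduces exactly the first equation of \eqref{eq:aux-resolvent-system} for that same test function, because by \eqref{eq:a-tilde} the left-hand added terms equal $(\wt{\cA}-\cA)(\ubsi)(0,\bv_p,0,0)$ and the right-hand added term equals $\wt{\bF}(0,\bv_p,0,0)-\wh{\bF}(0,\bv_p,0,0)$. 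For test functions of the form $(\btau_f,0,\btau_p,w_p)$, the additional contributions in $\wt{\cA}$ and $\wt{\bF}$ vanish (they involve only the $\bv_p$-slot), so the first equations of the two systems agree on these test functions, and by linearity over $\ubtau\in\bX_2$ we obtain \eqref{eq:aux-resolvent-system}.

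The converse is symmetric. Given a solution to \eqref{eq:aux-resolvent-system}, I test its first equation with $\ubtau=(0,0,0,w_p)$; because the extra terms in $\wt{\cA}$ and $\wt{\bF}$ are supported in the $\bv_p$-slot and hence vanish here, I recover the same pointwise mass conservation identity as above. Running the previous substitution backwards, the added terms in the $\bv_p$-tested first equation of \eqref{eq:aux-resolvent-system} sum to $\kappa\,(\wh{q}_p,\div(\bv_p))_{\Omega_p}$, which matches the corresponding addition to $\wh{\bF}$ and therefore cancels, recovering the $\bv_p$-tested first equation of \eqref{eq:resolvent-problem-hat}. Combined with the trivial equivalence on the remaining test functions, this yields \eqref{eq:resolvent-problem-hat}.

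There is no essential obstacle in this proof; it is largely bookkeeping with the component structure of the operators. The only point requiring a small remark is the admissibility of the substitution $w_p=\kappa\,\div(\bv_p)$, which relies on the trivial inclusion $\div(\bV_p)\subset \W_p$ inherent in the definition of $\bV_p=\bH(\div;\Omega_p)\cap\{\bv_p\cdot\bn_p=0\text{ on }\Gamma_p^{\rN}\}$.
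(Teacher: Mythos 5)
Your proof is correct and takes essentially the same route as the paper: the paper's proof tests the first row of \eqref{eq:resolvent-problem-hat} with $\ubtau = (\0,\0,\0,\div(\bv_p))$, scales by $\kappa$, and adds (resp.\ subtracts for the converse); you simply unpack the same manipulation into explicit steps using the component structure of $\cA$, $\cE$, $\cB_1$, $\cB$, $\wt{\cA}$, $\wt{\bF}$. Your remark that only the inclusion $\div(\bV_p)\subset\W_p$ is needed here is a correct (though inconsequential) sharpening of the paper's invocation of $\div\,\bV_p=\W_p$, since the equality is not used in either direction of this particular equivalence.
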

\begin{proof}
Let $(\ubsi,\ubvarphi,\ubu)\in \bX\times \bY\times \bZ$ be a solution to \eqref{eq:resolvent-problem-hat}. Using that $\div \, \bV_p = \W_p$, we take $\ubtau = (\0,w_p) = (\0,\div(\bv_p))\in \bX$ in the first row of \eqref{eq:resolvent-problem-hat}, multiply by a positive constant $\kappa$ and add that term to \eqref{eq:resolvent-problem-hat}, to obtain \eqref{eq:aux-resolvent-system}.
Conversely, if $(\ubsi,\ubvarphi,\ubu)\in \bX\times \bY\times \bZ$
satisfies \eqref{eq:aux-resolvent-system} we employ similar arguments,
but now subtracting, to recover \eqref{eq:resolvent-problem-hat}.
\end{proof}

Problem \eqref{eq:aux-resolvent-system} has the same structure as the one in
Theorem~\ref{thm:system_solvability}.
Therefore, in what follows we apply this result to establish the
well-posedness of \eqref{eq:aux-resolvent-system}.
To that end, we first observe that the kernel of the operator $\cB$,
cf. \eqref{eq:operator-B}, can be written as
\begin{equation}\label{eq:kernel-B}
\bV \,:=\, \Big\{ \ubtau\in \bX :\quad \cB(\ubtau)(\ubv) = 0 \quad \forall\,\bv\in \bZ \Big\} 
\,=\, \wt{\bbX}_{f}\times \bV_p\times \wt{\bbX}_p\times \W_p
\end{equation}
where
\begin{equation*}
\wt{\bbX}_{\star} := \Big\{ \btau_\star\in \bbX_{\star} :\quad \btau_\star = \btau^\rt_\star \qan \bdiv(\btau_\star) = \0 \qin \Omega_\star \Big\},\quad \star\in \{f,p\}.
\end{equation*}
We next verify the hypotheses of Theorem~\ref{thm:system_solvability}.
We begin by noting that the operators $\wt{\cA}, \cB_1, \cC, \cB$, and $\cE$
are linear and continuous. Next, we proceed with the ellipticity of the operator
$\cE + \wt{\cA}$ on $\bV$.

\begin{lem}\label{lem:ellipticity-tilde-a}
Assume that 
\begin{equation*}
\kappa\in \left(0, 2\,\min\left\{\delta_1, \frac{\delta_2}{\alpha_p} \right\}\right) \quad \mbox{with}\quad 
\delta_1\in \left(0, \frac{2}{s_0}\right) \qan 
\delta_2\in \left(0, \frac{4\mu_{min}}{n\,\alpha_p}\,
\left(1 - \frac{s_0}{2}\,\delta_1\right) \right).
\end{equation*}
Then, the operator $\cE + \wt{\cA}$ is elliptic on $\bV$.
\end{lem}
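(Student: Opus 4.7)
The plan is to establish $(\cE + \wt{\cA})(\ubtau)(\ubtau) \geq \alpha\,\|\ubtau\|^2_{\bX}$ for arbitrary $\ubtau = (\btau_f, \bv_p, \btau_p, w_p) \in \bV$ by combining the positivity estimates \eqref{eq:monotonicity-E-A} with a carefully weighted Cauchy--Schwarz--Young treatment of the two cross terms produced by the perturbation in \eqref{eq:a-tilde}. Setting $\ubsi = \ubtau$ in \eqref{eq:a-tilde} and invoking \eqref{eq:monotonicity-E-A}, I would write
\begin{equation*}
(\cE + \wt{\cA})(\ubtau)(\ubtau) \geq s_0\|w_p\|^2 + \|A^{1/2}(\btau_p + \alpha_p w_p\bI)\|^2 + \tfrac{1}{2\mu}\|\btau^\rd_f\|^2 + \mu\,k^{-1}_{\max}\|\bv_p\|^2 + \kappa\|\div(\bv_p)\|^2 + R,
\end{equation*}
where the residue $R := \kappa s_0(w_p,\div(\bv_p))_{\Omega_p} + \kappa\alpha_p(\tr A(\btau_p+\alpha_p w_p\bI),\div(\bv_p))_{\Omega_p}$ must be absorbed.

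The central task is to absorb $R$ in such a way that the three hypotheses on $\delta_1, \delta_2, \kappa$ appear exactly as positivity conditions. For the first piece I would use Young with weight $\delta_1$ to obtain $\kappa s_0|(w_p,\div(\bv_p))| \leq \tfrac{\kappa s_0}{2\delta_1}\|w_p\|^2 + \tfrac{\kappa s_0\delta_1}{2}\|\div(\bv_p)\|^2$. For the second piece I would first derive the pointwise estimate $\|\tr A(\btau_p + \alpha_p w_p\bI)\|^2_{L^2} \leq \tfrac{n}{2\mu_{\min}}\|A^{1/2}(\btau_p + \alpha_p w_p\bI)\|^2_{\bbL^2}$ from $|\tr M|^2 \leq n|M|^2_F$ and the spectral bound $\|A\btau\|^2 \leq \tfrac{1}{2\mu_{\min}}\|A^{1/2}\btau\|^2$ implied by \eqref{A-bounds}; then apply Young with the tailored weight $\epsilon = \delta_2\sqrt{n/(2\mu_{\min})}$ to obtain
\begin{equation*}
\kappa\alpha_p |(\tr A(\btau_p{+}\alpha_p w_p\bI),\div(\bv_p))| \leq \tfrac{\kappa\alpha_p}{2\delta_2}\|A^{1/2}(\btau_p{+}\alpha_p w_p\bI)\|^2 + \tfrac{\kappa\alpha_p n\delta_2}{4\mu_{\min}}\|\div(\bv_p)\|^2.
\end{equation*}
After subtraction, the coefficients of $\|w_p\|^2$, $\|A^{1/2}(\btau_p+\alpha_p w_p\bI)\|^2$, and $\|\div(\bv_p)\|^2$ become $s_0(1-\tfrac{\kappa}{2\delta_1})$, $1-\tfrac{\kappa\alpha_p}{2\delta_2}$, and $\kappa(1-\tfrac{s_0\delta_1}{2}-\tfrac{\alpha_p n\delta_2}{4\mu_{\min}})$, and these are strictly positive precisely under the three conditions $\kappa<2\delta_1$, $\kappa\alpha_p<2\delta_2$, and $\delta_2 < \tfrac{4\mu_{\min}}{n\alpha_p}(1-\tfrac{s_0\delta_1}{2})$ in the hypothesis.

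To conclude, I would translate the resulting lower bound into control on $\|\ubtau\|^2_{\bX}$. Since $\ubtau \in \bV$ has $\bdiv(\btau_f) = 0$ with $\btau_f = \btau_f^\rt$, combining \eqref{eq:tau-d-H0div-inequality}--\eqref{eq:tau-H0div-Xf-inequality} (which use $|\Gamma^\rN_f| > 0$) gives $\|\btau_f\|^2_{\bbX_f} \leq C\|\btau^\rd_f\|^2$. For $\btau_p$, the triangle inequality $\|\btau_p\|^2 \leq 2\|\btau_p + \alpha_p w_p\bI\|^2 + 2n\alpha_p^2\|w_p\|^2$ together with the lower bound in \eqref{A-bounds} and the already controlled $\|w_p\|^2$ recovers $\|\btau_p\|^2_{\bbX_p} = \|\btau_p\|^2_{\bbL^2}$ (recall $\bdiv(\btau_p) = 0$ on $\bV$); and $\|\bv_p\|^2_{\bV_p}$ decomposes directly into the $\bL^2$ and $\div$ parts already bounded. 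The main obstacle is exactly the coupled Young balancing in the second paragraph: both cross terms contribute negatively to the $\|\div(\bv_p)\|^2$ coefficient, so $\delta_1$ and $\delta_2$ cannot be chosen independently and the tailored weight $\epsilon = \delta_2\sqrt{n/(2\mu_{\min})}$ is what makes the three hypothesis conditions line up with the three positivity requirements.
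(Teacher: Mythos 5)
Your proposal is correct and follows essentially the same route as the paper: expand the quadratic form on the kernel $\bV$, absorb the two perturbation cross terms via Cauchy--Schwarz and Young with weights $\delta_1$ and $\delta_2$ so the surviving coefficients of $\|w_p\|^2$, $\|A^{1/2}(\btau_p+\alpha_p w_p\bI)\|^2$, and $\|\div(\bv_p)\|^2$ are exactly $s_0(1-\kappa/(2\delta_1))$, $1-\alpha_p\kappa/(2\delta_2)$, and $\kappa\bigl(1-\tfrac{s_0}{2}\delta_1-\tfrac{n\alpha_p}{4\mu_{\min}}\delta_2\bigr)$, and then recover the full $\bX$-norm using \eqref{eq:tau-d-H0div-inequality}--\eqref{eq:tau-H0div-Xf-inequality} and \eqref{eq:aux-taup-qp-inequality} together with $\bdiv(\btau_f)=\bdiv(\btau_p)=\0$ on $\bV$; the only cosmetic difference is that you keep the cross term in the form $(\tr A(\cdot),\div(\bv_p))$ rather than rewriting it as $(A^{1/2}(\cdot),A^{1/2}(\div(\bv_p)\bI))$ as the paper does. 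One small slip: the Young weight that makes your displayed inequality hold is $\epsilon = n\delta_2/(2\mu_{\min})$, not $\delta_2\sqrt{n/(2\mu_{\min})}$, though the resulting bound you wrote is correct.
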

\begin{proof}
  From the definition of $\wt{\cA}$, cf. \eqref{eq:a-tilde}, and considering
  $\ubtau\in \bV$ we get
\begin{equation*}
\begin{array}{l}
  \ds (\cE + \wt{\cA})(\ubtau)(\ubtau) \,=\,
  \frac{1}{2\,\mu}\,\|\btau^\rd_f\|^2_{\bbL^2(\Omega_f)}
  + \mu\|\bK^{-1/2}\bv_p\|^2_{\bL^2(\Omega_p)}
  + s_0\,\|w_p\|^2_{\W_p} + \|A^{1/2}(\btau_p
  + \alpha_p\,w_p\,\bI)\|^2_{\bbL^2(\Omega_p)} \\ [2ex]
  \ds\quad +\,\,\kappa\,\|\div(\bv_p)\|^2_{\L^2(\Omega_p)}
  + s_0\,\kappa\,(w_p,\div(\bv_p))_{\Omega_p}
  + \alpha_p\,\kappa\,(A^{1/2}(\btau_p
  + \alpha_p\,w_p\,\bI), A^{1/2}(\div(\bv_p)\,\bI))_{\Omega_p}.
\end{array}
\end{equation*}
Hence, using the Cauchy--Schwarz and Young's inequalities, \eqref{eq:K-uniform-bound},
\eqref{A-bounds},
and \eqref{eq:tau-d-H0div-inequality}--\eqref{eq:tau-H0div-Xf-inequality}, we obtain
\begin{equation*}
\begin{array}{l}
\ds (\cE + \wt{\cA})(\ubtau)(\ubtau) 
\geq \frac{C_{\rd}}{2\,\mu} \|\btau_f\|^2_{\bbX_f}
+ \mu\, k_{\max}^{-1}\|\bv_p\|^2_{\bL^2(\Omega_p)} 
+ \kappa \left(\left(1{-}\frac{s_0}{2} \delta_1\right)
- \frac{n\,\alpha_p}{4\mu_{\min}} \delta_2\right)
\|\div(\bv_p)\|^2_{\L^2(\Omega_p)} \\ [2ex]
\ds\qquad +\,\left(1 - \frac{\alpha_p}{2\,\delta_2}\,\kappa\right)
\|A^{1/2}\,(\btau_p + \alpha_p\,w_p\,\bI)\|^2_{\bbL^2(\Omega_p)}
+ s_0\,\left(1 - \frac{\kappa}{2\,\delta_1}\right) \|w_p\|^2_{\W_p},
\end{array}
\end{equation*}
where $C_{\rd} := C_1(\Omega_f)\,C_2(\Omega_f)$.
Then, using the stipulated hypotheses on $\delta_1, \delta_2$ and $\kappa$,
we can define the positive constants
\begin{equation*}
\begin{array}{c}
\ds \alpha_1(\Omega_f) := \frac{C_\rd}{2\,\mu},\quad
\alpha_2(\Omega_p) := \min\left\{ \mu\,k_{\max}^{-1},
\kappa\,\left(\left(1 - \frac{s_0}{2}\,\delta_1\right)
- \frac{n\,\alpha_p}{4\mu_{\min}}\,\delta_2 \right)\right\},\\ [3ex]
\ds \alpha_3(\Omega_p) := \frac{s_0}{2}\,\left(1 - \frac{\kappa}{2\,\delta_1}\right),\quad
\alpha_4(\Omega_p) := \min\left\{ \left(1 - \frac{\alpha_p}{2\,\delta_2}\,\kappa\right), \alpha_3(\Omega_p) \right\}
\end{array}
\end{equation*}
which allow us to obtain
\begin{equation}\label{eq:ellipticity-1}
\begin{array}{l}
  \ds (\cE + \wt{\cA})(\ubtau)(\ubtau) \,\geq\, \alpha_1(\Omega_f)\,\|\btau_f\|^2_{\bbX_f}
  + \alpha_2(\Omega_p)\,\|\bv_p\|^2_{\bV_p} + \alpha_3(\Omega_p)\,\|w_p\|^2_{\W_p} \\ [2ex] 
\ds\qquad +\,\,\alpha_4(\Omega_p)\,\left(\|A^{1/2}(\btau_p + \alpha_p\,w_p\,\bI)\|^2_{\bbL^2(\Omega_p)} + \|w_p\|^2_{\W_p}\right).
\end{array}
\end{equation}
In turn, from \eqref{A-bounds}
and using the triangle inequality, we deduce
\begin{equation}\label{eq:aux-taup-qp-inequality}
\begin{array}{l}
\ds \|\btau_p\|^2_{\bbL^2(\Omega_p)} \,\leq\, (2\,\mu_{\max} + n\,\lambda_{\max})\,
\left(\|A^{1/2}(\btau_p + \alpha_p\,w_p\,\bI)\|^2_{\bbL^2(\Omega_p)} + \|A^{1/2}(\alpha_p\,w_p\,\bI)\|^2_{\bbL^2(\Omega_p)}\right) \\ [2ex]
\ds\qquad \,\leq\, C_p\,\left(\|A^{1/2}(\btau_p + \alpha_p\,w_p\,\bI)\|^2_{\bbL^2(\Omega_p)} + \|w_p\|^2_{\W_p}\right),  
\end{array}
\end{equation}
where $C_p := (2\,\mu_{\max} + n\,\lambda_{\max})
\max\Big\{1, \frac{n\,\alpha^2_p}{2\mu_{\min}} \Big\}$.
A combination of \eqref{eq:ellipticity-1} and \eqref{eq:aux-taup-qp-inequality},
and the fact that $\bdiv(\btau_p) = \0$ in $\Omega_p$, implies
\begin{equation*}
(\cE + \wt{\cA})(\ubtau)(\ubtau) 
\,\geq\, \alpha(\Omega_f,\Omega_p)\,\|\ubtau\|^2_{\bX} \quad \forall\,\ubtau\in \bV,
\end{equation*}
with $\alpha(\Omega_f,\Omega_p) := \min\big\{ \alpha_1(\Omega_f), \alpha_2(\Omega_p),
\alpha_3(\Omega_p), \alpha_4(\Omega_p)/C_p \big\}$,
hence $\cE + \wt{\cA}$ is elliptic on $\bV$.
\end{proof}

\begin{rem}
  To maximize the ellipticity constant $\alpha(\Omega_f,\Omega_p)$,
  we can choose explicitly the parameter $\kappa$ by taking the
  parameters $\delta_1$ and $\delta_2$ as the middle points of their feasible ranges.
More precisely, we can simply take
\begin{equation*}
\delta_1 = \frac{1}{s_0},\quad
\delta_2 = \frac{\mu_{min}}{n\,\alpha_p},\quad
\kappa = \min\left\{ \frac{1}{s_0}, \frac{\mu_{\min}}{n\,\alpha^2_p} \right\}.
\end{equation*}
\end{rem}

We continue with the verification of the hypotheses of
Theorem~\ref{thm:system_solvability}.

\begin{lem}\label{lem:semipositive-c-inf-sup-b-B}
There exist positive constants $\beta_1$ and $\beta$, such that
\begin{equation}\label{eq:inf-sup-b1}
\sup_{\0\neq \ubtau\in \bV} \frac{\cB_1(\ubtau)(\ubpsi)}{\|\ubtau\|_\bX} 
\,\geq\, \beta_1\,\|\ubpsi\|_{\bY} \quad \forall\,\ubpsi\in \bY,
\end{equation}
and
\begin{equation}\label{eq:inf-sup-B}
\sup_{\0\neq \ubtau\in \bX } \frac{\cB(\ubtau)(\ubv)}{\|\ubtau\|_\bX} 
\,\geq\, \beta\,\|\ubv\|_\bZ \quad \forall\, \ubv \in \bZ.
\end{equation}
\end{lem}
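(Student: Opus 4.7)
The two inf-sup conditions are of rather different nature and will be handled independently.

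For \eqref{eq:inf-sup-b1}, the crucial observation is that $\cB_1(\ubtau)(\ubpsi)$ decouples across the three components of $\ubpsi=(\bpsi,\bphi,\xi)\in\bY$: the term $b_{\bn_f}(\cdot,\bpsi)$ depends only on $\btau_f$, $b_{\bn_p}(\cdot,\bphi)$ only on $\btau_p$, and $b_\Gamma(\cdot,\xi)$ only on $\bv_p$. Hence I plan to build $\ubtau\in\bV$ with vanishing $w_p$-component by three separate constructions. For the Darcy component I would extend $\xi\in\H^{1/2}(\Gamma_{fp})$ to $\wt{\xi}\in\H^{1/2}(\partial\Omega_p)$ vanishing on $\Gamma^\rN_p$ (using the distance assumption $\dist(\Gamma^\rN_p,\Gamma_{fp})\ge s$, which allows cut-off without loss in the $\H^{1/2}$ norm), and then solve a mixed Poisson problem to obtain $\bv_p\in\bV_p$ with $\bv_p\cdot\bn_p$ matching $\wt{\xi}$ and $\|\bv_p\|_{\bV_p}\lesssim\|\xi\|_{\Lambda_p}$, so that $b_\Gamma(\bv_p,\xi)\gtrsim\|\xi\|^2_{\Lambda_p}$. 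For the Stokes stress I would construct $\btau_f\in\wt{\bbX}_f$ (symmetric, divergence-free, with vanishing normal trace on $\Gamma^\rN_f$) as the stress $\btau_f=2\mu\,\be(\bu)-p\,\bI$ arising from an auxiliary mixed-boundary Stokes problem in $\Omega_f$ with a suitable $\bH^{-1/2}$ lift of $\bpsi$ as Neumann datum on $\Gamma_{fp}$; by construction $\btau_f$ is symmetric and $\bdiv(\btau_f)=\0$, and the chosen Neumann datum yields $b_{\bn_f}(\btau_f,\bpsi)\gtrsim\|\bpsi\|^2_{\bLambda_f}$ with $\|\btau_f\|_{\bbX_f}\lesssim\|\bpsi\|_{\bLambda_f}$. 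The tensor $\btau_p\in\wt{\bbX}_p$ is built analogously via a mixed linear elasticity problem in $\Omega_p$. Summing the three contributions delivers \eqref{eq:inf-sup-b1}.

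For \eqref{eq:inf-sup-B}, I note that $\cB$ only involves $\btau_f$ and $\btau_p$, and decouples into a Stokes and a Biot subproblem sharing the same structure. It thus suffices to show, given $(\bv_f,\bchi_f)\in\bV_f\times\bbQ_f$, that there exists $\btau_f\in\bbX_f$ with
\begin{equation*}
(\bdiv(\btau_f),\bv_f)_{\Omega_f}+(\btau_f,\bchi_f)_{\Omega_f}\,\gtrsim\,\|\bv_f\|^2_{\bV_f}+\|\bchi_f\|^2_{\bbQ_f},\qquad \|\btau_f\|_{\bbX_f}\,\lesssim\,\|\bv_f\|_{\bV_f}+\|\bchi_f\|_{\bbQ_f},
\end{equation*}
and analogously for $(\bv_s,\bchi_p)$ on $\Omega_p$. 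This is precisely the classical weakly-symmetric stress inf-sup condition; I would invoke the standard argument of Arnold--Falk--Winther (cited in the paper as \cite{afw2007}), which splits $\btau_f=\btau_{f,1}+\btau_{f,2}$ with $\btau_{f,1}$ controlling $\bv_f$ via an auxiliary vector Poisson problem (so that $\bdiv(\btau_{f,1})=\bv_f$) and $\btau_{f,2}$ controlling $\bchi_f$ through an element in the kernel of the divergence built from a potential of the skew tensor $\bchi_f$. The mixed essential Neumann condition $\btau_f\bn_f=\0$ on $\Gamma^\rN_f$ (with $|\Gamma^\rN_f|>0$) is easily incorporated in these auxiliary problems by choosing homogeneous data there.

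\textbf{Main obstacle.} The technically most delicate step is constructing the stress tensors in part (b)/(c) of the first inf-sup, since we must simultaneously enforce (i) symmetry, (ii) vanishing divergence, and (iii) a prescribed $\bH^{-1/2}$ normal trace on $\Gamma_{fp}$ together with a homogeneous normal trace on $\Gamma^\rN_f$ (resp.\ $\tilde\Gamma^\rN_p$). Working directly with a surjective-trace lifting into $\bbH(\bdiv)$ would not preserve symmetry and divergence-freeness; instead, solving an auxiliary Stokes (resp.\ elasticity) problem with mixed boundary conditions builds these constraints into the construction automatically, using the pressure (resp.\ displacement) as a Lagrange multiplier for the divergence constraint and $\be(\cdot)$ to enforce symmetry. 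The distance assumptions on $\Gamma^\rD_f,\Gamma^\rD_p,\tilde\Gamma^\rD_p$ are essential in carrying out the $\bH^{1/2}$ extensions needed to transfer Neumann data from $\Gamma_{fp}$ to the full boundary without norm blow-up.
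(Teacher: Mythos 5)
Your proposal follows essentially the same strategy as the paper: decouple $\cB_1$ and $\cB$ via their block-diagonal structure, and for $\cB_1$ construct test functions in the kernel $\bV$ by solving well-posed auxiliary elliptic boundary-value problems with prescribed Neumann data on $\Gamma_{fp}$ and homogeneous data on the adjacent Neumann boundary. The one substantive difference is that for $b_{\bn_f}$ the paper solves the elasticity-type problem \eqref{elast-aux}, $\bdiv(\be(\bv_f)) = \0$, and takes $\btau_f = \be(\bv_f)$, whereas you propose a full Stokes problem producing $\btau_f = 2\mu\,\be(\bu) - p\,\bI$. Both yield a symmetric, divergence-free tensor with the required normal trace and comparable norm, so both work; the paper's version is slightly leaner since it avoids carrying the pressure as an extra unknown in the auxiliary problem. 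For \eqref{eq:inf-sup-B} your Arnold--Falk--Winther splitting is precisely what the paper's reference to \cite[Section~2.4.3]{Gatica} packages abstractly.

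Two small slips in your write-up. First, the distance hypothesis in the paper concerns the Dirichlet pieces $\Gamma^\rD_f$, $\Gamma^\rD_p$, $\tilde\Gamma^\rD_p$, not $\Gamma^\rN_p$: it is the Neumann parts that are adjacent to $\Gamma_{fp}$, and the point of keeping the Dirichlet parts away is to have a simple characterization of the normal-trace space on $\Gamma_{fp}$. Second, for the Darcy inf-sup you do not need an $\H^{1/2}(\partial\Omega_p)$ extension of $\xi$; since $\bv_p \in \bH(\div;\Omega_p)$ only requires the normal trace to lie in $\H^{-1/2}(\partial\Omega_p)$, the extension by zero across the junction $\Gamma_{fp}\cap\Gamma^\rN_p$ (which generally breaks $\H^{1/2}$ regularity) is harmless. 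Apart from these minor points, the argument is the one the paper uses.
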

\begin{proof}
We begin with the proof of \eqref{eq:inf-sup-b1}. Due the diagonal character of
operator $\cB_1$, cf. \eqref{eq:operators-A-B1-C}, we need to show individual inf-sup
conditions for $b_{\bn_f}$, $b_{\bn_p}$, and $b_{\Gamma}$. The inf-sup condition for
$b_{\Gamma}$ follows from a slight adaptation of the argument in \cite[Lemma~3.2]{ejs2009}
to account for the presence of Dirichlet boundary $\Gamma_p^\rD$, using that
$\dist(\Gamma^\rD_p,\Gamma_{fp}) \geq s > 0$. The inf-sup conditions for
$b_{\bn_f}$ and $b_{\bn_p}$ follow in a similar way. Since the kernel space $\bV$
consists of symmetric and divergence-free tensors, the argument
in \cite[Lemma~3.2]{ejs2009} must be modified to account for that. For example, in
$\Omega_f$ we solve a problem
\begin{equation}\label{elast-aux}
\bdiv(\be(\bv_f)) = \0 \ \text{ in } \ \Omega_f, \quad \be(\bv_f)\, \bn_f = \bxi
\ \text{ on } \ \Gamma_{fp} \cup \Gamma_f^\rN, \quad \bv_f = \0 \ \text{ on } \ \Gamma_f^\rD,
\end{equation}
for given data $\bxi \in \bH^{-1/2}(\Gamma_{fp} \cup \Gamma_f^\rN)$
such that $\bxi = \0$ on $\Gamma_f^\rN$. We recall that $\Gamma_f^\rN$
is adjacent to $\Gamma_{fp}$. Furthermore,
$|\Gamma_f^\rD| > 0$, which guarantees the solvability of the problem. We refer to
\cite[Lemma~3.2]{ejs2009} for further details.

Finally, proceeding as above, using the diagonal character of operator $\cB$, cf. \eqref{eq:operator-B}, and employing the theory developed in
\cite[Section~2.4.3]{Gatica} to our context, we can deduce
\eqref{eq:inf-sup-B}.
\end{proof}

Now, we are in a position to establish that the resolvent system associated to \eqref{eq:auxiliary-evolution-problem-in-operator-form} is
well-posed. 
\begin{lem}\label{lem:resolvent-system}
For $\cN, \cM$ and $E'_b$ defined in \eqref{eq:E-u-N-M-f}--\eqref{eq:Eb-prima-D}, it holds that $Rg(\cN + \cM) = E'_b$, that is, given $f\in E'_b$, there exists $v\in \cD$ such that $(\cN + \cM)(v) = f$.
\end{lem}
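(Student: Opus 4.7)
The plan is to recognize that the equation $(\cN + \cM)(v) = f$ for $v = (\ubsi, \ubvarphi, \ubu) \in \bX \times \bY \times \bZ$ and $f \in E'_b$ is precisely the resolvent system \eqref{eq:resolvent-problem-hat}, restricted to data of the form $\wh{\f}_{\bsi_f} = \0$, $\wh{\f}_{\bu_p} = \0$, and $\wh{\bG} = \0$. Hence, it suffices to establish well-posedness of the more general resolvent system \eqref{eq:resolvent-problem-hat}, which in turn, by Lemma~\ref{lem:equivalent-resolvent-system}, is equivalent to \eqref{eq:aux-resolvent-system} for any admissible $\kappa$ from Lemma~\ref{lem:ellipticity-tilde-a}.

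The second step is to observe that \eqref{eq:aux-resolvent-system} has exactly the twofold saddle-point structure of Theorem~\ref{thm:system_solvability}, with $X = \bX$, $Y = \bY$, $Z = \bZ$, principal operator $A = \cE + \wt{\cA}$, constraint operators $B_1 = \cB_1$ and $B = \cB$, and semi-definite operator $S = \cC$. The four hypotheses required by that theorem have been verified in the preceding lemmas: (i) ellipticity of $\cE + \wt{\cA}$ on the kernel $\bV$ (cf. \eqref{eq:kernel-B}) is Lemma~\ref{lem:ellipticity-tilde-a}; (ii) positive semi-definiteness of $\cC$ on $\bY$ was established in \eqref{eq:pos-sem-def-c} during the proof of Lemma~\ref{lem:N-M-properties}; (iii) and (iv), the two inf-sup conditions for $\cB_1$ and $\cB$, are exactly the statements of Lemma~\ref{lem:semipositive-c-inf-sup-b-B}. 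Continuity of all operators was also recorded in Lemma~\ref{lem:N-M-properties} (together with the trace bounds \eqref{trace-u}--\eqref{trace-sigma} for $\cB_1$). Hence Theorem~\ref{thm:system_solvability} yields existence and uniqueness of a solution $(\ubsi,\ubvarphi,\ubu)\in \bX\times\bY\times\bZ$ to \eqref{eq:aux-resolvent-system}, together with a continuous dependence bound in terms of $\wt{\bF}$ and $\wh{\bG}$.

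Finally, I would apply the converse direction of Lemma~\ref{lem:equivalent-resolvent-system} to conclude that $(\ubsi,\ubvarphi,\ubu)$ solves \eqref{eq:resolvent-problem-hat}, and then specialize to the data at hand to obtain a preimage of $f$ under $\cN+\cM$. It remains only to check membership in the domain $\cD$ from \eqref{eq:Eb-prima-D}. Since $\cN v = (\cE(\ubsi),\0,\0)^{\rt}$ and $\cE(\ubsi)$ has vanishing first two components (it pairs only with $(\btau_p,w_p)\in\bbX_{p,2}\times\W_{p,2}$ through $a_e$ and the $s_0$-term), one has $\cN v \in \bX'_{2,0}\times \bY'_{2,0}\times \bZ'_{2,0} = E'_b$; combined with $f\in E'_b$, this forces $\cM v = f - \cN v \in E'_b$, so $v \in \cD$. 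The only genuinely non-routine step is organizing the reduction to Theorem~\ref{thm:system_solvability}, since each hypothesis has already been carried out in its own lemma; no further hard analysis is required here.
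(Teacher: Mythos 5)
Your proof is correct and follows essentially the same route as the paper: identify the range equation as a special case of the resolvent system \eqref{eq:resolvent-problem-hat}, pass to the equivalent system \eqref{eq:aux-resolvent-system} via Lemma~\ref{lem:equivalent-resolvent-system}, apply Theorem~\ref{thm:system_solvability} with $A=\cE+\wt{\cA}$, $B_1=\cB_1$, $S=\cC$, $B=\cB$ using Lemmas~\ref{lem:ellipticity-tilde-a} and \ref{lem:semipositive-c-inf-sup-b-B} and \eqref{eq:pos-sem-def-c}, and transfer back. Your closing observation that $\cN v\in E'_b$ automatically (because $\cE$ only acts through $(\btau_p,w_p)$), so that $\cM v = f-\cN v\in E'_b$ and hence $v\in\cD$, makes explicit a membership check the paper leaves implicit.
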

\begin{proof}
Let us consider $\wh{\bF} = (\0,\0,\wh{\f}_p,\wh{q}_p)^\rt$ and $\wh{\bG}=\0$ in \eqref{eq:resolvent-problem-hat}--\eqref{eq:aux-resolvent-system} and $\kappa$ as in Lemma~\ref{lem:ellipticity-tilde-a}.
The well-posedness of \eqref{eq:aux-resolvent-system} follows from \eqref{eq:pos-sem-def-c}, Lemmas~\ref{lem:ellipticity-tilde-a} and \ref{lem:semipositive-c-inf-sup-b-B}, and a straightforward application of Theorem~\ref{thm:system_solvability} with $A = \cE + \wt{\cA}, B_1 = \cB_1, S = \cC$, and $B = \cB$. Then, employing Lemma~\ref{lem:equivalent-resolvent-system} we conclude that there exists a unique solution of the resolvent system of \eqref{eq:auxiliary-evolution-problem-in-operator-form}, implying the range condition.
\end{proof}

We are now ready to establish existence for the auxiliary initial value problem \eqref{eq:auxiliary-evolution-problem-in-operator-form}, assuming compatible initial data.
\begin{lem}\label{lem:auxiliar-well-posedness-result}
  For each compatible initial data $(\wh{\ubsi}_0,\wh{\ubvarphi}_0,\wh{\ubu}_0)\in \cD$ and each $(\wh{\f}_p,\wh{q}_p)\in \W^{1,1}(0,T;\bbX'_{p,2})\times \W^{1,1}(0,T;\W'_{p,2})$, the problem \eqref{eq:auxiliary-evolution-problem-in-operator-form} has a solution $(\wh\ubsi,\wh\ubvarphi,\wh\ubu):[0,T]\to \bX \times \bY \times \bZ$ such that 
  $(\wh\bsi_p,\wh p_p) \in \W^{1,\infty}(0,T;\bbL^2(\Omega_p))\times \W^{1,\infty}(0,T;\W_p)$ and $(\wh\bsi_p(0),\wh p_p(0)) = (\wh{\bsi}_{p,0},\wh{p}_{p,0})$.
\end{lem}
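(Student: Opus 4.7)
The plan is to verify that problem \eqref{eq:auxiliary-evolution-problem-in-operator-form} is an instance of the abstract Cauchy problem \eqref{eq:general-parabolic-problem} for the operators $\cN,\cM$ defined in \eqref{eq:E-u-N-M-f} on the space $E=\bX\times\bY\times\bZ$, and then invoke Theorem~\ref{thm:solvability-parabolic-problem}. By construction, the reformulation is immediate: the first block equation is $\partial_t(\cN u) + \cM u \ni f$ with $f=(\wh\bF,\0,\0)^\rt$ and $\wh\bF = (\0,\0,\wh\f_p,\wh q_p)^\rt$; since $\wh\bF$ has zero components in $\bbX_f'$ and $\bV_p'$ and the second and third block rows of $\cM u$ land in $\bY'_{2,0}=\0$ and $\bZ'_{2,0}=\0$, it follows that $f(t)\in E_b'$ for a.e. $t$.

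Next I would check the hypotheses of Theorem~\ref{thm:solvability-parabolic-problem} in turn. Linearity, continuity, symmetry, and monotonicity of $\cN$, along with monotonicity of $\cM$, are precisely Lemma~\ref{lem:N-M-properties}. The range condition $Rg(\cN+\cM)=E_b'$ is exactly the content of Lemma~\ref{lem:resolvent-system}. The compatibility $u_0:=(\wh\ubsi_0,\wh\ubvarphi_0,\wh\ubu_0)\in\cD$ is part of the hypothesis. It remains to verify that $f\in\W^{1,1}(0,T;E_b')$; this reduces to checking that $(\wh\f_p,\wh q_p)\in\W^{1,1}(0,T;\bbX'_{p,2})\times\W^{1,1}(0,T;\W'_{p,2})$, which is assumed. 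Applying Theorem~\ref{thm:solvability-parabolic-problem} therefore yields a solution $(\wh\ubsi,\wh\ubvarphi,\wh\ubu):[0,T]\to\bX\times\bY\times\bZ$ with $\cN u\in\W^{1,\infty}(0,T;E_b')$, $u(t)\in\cD$ for every $t$, and $\cN u(0)=\cN u_0$.

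The last step is to translate the abstract regularity $\cN u\in\W^{1,\infty}(0,T;E_b')$ and the initial condition $\cN u(0)=\cN u_0$ back to statements about $(\wh\bsi_p,\wh p_p)$. Since the only nonzero block of $\cN$ is $\cE$, this amounts to showing that $\cE(\wh\ubsi)\in\W^{1,\infty}(0,T;\bbX'_{p,2}\times\W'_{p,2})$ controls $(\wh\bsi_p,\wh p_p)$ in $\bbL^2(\Omega_p)\times\W_p$. For this I would use the seminorm equivalence already noted in the paper, namely
\[
|\ubtau|_\cE^2 \,=\, s_0\,\|w_p\|^2_{\L^2(\Omega_p)} + \|A^{1/2}(\btau_p+\alpha_p w_p\,\bI)\|^2_{\bbL^2(\Omega_p)} \,\simeq\, \|\btau_p\|^2_{\bbL^2(\Omega_p)}+\|w_p\|^2_{\W_p},
\]
which holds because $s_0>0$, $\alpha_p\le 1$, and $A$ is uniformly bounded and coercive by \eqref{A-bounds}. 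Consequently the map $\ubtau\mapsto\cE(\ubtau)$ induces an isomorphism between $\bbL^2(\Omega_p)\times\W_p$ (equipped with the $|\cdot|_\cE$ norm) and its dual realized as $\bbX'_{p,2}\times\W'_{p,2}$. Pulling back through this isomorphism, the conclusion $\cE(\wh\ubsi)\in\W^{1,\infty}(0,T;\bbX'_{p,2}\times\W'_{p,2})$ yields $(\wh\bsi_p,\wh p_p)\in\W^{1,\infty}(0,T;\bbL^2(\Omega_p))\times\W^{1,\infty}(0,T;\W_p)$, and the identity $\cN u(0)=\cN u_0$ gives $(\wh\bsi_p(0),\wh p_p(0))=(\wh\bsi_{p,0},\wh p_{p,0})$.

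The main obstacle I anticipate is the last step: the abstract theorem only delivers regularity of $\cE(\wh\ubsi)$ and an initial condition on $\cN u$, not on $\wh\ubsi$ itself. One must argue carefully that on the components $(\btau_p,w_p)$ the bilinear form behind $\cE$ is nondegenerate (so that the pair $(\wh\bsi_p,\wh p_p)$ is uniquely recovered) while the corresponding components of $E_b'$ coincide with $\bbX'_{p,2}\times\W'_{p,2}$ used in the regularity hypothesis. Once that equivalence is made precise via \eqref{A-bounds} and $s_0>0$, the desired $\W^{1,\infty}$-in-time regularity and the initial values for $\wh\bsi_p$ and $\wh p_p$ follow directly.
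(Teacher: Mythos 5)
Your proposal is correct and follows essentially the same route as the paper, which simply invokes Theorem~\ref{thm:solvability-parabolic-problem} together with Lemmas~\ref{lem:N-M-properties} and \ref{lem:resolvent-system}. You actually spell out a step the paper leaves implicit — namely, translating $\cN u\in\W^{1,\infty}(0,T;E_b')$ and $\cN u(0)=\cN u_0$ into regularity and initial values for $(\wh\bsi_p,\wh p_p)$ via the norm equivalence $|\ubtau|_\cE^2\simeq\|\btau_p\|^2_{\bbL^2(\Omega_p)}+\|w_p\|^2_{\W_p}$ guaranteed by $s_0>0$ and \eqref{A-bounds} — and that fills a small gap helpfully.
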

\begin{proof}
The assertion of the lemma follows by applying Theorem~\ref{thm:solvability-parabolic-problem} with $E, \cN, \cM$ defined in \eqref{eq:E-u-N-M-f}, using Lemmas~\ref{lem:N-M-properties} and \ref{lem:resolvent-system}.
\end{proof}

We will employ Lemma~\ref{lem:auxiliar-well-posedness-result} to obtain existence of a solution to our problem \eqref{eq:evolution-problem-in-operator-form}. 
To that end, we first construct compatible initial data $(\ubsi_0,\ubvarphi_0,\ubu_0)$.
\begin{lem}\label{lem:sol0-in-M-operator}
Assume that the initial data $p_{p,0}\in \W_p\cap \H$, where
\begin{equation}\label{eq:H-space-initial-condition}
\H \,:=\, \Big\{ w_p\in \H^1(\Omega_p) :\quad \bK\,\nabla\,w_p\in \bH^1(\Omega_p),\,\,  
\bK\,\nabla\,w_p\cdot\bn_p = 0 \mbox{ on } \Gamma^{\rN}_p,\,\,   
\ w_p = 0 \mbox{ on } \Gamma^{\rD}_p \Big\}. 
\end{equation}
Then, there exist $\ubsi_0 := (\bsi_{f,0}, \bu_{p,0},\bsi_{p,0},p_{p,0})\in \bX$, $\ubvarphi_0 := (\bvarphi_0, \btheta_0, \lambda_0)\in \bY$, and $\ubu_0 := (\bu_{f,0}, \bu_{s,0}, \bgamma_{f,0},\newline \bgamma_{p,0})\in \bZ$ such that 
\begin{equation}\label{eq:initial-data-problem}
\begin{array}{lllll}
\ds \cA(\ubsi_0) + \cB'_1(\ubvarphi_0) + \cB'(\ubu_0) & = & \wh{\bF}_0 & \mbox{ in } & \bX'_2, \\ [1.5ex]
\ds -\,\cB_1(\ubsi_0) + \cC(\ubvarphi_0) & = & \0 & \mbox{ in } & \bY', \\ [1.5ex]
\ds -\,\cB\,(\ubsi_0) & = & \bG(0) & \mbox{ in } & \bZ',
\end{array}
\end{equation}
where $\wh{\bF}_0 = (q_f(0),\0,\wh{\f}_{p,0},\wh{q}_{p,0})^\rt\in \bX'_2$, with suitable $(\wh{\f}_{p,0},\wh{q}_{p,0})\in \bbX'_{p,2}\times \W'_{p,2}$.
\end{lem}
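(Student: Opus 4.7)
The plan is to construct the initial data in two stages. First, use the given initial pressure $p_{p,0}$ together with its $\H$-regularity to define directly all Darcy-related variables. Second, construct the remaining fields by solving a stationary coupled mixed Stokes--Biot elasticity saddle-point problem (retaining the compliance operator $A$) in which the Darcy data are treated as known, and finally define the auxiliary source terms $\wh{\f}_{p,0},\wh{q}_{p,0}$ a posteriori so that \eqref{eq:initial-data-problem} holds.

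For Step~1, I would set
\begin{equation*}
\bu_{p,0} := -\,\frac{1}{\mu}\,\bK\,\nabla p_{p,0}, \qquad \lambda_0 := p_{p,0}|_{\Gamma_{fp}}, \qquad \wh{q}_{p,0} := \div\,\bu_{p,0}.
\end{equation*}
The definition \eqref{eq:H-space-initial-condition} of $\H$ guarantees $\bu_{p,0}\in \bV_p$, $\wh{q}_{p,0}\in \L^2(\Omega_p)=\W'_{p,2}$, and $\lambda_0\in \H^{1/2}(\Gamma_{fp})=\Lambda_p$. An integration by parts using $\mu\bK^{-1}\bu_{p,0}=-\nabla p_{p,0}$ and the boundary conditions encoded in $\H$ verifies that the Darcy component (test with $\bv_p$) and the continuity component (test with $w_p$) of the first equation of \eqref{eq:initial-data-problem} hold by construction. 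For Step~2, I would invoke Remark~\ref{rem:A-sigmap-pp-without-time-derivative} and introduce auxiliary unknowns $(\bbeta_{p,0},\brho_{p,0},\bomega_0)\in \bV_s\times\bbQ_p\times\bLambda_s$ playing the roles of $(\bu_{s,0},\bgamma_{p,0},\btheta_0)$, and then solve the stationary coupled mixed problem obtained from \eqref{eq:initial-data-problem} by replacing the $\btau_p$-component of its first equation with the integrated elasticity relation \eqref{eq:A-sigmap-pp-without-time-derivative}. With unknowns $(\bsi_{f,0},\bsi_{p,0})\in \bbX_f\times\bbX_p$, $(\bvarphi_0,\bomega_0)\in \bLambda_f\times\bLambda_s$, $(\bu_{f,0},\bbeta_{p,0},\bgamma_{f,0},\brho_{p,0})\in \bV_f\times\bV_s\times\bbQ_f\times\bbQ_p$, and with $\bu_{p,0},p_{p,0},\lambda_0$ entering as data, this reduced system fits the twofold saddle-point framework of Theorem~\ref{thm:system_solvability}; its $X$-bilinear form is $a_f(\bsi_f,\btau_f)+(A\,\bsi_p,\btau_p)_{\Omega_p}$, its $\cB$-kernel ellipticity follows from \eqref{eq:tau-d-H0div-inequality}--\eqref{eq:tau-H0div-Xf-inequality} together with \eqref{A-bounds}, and the inf-sup conditions for $\cB_1$ and $\cB$ are direct restrictions of those established in Lemma~\ref{lem:semipositive-c-inf-sup-b-B}.

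Having solved the stationary problem, I would set
\begin{equation*}
\bu_{s,0} := \bbeta_{p,0}, \qquad \bgamma_{p,0} := \brho_{p,0}, \qquad \btheta_0 := \bomega_0,
\end{equation*}
so that \eqref{eq:A-sigmap-pp-without-time-derivative} at $t=0$ rewrites as
\begin{equation*}
b_{\bn_p}(\btau_p,\btheta_0) + b_s(\btau_p,\bu_{s,0}) + b_{\skp}(\btau_p,\bgamma_{p,0}) = -\,\bigl(A(\bsi_{p,0}+\alpha_p\,p_{p,0}\,\bI),\btau_p\bigr)_{\Omega_p}.
\end{equation*}
Defining $\wh{\f}_{p,0} := -\,A(\bsi_{p,0}+\alpha_p\,p_{p,0}\,\bI)\in \bbL^2(\Omega_p)=\bbX'_{p,2}$ then makes the $\btau_p$-component of the first equation of \eqref{eq:initial-data-problem} hold, while the second and third equations of \eqref{eq:initial-data-problem} are already satisfied by the stationary solution. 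The main obstacle is Step~2: the $X$-bilinear form of the stationary problem lacks both the $s_0\|w_p\|^2$ contribution and the stabilization term from \eqref{eq:a-tilde}, so Lemma~\ref{lem:ellipticity-tilde-a} does not apply verbatim. Instead, since $p_{p,0}$ is fixed and $w_p$ is no longer an unknown, the kernel problem decouples into coercivity of $a_f$ on $\wt{\bbX}_f$ (already shown via \eqref{eq:tau-d-H0div-inequality}--\eqref{eq:tau-H0div-Xf-inequality}) and coercivity of $(A\,\cdot,\cdot)$ on $\wt{\bbX}_p$ (immediate from \eqref{A-bounds}), so the required ellipticity is preserved and Theorem~\ref{thm:system_solvability} applies.
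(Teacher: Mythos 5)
There is a genuine gap in Step~2. Your stationary saddle-point system uses $Y = \bLambda_f\times\bLambda_s$ and treats $\lambda_0$ (and $\bu_{p,0}$, $p_{p,0}$) as data, so the $\Lambda_p'$-component of the second equation of \eqref{eq:initial-data-problem}, i.e., the mass conservation interface constraint
\begin{equation*}
-\,\pil\bvarphi_0\cdot\bn_f + (\btheta_0 + \bu_{p,0})\cdot\bn_p,\xi\pir_{\Gamma_{fp}} \,=\, 0 \qquad \forall\,\xi\in\Lambda_p,
\end{equation*}
is simply not among the equations you solve. Since $\bu_{p,0}$ is fixed by Step~1 and your $\bvarphi_0$ and $\bomega_0$ (which you then rename $\btheta_0$) are already determined by the $\bLambda_f$- and $\bLambda_s$-components of the saddle point, this extra constraint will generically fail, so the closing assertion that ``the second and third equations of \eqref{eq:initial-data-problem} are already satisfied by the stationary solution'' is false. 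You also cannot rescue this by re-admitting $\lambda_0$ as a $Y$-unknown: once $\bu_{p,0}$ is data, the $\Lambda_p$-block of $\cB_1$ has no $X$-variable coupled to it (there is no $\bv_p$ test function left), and the inf-sup condition for $\cB_1$ on the $\Lambda_p$-component would fail.

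The paper avoids this precisely by \emph{not} identifying $\bomega_0$ with $\btheta_0$ (nor $\bbeta_{p,0}$ with $\bu_{s,0}$, nor $\brho_{p,0}$ with $\bgamma_{p,0}$). It solves three decoupled subproblems. In the Stokes subproblem \eqref{eq:system-sol0-2} and the non-differentiated elasticity subproblem \eqref{eq:system-sol0-3}, the BJS terms are written directly in terms of the known data $\bu_{p,0}\cdot\bt_{f,j}$. Then $\btheta_0$ is defined \emph{pointwise} by $\btheta_0 := \bvarphi_0 - \bu_{p,0}$ on $\Gamma_{fp}$; this single definition (i) makes the $\xi$-equation hold identically and (ii) makes the substituted BJS data consistent, since $(\bvarphi_0-\btheta_0)\cdot\bt_{f,j} = \bu_{p,0}\cdot\bt_{f,j}$. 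A further elasticity solve \eqref{eq:system-sol0-5} with $\btheta_0$ as Dirichlet data yields $\bu_{s,0},\bgamma_{p,0}$ and the auxiliary tensor $\wh\bsi_{p,0}$, and the residual $\wh\f_{p,0} := -A(\wh\bsi_{p,0})\in\bbL^2(\Omega_p) = \bbX'_{p,2}$ then closes the $\btau_p$-equation. Collapsing $\bomega_0$ with $\btheta_0$, as you propose, removes exactly the slack the paper needs to enforce mass conservation on the interface; without it, \eqref{eq:initial-data-problem} cannot hold in $\bY'$.
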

\begin{proof}
Following the approach from \cite[Lemma~4.15]{aeny2019}, the initial data is constructed by 
solving a sequence of well-defined subproblems. We take the following steps.	
	
1. Define $\bu_{p,0} := -\dfrac{1}{\mu}\,\bK\nabla p_{p,0}$,
with $p_{p,0}\in \H$, cf. \eqref{eq:H-space-initial-condition}. It follows that
$\bu_{p,0} \in \bH(\div;\Omega_p)$ and 
\begin{equation}\label{eq:sol0-up0-pp0}
\mu\,\bK^{-1}\bu_{p,0} = -\nabla p_{p,0},\quad 
\div(\bu_{p,0}) = -\frac{1}{\mu}\,\div(\bK\nabla p_{p,0}) \qin \Omega_p,\quad
\bu_{p,0}\cdot\bn_p = 0 \qon \Gamma^{\rN}_p.
\end{equation}
Next, defining $\lambda_0 := p_{p,0}|_{\Gamma_{fp}}\in \Lambda_p$, \eqref{eq:sol0-up0-pp0}
implies
\begin{equation}\label{eq:system-sol0-1}
a_p(\bu_{p,0},\bv_p) + b_p(\bv_p,p_{p,0}) + b_{\Gamma}(\bv_p,\lambda_0)
  = 0 \quad \forall\,\bv_p\in \bV_p.
\end{equation}
	
2. Define $(\bsi_{f,0},\bvarphi_0,\bu_{f,0},\bgamma_{f,0})\in \bbX_{f}\times \bLambda_f\times \bV_f\times \bbQ_f$ as the unique solution of the problem
\begin{equation}\label{eq:system-sol0-2}
\begin{array}{l}
a_f(\bsi_{f,0},\btau_f) + b_{\bn_f}(\btau_f,\bvarphi_0)
+ b_f(\btau_f,\bu_{f,0}) + b_{\skf}(\btau_f,\bgamma_{f,0})
= \ds -\frac{1}{n}\,(q_f(0)\,\bI,\btau_f)_{\Omega_f}, \\ [1ex]
\ds -b_{\bn_f}(\bsi_{f,0},\bpsi) 
	= - \mu\,\alpha_{\BJS}\sum^{n-1}_{j=1}
	\pil\sqrt{\bK_j^{-1}}\bu_{p,0}\cdot\bt_{f,j},\bpsi\cdot\bt_{f,j}\pir_{\Gamma_{fp}}
	- \pil\bpsi\cdot\bn_f,\lambda_0\pir_{\Gamma_{fp}}, \\ [2ex]
-b_f(\bsi_{f,0},\bv_f) - b_{\skf}(\bsi_{f,0},\bchi_f)
=  (\f_f(0),\bv_f)_{\Omega_f}
\end{array}
\end{equation}
for all $(\btau_f,\bpsi,\bv_f,\bchi_f)\in \bbX_f\times
\bLambda_f\times \bV_f\times \bbQ_f$.  
Note that \eqref{eq:system-sol0-2} is well-posed, since it corresponds 
to the weak solution of the Stokes problem in a mixed formulation and 
its solvability can be shown using classical Babu{\v s}ka-Brezzi theory.  
Note also that $\bu_{p,0}$ and $\lambda_0$ are data for this problem.

3. Define $(\bsi_{p,0}, \bomega_0, \bbeta_{p,0}, \brho_{p,0})\in \bbX_p\times \bLambda_s\times \bV_s\times \bbQ_p$, as the unique solution of the problem
\begin{equation}\label{eq:system-sol0-3}
\begin{array}{l}
\ds (A(\bsi_{p,0}),\btau_p)_{\Omega_p} + b_{\bn_p}(\btau_p,\bomega_0) + b_s(\btau_p,\bbeta_{p,0}) + b_{\skp}(\btau_p,\brho_{p,0}) \,=\, - (A(\alpha_p\,p_{p,0}\,\bI),\btau_p)_{\Omega_p} \\ [1ex]
\ds -b_{\bn_p}(\bsi_{p,0},\bphi) \,=\,  
\mu\,\alpha_{\BJS}\sum^{n-1}_{j=1} \pil\sqrt{\bK_j^{-1}}\bu_{p,0}\cdot\bt_{f,j},\bphi\cdot\bt_{f,j}\pir_{\Gamma_{fp}} 
- \pil\bphi\cdot\bn_p,\lambda_0\pir_{\Gamma_{fp}} \\ [2ex]
-b_s(\bsi_{p,0},\bv_s) - b_{\skp}(\bsi_{p,0},\bchi_p) \,=\, (\f_{p}(0),\bv_s)_{\Omega_p},
\end{array}
\end{equation}
for all $(\btau_p,\bphi,\bv_s,\bchi_p)\in \bbX_p\times \bLambda_s\times \bV_s\times \bbQ_p$.
Problem \eqref{eq:system-sol0-3} corresponds to the weak solution of the elasticity
problem in a mixed formulation and its solvability can be shown using
classical Babu{\v s}ka-Brezzi theory.
Note that $p_{p,0}, \bu_{p,0}$, and $\lambda_0$ are data for this problem.
Here $\bbeta_{p,0}, \brho_{p,0}$, and $\bomega_0$ are auxiliary variables that are not part of the constructed initial data.
However, they can be used to recover the variables $\bbeta_p, \brho_p$, and $\bomega$ that satisfy the non-differentiated equation \eqref{eq:A-sigmap-pp-without-time-derivative}.

4. Define $\btheta_0\in \bLambda_s$ as
\begin{equation}\label{eq:definition-theta0}
\btheta_0 \,:=\, \bvarphi_0 - \bu_{p,0} \qon \Gamma_{fp},
\end{equation}
where $\bvarphi_0$ and $\bu_{p,0}$ are data obtained in the previous steps.
Note that \eqref{eq:definition-theta0} implies that the BJS terms in \eqref{eq:system-sol0-2} and \eqref{eq:system-sol0-3} can be rewritten with $\bu_{p,0}\cdot \bt_{f,j} = (\bvarphi_0 - \btheta_0)\cdot \bt_{f,j}$ and that the ninth equation in \eqref{eq:Stokes-Biot-formulation-1} holds for the initial data, that is,
\begin{equation}\label{eq:system-sol0-4}
-\pil\bvarphi_0\cdot\bn_f + (\btheta_0 + \bu_{p,0})\cdot\bn_p,\xi\pir_{\Gamma_{fp}} \,=\, 0 \quad \forall\,\xi\in \Lambda_p.
\end{equation}

5. Finally, define $(\wh\bsi_{p,0},\bu_{s,0}, \bgamma_{p,0}) \in
\bbX_p\times \bV_s\times \bbQ_p$, as the unique solution of the problem
\begin{equation}\label{eq:system-sol0-5}
\begin{array}{l}
\ds (A(\wh\bsi_{p,0}),\btau_p)_{\Omega_p} + b_s(\btau_p,\bu_{s,0})
+ b_{\skp}(\btau_p,\bgamma_{p,0}) \,=\, - b_{\bn_p}(\btau_p,\btheta_0) \\ [2ex]
-b_s(\wh\bsi_{p,0},\bv_s) - b_{\skp}(\wh\bsi_{p,0},\bchi_p) \,=\, 0,
\end{array}
\end{equation}
for all $(\btau_p,\bv_s,\bchi_p)\in \bbX_p \times \bV_s\times \bbQ_p$.
Problem \eqref{eq:system-sol0-5} corresponds to the weak solution of the elasticity
problem in $\Omega_p$ with Dirichlet datum $\btheta_0$ on $\Gamma_{fp}$.

Combining \eqref{eq:system-sol0-1}, \eqref{eq:system-sol0-2}, the second and
third equations in \eqref{eq:system-sol0-3}, \eqref{eq:system-sol0-4},
and the first equation in \eqref{eq:system-sol0-5}, we obtain $(\ubsi_0, \ubvarphi_0, \ubu_0)\in \bX\times \bY\times \bZ$ satisfying \eqref{eq:initial-data-problem} with
\begin{equation}\label{eq:data-fp0-qp0-hat}
(\wh{\f}_{p,0},\btau_p)_{\Omega_p} \,=\, - (A(\wh\bsi_{p,0}),\btau_p)_{\Omega_p} \qan
(\wh{q}_{p,0},w_p)_{\Omega_p} \,=\, - b_p(\bu_{p,0},w_p).
\end{equation}
The above equations imply
$$
\|\wh{\f}_{p,0}\|_{\bbL^2(\Omega_p)} + \|\wh{q}_{p,0}\|_{\L^2(\Omega_p)}
\le C\,\left( \|\wh{\bsi}_{p,0}\|_{\bbL^2(\Omega_p)}
+ \|\div(\bu_{p,0})\|_{\L^2(\Omega_p)} \right),
$$
hence $(\wh{\f}_{p,0}, \wh{q}_{p,0})\in \bbX'_{p,2}\times \W'_{p,2}$, completing the proof.
\end{proof}

\subsection{The main result}

We are now ready to prove the main result of this section.

\begin{thm}\label{thm:well-posdness-continuous}
For each compatible initial data $(\ubsi_0,\ubvarphi_0,\ubu_0)$ constructed in Lemma~\ref{lem:sol0-in-M-operator} and each
\begin{equation*}
\f_f\in \W^{1,1}(0,T;\bV_f'),\quad \f_p\in \W^{1,1}(0,T;\bV_s'),\quad q_f\in \W^{1,1}(0,T;\bbX'_f),\quad q_p\in \W^{1,1}(0,T;\W'_p),
\end{equation*}
there exists a unique solution of \eqref{eq:evolution-problem-in-operator-form},
$(\ubsi,\ubvarphi,\ubu):[0,T]\to \bX \times \bY \times \bZ$, such that
$(\bsi_p,p_p) \in \W^{1,\infty}(0,T;\bbL^2(\Omega_p))\times \W^{1,\infty}(0,T;\W_p)$
and $(\bsi_p(0), p_p(0)) = (\bsi_{p,0}, p_{p,0})$.
\end{thm}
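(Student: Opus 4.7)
The plan is to combine the auxiliary existence result of Lemma~\ref{lem:auxiliar-well-posedness-result} with a translation argument and the compatible initial data constructed in Lemma~\ref{lem:sol0-in-M-operator}; uniqueness will then follow from a standard energy estimate based on the monotonicity of $\cN$ and $\cM$ proved in Lemma~\ref{lem:N-M-properties}.

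For existence, I would construct a static lift $(\bar\ubsi(t),\bar\ubvarphi(t),\bar\ubu(t))$ by solving, pointwise in $t\in[0,T]$, the resolvent system \eqref{eq:resolvent-problem-hat} with data $\wh{\bF}=\bF(t)$ and $\wh{\bG}=\bG(t)$; this is well-posed thanks to Lemmas~\ref{lem:equivalent-resolvent-system} and \ref{lem:resolvent-system}, and linearity transfers the $\W^{1,1}$-in-time regularity of the data to the lift. Setting $(\wh\ubsi,\wh\ubvarphi,\wh\ubu):=(\ubsi,\ubvarphi,\ubu)-(\bar\ubsi,\bar\ubvarphi,\bar\ubu)$ and subtracting the static equations from \eqref{eq:evolution-problem-in-operator-form} produces the auxiliary problem \eqref{eq:auxiliary-evolution-problem-in-operator-form} for the hat-variables, with residual source $\cE(\bar\ubsi)-\partial_t\cE(\bar\ubsi)$ and initial value $\wh\ubsi_0:=\ubsi_0-\bar\ubsi(0)$. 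Because $\cE$ only couples the $(\bsi_p,p_p)$-slots, this residual lives in $\bX'_{2,0}$, and comparing \eqref{eq:initial-data-problem} with the static system evaluated at $t=0$ shows $\wh\ubsi_0\in\cD$. Lemma~\ref{lem:auxiliar-well-posedness-result} then delivers $(\wh\ubsi,\wh\ubvarphi,\wh\ubu)$, and $(\ubsi,\ubvarphi,\ubu):=(\wh\ubsi,\wh\ubvarphi,\wh\ubu)+(\bar\ubsi,\bar\ubvarphi,\bar\ubu)$ solves \eqref{eq:evolution-problem-in-operator-form} with $(\bsi_p,p_p)\in \W^{1,\infty}(0,T;\bbL^2(\Omega_p))\times\W^{1,\infty}(0,T;\W_p)$ inherited from the auxiliary solution.

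For uniqueness, denote the difference of two solutions sharing initial data by $(\ubsi,\ubvarphi,\ubu)$, which satisfies the homogeneous version of \eqref{eq:evolution-problem-in-operator-form}. Testing the three equations respectively with $\ubsi$, $\ubvarphi$, and $\ubu$ and summing, the couplings through $\cB_1$, $\cB'_1$, $\cB$, $\cB'$ cancel pairwise, leaving
\begin{equation*}
\frac{1}{2}\,\frac{d}{dt}\,|\ubsi|^2_\cE \,+\, \cA(\ubsi)(\ubsi) \,+\, \cC(\ubvarphi)(\ubvarphi) \,=\, 0,
\end{equation*}
where the symmetry of $\cE$ has been used for the time-derivative term. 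Integrating in time and invoking \eqref{eq:monotonicity-E-A} and \eqref{eq:pos-sem-def-c} together with the vanishing initial datum yields $|\ubsi(t)|_\cE\equiv 0$ (hence $\bsi_p\equiv p_p\equiv 0$ since $s_0>0$), $\bsi_f^\rd\equiv \bu_p\equiv 0$, and $|\bvarphi-\btheta|_\BJS\equiv 0$. Substituting this information back into the homogeneous system, the third equation forces $\bdiv(\bsi_f)\equiv 0$ and the symmetry of $\bsi_f$, so \eqref{eq:tau-d-H0div-inequality}--\eqref{eq:tau-H0div-Xf-inequality} give $\bsi_f\equiv 0$; the remaining unknowns then vanish by the inf-sup conditions of Lemma~\ref{lem:semipositive-c-inf-sup-b-B}.

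The main technical hurdle is a regularity-matching issue in the translation: the residual source $\cE(\bar\ubsi-\partial_t\bar\ubsi)$ is a priori only in $\L^1(0,T;\bX'_{2,0})$ because $\partial_t\bar\ubsi\in \L^1(0,T;\bX)$, whereas Lemma~\ref{lem:auxiliar-well-posedness-result} (through Theorem~\ref{thm:solvability-parabolic-problem}) demands a $\W^{1,1}$-in-time forcing. I would close this gap by mollifying the data $(\f_f,\f_p,q_f,q_p)$ in time, applying Lemma~\ref{lem:auxiliar-well-posedness-result} to each smoothed problem, and passing to the limit using the a priori stability bounds obtained by re-running the uniqueness energy argument on differences of regularized solutions.
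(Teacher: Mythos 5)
Your proof follows the paper's approach essentially line by line: a pointwise static lift $\bar\ubsi(t)$ obtained from the resolvent system of Lemma~\ref{lem:resolvent-system}, subtraction to obtain the reduced degenerate evolution problem for the hat variables, verification via \eqref{eq:initial-data-problem} that the hatted initial datum lies in $\cD$, application of Lemma~\ref{lem:auxiliar-well-posedness-result}, and then an energy argument for uniqueness that uses the monotonicity of $\cE,\cA,\cC$ together with the inf-sup conditions of Lemma~\ref{lem:semipositive-c-inf-sup-b-B} and the algebraic inequalities \eqref{eq:tau-d-H0div-inequality}--\eqref{eq:tau-H0div-Xf-inequality}. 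The small reordering in the uniqueness step (obtaining $\bsi_f=\0$ from the third equation before invoking the inf-sup conditions, rather than after, as in the paper) is immaterial; both routes close the argument.

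The regularity concern you raise at the end is legitimate, and it is a point the paper's proof does not address. The operator solving the resolvent system is linear and bounded, so $\bar\ubsi$ inherits exactly the $\W^{1,1}(0,T;\bX)$ regularity of the data $\bF,\bG$; consequently the residual source $\cE(\bar\ubsi)-\partial_t\cE(\bar\ubsi)=\cE(\bar\ubsi)-\cE(\partial_t\bar\ubsi)$ is only $\L^1(0,T;\bX'_{2,0})$, whereas Theorem~\ref{thm:solvability-parabolic-problem}, and therefore Lemma~\ref{lem:auxiliar-well-posedness-result}, requires $\W^{1,1}(0,T;E'_b)$. The paper applies the lemma directly to the reduced problem \eqref{eq:aux-hat-problem} without checking this, and the same remark applies to the claimed $\W^{1,\infty}$ regularity of $(\bsi_p,p_p)$: the hat part indeed has it by Theorem~\ref{thm:solvability-parabolic-problem}, but the tilde part is a priori only $\W^{1,1}$, so the sum is not obviously $\W^{1,\infty}$ unless the data have more time regularity. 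Your mollification-and-limit strategy is a reasonable way to repair the existence argument; one would mollify the data, obtain a family of solutions via the translation argument applied with smooth data, and pass to the limit using the energy estimates of the uniqueness step (which provide continuous dependence on the data in the $\L^1$-norm), while the regularity claim should in fairness then be stated for data with one additional time derivative (e.g.\ $\W^{2,1}$ or $\H^1$ as in Theorem~\ref{thm:stability-bound}). So your proposal is not merely a reproduction of the paper's argument: it correctly flags a gap that the paper glosses over and offers a concrete remedy.
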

\begin{proof}
For each fixed time $t\in [0,T]$, Lemma~\ref{lem:resolvent-system} implies that there exists a solution to the resolvent system \eqref{eq:resolvent-problem-hat} with $\wh{\bF} = \bF(t)$ and $\wh{\bG} = \bG(t)$ defined in \eqref{eq:data-F-G}.
More precisely, there exist $(\wt{\ubsi}(t), \wt{\ubvarphi}(t), \wt{\ubu}(t))$ such that
\begin{equation}\label{eq:aux-tilda-problem}
\begin{array}{lllll}
\ds \big( \cE + \cA \big)(\wt{\ubsi}(t)) + \cB'_1(\wt{\ubvarphi}(t)) + \cB'(\wt{\ubu}(t)) & = & \bF(t) & \mbox{ in } & \bX'_{2}, \\ [1.5ex]
\ds -\,\cB_1(\wt{\ubsi}(t)) + \cC(\wt{\ubvarphi}(t)) & = & \0 & \mbox{ in } & \bY', \\ [1.5ex]
\ds -\,\cB\,(\wt{\ubsi}(t)) & = & \bG(t) & \mbox{ in } & \bZ'.
\end{array}
\end{equation}
We look for a solution to \eqref{eq:evolution-problem-in-operator-form} in the form $\ubsi(t)= \wt{\ubsi}(t) + \wh{\ubsi}(t)$, $\ubvarphi(t)= \wt{\ubvarphi}(t) + \wh{\ubvarphi}(t)$, and $\ubu(t) = \wt{\ubu}(t) + \wh{\ubu}(t)$.
Subtracting \eqref{eq:aux-tilda-problem} from \eqref{eq:evolution-problem-in-operator-form} leads to the reduced evolution problem
\begin{equation}\label{eq:aux-hat-problem}
\begin{array}{lllll}
\ds \partial_t \cE(\wh{\ubsi}(t)) + \cA(\wh{\ubsi}(t)) + \cB'_1(\wh{\ubvarphi}(t)) + \cB'(\wh{\ubu}(t)) & = & \cE(\wt{\ubsi}(t)) - \partial_t \cE(\wt{\ubsi}(t)) & \mbox{ in } & \bX'_{2,0}, \\ [1.5ex]
\ds -\,\cB_1(\wh{\ubsi}(t)) + \cC(\wh{\ubvarphi}(t)) & = & \0 & \mbox{ in } & \bY'_{2,0}, \\ [1.5ex]
\ds -\,\cB\,(\wh{\ubsi}(t)) & = & \0 & \mbox{ in } & \bZ'_{2,0},
\end{array}
\end{equation}
with initial condition $\wh{\ubsi}(0) = \ubsi_0 - \wt{\ubsi}(0)$, $\wh{\ubvarphi}(0) = \ubvarphi_0 - \wt{\ubvarphi}(0)$, and $\wh{\ubu}(0) = \ubu_0 - \wt{\ubu}(0)$.
Subtracting \eqref{eq:aux-tilda-problem} at $t=0$ from \eqref{eq:initial-data-problem} gives
\begin{equation}\label{eq:aux-hat-problem-0}
\begin{array}{lllll}
\ds \cA(\wh{\ubsi}(0)) + \cB'_1(\wh{\ubvarphi}(0)) + \cB'(\wh{\ubu}(0)) & = & \cE(\wt{\ubsi}(0)) + \wh{\bF}_0 - \bF(0) & \mbox{ in } & \bX'_{2,0}, \\ [1.5ex]
\ds -\,\cB_1(\wh{\ubsi}(0)) + \cC(\wh{\ubvarphi}(0)) & = & \0 & \mbox{ in } & \bY'_{2,0}, \\ [1.5ex]
\ds -\,\cB\,(\wh{\ubsi}(0)) & = & \0 & \mbox{ in } & \bZ'_{2,0}.
\end{array}
\end{equation}
We emphasize that in \eqref{eq:aux-hat-problem-0}, $\wh{\bF}_0 - \bF(0) = (\0,\0,\wh{\f}_{p,0},\wh{q}_{p,0} - q_p(0))^\rt\in \bX'_{2,0}$.
Thus, $\cM(\wh{\ubsi}(0), \wh{\ubvarphi}(0), \wh{\ubu}(0))\in E'_b$, i.e., $(\wh{\ubsi}(0), \wh{\ubvarphi}(0), \wh{\ubu}(0))\in \cD$ (cf. \eqref{eq:Eb-prima-D}).
Thus, the reduced evolution problem \eqref{eq:aux-hat-problem} is in the form of \eqref{eq:auxiliary-evolution-problem-in-operator-form}.
According to Lemma~\ref{lem:auxiliar-well-posedness-result}, it has a solution, which establishes the existence of a solution to \eqref{eq:evolution-problem-in-operator-form} with the stated regularity satisfying $(\bsi_p(0),p_p(0)) = (\bsi_{p,0},p_{p,0})$.

We next show that the solution of \eqref{eq:evolution-problem-in-operator-form} is unique.
Since the problem is linear, it is sufficient to prove that the problem with zero data has only the zero solution.
Taking $\bF = \bG = \0$ in \eqref{eq:evolution-problem-in-operator-form} and testing it with the solution $(\ubsi,\ubvarphi,\ubu)$ yields
\begin{equation*}
\begin{array}{l}
\ds \frac{1}{2}\,\partial_t\,\left( \|A^{1/2}\,(\bsi_p + \alpha_p\,p_p\,\bI)\|^2_{\bbL^2(\Omega_p)} + s_0\,\|p_p\|^2_{\W_p} \right) 
+ \frac{1}{2\,\mu}\,\|\bsi^{\rd}_f\|^2_{\bbL^2(\Omega_f)} + a_p(\bu_p,\bu_p) + \cC(\ubvarphi)(\ubvarphi) \,=\, 0,
\end{array}
\end{equation*}
which together with \eqref{eq:aux-taup-qp-inequality}, \eqref{eq:K-uniform-bound} to bound $a_p$ (cf. \eqref{eq:bilinear-forms-1}), the semi-definite positive property of $\cC$ (cf. \eqref{eq:pos-sem-def-c}),
integrating in time from $0$ to $t\in (0,T]$, and using that the initial data is zero, implies
\begin{equation}\label{eq:uniqueness-bound-1}
\|\bsi_p\|^2_{\bbL^2(\Omega_p)} + \|p_p\|^2_{\W_p} 
+ \int^t_0 \left(\|\bsi^\rd_f\|^2_{\bbL^2(\Omega_f)} + \|\bu_p\|^2_{\bL^2(\Omega_p)}\right)\, ds \leq 0.
\end{equation} 
It follows from \eqref{eq:uniqueness-bound-1} that $\bsi^{\rd}_f(t) = \0, \bu_p(t) = \0, \bsi_p(t) = \0$, and $p_p(t) = 0$ for all $t\in (0,T]$.

Now, taking $\ubtau\in \bV$ (cf. \eqref{eq:kernel-B}) in the first equation of \eqref{eq:evolution-problem-in-operator-form} and employing the inf-sup condition of $\cB_1$ (cf. \eqref{eq:inf-sup-b1}), with $\ubpsi = \ubvarphi = (\bvarphi, \btheta, \lambda)\in \bY$, yields
\begin{equation*}
\wt{\beta}\,\|\ubvarphi\|_{\bY} 
\,\leq\, \sup_{\0\neq \ubtau\in \bV} \frac{\cB_1(\ubtau)(\ubvarphi)}{\|\ubtau\|_{\bX}} 
= -\,\sup_{\0\neq \ubtau\in \bV} \frac{(\partial_t\,\cE + \cA)(\ubsi)(\ubtau)}{\|\ubtau\|_{\bX}} = 0.
\end{equation*}
Thus, $\bvarphi(t) = \0, \btheta(t) = \0$, and $\lambda(t) = 0$ for all $t\in (0,T]$. 
In turn, from the inf-sup condition of $\cB$ (cf. \eqref{eq:inf-sup-B}), with $\ubv = \ubu = (\bu_f, \bu_s,\bgamma_f, \bgamma_p)\in \bZ$, we get
\begin{equation*}
\beta\,\|\ubu\|_{\bZ} 
\leq \sup_{\0\neq \ubtau\in \bX} \frac{\cB(\ubtau)(\ubu)}{\|\ubtau\|_{\bX}} 
= - \sup_{\0\neq \ubtau\in \bX} \frac{ (\partial_t\,\cE + \cA)(\ubsi)(\ubtau) + \cB_1(\ubtau)(\ubvarphi)}{\|\ubtau\|_{\bX}} \,=\, 0.
\end{equation*}
Therefore, $\bu_f(t) = \0, \bu_s(t) = \0, \bgamma_f(t) = \0$, and $\bgamma_p(t) = \0$ for all $t\in (0,T]$.
Finally, from the third row in \eqref{eq:Stokes-Biot-formulation}, we have the identity
\begin{equation*}
b_f(\bsi_f,\bv_f) = 0 \quad \forall\,\bv_f\in \bV_f.
\end{equation*}
Taking $\bv_f = \bdiv(\bsi_f)\in \bV_f$, we deduce that $\bdiv(\bsi_f(t)) = \0$ for all $t\in (0,T]$, which combined with the fact that $\bsi^{\rd}_f(t) = \0$ for all $t\in (0,T]$, and estimates \eqref{eq:tau-d-H0div-inequality}--\eqref{eq:tau-H0div-Xf-inequality} yields $\bsi_f(t) = \0$ for all $t\in (0,T]$. 
Then, \eqref{eq:evolution-problem-in-operator-form} has a unique solution.
\end{proof}

\begin{cor}\label{cor:initial-data}
The solution of \eqref{eq:evolution-problem-in-operator-form} satisfies $\bsi_f(0)= \bsi_{f,0}, \bu_f(0)=\bu_{f,0}, \bgamma_{f}(0) = \bgamma_{f,0}, \bu_p(0) = \bu_{p,0}, \bvarphi(0) = \bvarphi_0$, $\lambda(0) = \lambda_0$, and $\btheta(0) = \btheta_0$.
\end{cor}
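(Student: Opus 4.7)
The plan is to show that the solution of \eqref{eq:evolution-problem-in-operator-form} at $t=0$ satisfies the same reduced (static) system that characterizes the constructed initial data from Lemma~\ref{lem:sol0-in-M-operator}, and then to conclude by adapting the uniqueness argument from the proof of Theorem~\ref{thm:well-posdness-continuous}. Since Theorem~\ref{thm:well-posdness-continuous} already provides $\bsi_p(0) = \bsi_{p,0}$ and $p_p(0) = p_{p,0}$, only the seven remaining components have to be identified.

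The crucial observation is that the time-derivative operator $\partial_t\cE$ is supported only on the $(\btau_p,w_p)$ slots of $\bX$. Consequently, evaluating the first row of \eqref{eq:evolution-problem-in-operator-form} at $t=0$ on test functions of the form $\ubtau = (\btau_f,\bv_p,\0,0)$ eliminates $\partial_t\cE$, and the resulting equation has exactly the same form as the first row of \eqref{eq:initial-data-problem} on the same class of test functions; the right-hand sides $\bF(0)$ and $\wh{\bF}_0$ coincide there, both reducing to $-\tfrac{1}{n}(q_f(0)\,\bI,\btau_f)_{\Omega_f}$. The second and third rows of \eqref{eq:evolution-problem-in-operator-form} at $t=0$ already match the corresponding rows of \eqref{eq:initial-data-problem} verbatim. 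Setting $(\delta\ubsi,\delta\ubvarphi,\delta\ubu) := (\ubsi(0)-\ubsi_0,\ubvarphi(0)-\ubvarphi_0,\ubu(0)-\ubu_0)$ and subtracting, I would obtain a homogeneous coupled system for the difference, supplemented by the constraints $\delta\bsi_p = \0$ and $\delta p_p = 0$.

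From this point I would rerun the energy argument from the uniqueness proof of Theorem~\ref{thm:well-posdness-continuous} on the homogeneous difference system. Testing the restricted first row with $(\delta\bsi_f,\delta\bu_p)$, the second row with $(\delta\bvarphi,\delta\btheta,\delta\lambda)$, and the third row with $(\delta\bu_f,\delta\bu_s,\delta\bgamma_f,\delta\bgamma_p)$, the off-diagonal couplings cancel thanks to $\delta\bsi_p = \0$, leaving the identity $a_f(\delta\bsi_f,\delta\bsi_f) + a_p(\delta\bu_p,\delta\bu_p) + c_{\BJS}(\delta\bvarphi,\delta\btheta;\delta\bvarphi,\delta\btheta) = 0$. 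Non-negativity together with the third-row equation $-\cB(\delta\ubsi)=0$ (to upgrade the deviatoric control to a full one via \eqref{eq:tau-d-H0div-inequality}--\eqref{eq:tau-H0div-Xf-inequality}) yields $\delta\bsi_f = \0$, $\delta\bu_p = \0$, and (assuming $\alpha_{\BJS}>0$) $(\delta\bvarphi - \delta\btheta)\cdot\bt_{f,j} = 0$. The inf-sup conditions of Lemma~\ref{lem:semipositive-c-inf-sup-b-B} then deliver, in turn, $\delta\lambda = 0$ from $b_\Gamma$, $\delta\bvarphi = \0$ from the kernel inf-sup of $b_{\bn_f}$ on $\wt{\bbX}_f$, and $\delta\bu_f = \0,\; \delta\bgamma_f = \0$ from the Stokes block of $\cB$.

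The main obstacle is the identification of $\delta\btheta$, since $\btheta$ enters the system only tangentially through the BJS term and normally through mass conservation. Once $\delta\bvarphi = \0$ and $\delta\bu_p = \0$ are in hand, the mass-conservation identity (the $\xi$-equation in \eqref{eq:Stokes-Biot-formulation-1}) at $t=0$ forces $\delta\btheta\cdot\bn_p = 0$, while $(\delta\bvarphi - \delta\btheta)\cdot\bt_{f,j} = 0$ combined with $\delta\bvarphi = \0$ gives $\delta\btheta\cdot\bt_{f,j} = 0$. Together these imply $\delta\btheta = \0$, completing the identification of all seven components claimed in the corollary. Note that the construction is consistent with Lemma~\ref{lem:sol0-in-M-operator} step 4, where $\btheta_0 = \bvarphi_0 - \bu_{p,0}$ on $\Gamma_{fp}$ is defined to be precisely the value compatible with the continuous dynamics at $t=0$.
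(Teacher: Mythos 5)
Your proposal is correct and follows essentially the same strategy as the paper's proof: restrict to the equations that survive at $t=0$ (equivalently, pass to the limit $t\to 0$ in the equations without time derivatives, which the paper justifies via $\cM(u)\in\L^\infty(0,T;E'_b)$), subtract the initial-data system \eqref{eq:initial-data-problem}, run the energy argument to get $\ov{\bsi}^{\rd}_f=\0$, $\ov{\bu}_p=\0$, $(\ov{\bvarphi}-\ov{\btheta})\cdot\bt_{f,j}=0$, use the inf-sup conditions to identify $\ov{\bvarphi},\ov{\lambda},\ov{\bu}_f,\ov{\bgamma}_f$, recover $\ov{\btheta}$ by combining the tangential BJS information with the mass-conservation equation (the paper additionally notes the $\H^{1/2}\hookrightarrow\L^2$ density argument needed for the normal component, which you should include), and finally take $\bv_f=\bdiv(\ov{\bsi}_f)$ to upgrade $\ov{\bsi}^\rd_f=\0$ to $\ov{\bsi}_f=\0$ via \eqref{eq:tau-d-H0div-inequality}--\eqref{eq:tau-H0div-Xf-inequality}. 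The only cosmetic difference is that you restrict test functions to $(\btau_f,\bv_p,\0,0)$ to kill $\partial_t\cE$ up front, whereas the paper arrives at the same system \eqref{eq:Stokes-Biot-formulation-0} directly from the observation that $\ov{\bsi}_p=\0$, $\ov{p}_p=0$.
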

\begin{proof}
  Let $\overline{\bsi}_f := \bsi_f(0) - \bsi_{f,0}$, with a similar definition and notation for the rest of the variables. Since Theorem~\ref{thm:solvability-parabolic-problem} implies that $\cM(u) \in \L^{\infty}(0,T;E'_b)$, we can take $t\to 0$ in all equations without time derivatives in \eqref{eq:aux-hat-problem}, and therefore also in 
\eqref{eq:evolution-problem-in-operator-form}. Using that the initial data $(\ubsi_0,\ubvarphi_0,\ubu_0)$ satisfies the same equations at $t=0$ (cf. \eqref{eq:initial-data-problem}), and that $\overline{\bsi}_p = \0$ and $\overline{p}_p = 0$, we obtain
\begin{align}\label{eq:Stokes-Biot-formulation-0}
& \ds \frac{1}{2\mu}\,(\ov{\bsi}^\rd_f,\btau^\rd_f)_{\Omega_f}
+ (\ov{\bu}_f,\bdiv(\btau_f))_{\Omega_f}
+ (\ov{\bgamma}_f,\btau_f)_{\Omega_f}
- \pil\btau_f\bn_f,\ov{\bvarphi} \pir_{\Gamma_{fp}}
=  0, \nonumber \\[0.5ex]
& \ds \mu\,(\bK^{-1}\ov{\bu}_p,\bv_p)_{\Omega_p}
+ \pil\bv_p\cdot\bn_p,\ov{\lambda}\pir_{\Gamma_{fp}} = 0, \nonumber \\[0.5ex] 
& \ds -\,(\bv_f,\bdiv(\ov{\bsi}_f))_{\Omega_f} = 0,\nonumber \\[0.5ex] 
& \ds -\,(\ov{\bsi}_f,\bchi_f)_{\Omega_f} = 0, \nonumber \\[0.5ex]
& \ds -\,\pil\ov{\bvarphi}\cdot\bn_f + \left(\ov{\btheta} + \ov{\bu}_p\right)\cdot\bn_p,\xi\pir_{\Gamma_{fp}}
= 0, \\[0.5ex]
& \ds \pil\ov{\bsi}_f\bn_f,\bpsi\pir_{\Gamma_{fp}} + \mu\,\alpha_{\BJS}\,\sum_{j=1}^{n-1}
\pil\sqrt{\bK_j^{-1}}\left( \ov{\bvarphi} - \ov{\btheta} \right)\cdot\bt_{f,j},\bpsi\cdot\bt_{f,j} \pir_{\Gamma_{fp}}
 +\, \pil\bpsi\cdot\bn_f,\ov{\lambda}\pir_{\Gamma_{fp}} = 0,\nonumber \\[0.5ex]
&\ds - \mu\,\alpha_{\BJS}\,\sum_{j=1}^{n-1}
\pil\sqrt{\bK_j^{-1}}\left( \ov{\bvarphi} - \ov{\btheta} \right)\cdot\bt_{f,j},\bphi\cdot\bt_{f,j} \pir_{\Gamma_{fp}}
 +\, \pil\bphi\cdot\bn_p,\ov{\lambda}\pir_{\Gamma_{fp}} = 0. \nonumber
\end{align} 
Taking $(\btau_f,\bv_p,\bv_f,\bchi_f,\xi,\bpsi,\bphi) = (\ov{\bsi}_f,\ov{\bu}_p,\ov{\bu}_f,\ov{\bgamma}_f,\ov{\lambda},\ov{\bvarphi},\ov{\btheta})$ and combining the equations results in
\begin{equation}\label{eq:sigmaf-up-varphi-theta-overline}
\|\ov{\bsi}^{\rd}_f\|^2_{\bbL^2(\Omega_f)} + \|\ov{\bu}_p\|^2_{\bL^2(\Omega_p)} + |\ov{\bvarphi} - \ov{\btheta}|^2_{\BJS} \leq 0 \,,
\end{equation}
implying $\ov{\bsi}^{\rd}_f = \0, \ov{\bu}_p = \0$, and $(\ov{\bvarphi} - \ov{\btheta})\cdot\bt_{f,j} = 0$.
The inf-sup conditions \eqref{eq:inf-sup-b1}--\eqref{eq:inf-sup-B},
together with \eqref{eq:Stokes-Biot-formulation-0}, imply that $\ov{\bu}_f = \0, \ov{\bgamma}_f = 0, \ov{\bvarphi} = \0$, and $\ov{\lambda} = 0$. 
Then \eqref{eq:sigmaf-up-varphi-theta-overline} yields
$\ov{\btheta}\cdot\bt_{f,j} = 0$. 
In turn, the fifth equation in \eqref{eq:Stokes-Biot-formulation-0}
implies that $\pil \ov{\btheta}\cdot\bn_p,\xi\pir_{\Gamma_{fp}} = 0$ for all
$\xi \in \H^{1/2}(\Gamma_{fp})$. Note that $\bn_p$ may be discontinuous on $\Gamma_{fp}$,
thus $\ov{\btheta}\cdot\bn_p \in \L^2(\Gamma_{fp})$. Since
$\H^{1/2}(\Gamma_{fp})$ is dense in $\L^2(\Gamma_{fp})$, then
$\ov{\btheta}\cdot\bn_p = 0$, and we conclude that $\ov{\btheta} = \0$.
In addition, taking $\bv_f=\bdiv(\ov{\bsi}_f)\in \bV_f$ in the third equation of \eqref{eq:Stokes-Biot-formulation-0} we deduce that $\bdiv(\ov{\bsi}_f) = \0$, which,  combined with \eqref{eq:tau-d-H0div-inequality}--\eqref{eq:tau-H0div-Xf-inequality}, yields $\ov{\bsi}_f = \0$, completing the proof.
\end{proof}

\begin{rem}
As we noted in Remark~\ref{rem:A-sigmap-pp-without-time-derivative}, the fourth equation in \eqref{eq:Stokes-Biot-formulation-1} can be used to recover the non-differentiated equation \eqref{eq:A-sigmap-pp-without-time-derivative}.
In particular, recalling the initial data construction \eqref{eq:system-sol0-3}, let
\begin{equation*}
\forall\,t\in [0,T],\quad
\bbeta_p(t) = \bbeta_{p,0} + \int^t_0 \bu_s(s)\,ds,\quad
\brho_p(t) = \brho_{p,0} + \int^t_0 \bgamma_p(s)\,ds,\quad
\bomega(t) = \bomega_0 + \int^t_0 \btheta(s)\,ds.
\end{equation*}
Then \eqref{eq:A-sigmap-pp-without-time-derivative} follows from integrating the fourth equation in \eqref{eq:Stokes-Biot-formulation-1} from $0$ to $t\in (0,T]$ and using the first equation in \eqref{eq:system-sol0-3}.
\end{rem}

We end this section with a stability bound for the solution of \eqref{eq:evolution-problem-in-operator-form}. We will use the inf-sup condition
\begin{equation}\label{eq:p-lambda-bound 0}
\|p_p\|_{\W_p} + \|\lambda\|_{\Lambda_p} 
\,\leq\, c\,\sup_{\0\neq\bv_p\in \bV_p} \frac{b_p(\bv_p,p_p) + b_{\Gamma}(\bv_p,\lambda)}{\|\bv_p\|_{\bV_p}},
\end{equation}
which follows from a slight adaptation of \cite[Lemma~3.3]{gos2012}.

\begin{thm}\label{thm:stability-bound}
For the solution of \eqref{eq:evolution-problem-in-operator-form}, assuming sufficient regularity of the data, there exists a positive constant $C$ independent of $s_0$ such that
\begin{align}\label{eq:stability-bound}
& \ds \|\bsi_f\|_{\L^\infty(0,T;\bbX_f)} + \|\bsi_f\|_{\L^2(0,T;\bbX_f)} + \|\bu_p\|_{\L^\infty(0,T;\bL^2(\Omega_p))} + \|\bu_p\|_{\L^2(0,T;\bV_p)} + |\bvarphi - \btheta|_{\L^\infty(0,T;\BJS)} \nonumber \\[0.5ex]
& \ds\quad
+ |\bvarphi - \btheta|_{\L^2(0,T;\BJS)} + \|\lambda\|_{\L^\infty(0,T;\Lambda_p)} 
+ \|\ubvarphi\|_{\L^2(0,T;\bY)} + \|\ubu\|_{\L^2(0,T;\bZ)}
+ \|A^{1/2}(\bsi_p)\|_{\L^\infty(0,T;\bbL^2(\Omega_p))} \nonumber  \\[0.5ex] 
& \ds \quad
+ \|\bdiv(\bsi_p)\|_{\L^\infty(0,T;\bL^2(\Omega_p))}
+ \|\bdiv(\bsi_p)\|_{\L^2(0,T;\bL^2(\Omega_p))}
+ \|p_p\|_{\L^\infty(0,T;\W_p)}
+ \|p_p\|_{\L^2(0,T;\W_p)} \nonumber  \\[0.5ex] 
& \ds\quad 
+ \|\partial_t\,A^{1/2}(\bsi_p+\alpha_p p_p \bI)\|_{\L^2(0,T;\bbL^2(\Omega_p))}
+ \sqrt{s_0}\|\partial_t\,p_p\|_{\L^2(0,T;\W_p)} \\[0.5ex]
& \ds \leq C\,\Big( \|\f_f\|_{\H^1(0,T;\bV'_f)} 
+ \|\f_p\|_{\H^1(0,T;\bV'_s)} 
+ \|q_f\|_{\H^1(0,T;\bbX'_f)} 
+ \|q_p\|_{\H^1(0,T;\W'_p)} \nonumber \\[0.5ex]
& \ds \quad \qquad
+ (1+\sqrt{s_0})\|p_{p,0}\|_{\W_p}
+ \|\bK\nabla p_{p,0}\|_{\H^1(\Omega_p)} \Big). \nonumber
\end{align}
\end{thm}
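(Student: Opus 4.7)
The strategy is a standard energy–inf–sup argument for doubly mixed evolution systems, carried out in three passes: (i) a basic energy identity from testing the system with the solution, (ii) inf-sup recovery of the quantities not controlled by coercivity (the Lagrange multipliers $\ubvarphi$, the velocity/rotation unknowns $\ubu$, and the $s_0$-independent part of $p_p$ and $\lambda$), and (iii) a differentiated-in-time energy identity to promote $\L^2$-in-time bounds on $\bsi_f$, $\bsi_p$, $\bu_p$, and the interface quantities to $\L^\infty$-in-time bounds and to generate the time derivative norms appearing on the left-hand side of \eqref{eq:stability-bound}. Throughout, the constants must not depend on $s_0$, which is the key delicacy and dictates the order of the steps.

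First, test \eqref{eq:evolution-problem-in-operator-form} with $(\ubsi,\ubvarphi,\ubu)$. The off-diagonal couplings $\cB_1',\cB'$ cancel with the second and third rows, so by \eqref{eq:time-operator-E}, \eqref{eq:monotonicity-E-A} and \eqref{eq:pos-sem-def-c} we obtain
\begin{equation*}
\tfrac{1}{2}\,\partial_t\bigl(s_0\|p_p\|_{\W_p}^2+\|A^{1/2}(\bsi_p+\alpha_p p_p\bI)\|_{\bbL^2(\Omega_p)}^2\bigr)
+\tfrac{1}{2\mu}\|\bsi_f^\rd\|_{\bbL^2(\Omega_f)}^2+\mu k_{\max}^{-1}\|\bu_p\|_{\bL^2(\Omega_p)}^2
+\tfrac{\mu\alpha_{\BJS}}{\sqrt{k_{\max}}}|\bvarphi-\btheta|_{\BJS}^2\le \bF(\ubsi)-\bG(\ubu).
\end{equation*}
Integrating in time, bounding the right-hand side by Cauchy--Schwarz and Young's inequality, and using Gronwall produces the $\L^\infty$ control of $A^{1/2}(\bsi_p+\alpha_p p_p\bI)$ and $\sqrt{s_0}\,p_p$, and the $\L^2(0,T)$ control of $\bsi_f^\rd$, $\bu_p$, and $(\bvarphi-\btheta)$ in the BJS seminorm. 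The divergence pieces $\bdiv(\bsi_f)$ and $\bdiv(\bsi_p)$ are recovered directly from the third row of \eqref{eq:Stokes-Biot-formulation} (which yields $\bdiv(\bsi_f)=-\f_f$ and $\bdiv(\bsi_p)=-\f_p$ pointwise in time), and combined with \eqref{eq:tau-d-H0div-inequality}--\eqref{eq:tau-H0div-Xf-inequality} this upgrades $\|\bsi_f^\rd\|$ to the full $\bbX_f$-norm. To obtain $\|p_p\|_{\W_p}$ and $\|\lambda\|_{\Lambda_p}$ without a factor of $s_0$, apply the inf-sup \eqref{eq:p-lambda-bound 0} to the Darcy row of \eqref{eq:Stokes-Biot-formulation}, and to obtain $\|\ubvarphi\|_{\bY}$ and $\|\ubu\|_{\bZ}$ apply \eqref{eq:inf-sup-b1} and \eqref{eq:inf-sup-B} to the first row; in each case the sup is bounded by the norms already controlled, possibly involving $\partial_t A^{1/2}(\bsi_p+\alpha_p p_p\bI)$ and $\sqrt{s_0}\,\partial_t p_p$, which will be produced by the differentiated estimate.

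Next, differentiate \eqref{eq:evolution-problem-in-operator-form} formally in time and test with $(\partial_t\ubsi,\partial_t\ubvarphi,\partial_t\ubu)$. Exactly as above, the $\cB_1',\cB'$ terms cancel and we are left with
\begin{equation*}
\tfrac{1}{2}\,\partial_t\bigl(\tfrac{1}{2\mu}\|\bsi_f^\rd\|^2+\mu\|\bK^{-1/2}\bu_p\|^2\bigr)
+\|\partial_t A^{1/2}(\bsi_p+\alpha_p p_p\bI)\|^2+s_0\|\partial_t p_p\|^2+\cC(\partial_t\ubvarphi)(\partial_t\ubvarphi)
\le\partial_t\bF(\partial_t\ubsi)-\partial_t\bG(\partial_t\ubu).
\end{equation*}
After integration in time, this simultaneously yields the $\L^2(0,T)$ bounds on $\partial_t A^{1/2}(\bsi_p+\alpha_p p_p\bI)$ and $\sqrt{s_0}\,\partial_t p_p$ and the $\L^\infty(0,T)$ bounds on $\bsi_f^\rd$ and $\bu_p$. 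Finally, $\|A^{1/2}(\bsi_p)\|_{\L^\infty}$ follows by writing $A^{1/2}(\bsi_p)=A^{1/2}(\bsi_p+\alpha_p p_p\bI)-A^{1/2}(\alpha_p p_p\bI)$ and using the $\L^\infty$ bound on $p_p$ obtained from the inf-sup. The initial values $\bsi_{f,0},\bu_{p,0},\bsi_{p,0},\bvarphi_0,\btheta_0,\lambda_0$ at $t=0$ that appear when integrating time derivatives are bounded through Lemma~\ref{lem:sol0-in-M-operator}: the construction there solves a chain of Stokes/Darcy/elasticity subproblems driven solely by $p_{p,0}$ and $\bu_{p,0}=-\mu^{-1}\bK\nabla p_{p,0}$, producing a bound of the form $C(\|p_{p,0}\|_{\W_p}+\|\bK\nabla p_{p,0}\|_{\bH^1(\Omega_p)})$, which accounts for the last two terms on the right-hand side of \eqref{eq:stability-bound}. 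The main technical obstacle is arranging the estimate so that the constant is genuinely independent of $s_0$: the $s_0$-weighted energy term cannot be used to control $\|p_p\|_{\W_p}$ directly, so the inf-sup \eqref{eq:p-lambda-bound 0} must be invoked before the differentiated estimate is closed, which creates the coupling between the two passes and requires careful bookkeeping of the right-hand side terms through Gronwall.
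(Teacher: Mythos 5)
Passes (i) and (ii) of your plan match the paper exactly: the basic energy identity from testing with $(\ubsi,\ubvarphi,\ubu)$, the recovery of $\div(\bu_p)$, $\bdiv(\bsi_f)$, $\bdiv(\bsi_p)$ by taking $w_p=\div(\bu_p)$, $\bv_f=\bdiv(\bsi_f)$, $\bv_s=\bdiv(\bsi_p)$, and the inf-sup steps \eqref{eq:p-lambda-bound 0}, \eqref{eq:inf-sup-b1}, \eqref{eq:inf-sup-B} are all as in the paper, as is the observation that the main constraint is keeping constants independent of $s_0$. The gap is in pass (iii), and it is not a cosmetic one.

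You describe pass (iii) as ``differentiate the system in time and test with $(\partial_t\ubsi,\partial_t\ubvarphi,\partial_t\ubu)$.'' That procedure would not produce the displayed identity: for the $\cE$-block you would get $\partial_t^2\cE(\ubsi)(\partial_t\ubsi)=\tfrac12\partial_t\big[\|\partial_t A^{1/2}(\bsi_p+\alpha_p p_p\bI)\|^2+s_0\|\partial_t p_p\|^2\big]$, not $\|\partial_t A^{1/2}(\cdot)\|^2+s_0\|\partial_t p_p\|^2$, and for the $\cA$-block you would get $\tfrac{1}{2\mu}\|\partial_t\bsi_f^\rd\|^2 + a_p(\partial_t\bu_p,\partial_t\bu_p)$, not $\tfrac12\partial_t\big[\tfrac{1}{2\mu}\|\bsi_f^\rd\|^2+a_p(\bu_p,\bu_p)\big]$. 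More importantly, after integration that route requires control of $\partial_t\ubsi(0)$ --- i.e.\ of $\|\partial_t A^{1/2}(\bsi_p+\alpha_p p_p\bI)(0)\|$ and $\sqrt{s_0}\|\partial_t p_p(0)\|$ --- which the initial-data construction of Lemma~\ref{lem:sol0-in-M-operator} does not furnish and which is nontrivial to bound independently of $s_0$. Your displayed inequality is actually a hybrid: the time-derivative terms in it match the paper's, but the $\cC(\partial_t\ubvarphi)(\partial_t\ubvarphi)$ term and the right-hand side $\partial_t\bF(\partial_t\ubsi)-\partial_t\bG(\partial_t\ubu)$ match only your stated (incorrect) procedure, not what the paper derives.

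The missing idea is the paper's \emph{mixed} choice of test functions: one tests the undifferentiated rows associated with $\btau_f,\btau_p,w_p$ with $\partial_t\bsi_f,\partial_t\bsi_p,\partial_t p_p$ (so the $a_e$ and $s_0$ terms become $\L^2$-in-time quantities without a time derivative), while one differentiates in time the remaining rows (those with test functions $\bv_p,\bpsi,\bphi,\bv_f,\bv_s,\bchi_f,\bchi_p$) and tests them with the undifferentiated $\bu_p,\bvarphi,\btheta,\bu_f,\bu_s,\bgamma_f,\bgamma_p$ (so the $a_p$ and $c_{\BJS}$ terms become exact time derivatives). The resulting identity is
$$
\tfrac12\,\partial_t\Big(\tfrac{1}{2\mu}\|\bsi_f^\rd\|^2 + a_p(\bu_p,\bu_p) + c_{\BJS}(\bvarphi,\btheta;\bvarphi,\btheta)\Big) + \|\partial_t A^{1/2}(\bsi_p+\alpha_p p_p\bI)\|^2 + s_0\|\partial_t p_p\|^2
= \tfrac1n(q_f\,\bI,\partial_t\bsi_f)_{\Omega_f} + (q_p,\partial_t p_p)_{\Omega_p} + (\partial_t\f_f,\bu_f)_{\Omega_f} + (\partial_t\f_p,\bu_s)_{\Omega_p},
$$
and the two terms on the right involving $\partial_t\bsi_f,\partial_t p_p$ are then integrated by parts in time (since $\partial_t\bsi_f$ is not itself controlled in $\L^2(0,T;\bbL^2)$). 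This is what produces the $\L^\infty$ bounds on $\bsi_f^\rd$, $\bu_p$, $|\bvarphi-\btheta|_{\BJS}$ directly, needing only $\ubsi(0),\ubvarphi(0)$ from Lemma~\ref{lem:sol0-in-M-operator}, and the $\L^2(0,T)$ bounds on $\partial_t A^{1/2}(\bsi_p+\alpha_p p_p\bI)$ and $\sqrt{s_0}\partial_t p_p$, all with a constant independent of $s_0$. Without identifying this mixed choice and the ensuing integration by parts in time, the proof cannot be closed as written.
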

\begin{proof}
We begin by choosing $(\ubtau,\ubpsi,\ubv) = (\ubsi,\ubvarphi,\ubu)$ in \eqref{eq:Stokes-Biot-formulation} to get
\begin{align}\label{eq:stability-identity}
& \ds \frac{1}{2}\,\partial_t\,\Big( 
\|A^{1/2}(\bsi_p + \alpha_p\,p_p\,\bI)\|^2_{\bbL^2(\Omega_p)}
+ s_0\,\|p_p\|^2_{\W_p} \Big) + \frac{1}{2\,\mu}\,\|\bsi^{\rd}_f\|^2_{\bbL^2(\Omega_f)} + a_p(\bu_p,\bu_p) + c_{\BJS}(\bvarphi, \btheta; \bvarphi, \btheta) \nonumber \\ 
& \ds\quad \,=\, -\frac{1}{n}\,(q_f \, \bI,\bsi_f)_{\Omega_f} + (q_p,p_p)_{\Omega_p} + (\f_f,\bu_f)_{\Omega_f} + (\f_p,\bu_s)_{\Omega_p}.
\end{align}
Next, we integrate \eqref{eq:stability-identity} from $0$ to $t\in (0,T]$,
  use the coercivity bounds
  \eqref{eq:monotonicity-E-A}--\eqref{eq:pos-sem-def-c}, 
  and apply the Cauchy--Schwarz and Young's inequalities, to find
\begin{align}\label{eq:stability-bound-1}
& \ds \|A^{1/2}(\bsi_p + \alpha_p\,p_p\,\bI)(t)\|^2_{\bbL^2(\Omega_p)} + s_0\|p_p(t)\|^2_{\W_p} 
+ \int^t_0 \Big( \|\bsi_f^\rd\|^2_{\bbL^2(\Omega_f)} + \|\bu_p\|^2_{\bL^2(\Omega_p)} + |\bvarphi - \btheta|^2_{\BJS} \Big)\,ds \nonumber \\
& \ds\quad\leq\, C\,\Bigg( \int^t_0 \Big( \|\f_f\|^2_{\bV'_f} + \|\f_p\|^2_{\bV'_s} + \|q_f\|^2_{\bbX'_f} + \|q_p\|^2_{\W'_p} \Big)\,ds + \|A^{1/2}(\bsi_p(0) + \alpha_p\,p_{p}(0)\bI)\|^2_{\bbL^2(\Omega_p)} \\
& \ds\qquad\, +\, s_0\,\|p_p(0)\|^2_{\W_p} \Bigg) \, + \,\delta \int^t_0 \Big( \|\bsi_f\|^2_{\bbX_f} + \|p_p\|^2_{\W_p} + \|\bu_f\|^2_{\bV_f} + \|\bu_s\|^2_{\bV_s} \Big)\,ds, \nonumber
\end{align}
where $\delta > 0$ will be suitably chosen.
In addition, \eqref{eq:p-lambda-bound 0} and the first equation in \eqref{eq:Stokes-Biot-formulation}, yields 
\begin{equation}\label{eq:p-lambda-bound}
\|p_p\|_{\W_p} + \|\lambda\|_{\Lambda_p} 
\,\leq\, c\,\sup_{\0\neq\bv_p\in \bV_p} \frac{b_p(\bv_p,p_p) + b_{\Gamma}(\bv_p,\lambda)}{\|\bv_p\|_{\bV_p}}
\,=\, -c\,\sup_{\0\neq\bv_p\in \bV_p} \frac{a_p(\bu_p,\bv_p)}{\|\bv_p\|_{\bV_p}} 
\,\leq\, C\,\|\bu_p\|_{\bL^2(\Omega_p)}.
\end{equation}
Taking $\ubtau\in \bV$ (cf. \eqref{eq:kernel-B}) in the first equation of \eqref{eq:evolution-problem-in-operator-form}, using the continuity of the operators $\cE$ and $\cA$ in Lemma \ref{lem:N-M-properties}, and the inf-sup condition of $\cB_1$ for $\ubvarphi\in \bY$ (cf. \eqref{eq:inf-sup-b1}), we deduce
\begin{equation}\label{eq:stability-bound-2}
\begin{array}{l}
\ds \beta_1\,\|\ubvarphi\|_{\bY} 
\,\leq\, \sup_{\0\neq\ubtau\in \bV}\,\frac{\cB_1(\ubtau)(\ubvarphi)}{\|\ubtau\|_{\bX}} 
\,=\, -\,\sup_{\0\neq \ubtau\in \bV}\,\frac{(\partial_t\,\cE + \cA)(\ubsi)(\ubtau) - \bF(\ubtau)}{\|\ubtau\|_{\bX}}  \\[3ex]
\ds \leq C \left( \|\bsi_f\|_{\bbX_f} + \|\bu_p\|_{\bV_p} + \|\partial_t\,A^{1/2}(\bsi_p+ \alpha_p p_p \bI)\|_{\bbL^2(\Omega_p)} + \sqrt{s_0}\|\partial_t\,p_p\|_{\W_p} + \|q_f\|_{\bbX'_f} + \|q_p\|_{\W'_p} \right) .
\end{array}
\end{equation}
In turn, from the first equation in \eqref{eq:evolution-problem-in-operator-form}, applying the inf-sup condition of $\cB$ (cf. \eqref{eq:inf-sup-B}) for $\ubu = (\bu_f,\bu_s,\bgamma_f,\bgamma_p)\in \bZ$, and \eqref{eq:stability-bound-2}, we obtain
\begin{equation}\label{eq:stability-bound-3}
\begin{array}{l}
\ds \beta\,\|\ubu\|_{\bZ} 
\,\leq\, \sup_{\0\neq \ubtau\in \bX} \frac{\cB(\ubtau)(\ubu)}{\|\ubtau\|_{\bX}} 
\,=\, -\,\sup_{\0\neq \ubtau\in \bX} \frac{(\partial_t\,\cE + \cA)(\ubsi)(\ubtau) + \cB_1(\ubtau)(\ubvarphi) - \bF(\ubtau)}{\|\ubtau\|_{\bX}} \\ [3ex]
\ds \leq C \left( \|\bsi_f\|_{\bbX_f} + \|\bu_p\|_{\bV_p} + \|\partial_t\,A^{1/2}(\bsi_p+ \alpha_p p_p \bI)\|_{\bbL^2(\Omega_p)} + \sqrt{s_0}\|\partial_t\,p_p\|_{\W_p} + \|q_f\|_{\bbX'_f} + \|q_p\|_{\W'_p} \right).
\end{array}
\end{equation}

In addition, taking $w_p = \div(\bu_p)$, $\bv_f = \bdiv(\bsi_f)$, and $\bv_s = \bdiv(\bsi_p)$ in the first and third equations of \eqref{eq:Stokes-Biot-formulation}, we get
\begin{equation}\label{eq:div-up-sigmaf-bound}
\begin{array}{c}
\ds \|\bdiv(\bsi_f)\|_{\bL^2(\Omega_f)} \,\leq\, \|\f_f\|_{\bV'_f},\quad
\|\bdiv(\bsi_p)\|_{\bL^2(\Omega_p)} \,\leq\, \|\f_p\|_{\bV'_s}\,, \\ [2ex]
\ds \|\div(\bu_p)\|_{\L^2(\Omega_p)} \,\leq\, C\left( \|\partial_t\,A^{1/2}(\bsi_p+ \alpha_p p_p \bI)\|_{\bbL^2(\Omega_p)} + \sqrt{s_0}\|\partial_t\,p_p\|_{\W_p} + \|q_p\|_{\W'_p} \right).
\end{array}
\end{equation}
Then, combining \eqref{eq:stability-bound-1}--\eqref{eq:div-up-sigmaf-bound},
using \eqref{eq:tau-d-H0div-inequality}--\eqref{eq:tau-H0div-Xf-inequality},
and choosing $\delta$ small enough, we obtain
\begin{equation}\label{eq:stability-bound-4}
\begin{array}{l}
\ds \|A^{1/2}(\bsi_p+ \alpha_p p_p \bI)(t)\|^2_{\bbL^2(\Omega_p)} + s_0\|p_p(t)\|^2_{\W_p} \\ [3ex]
\ds\quad
+ \, \int^t_0 \Big( \|\bsi_f\|^2_{\bbX_f} + \|\bu_p\|^2_{\bV_p} + \|\bdiv(\bsi_p)\|^2_{\bL^2(\Omega_p)} + \| p_p \|^2_{\W_p}+ |\bvarphi - \btheta|^2_{\BJS} + \|\ubvarphi\|^2_{\bY} + \|\ubu\|^2_{\bZ} \Big)\,ds \\ [3ex]
\ds\leq\, C\,\Bigg( \int^t_0 \Big( \|\f_f\|^2_{\bV'_f} + \|\f_p\|^2_{\bV'_s} + \|q_f\|^2_{\bbX'_f} + \|q_p\|^2_{\W'_p} \Big)\,ds
+ \|A^{1/2}(\bsi_p(0) + \alpha_p\,p_{p}(0)\bI)\|^2_{\bbL^2(\Omega_p)} \\[3ex]
\ds\quad\, +\, s_0\,\|p_p(0)\|^2_{\W_p} \,+\,\int^t_0 \Big(  \|\partial_t\,A^{1/2}(\bsi_p+ \alpha_p p_p \bI)\|^2_{\bbL^2(\Omega_p)} +  s_0\|\partial_t\,p_p\|^2_{\W_p} \Big)\, ds \Bigg).
\end{array}
\end{equation}
Finally, in order to bound the last two terms in \eqref{eq:stability-bound-4}, we test \eqref{eq:Stokes-Biot-formulation} with $\ubtau = (\partial_t\,\bsi_f, \bu_p, \partial_t\,\bsi_p, \partial_t\,p_p)\linebreak\in \bX$, $\ubpsi = (\bvarphi, \btheta, \partial_t\,\lambda)\in \bY$,  $\ubv = (\bu_f, \bu_s, \bgamma_f, \bgamma_p)\in \bZ$ and differentiate in time the rows in \eqref{eq:Stokes-Biot-formulation} associated to $\bv_p, \bpsi, \bphi, \bv_f, \bv_s, \bchi_f$ and $\bchi_p$, to deduce 
\begin{equation*}
\begin{array}{l}
\ds \frac{1}{2}\,\partial_t\,\Big( \frac{1}{2\,\mu}\,\|\bsi^{\rd}_f\|^2_{\bbL^2(\Omega_f)} + a_p(\bu_p, \bu_p) + c_{\BJS}(\bvarphi, \btheta; \bvarphi, \btheta) \Big) 
+ \|\partial_t\,A^{1/2}(\bsi_p + \alpha_p\,p_p\,\bI)\|^2_{\bbL^2(\Omega_p)}
+ s_0\,\|\partial_t\,p_p\|^2_{\W_p} \\ [2ex]
\ds\quad  
\,=\, \frac{1}{n}\,(q_f\, \bI ,\partial_t\,\bsi_f)_{\Omega_f} + (q_p,\partial_t\,p_p)_{\Omega_p} + (\partial_t\,\f_f,\bu_f)_{\Omega_f} + (\partial_t\,\f_p,\bu_s)_{\Omega_p}, 
\end{array}
\end{equation*}
which together with the identities 
\begin{equation*}
\begin{array}{c}
\ds \int^t_0 (q_f \, \bI,\partial_t\,\bsi_f)_{\Omega_f} = (q_f \, \bI,\bsi_f)_{\Omega_f}\Big|^t_0 - \int^t_0 (\partial_t\,q_f\, \bI,\bsi_f)_{\Omega_f}, \\[2ex]
\ds \int^t_0 (q_p,\partial_t\,p_p)_{\Omega_p} = (q_p,p_p)_{\Omega_p}\Big|^t_0 - \int^t_0 (\partial_t\,q_p,p_p)_{\Omega_p}, 
\end{array}
\end{equation*}
and the positive semi-definite property of $\cC$ (cf. \eqref{eq:pos-sem-def-c}), yields
\begin{align}\label{eq:stability-bound-5}
& \ds \|\bsi^\rd_f(t)\|^2_{\bbL^2(\Omega_f)} + \|\bu_p(t)\|^2_{\bL^2(\Omega_p)} + |\bvarphi(t) - \btheta(t)|^2_{\BJS} 
+ \int^t_0 \Big(\|\partial_t A^{1/2}(\bsi_p+ \alpha_p p_p \bI)\|^2_{\bbL^2(\Omega_p)} +  s_0\|\partial_t p_p\|^2_{\W_p} \Big)\,ds \nonumber \\
& \ds \leq\, C\,\Bigg( \int^t_0 \Big( \|\partial_t\,\f_f\|^2_{\bV'_f} + \|\partial_t\,\f_p\|^2_{\bV'_s} + \|\partial_t\,q_f\|^2_{\L^2(\Omega_f)}
+ \|\partial_t \, q_p\|^2_{\W'_p} \Big)\,ds
+ \|q_f(t)\|^2_{\bbX'_f}
+ \|q_p(t)\|^2_{\W'_p} \nonumber \\[1ex]
& \ds\quad\,+\,\,
\|q_f(0)\|^2_{\bbX'_f}
+ \|q_p(0)\|^2_{\W'_p} 
+ \|\bsi_f(0)\|^2_{\bbX_f}
+ \|\bu_p(0)\|^2_{\bL^2(\Omega_p)} 
+ \|p_p(0)\|^2_{\W_p} 
+ |\bvarphi(0) - \btheta(0)|^2_{\BJS} \Bigg) \\ 
& \ds\quad\,+\,\, 
\delta_1\,\Big(\|\bsi_f(t)\|^2_{\bbX_f}+ \|p_p(t)\|^2_{\W_p}
\Big)
+ \delta_2\,\int^t_0 \Big( \|\bsi_f\|^2_{\bbL^2(\Omega_f)} +
\|p_p\|^2_{\W_p} +
\|\bu_f\|^2_{\bV_f} + \|\bu_s\|^2_{\bV_s} \Big)\,ds. \nonumber
\end{align}
Using \eqref{eq:p-lambda-bound} and the first two inequalities in
\eqref{eq:div-up-sigmaf-bound}, and choosing $\delta_1$ small enough,
we derive from \eqref{eq:stability-bound-5} and
\eqref{eq:tau-d-H0div-inequality}--\eqref{eq:tau-H0div-Xf-inequality} that
\begin{align}\label{eq:stability-bound-6}
& \ds \|\bsi_f(t)\|^2_{\bbX_f} + \|\bu_p(t)\|^2_{\bL^2(\Omega_p)} 
+ \|\bdiv(\bsi_p(t))\|^2_{\bL^2(\Omega_p)}
+ |\bvarphi(t) - \btheta(t)|^2_{\BJS} 
+ \|p_p(t)\|^2_{\W_p}
+ \|\lambda(t)\|^2_{\Lambda_p}
\nonumber \\[1ex]
& \,\ds \ + \,\int^t_0 \Big(\|\partial_t\,A^{1/2}(\bsi_p+ \alpha_p p_p \bI)\|^2_{\bbL^2(\Omega_p)} +  s_0\|\partial_t\,p_p\|^2_{\W_p} \Big)\,ds \nonumber \\[1ex]
& \ds \leq\, C\,\Bigg( \int^t_0 \Big( \|\partial_t\,\f_f\|^2_{\bV'_f} + \|\partial_t\,\f_p\|^2_{\bV'_s} + \|\partial_t\,q_f\|^2_{\L^2(\Omega_f)}
+ \|\partial_t q_p\|^2_{\W'_p} \Big)\,ds 
+ \|\f_f(t)\|^2_{\bV'_f}
+ \|\f_p(t)\|^2_{\bV'_s} \\[1ex]
& \ds\quad
+\,\|q_f(t)\|^2_{\bbX'_f} + \|q_p(t)\|^2_{\W'_p}  
+ \|q_f(0)\|^2_{\bbX'_f} + \|q_p(0)\|^2_{\W'_p} 
+ \|\bsi_f(0)\|^2_{\bbX_f}
+ \|\bu_p(0)\|^2_{\bL^2(\Omega_p)}
+ \|p_p(0)\|^2_{\W_p} \nonumber \\[1ex]
& \ds\quad +\,|\bvarphi(0) - \btheta(0)|^2_{\BJS} \Bigg)
+ \delta_2 \int^t_0 \Big( \|\bsi_f\|^2_{\bbX_f} +
\|p_p\|^2_{\W_p}+\|\bu_f\|^2_{\bV_f} + \|\bu_s\|^2_{\bV_s} \Big)\,ds. \nonumber
\end{align}
We next bound the initial data terms in \eqref{eq:stability-bound-4} and 
\eqref{eq:stability-bound-6}. Recalling from Corollary~\ref{cor:initial-data} that
$(\ubsi(0), \bvarphi(0),\btheta(0))
= (\ubsi_{0}, \bvarphi_0, \btheta_{0})$, using the stability of the continuous
initial data problems \eqref{eq:sol0-up0-pp0}--\eqref{eq:system-sol0-3} 
and the steady-state version of the arguments leading to 
\eqref{eq:stability-bound-4}, we obtain
\begin{equation}\label{init-data-bound}
\begin{array}{l}
\ds \|\bsi_{f}(0)\|^2_{\bbX_f} + \|\bu_p(0)\|^2_{\bL^2(\Omega_p)} 
+ \|A^{1/2}(\bsi_p(0))\|^2_{\bbL^2(\Omega_p)}
+ \|p_p(0)\|^2_{\W_p} 
+ |\bvarphi(0) - \btheta(0)|^2_{\BJS} \\[2ex]
\quad \le\, C\,
\left( \|p_{p,0}\|^2_{\W_p} + \|\bK\nabla p_{p,0}\|^2_{\H^1(\Omega_p)}
+ \| \f_f(0) \|^2_{\bV_f'}
+ \| \f_p(0) \|^2_{\bV_s'}
+ \| q_f(0) \|^2_{\bbX_f'}  \right),
\end{array}
\end{equation}
Therefore, combining \eqref{eq:stability-bound-4} with
\eqref{eq:stability-bound-6} and \eqref{init-data-bound},
choosing $\delta_2$ small enough, and using
the estimate (cf. \eqref{eq:aux-taup-qp-inequality}):
\begin{equation}\label{eq:A-sigmap-t-bound}
\|A^{1/2}(\bsi_p(t))\|_{\bbL^2(\Omega_p)} \leq C\,\left( \|A^{1/2}(\bsi_p + \alpha_p\,p_p\,\bI)(t)\|_{\bbL^2(\Omega_p)} + \|p_p(t)\|_{\W_p} \right),
\end{equation}
and the Sobolev embedding of $\H^1(0,T)$ into $\L^\infty(0,T)$, we
conclude \eqref{eq:stability-bound}.
\end{proof}

\section{Semidiscrete continuous-in-time approximation}\label{sec:semidiscrete-formulation}

In this section we introduce and analyze the semidiscrete
continuous-in-time approximation of
\eqref{eq:evolution-problem-in-operator-form}.  We analyze its
solvability by employing the strategy developed in
Section~\ref{sec:well-posedness-model}.  In addition, we derive
error estimates with rates of convergence.

Let $\cT_h^f$ and $\cT_h^p$ be shape-regular and quasi-uniform affine
finite element partitions of $\Omega_f$ and $\Omega_p$, respectively.
The two partitions may be non-matching along the interface $\Gamma_{fp}$.
For the discretization, we consider the following conforming finite
element spaces:
\begin{equation*}
\bbX_{fh}\times \bV_{fh}\times \bbQ_{fh} \subset \bbX_{f}\times \bV_{f}\times \bbQ_{f},
\quad
\bbX_{ph}\times \bV_{sh}\times \bbQ_{ph} \subset \bbX_{p}\times \bV_{s}\times \bbQ_{p},
\quad
\bV_{ph}\times \W_{ph} \subset \bV_{p}\times \W_{p}.
\end{equation*}
We take $(\bbX_{fh}, \bV_{fh}, \bbQ_{fh})$ and $(\bbX_{ph}, \bV_{sh},
\bbQ_{ph})$ to be any stable finite element spaces for mixed
elasticity with weakly imposed stress symmetry, such as the
Amara--Thomas \cite{at1979}, PEERS \cite{abd1984}, Stenberg
\cite{stenberg1988}, Arnold--Falk--Winther \cite{afw2007,awanou2013},
or Cockburn--Gopalakrishnan--Guzman \cite{cgg2010} families of spaces.
We choose $(\bV_{ph}, \W_{ph})$ to be any stable
mixed finite element Darcy spaces, such as the Raviart--Thomas or
Brezzi-Douglas-Marini
spaces \cite{Brezzi-Fortin}. For the Lagrange
multipliers $(\bLambda_{fh}, \bLambda_{sh}, \Lambda_{ph})$ 
we consider the following two options of discrete spaces.
\begin{itemize}
\item[{\bf (S1)}] Conforming spaces:
\begin{equation}\label{eq:conforming-spaces-Yh}
\bLambda_{fh} \subset \bLambda_f,\quad 
\bLambda_{sh} \subset \bLambda_s,\quad 
\Lambda_{ph} \subset \Lambda_p\,,
\end{equation}
equipped with $\H^{1/2}$-norms as in \eqref{eq:norms-H-half}.  If the
normal traces of the spaces $\bbX_{fh}$, $\bbX_{ph}$, or $\bV_{ph}$
contain piecewise polynomials in $\P_k$ on simplices or $\Q_k$ on cubes with $k \ge 1$,
where $\P_k$ denotes polynomials of total degree $k$ and $\Q_k$
stands for polynomials of degree $k$ in each variable, we take the
Lagrange multiplier spaces to be continuous piecewise polynomials in
$\P_k$ or $\Q_k$ on the traces of the corresponding subdomain grids.
In the case of $k = 0$, we take the Lagrange multiplier spaces to be continuous
piecewise polynomials in $\P_1$ or $\Q_1$ on grids obtained by
coarsening by two the traces of the subdomain grids.

\item[{\bf (S2)}] Non-conforming spaces:
\begin{equation}\label{eq:non-conforming-spaces-Yh}
\bLambda_{fh} := \bbX_{fh}\bn_f|_{\Gamma_{fp}},\quad 
\bLambda_{sh} := \bbX_{ph}\bn_p|_{\Gamma_{fp}},\quad 
\Lambda_{ph} := \bV_{ph}\cdot\bn_p|_{\Gamma_{fp}}\,,
\end{equation}
which consist of discontinuous piecewise polynomials and are equipped with $\L^2$-norms.
\end{itemize}
It is also possible to mix conforming and non-conforming choices, but we will focus on {\bf (S1)} and {\bf (S2)} for simplicity of the presentation.

\begin{rem}\label{rem:non-conf}
  We note that, since $\H^{1/2}(\Gamma_{fp})$ is dense in $\L^2(\Gamma_{fp})$, 
  the last three equations in the continuous weak formulation
  \eqref{eq:Stokes-Biot-formulation-1} hold for test functions in $\L^2(\Gamma_{fp})$,
  assuming that the solution is smooth enough. In particular, these equations hold
  for $\xi_h \in \Lambda_{ph}$, $\bpsi_h \in \bLambda_{fh}$, and
  $\bphi_h \in \bLambda_{sh}$ in both the conforming case {\bf (S1)} and the
  non-conforming case {\bf (S2)}.
  \end{rem}

Now, we group the spaces similarly to the continuous case:
\begin{equation*}
\begin{array}{c}
\ds \bX_h := \bbX_{fh}\times \bV_{ph} \times \bbX_{ph} \times \W_{ph},\quad
\bY_h := \bLambda_{fh} \times \bLambda_{sh} \times \Lambda_{ph},\quad
\bZ_h := \bV_{fh} \times \bV_{sh} \times \bbQ_{fh} \times \bbQ_{ph}, \\ [2ex]
\ds \ubsi_h := (\bsi_{fh}, \bu_{ph}, \bsi_{ph}, p_{ph})\in \bX_h,\quad
\ubvarphi_h := (\bvarphi_h, \btheta_h, \lambda_h)\in \bY_h,\quad 
\ubu_h := (\bu_{fh}, \bu_{sh}, \bgamma_{fh}, \bgamma_{ph})\in \bZ_h, \\ [1ex]
\ds \ubtau_h := (\btau_{fh}, \bv_{ph}, \btau_{ph}, w_{ph})\in \bX_h,\quad
\ubpsi_h := (\bpsi_h, \bphi_h, \xi_h)\in \bY_h,\quad 
\ubv_h := (\bv_{fh}, \bv_{sh}, \bchi_{fh}, \bchi_{ph})\in \bZ_h.
\end{array}
\end{equation*}
The spaces $\bX_h$ and $\bZ_h$ are endowed with the same norms as their
continuous counterparts. For $\bY_h$ we consider the norm $\|\ubpsi_h\|_{\bY_h} := \|\bpsi_h\|_{\bLambda_{fh}} + \|\bphi_h\|_{\bLambda_{sh}} + \|\xi_h\|_{\Lambda_{ph}}$, where
\begin{equation}\label{eq:norm-in-Yh}
\|\xi_h\|_{\Lambda_{ph}} :=
\left\{ \begin{array}{l}
\|\xi_h\|_{\Lambda_{p}}
\mbox{ for conforming subspaces } {\bf(\bS1)} \mbox{ (cf. \eqref{eq:norms-H-half})} \,,\\[1.5ex]
\|\xi_h\|_{\L^2(\Gamma_{fp})} \mbox{ for non-conforming subspaces } {\bf(\bS2)}\,.
\end{array}\right.
\end{equation}
Analogous notation is used for $\|\bpsi_h\|_{\bLambda_{fh}}$ and $\|\bphi_h\|_{\bLambda_{sh}}$.

The continuity of all operators in the discrete case follows from their
continuity in the continuous case (cf. Lemma~\ref{lem:N-M-properties}),
with the exception of $\cB_1$ (cf. \eqref{eq:operators-A-B1-C})
in the case of non-conforming
Lagrange multipliers ${\bf(\bS2)}$. In this case it follows for each
fixed $h$ from the
discrete trace-inverse inequality for piecewise polynomial
functions, $\|\varphi\|_{\L^2(\Gamma)} \le C h^{-1/2}\|\varphi\|_{\L^2(\cO)}$,
where $\Gamma \subset \partial \cO$. In particular,
\begin{equation}\label{cont-B1-discrete}
b_{\bn_f}(\btau_f,\bpsi) \le C \|\btau_f\|_{\bbL^2(\Gamma_{fp})}\|\bpsi\|_{\bL^2(\Gamma_{fp})}
\le C h^{-1/2}\|\btau_f\|_{\bbL^2(\Omega_f)}
\|\bpsi\|_{\bL^2(\Gamma_{fp})},
\end{equation}
with similar bounds for $b_{\bn_p}(\btau_p,\bphi)$ and $b_{\Gamma}(\bv_p,\xi)$.

We next discuss the discrete inf-sup conditions that are satisfied by the 
finite element spaces. Let
\begin{equation}\label{eq:subspace-bbXh-tilde}
\wt{\bX}_h := \Big\{ \ubtau_{h}\in \bX_h :\quad \btau_{fh}\bn_f = \0 \qan \btau_{ph}\bn_p = \0 \qon \Gamma_{fp} \Big\}.
\end{equation}
In addition, define the discrete kernel of the operator $\cB$ as
\begin{equation}\label{eq:discrete-kernel-B}
  \bV_h := \Big\{ \ubtau_h\in \bX_h :\quad \cB(\ubtau_h)(\ubv_h) = 0
  \quad \forall\,\bv_h\in \bZ_h \Big\} 
= \wt{\bbX}_{fh}\times \bV_{ph}\times \wt{\bbX}_{ph}\times \W_{ph},
\end{equation}
where
\begin{equation*}
\wt{\bbX}_{\star h} := \Big\{ \btau_{\star h}\in \bbX_{\star h} :\,
(\btau_{\star h}, \bxi_{\star h})_{\Omega_\star} = 0 \ \ \forall \, \bxi_{\star h} \in \bbQ_{\star h}
\qan \bdiv(\btau_{\star h}) = \0 \qin \Omega_\star \Big\},\quad \star\in \{f,p\}. 
\end{equation*}
In the above, $\bdiv(\btau_{\star h}) = \0$ follows from 
$\bdiv(\bbX_{fh}) = \bV_{fh}$ and $\bdiv(\bbX_{ph}) = \bV_{sh}$, which is true
for all stable elasticity spaces. 

\begin{lem}\label{lem:discr-inf-sup}
There exist positive constants $\wt{\beta}$ and $\wt{\beta}_{1}$ such that
\begin{equation}\label{eq:inf-sup-B-semidiscrete}
\sup_{\0\neq \ubtau_h\in \wt{\bX}_h} \frac{\cB(\ubtau_h)(\ubv_h)}{\|\ubtau_h\|_{\bX}}
\,\geq\, \wt{\beta}\,\|\ubv_h\|_{\bZ} \quad \forall\, \ubv_h \in \bZ_h,
\end{equation}
\begin{equation}\label{eq:inf-sup-b1-semidiscrete}
\sup_{\0\neq \ubtau_h \in \bV_h} \frac{\cB_1(\ubtau_h)(\ubpsi_h)}{\|\ubtau_h \|_{\bX}} 
\,\geq\, \wt{\beta}_{1}\,\|\ubpsi_h\|_{\bY_h} \quad \forall\,\ubpsi_h\in \bY_h.
\end{equation}
\end{lem}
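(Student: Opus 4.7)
The plan is to mirror the strategy used in Lemma~\ref{lem:semipositive-c-inf-sup-b-B} for the continuous inf-sup conditions, combining the discrete stability of the chosen mixed finite element triples with Fortin-type correction arguments. Both inf-sup conditions exploit the diagonal (uncoupled) structure of the operators $\cB$ and $\cB_1$ displayed in \eqref{eq:operators-A-B1-C}--\eqref{eq:operator-B}, so that each can be reduced to a sum of independent sub-problems posed in $\Omega_f$ and $\Omega_p$ (plus the Darcy interface term).

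To establish \eqref{eq:inf-sup-B-semidiscrete}, I would use the fact that $\cB$ decouples across subdomains with no interface contribution, so that the inf-sup reduces to two independent mixed-elasticity inf-sup conditions, one on $\wt{\bbX}_{fh} \times (\bV_{fh} \times \bbQ_{fh})$ for the pair $(b_f, b_\skf)$, and an analogous one on $\wt{\bbX}_{ph} \times (\bV_{sh} \times \bbQ_{ph})$, where $\wt{\bbX}_{\star h} := \{ \btau_{\star h} \in \bbX_{\star h} : \btau_{\star h}\bn_\star = \0 \mbox{ on } \Gamma_{fp} \}$. Each of these is the standard discrete stability of the chosen weakly-symmetric mixed elasticity triple on a Lipschitz domain with Neumann boundary consisting of $\Gamma_{fp} \cup \Gamma_f^\rN$ (resp.\ $\Gamma_{fp} \cup \tilde\Gamma_p^\rN$), and it holds by the assumed stability of the Amara--Thomas, PEERS, Stenberg, Arnold--Falk--Winther, or Cockburn--Gopalakrishnan--Guzm\'an families, together with $|\Gamma_f^\rD|, |\tilde\Gamma_p^\rD| > 0$.

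To establish \eqref{eq:inf-sup-b1-semidiscrete} on the discrete kernel $\bV_h$, I would again separate the three summands of $\cB_1$ and treat the two choices of multiplier spaces distinctly. In the non-conforming case \textbf{(S2)}, the spaces $\bLambda_{fh} = \bbX_{fh}\bn_f|_{\Gamma_{fp}}$, $\bLambda_{sh} = \bbX_{ph}\bn_p|_{\Gamma_{fp}}$, and $\Lambda_{ph} = \bV_{ph}\cdot\bn_p|_{\Gamma_{fp}}$ immediately furnish, for given $(\bpsi_h,\bphi_h,\xi_h)$, preimages $\btau_{fh}^0 \in \bbX_{fh}$, $\btau_{ph}^0 \in \bbX_{ph}$, and $\bv_{ph}^0 \in \bV_{ph}$ with the prescribed normal traces and with $\L^2(\Gamma_{fp})$-bounded $\bbH(\bdiv)$ norms, obtained by solving auxiliary discrete elasticity and Darcy problems. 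A Fortin-type correction based on the canonical commuting interpolants of the mixed triples then projects these tensors into $\bV_h$, killing their divergence and weakly-antisymmetric parts without altering the interface pairings. In the conforming case \textbf{(S1)}, I would apply the continuous inf-sup of Lemma~\ref{lem:semipositive-c-inf-sup-b-B} to $\ubpsi_h \in \bY_h \subset \bY$ to produce $\ubtau \in \bV$, and then apply a Fortin operator into $\bV_h$ compatible with the chosen spaces and preserving $\langle \btau_f \bn_f,\bpsi_h\rangle_{\Gamma_{fp}}$, $\langle \btau_p \bn_p,\bphi_h\rangle_{\Gamma_{fp}}$, and $\langle \bv_p\cdot\bn_p,\xi_h\rangle_{\Gamma_{fp}}$.

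The main obstacle I expect is the construction of such a bounded Fortin interpolant into $\bV_h$ in the non-matching-grid setting, since it must simultaneously (i) preserve divergences via $\bdiv(\bbX_{\star h}) = \bV_{\star h}$, (ii) preserve weak symmetry through orthogonality against $\bbQ_{\star h}$, and (iii) reproduce the correct duality pairings against $\bLambda_{fh}$, $\bLambda_{sh}$, and $\Lambda_{ph}$ on $\Gamma_{fp}$. This is precisely why the prescription in \textbf{(S1)} coarsens by two in the $k=0$ case: it ensures a discrete trace inf-sup between the Lagrange multiplier grids and the normal traces of the bulk finite element spaces on the subdomain trace meshes, yielding $h$-independent constants $\wt{\beta}$ and $\wt{\beta}_1$. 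A careful verification of this trace inf-sup, together with an adaptation of the argument of \cite[Lemma~3.2]{ejs2009} accounting for the symmetry and divergence-free constraints in $\bV_h$, completes the proof.
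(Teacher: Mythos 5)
Your plan follows the same structure as the paper's proof: exploit the diagonal decoupling of $\cB$ and $\cB_1$, reduce \eqref{eq:inf-sup-B-semidiscrete} to subdomain mixed-elasticity stability with $\Gamma_{fp}$ treated as additional Neumann boundary (the paper handles the restriction to $\wt\bX_h$ by citing \cite[Theorem~4.2]{msmfe-simpl}), and establish \eqref{eq:inf-sup-b1-semidiscrete} by solving a continuous auxiliary problem like \eqref{elast-aux} and pushing the solution into $\bV_h$ by an interpolant that preserves the right quantities. You correctly isolate what this interpolant must do — preserve divergence moments, preserve $\bbQ_{\star h}$-orthogonality, and reproduce the normal-trace pairings on $\Gamma_{fp}$ — which is exactly the list of conditions \eqref{pi-tilde}. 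So the identification of the pivotal step is right.

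Where you stop short is in actually supplying such an interpolant, and the replacement you gesture at would not work as stated. The canonical commuting interpolants of the listed weakly-symmetric elasticity families preserve divergence and edge/face normal-trace moments, but they have no reason to preserve orthogonality against $\bbQ_{\star h}$, so they do not map $\be(\bv_f)$ into the discrete kernel component $\wt\bbX_{fh}$. The paper's resolution is to use instead the elliptic projection $\tilde\Pi_h^f$ from \cite[Lemma~3.2]{ejs2009} style arguments, defined precisely by the three equations \eqref{pi-tilde}; its existence and boundedness follow from the stability of the discrete triple, i.e.\ from \eqref{eq:inf-sup-B-semidiscrete} itself, and applied to the symmetric divergence-free tensor $\be(\bv_f)$ it lands in $\wt\bbX_{fh}$ by construction. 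Your \textbf{(S2)} variant — lift the prescribed normal trace to a preimage $\btau^0_{fh}$ and then correct by a kernel perturbation vanishing on $\Gamma_{fp}$ — is a legitimate alternative, but there too the correction must be built from \eqref{eq:inf-sup-B-semidiscrete} rather than from a commuting interpolant, and you must verify that the lifting is $\L^2(\Gamma_{fp})$-bounded with an $h$-independent constant: a naive edge-by-edge $\bbBDM_1$ lifting costs a factor $h^{-1/2}$ in the $\bbH(\bdiv)$ norm, and it is precisely to avoid this loss that the paper routes the argument through the continuous problem \eqref{elast-aux} and the elliptic projection. Finally, a notational caution: the paper uses $\wt\bbX_{\star h}$ for the discrete kernel components (cf.\ \eqref{eq:discrete-kernel-B}) and $\wt\bX_h$ for the zero-normal-trace subspace \eqref{eq:subspace-bbXh-tilde}; your proposal reuses $\wt\bbX_{\star h}$ for the latter, which could cause confusion when the two spaces appear together in the proof of \eqref{eq:inf-sup-b1-semidiscrete}.
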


\begin{proof}
We begin with the proof of \eqref{eq:inf-sup-B-semidiscrete}.
We recall that the space $\bX_h$ consists of stresses and velocities
with zero normal traces on the Neumann boundaries, while the space
$\wt{\bX}_h$ involves further restriction on $\Gamma_{fp}$. The 
inf-sup condition \eqref{eq:inf-sup-B-semidiscrete}
without restricting the normal stress or velocity
on the subdomain boundary follows from the stability of the
elasticity and Darcy finite element spaces. The restricted inf-sup
condition \eqref{eq:inf-sup-B-semidiscrete} can be shown using the
argument in \cite[Theorem~4.2]{msmfe-simpl}.

We continue with the proof of \eqref{eq:inf-sup-b1-semidiscrete}.
Similarly to the continuous case, due the diagonal character of
operator $\cB_1$ (cf. \eqref{eq:operators-A-B1-C}), we need to show
individual inf-sup conditions for $b_{\bn_f}$, $b_{\bn_p}$, and
$b_{\Gamma}$. We first focus on $b_{\Gamma}$. For the conforming case
{\bf (S1)} (cf. \eqref{eq:conforming-spaces-Yh}), the proof of
\eqref{eq:inf-sup-b1-semidiscrete} can be derived from a slight
adaptation of \cite[Lemma~4.4]{ejs2009} (see also
\cite[Section~5.3]{gmor2014} for the case $k=0$), whereas from
\cite[Section~5.1]{aeny2019} we obtain the proof for the
non-conforming version {\bf (S2)}
(cf. \eqref{eq:non-conforming-spaces-Yh}). We next consider the
inf-sup condition \eqref{eq:inf-sup-b1-semidiscrete} for $b_{\bn_f}$,
with argument for $b_{\bn_p}$ being similar.
The proof utilizes a suitable interpolant of $\btau_f := \be(\bv_f)$,
the solution to the auxiliary problem \eqref{elast-aux}.
Due to the stability of the spaces $(\bbX_{fh}, \bV_{fh}, \bbQ_{fh})$
(cf. \eqref{eq:inf-sup-B-semidiscrete}), there exists an interpolant
$\tilde\Pi_h^f: \bbH^1(\Omega_f) \to \bbX_{fh}$ satisfying
\begin{equation}\label{pi-tilde}
  \begin{array}{c}
  b_f(\tilde\Pi_h^f\btau_f - \btau_f,\bv_{fh}) = 0 \quad
  \forall \, \bv_{fh} \in \bV_{fh}, \quad
  b_{\sk,f}(\tilde\Pi_h^f\btau_f - \btau_f,\bchi_{fh}) = 0 \quad
  \forall \, \bchi_{fh} \in \bbQ_{fh}, \\ [1ex]
  \langle(\tilde\Pi_h^f\btau_f - \btau_f)\bn_f,
  \btau_{fh}\bn_f\rangle_{\Gamma_{fp}\cup\Gamma_f^N} = 0
  \quad \forall \, \btau_{fh} \in \bbX_{fh}.
  \end{array}
\end{equation}
The interpolant $\tilde\Pi_h^f\btau_f$ is defined as the elliptic
projection of $\btau_f$ satisfying Neumann boundary condition on
$\Gamma_{fp}\cup\Gamma_f^N$ \cite[(3.11)--(3.15)]{Khat-Yot}.
Due to \eqref{pi-tilde},
it holds that $\tilde\Pi_h^f\btau_f \in \wt{\bbX}_{fh}$. With this
interpolant, the proof of \eqref{eq:inf-sup-b1-semidiscrete} for
$b_\Gamma$ discussed above
can be easily modified for $b_{\bn_f}$, see \cite[Lemma~4.4]{ejs2009}
and \cite[Section~5.3]{gmor2014} for {\bf (S1)} and \cite[Section~5.1]{aeny2019}
for {\bf (S2)}.
\end{proof}

\begin{rem} The stability analysis requires only a discrete
inf-sup condition for $\cB$ in $\bX_h\times \bZ_h$. The more
restrictive inf-sup condition \eqref{eq:inf-sup-B-semidiscrete} is
used in the error analysis in order to simplify the proof.
\end{rem}

Finally, we will utilize the following inf-sup condition:
there exists a constant $c>0$ such that
\begin{equation}\label{eq:p-lambda-bound semidiscrete}
\|p_{ph}\|_{\W_p} + \|\lambda_h\|_{\Lambda_{ph}} 
\,\leq\, c\,\sup_{\0\neq \bv_{ph}\in \bV_{ph}} \frac{b_p(\bv_{ph},p_{ph})
  + b_{\Gamma}(\bv_{ph},\lambda_h)}{\|\bv_{ph}\|_{\bV_p}},
\end{equation}
whose proof for the conforming case \eqref{eq:conforming-spaces-Yh} 
follows from a slight adaptation of \cite[Lemma~5.1]{gos2012}, 
whereas the non-conforming case \eqref{eq:non-conforming-spaces-Yh} can be
found in \cite[Section~5.1]{aeny2019}.

The semidiscrete continuous-in-time approximation to
\eqref{eq:evolution-problem-in-operator-form} reads: 
find 
$(\ubsi_h,\ubvarphi_h,\ubu_h):[0,T]\to \bX_h\times \bY_h\times \bZ_h$ such that for all $(\ubtau_h,\ubpsi_h,\ubv_h)\in \bX_h\times \bY_h\times \bZ_h$, and for a.e. $t\in (0,T)$, 
\begin{equation}\label{eq:Stokes-Biot-formulation-semidiscrete}
\begin{array}{lll}
\ds \frac{\partial}{\partial t}\,\cE(\ubsi_h)(\ubtau_h) + \cA(\ubsi_h)(\ubtau_h) + \cB_1(\ubtau_h)(\ubvarphi_h) + \cB(\ubtau_h)(\ubu_h) & = & \bF(\ubtau_h) , \\ [2ex]
\ds -\,\cB_1(\ubsi_h)(\ubpsi_h) + \cC(\ubvarphi_h)(\ubpsi_h) & = & 0 , \\ [2ex]
\ds -\,\cB\,(\ubsi_h)(\ubv_h) & = & \bG(\ubv_h) .
\end{array}
\end{equation}

We next discuss the choice of compatible discrete initial data $(\ubsi_{h,0},\ubvarphi_{h,0},\ubu_{h,0})$, whose construction is based on a modification of the step-by-step procedure for the continuous initial data.

1. Define $\btheta_{h,0} := P^{\bLambda_s}_h(\btheta_0)$, where $P^{\bLambda_s}_h : \bLambda_s \to \bLambda_{sh}$ is the classical $\L^2$-projection operator, satisfying, for all $\bphi\in \bL^2(\Gamma_{fp})$,
\begin{equation*}
\pil \bphi - P^{\bLambda_s}_h(\bphi), \bphi_h\pir_{\Gamma_{fp}} = 0 \quad \forall\,\bphi_h\in \bLambda_{sh}\,.
\end{equation*} 

2. Define $(\bsi_{fh,0},\bvarphi_{h,0},\bu_{fh,0},\bgamma_{fh,0})\in \bbX_{fh}\times \bLambda_{fh}\times \bV_{fh}\times \bbQ_{fh}$ and $(\bu_{ph,0},p_{ph,0},\lambda_{h,0})\in \bV_{ph}\times \W_{ph}\times \Lambda_{ph}$ by solving a coupled Stokes-Darcy problem: 
\begin{align}\label{eq:auxiliary-initial-condition-semidiscrete-1}
& a_f(\bsi_{fh,0},\btau_{fh}) + b_{\bn_f}(\btau_{fh},\bvarphi_{h,0})
+ b_f(\btau_{fh},\bu_{fh,0}) + b_{\skf}(\btau_{fh},\bgamma_{fh,0}) \nonumber \\
& \ds\quad =\, a_f(\bsi_{f,0},\btau_{fh}) + b_{\bn_f}(\btau_{fh},\bvarphi_{0})
+ b_f(\btau_{fh},\bu_{f,0}) + b_{\skf}(\btau_{fh},\bgamma_{f,0})
= \ds -\frac{1}{n}\,(q_f(0)\,\bI,\btau_{fh})_{\Omega_f}, \nonumber \\
& \ds -b_{\bn_f}(\bsi_{fh,0},\bpsi_h) 
+ \mu\,\alpha_{\BJS}\sum^{n-1}_{j=1}
\pil\sqrt{\bK_j^{-1}} (\bvarphi_{h,0} - \btheta_{h,0})\cdot\bt_{f,j},\bpsi_h\cdot\bt_{f,j}\pir_{\Gamma_{fp}}
+ \pil\bpsi_h\cdot\bn_f,\lambda_{h,0}\pir_{\Gamma_{fp}} \nonumber \\
& \ds\quad =\, -b_{\bn_f}(\bsi_{f,0},\bpsi_h) 
+ \mu\,\alpha_{\BJS}\sum^{n-1}_{j=1}
\pil\sqrt{\bK_j^{-1}} (\bvarphi_{0} - \btheta_{0})\cdot\bt_{f,j},\bpsi_h\cdot\bt_{f,j}\pir_{\Gamma_{fp}}
+ \pil\bpsi_h\cdot\bn_f,\lambda_{0}\pir_{\Gamma_{fp}}  = 0, \nonumber \\[0.5ex]
& -b_f(\bsi_{fh,0},\bv_{fh}) - b_{\skf}(\bsi_{fh,0},\bchi_{fh})
= -b_f(\bsi_{f,0},\bv_{fh}) - b_{\skf}(\bsi_{f,0},\bchi_{fh})
=  (\f_f(0),\bv_{fh})_{\Omega_f}, \\[0.5ex]
& \ds a_p(\bu_{ph,0},\bv_{ph}) + b_p(\bv_{ph},p_{ph,0}) + b_{\Gamma}(\bv_{ph},\lambda_{h,0})
= a_p(\bu_{p,0},\bv_{ph}) + b_p(\bv_{ph},p_{p,0}) + b_{\Gamma}(\bv_{ph},\lambda_{0})
= 0 \,, \nonumber \\[0.5ex]
& \ds - b_p(\bu_{ph,0},w_{ph}) = - b_p(\bu_{p,0},w_{ph}) = -\mu^{-1} (\div(\bK\nabla p_{p,0}),w_{ph})_{\Omega_p}, \nonumber \\[0.5ex]
& \ds -\pil\bvarphi_{h,0}\cdot\bn_f + (\btheta_{h,0} + \bu_{ph,0})\cdot\bn_p,\xi_h\pir_{\Gamma_{fp}} 
= -\pil\bvarphi_{0}\cdot\bn_f + (\btheta_{0} + \bu_{p,0})\cdot\bn_p,\xi_h\pir_{\Gamma_{fp}} 
= 0, \nonumber
\end{align}
for all $(\btau_{fh}, \bpsi_h, \bv_{fh}, \bchi_{fh}) \in \bbX_{fh}\times \bLambda_{fh}\times \bV_{fh}\times \bbQ_{fh}$ and $(\bv_{ph},w_{ph},\xi_h)\in \bV_{ph}\times \W_{ph}\times \Lambda_{ph}$.
Note that \eqref{eq:auxiliary-initial-condition-semidiscrete-1} is well-posed as a direct application of Theorem~\ref{thm:system_solvability}.
Note also that $\btheta_{h,0}$ is data for this problem.

3. Define $(\bsi_{ph,0}, \bomega_{h,0}, \bbeta_{ph,0}, \brho_{ph,0})\in \bbX_{ph}\times \bLambda_{sh}\times \bV_{sh}\times \bbQ_{ph}$, as the unique solution of the problem
\begin{align}\label{eq:auxiliary-initial-condition-semidiscrete-2}
  & \ds (A(\bsi_{ph,0}),\btau_{ph})_{\Omega_p} + b_{\bn_p}(\btau_{ph},\bomega_{h,0})
  + b_s(\btau_{ph},\bbeta_{ph,0}) + b_{\skp}(\btau_{ph},\brho_{ph,0})
  + (A(\alpha_p\,p_{ph,0}\,\bI),\btau_{ph})_{\Omega_p} \nonumber \\[0.5ex]
  & \ds\quad =\, (A(\bsi_{p,0}),\btau_{ph})_{\Omega_p} + b_{\bn_p}(\btau_{ph},\bomega_{0})
  + b_s(\btau_{ph},\bbeta_{p,0}) + b_{\skp}(\btau_{ph},\brho_{p,0})
  + (A(\alpha_p\,p_{p,0}\,\bI),\btau_{ph})_{\Omega_p} = 0, \nonumber \\
& \ds -b_{\bn_p}(\bsi_{ph,0},\bphi_h) +  
\mu\,\alpha_{\BJS}\sum^{n-1}_{j=1} \pil\sqrt{\bK_j^{-1}}(\bvarphi_{h,0} - \btheta_{h,0})\cdot\bt_{f,j},\bphi_h\cdot\bt_{f,j}\pir_{\Gamma_{fp}} 
+ \pil\bphi_h\cdot\bn_p,\lambda_{h,0}\pir_{\Gamma_{fp}} \\
& \ds\quad = -b_{\bn_p}(\bsi_{p,0},\bphi_h) +  
\mu\,\alpha_{\BJS}\sum^{n-1}_{j=1} \pil\sqrt{\bK_j^{-1}}(\bvarphi_{0} - \btheta_{0})\cdot\bt_{f,j},\bphi_h\cdot\bt_{f,j}\pir_{\Gamma_{fp}} 
+ \pil\bphi_h\cdot\bn_p,\lambda_{0}\pir_{\Gamma_{fp}} = 0, \nonumber \\
& \ds -b_s(\bsi_{ph,0},\bv_{sh}) - b_{\skp}(\bsi_{ph,0},\bchi_{ph}) 
= -b_s(\bsi_{p,0},\bv_{sh}) - b_{\skp}(\bsi_{p,0},\bchi_{ph})
= (\f_{p}(0),\bv_{sh})_{\Omega_p}, \nonumber
\end{align}
for all $(\btau_{ph}, \bphi_{h}, \bv_{sh}, \bchi_{ph})\in \bbX_{ph}\times \bLambda_{sh}\times \bV_{sh}\times \bbQ_{ph}$.
Note that the well-posedness of \eqref{eq:auxiliary-initial-condition-semidiscrete-2} follows from the
classical Babu{\v s}ka-Brezzi theory.
Note also that $p_{ph,0}, \bvarphi_{h,0}, \btheta_{h,0}$, and $\lambda_{h,0}$ are data for this problem.

4. Finally, define $(\wh\bsi_{ph,0},\bu_{sh,0}, \bgamma_{ph,0}) \in
\bbX_{ph}\times \bV_{sh}\times \bbQ_{ph}$, as the unique solution of the problem
\begin{equation}\label{eq:auxiliary-initial-condition-semidiscrete-3}
\begin{array}{l}
\ds (A(\wh\bsi_{ph,0}),\btau_{ph})_{\Omega_p} + b_s(\btau_{ph},\bu_{sh,0})
+ b_{\skp}(\btau_{ph},\bgamma_{ph,0}) \,=\, - b_{\bn_p}(\btau_{ph},\btheta_{h,0}) \,, \\ [2ex]
-b_s(\wh\bsi_{ph,0},\bv_{sh}) - b_{\skp}(\wh\bsi_{ph,0},\bchi_{ph}) \,=\, 0 \,,
\end{array}
\end{equation}
for all $(\btau_{ph},\bv_{sh}, \bchi_{ph})\in \bbX_{ph}\times \bV_{sh}\times \bbQ_{ph}$.
Problem \eqref{eq:auxiliary-initial-condition-semidiscrete-3} is well-posed 
as a direct application of the classical Babu{\v s}ka-Brezzi theory.
Note that $\btheta_{h,0}$ is data for this problem.

\medskip
We then define $\ubsi_{h,0} = (\bsi_{fh,0},\bu_{ph,0},\bsi_{ph,0},p_{ph,0})\in \bX_h, \ubvarphi_{h,0}= (\bvarphi_{h,0},\btheta_{h,0},\lambda_{h,0})\in \bY_h$, and $\ubu_{h,0} = (\bu_{fh,0},\bu_{sh,0},\bgamma_{fh,0},\bgamma_{ph,0})\in \bZ_h$.
This construction guarantees that the discrete initial data is compatible in the sense of Lemma~\ref{lem:sol0-in-M-operator}:
\begin{equation}\label{eq:initial-condition-semidiscrete}
\begin{array}{llll}
\ds \cA(\ubsi_{h,0})(\ubtau_h) + \cB_1(\ubtau_h)(\ubvarphi_{h,0}) + \cB(\ubtau_h)(\ubu_{h,0}) & = & \wh{\bF}_{h,0}(\ubtau_h) & \forall\,\ubtau_h\in \bX_h, \\ [2ex]
\ds -\,\cB_1(\ubsi_{h,0})(\ubpsi_h) + \cC(\ubvarphi_{h,0})(\ubpsi_h) & = & 0 & \forall\,\ubpsi_h\in \bY_h, \\ [2ex]
\ds -\,\cB\,(\ubsi_{h,0})(\ubv_h) & = & \bG_0(\ubv_h) & \forall\,\ubv_h\in \bZ_h,
\end{array}
\end{equation}
where $\wh{\bF}_{h,0} = (q_f(0),\0,\wh{\f}_{ph,0},\wh{q}_{ph,0})^\rt\in \bX'_2$ and $\bG_0=\bG(0)\in \bZ'$, with $\wh{\f}_{ph,0}\in \bbX'_{p,2}$ and $\wh{q}_{ph,0}\in \W'_{p,2}$ suitable data.
Furthermore, it provides compatible initial data for the non-differentiated elasticity variables $(\bbeta_{ph,0},\brho_{ph,0},\bomega_{h,0})$ in the sense of the first equation in \eqref{eq:system-sol0-3} (cf. \eqref{eq:auxiliary-initial-condition-semidiscrete-2}).

\subsection{Existence and uniqueness of a solution}

Now, we establish the well-posedness of problem \eqref{eq:Stokes-Biot-formulation-semidiscrete} and the corresponding stability bound.
\begin{thm}\label{thm:well-posedness-semidiscrete}
For each compatible initial data $(\ubsi_{h,0}, \ubvarphi_{h,0}, \ubu_{h,0})$ satisfying \eqref{eq:initial-condition-semidiscrete} and
\begin{equation*}
\f_f\in \W^{1,1}(0,T;\bV_f'),\quad \f_p\in \W^{1,1}(0,T;\bV_s'),\quad q_f\in \W^{1,1}(0,T;\bbX'_f),\quad q_p\in \W^{1,1}(0,T;\W'_p)\,,
\end{equation*}	
there exists a unique solution of \eqref{eq:Stokes-Biot-formulation-semidiscrete}, $(\ubsi_h,\ubvarphi_h,\ubu_h):[0,T]\to \bX_h\times \bY_h\times \bZ_h$ such that
$(\bsi_{ph},p_{ph}) \in \W^{1,\infty}(0,T;\bbX_{ph}) \times  \W^{1,\infty}(0,T;\W_{ph})$, and $(\ubsi_h(0), \ubvarphi_h(0), \bu_{fh}(0), \bgamma_{fh}(0)) = (\ubsi_{h,0}, \ubvarphi_{h,0}, \bu_{fh,0}, \bgamma_{fh,0})$. Moreover, assuming sufficient regularity of the data, there exists a positive constant $C$ independent of $h$ and $s_0$, such that
\begin{align}\label{eq:discrete-stability-bound}
& \ds \|\bsi_{fh}\|_{\L^\infty(0,T;\bbX_f)} + \|\bsi_{fh}\|_{\L^2(0,T;\bbX_f)} + \|\bu_{ph}\|_{\L^\infty(0,T;\bL^2(\Omega_p))} + \|\bu_{ph}\|_{\L^2(0,T;\bV_p)}
+ |\bvarphi_h - \btheta_h|_{\L^\infty(0,T;\BJS)} \nonumber \\[0.5ex] 
& \ds\quad+ |\bvarphi_h - \btheta_h|_{\L^2(0,T;\BJS)} 
+\|\lambda_h\|_{\L^\infty(0,T;\Lambda_{ph})}
+ \|\ubvarphi_h\|_{\L^2(0,T;\bY_h)}
+ \|\ubu_h\|_{\L^2(0,T;\bZ)}
+ \|A^{1/2}(\bsi_{ph})\|_{\L^\infty(0,T;\bbL^2(\Omega_p))} \nonumber \\[0.5ex]  
& \ds\quad
+ \|\bdiv(\bsi_{ph})\|_{\L^\infty(0,T;\bL^2(\Omega_p))} 
+ \|\bdiv(\bsi_{ph})\|_{\L^2(0,T;\bL^2(\Omega_p))}
+ \|p_{ph}\|_{\L^\infty(0,T;\W_p)} 
+ \|p_{ph}\|_{\L^2(0,T;\W_p)} \nonumber \\[0.5ex]
& \ds \quad 
+ \|\partial_t\,A^{1/2}(\bsi_{ph}+\alpha_p p_{ph} \bI)\|_{\L^2(0,T;\bbL^2(\Omega_p))} 
+ \sqrt{s_0}\|\partial_t\,p_{ph}\|_{\L^2(0,T;\W_p)}  \\[0.5ex] 
& \ds\leq C\,\Big( \|\f_f\|_{\H^1(0,T;\bV'_f)} + \|\f_p\|_{\H^1(0,T;\bV'_s)} + \|q_f\|_{\H^1(0,T;\bbX'_f)} + \|q_p\|_{\H^1(0,T;\W'_p)} \nonumber \\[0.5ex]
& \ds \quad \qquad
+ (1+\sqrt{s_0})\|p_{p,0}\|_{\W_p}
+ \|\bK\nabla p_{p,0}\|_{\H^1(\Omega_p)} \Big). \nonumber
\end{align}
\end{thm}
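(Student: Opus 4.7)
The plan is to mirror the continuous well-posedness proof of Theorem~\ref{thm:well-posdness-continuous} in the discrete setting, since all the essential ingredients have already been collected. Writing problem \eqref{eq:Stokes-Biot-formulation-semidiscrete} in the same operator form as \eqref{eq:E-u-N-M-f} with $E_h := \bX_h \times \bY_h \times \bZ_h$ and discrete operators $\cN_h,\cM_h$ inherited from $\cN,\cM$, I would first verify the discrete analogue of Lemma~\ref{lem:N-M-properties}: continuity, symmetry of $\cN_h$, and monotonicity of $\cN_h$ and $\cM_h$ follow verbatim from the continuous arguments, with the only subtlety being the continuity of $\cB_1$ in the non-conforming case {\bf (S2)}, which for fixed $h$ is furnished by the discrete trace inequality \eqref{cont-B1-discrete}.

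Next I would establish the range condition $Rg(\cN_h+\cM_h)=(E_b')_h$ by solving a discrete resolvent system. Following Lemma~\ref{lem:equivalent-resolvent-system}, I would introduce the perturbed operator $\wt{\cA}$ using a positive parameter $\kappa$, which is well-defined discretely since $\div(\bV_{ph})=\W_{ph}$. Ellipticity of $\cE+\wt{\cA}$ on the discrete kernel $\bV_h$ (cf.~\eqref{eq:discrete-kernel-B}) follows by reproducing the argument of Lemma~\ref{lem:ellipticity-tilde-a}, noting that elements of $\wt{\bbX}_{\star h}$ are divergence-free and the inequalities \eqref{eq:tau-d-H0div-inequality}--\eqref{eq:tau-H0div-Xf-inequality} hold at the discrete level. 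Combined with the discrete inf-sup conditions from Lemma~\ref{lem:discr-inf-sup} and the positive semi-definiteness of $\cC$, a direct application of Theorem~\ref{thm:system_solvability} gives solvability. The compatible initial data $(\ubsi_{h,0},\ubvarphi_{h,0},\ubu_{h,0})$ constructed in \eqref{eq:auxiliary-initial-condition-semidiscrete-1}--\eqref{eq:auxiliary-initial-condition-semidiscrete-3} and satisfying \eqref{eq:initial-condition-semidiscrete} belongs to the discrete domain $\cD_h$. Then a translation argument analogous to the one leading to \eqref{eq:aux-hat-problem}--\eqref{eq:aux-hat-problem-0}, combined with Theorem~\ref{thm:solvability-parabolic-problem}, yields existence. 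Uniqueness is obtained as in Theorem~\ref{thm:well-posdness-continuous}: testing the homogeneous problem with $(\ubsi_h,\ubvarphi_h,\ubu_h)$, integrating in time, and invoking the discrete inf-sup conditions \eqref{eq:inf-sup-B-semidiscrete}--\eqref{eq:inf-sup-b1-semidiscrete} together with the discrete analogue of \eqref{eq:tau-d-H0div-inequality}--\eqref{eq:tau-H0div-Xf-inequality}.

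For the stability bound \eqref{eq:discrete-stability-bound}, I would follow the two-step energy strategy of Theorem~\ref{thm:stability-bound}. First I would test \eqref{eq:Stokes-Biot-formulation-semidiscrete} with $(\ubsi_h,\ubvarphi_h,\ubu_h)$ to obtain the discrete analogue of \eqref{eq:stability-bound-1}, then recover control of $\|p_{ph}\|_{\W_p}$ and $\|\lambda_h\|_{\Lambda_{ph}}$ via \eqref{eq:p-lambda-bound semidiscrete}, and control of $\|\ubvarphi_h\|_{\bY_h}$ and $\|\ubu_h\|_{\bZ}$ via the discrete inf-sup conditions \eqref{eq:inf-sup-B-semidiscrete}--\eqref{eq:inf-sup-b1-semidiscrete}, exactly as in \eqref{eq:stability-bound-2}--\eqref{eq:stability-bound-3}. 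A second energy estimate, obtained by testing with a time-derivative test tuple and differentiating the equations not involving $\partial_t$, produces the bounds on $\partial_t A^{1/2}(\bsi_{ph}+\alpha_p p_{ph}\bI)$ and $\sqrt{s_0}\,\partial_t p_{ph}$, mimicking \eqref{eq:stability-bound-5}--\eqref{eq:stability-bound-6}.

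The main obstacle will be bounding the discrete initial data in a manner independent of $h$ and $s_0$. Specifically, I must establish the discrete counterpart of \eqref{init-data-bound} for the data coming from \eqref{eq:auxiliary-initial-condition-semidiscrete-1}--\eqref{eq:auxiliary-initial-condition-semidiscrete-3}. Because these problems are posed as Galerkin projections of the continuous initial-data problems \eqref{eq:sol0-up0-pp0}--\eqref{eq:system-sol0-3}, their stability follows from the same mixed-formulation arguments applied to the discrete spaces, giving bounds in terms of $p_{p,0}$, $\bK\nabla p_{p,0}$, and the data $\f_f(0),\f_p(0),q_f(0)$, with constants depending only on the inf-sup constants $\wt{\beta},\wt{\beta}_1$ and on $\Omega_f,\Omega_p$, which are $h$- and $s_0$-independent. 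Careful tracking of the role of $s_0$, as in the derivation of \eqref{eq:A-sigmap-t-bound} and the factor $\sqrt{s_0}$ in front of $\partial_t p_{ph}$, yields the $s_0$-independent constant $C$ in \eqref{eq:discrete-stability-bound}. Combining these ingredients with the Sobolev embedding $\H^1(0,T)\hookrightarrow \L^\infty(0,T)$ completes the argument.
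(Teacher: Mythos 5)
Your proposal is correct and follows essentially the same path as the paper: the paper's proof of this theorem is a short remark that the continuous arguments of Theorems~\ref{thm:well-posdness-continuous}, \ref{thm:stability-bound} and Corollary~\ref{cor:initial-data} carry over verbatim once one has the discrete inf-sup conditions \eqref{eq:inf-sup-B-semidiscrete}, \eqref{eq:inf-sup-b1-semidiscrete}, \eqref{eq:p-lambda-bound semidiscrete}, the discrete trace inequality \eqref{cont-B1-discrete} for {\bf (S2)}, and the observation that the discrete initial data are the elliptic projection of the continuous initial data. You unpack exactly the same chain of ingredients (operator reformulation, perturbed ellipticity, Theorems~\ref{thm:solvability-parabolic-problem} and \ref{thm:system_solvability}, translation argument, two-step energy estimate), so the plan matches the paper's intent.
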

\begin{proof}
  From the fact that $\bX_h\subset \bX$, $\bZ_h\subset \bZ$,
  and $\bdiv(\bbX_{fh}) = \bV_{fh}$, 
$\bdiv(\bbX_{ph}) = \bV_{sh}$, $\div(\bV_{ph}) = \W_{ph}$, considering 
  $(\ubsi_{h,0}, \ubvarphi_{h,0}, \ubu_{h,0})$ satisfying
  \eqref{eq:initial-condition-semidiscrete}, 
  and employing the continuity and monotonicity properties of the
  operators $\cN$ and $\cM$ (cf. Lemma~\ref{lem:N-M-properties}
and \eqref{cont-B1-discrete}), as well as the discrete inf-sup conditions
\eqref{eq:inf-sup-B-semidiscrete}, \eqref{eq:inf-sup-b1-semidiscrete}, and 
\eqref{eq:p-lambda-bound semidiscrete}, the proof is identical to the proofs of 
Theorems \ref{thm:well-posdness-continuous} and \ref{thm:stability-bound}, 
and Corollary~\ref{cor:initial-data}.
We note that the proof of Corollary \ref{cor:initial-data} works in the discrete case 
due to the choice of the discrete initial data as the elliptic projection of the 
continuous initial data
(cf. \eqref{eq:auxiliary-initial-condition-semidiscrete-1}--\eqref{eq:auxiliary-initial-condition-semidiscrete-3}).
\end{proof}

\begin{rem}\label{rem:post-processed-formulae}
As in the continuous case, we can recover the non-differentiated elasticity variables
\begin{equation*}
\bbeta_{ph}(t) = \bbeta_{ph,0} + \int^t_0 \bu_{sh}(s)\,ds,\quad
\brho_{ph}(t) = \brho_{ph,0} + \int^t_0 \bgamma_{ph}(s)\,ds,\quad
\bomega_h(t) = \bomega_{h,0} + \int^t_0 \btheta_h(s)\,ds\,,
\end{equation*}
for each $t\in [0,T]$.
Then \eqref{eq:A-sigmap-pp-without-time-derivative} holds discretely, which follows from integrating the equation associated to $\btau_{ph}$ in \eqref{eq:Stokes-Biot-formulation-semidiscrete} from $0$ to $t\in (0,T]$ and using the first equation in \eqref{eq:auxiliary-initial-condition-semidiscrete-2} (cf. \eqref{eq:system-sol0-3}).
\end{rem}

\subsection{Error analysis}

We proceed with establishing rates of convergence.
To that end, let us set $\V\in \big\{ \W_p, \bV_f, \bV_s, \bbQ_f, \bbQ_p \big\}$, 
$\Lambda\in \big\{ \bLambda_f, \bLambda_s, \Lambda_p \big\}$ and let
$\V_h, \Lambda_h$ be the discrete counterparts. Let $P_h^{\V}: \V\to \V_h$ and 
$P_h^{\Lambda}: \Lambda\to \Lambda_h$ be the $\L^2$-projection operators, satisfying
\begin{equation}\label{eq:projection1}
\begin{array}{rl}
( u - P_h^{\V}(u), v_{h} )_{\Omega_\star} \,=\, 0   & \forall \, v_h\in \V_{h}, \\ [2ex]
\langle \varphi - P_h^{\Lambda}(\varphi), \psi_h \rangle_{\Gamma_{fp}} \,=\, 0 & \forall\, \psi_h\in \Lambda_{h},
\end{array}
\end{equation}
where $\star\in \{f,p\}$, $u\in \big\{ p_p, \bu_f, \bu_s, \bgamma_f,
\bgamma_p \big\}$, $\varphi\in \big\{ \bvarphi, \btheta, \lambda
\big\}$, and $v_h, \psi_h$ are the corresponding discrete test
functions. We have the approximation properties \cite{ciar1978}:
\begin{equation}\label{eq:approx-property1}
\begin{array}{c}
\|u - P^{\V}_h(u)\|_{\L^2(\Omega_\star)} \,\leq\, Ch^{s_{u} + 1}\, \|u\|_{\H^{s_{u} + 1}(\Omega_\star)}, \\ [2ex]
\|\varphi - P^{\Lambda}_h(\varphi)\|_{\Lambda_h} \,\leq\, Ch^{s_{\varphi} + r}\,
\|\varphi\|_{\H^{s_{\varphi} + 1}(\Gamma_{fp})},
\end{array}
\end{equation}
where $s_{u}\in \big\{ s_{p_p}, s_{\bu_f}, s_{\bu_s}, s_{\bgamma_f},
s_{\bgamma_p} \big\}$ and $s_{\varphi}\in \big\{ s_{\bvarphi},
s_{\btheta}, s_{\lambda} \big\}$ are the degrees of polynomials in the
spaces $\V_h$ and $\Lambda_h$, respectively, and (cf. \eqref{eq:norm-in-Yh}), 
\begin{equation*}
\|\varphi\|_{\Lambda_h} := \left\{\begin{array}{l}
\|\varphi\|_{\H^{1/2}(\Gamma_{fp})}, \mbox{ with } r=1/2 \mbox{ in } \eqref{eq:approx-property1} \mbox{ for conforming spaces } \textbf{(S1)}, \\[1.5ex]
\|\varphi\|_{\L^2(\Gamma_{fp})}, \mbox{ with } r=1 \mbox{ in } \eqref{eq:approx-property1} \mbox{ for non-conforming spaces } \textbf{(S2)}.
\end{array}\right.
\end{equation*}

Next, denote $\X\in \big\{ \bbX_f, \bbX_p, \bV_p \big\}$, $\sigma\in
\big\{ \bsi_f, \bsi_p, \bu_p \big\}\in \X$ and let $\X_h$ and $\tau_h$ be
their discrete counterparts. 
For the case \textbf{(S2)} when the discrete Lagrange multiplier spaces 
are chosen as in \eqref{eq:non-conforming-spaces-Yh}, \eqref{eq:projection1} implies
\begin{equation}\label{eq:projection2}
\langle \varphi - P_h^{\Lambda}(\varphi), \tau_{h}\bn_\star \rangle_{\Gamma_{fp}} \,=\, 0 \quad \forall\, \tau_{h}\in \X_{h},
\end{equation}
where $\star\in \{f,p\}$. We note that \eqref{eq:projection2} does not hold
for the case \textbf{(S1)}.

Let $I^{\X}_h : \X \cap \H^{1}(\Omega_{\star})\to \X_{h}$ be the mixed finite element
projection operator \cite{Brezzi-Fortin} satisfying
\begin{equation}\label{eq:projection3}
\begin{array}{cl}
(\div(I^{\X}_h(\sigma)), w_h)_{\Omega_\star} = (\div(\sigma), w_h)_{\Omega_\star} & \forall\,w_h\in \W_h, \\ [2ex]
\pil I^{\X}_h(\sigma)\bn_{\star}, \tau_h\bn_{\star} \pir_{\Gamma_{fp}} = \pil \sigma \bn_{\star}, \tau_{h}\bn_{\star} \pir_{\Gamma_{fp}} & \forall\,\tau_h\in \X_h, 
\end{array}
\end{equation}
and
\begin{equation}\label{eq:approx-property2}
\begin{array}{c}
\|\sigma - I^{\X}_h(\sigma)\|_{\L^2(\Omega_{\star})} \,\leq\, C\,h^{s_{\sigma} + 1} \| \sigma \|_{\H^{s_{\sigma} + 1}(\Omega_{\star})}, \\ [2ex]
\|\div(\sigma - I^{\X}_h(\sigma))\|_{\L^2(\Omega_{\star})} \,\leq\, C\,h^{s_{\sigma} + 1} \|\div(\sigma)\|_{\H^{s_{\sigma} + 1}(\Omega_{\star})},
\end{array}
\end{equation}
where $w_h\in \big\{ \bv_{fh}, \bv_{sh}, w_{ph} \big\}$, $\W_h\in
\big\{ \bV_f, \bV_s, \W_p \big\}$, and $s_{\sigma}\in \big\{
s_{\bsi_f}, s_{\bsi_p}, s_{\bu_p} \big\}$ -- the degrees of polynomials
in the spaces $\X_h$.

Now, let $(\bsi_f, \bu_p, \bsi_p, p_p, \bvarphi, \btheta, \lambda,
\bu_f, \bu_s, \bgamma_f, \bgamma_p)$ and $(\bsi_{fh}, \bu_{ph},
\bsi_{ph}, p_{ph}, \bvarphi_h, \btheta_h, \lambda_h, \bu_{fh},
\bu_{sh}, \bgamma_{fh}, \bgamma_{ph})$ be the solutions of
\eqref{eq:evolution-problem-in-operator-form} and
\eqref{eq:Stokes-Biot-formulation-semidiscrete}, respectively.  We
introduce the error terms as the differences of these two solutions and
decompose them into approximation and discretization errors
using the interpolation operators:
\begin{equation}\label{eq:error-decomposition}
\begin{array}{l}
  \ds \be_{\sigma} \,:=\, \sigma - \sigma_{h} \,=\,
  (\sigma - I^{\X}_h(\sigma)) + (I^{\X}_h(\sigma) - \sigma_{h})
  \,:=\, \be^I_{\sigma} + \be^h_{\sigma},
  \quad \sigma\in \big\{ \bsi_f, \bsi_p, \bu_p \big\}, \\ [2ex]
  \ds \be_{\varphi} \,:=\, \varphi - \varphi_h
  \,=\, (\varphi - P^{\Lambda}_h(\varphi)) + (P^{\Lambda}_h(\varphi) - \varphi_h)
  \,:=\, \be^I_{\varphi} + \be^h_{\varphi},
\quad \varphi\in \big\{ \bvarphi, \btheta, \lambda \big\},
  \\ [2ex]
  \ds \be_{u} \,:=\, u - u_{h} \,=\, (u - P^{\V}_h(u)) + (P^{\V}_h(u) - u_{h})
  \,:=\, \be^I_{u} + \be^h_{u}, \quad
  u\in \big\{ p_p, \bu_f, \bu_s, \bgamma_f, \bgamma_p \big\}.
\end{array}
\end{equation}
Then, we set the errors
\begin{equation*}
\be_{\ubsi} := (\be_{\bsi_f}, \be_{\bu_p}, \be_{\bsi_p}, \be_{p_p}),\quad
\be_{\ubvarphi} := (\be_{\bvarphi}, \be_{\btheta}, \be_{\lambda}),\qan
\be_{\ubu} := (\be_{\bu_f}, \be_{\bu_s}, \be_{\bgamma_f}, \be_{\bgamma_p}).
\end{equation*}
We next form the error system by subtracting the discrete problem
\eqref{eq:Stokes-Biot-formulation-semidiscrete} from the continuous
one \eqref{eq:evolution-problem-in-operator-form}. Using that $\bX_h\subset \bX$ and
$\bZ_h\subset \bZ$, as well as Remark~\ref{rem:non-conf}, we obtain
\begin{equation}\label{eq:error-equation}
\begin{array}{lll}
\ds (\partial_t\,\cE + \cA)(\be_{\ubsi})(\ubtau_h) + \cB_1(\ubtau_h)(\be_{\ubvarphi}) + \cB(\ubtau_h)(\be_{\ubu}) & = & 0 \quad \forall\,\ubtau_h\in \bX_h, \\ [2ex]
\ds -\,\cB_1(\be_{\ubsi})(\ubpsi_h) + \cC(\be_{\ubvarphi})(\ubpsi_h) & = & 0 \quad\forall\,\ubpsi_h\in \bY_h, \\ [2ex]
\ds -\,\cB(\be_{\ubsi})(\ubv_h) & = & 0 \quad \forall\,\ubv_h\in \bZ_h.
\end{array}
\end{equation}

We now establish the main result of this section.
\begin{thm}\label{thm:error-analysis-result}
For the solutions of the continuous and discrete problems
\eqref{eq:evolution-problem-in-operator-form} and
\eqref{eq:Stokes-Biot-formulation-semidiscrete}, respectively,
assuming sufficient regularity of the true solution
according to \eqref{eq:approx-property1} and \eqref{eq:approx-property2},
there exists a
positive constant $C$ independent of $h$ and $s_0$, such that
\begin{align}\label{eq:error-analysis-result}
& \ds \|\be_{\bsi_f}\|_{\L^\infty(0,T;\bbX_{f})}
+ \|\be_{\bsi_f}\|_{\L^2(0,T;\bbX_{f})}
+ \|\be_{\bu_p}\|_{\L^\infty(0,T;\bL^2(\Omega_p))}
+ \|\be_{\bu_p}\|_{\L^2(0,T;\bV_{p})} 
+ |\be_{\bvarphi} - \be_{\btheta}|_{\L^\infty(0,T;\BJS)} \nonumber \\[0.5ex]
& \ds\quad
+ |\be_{\bvarphi} - \be_{\btheta}|_{\L^2(0,T;\BJS)}
+ \|\be_{\lambda}\|_{\L^\infty(0,T;\Lambda_{ph})}
+ \|\be_{\ubvarphi}\|_{\L^2(0,T;\bY_h)}
+ \|\be_{\ubu}\|_{\L^2(0,T;\bZ)}
+ \|A^{1/2}(\be_{\bsi_p})\|_{\L^\infty(0,T;\bbL^2(\Omega_{p}))} \nonumber \\[0.5ex]
& \ds\quad
+ \|\bdiv(\be_{\bsi_p})\|_{\L^\infty(0,T;\bL^2(\Omega_p))}
+ \|\bdiv(\be_{\bsi_p})\|_{\L^2(0,T;\bL^2(\Omega_p))}
+ \|\be_{p_p}\|_{\L^\infty(0,T;\W_{p})} 
+ \|\be_{p_p}\|_{\L^2(0,T;\W_{p})} \nonumber \\[0.5ex]
& \ds\quad 
+ \|\partial_t\,A^{1/2}(\be_{\bsi_p} + \alpha_p \be_{p_p}\bI)\|_{\L^2(0,T;\bbL^2(\Omega_p))}
+ \sqrt{s_0}\|\partial_t\,\be_{p_p}\|_{\L^2(0,T;\W_{p})} \nonumber \\[0.5ex]
& \ds \leq\,\, 
C\,\sqrt{\exp(T)}\,\Big( h^{s_{\ubsi}+1} 
+ h^{s_{\ubvarphi}+r} 
+ h^{s_{\ubu}+1} 
\Big),
\end{align}
where $s_{\ubsi} = \min\{ s_{\bsi_f}, s_{\bu_p}, s_{\bsi_p}, s_{p_p} \}$, $s_{\ubvarphi} = \min\{ s_{\bvarphi}, s_{\btheta}, s_{\lambda} \}$, $s_{\ubu} = \min\{ s_{\bu_f}, s_{\bu_s}, s_{\bgamma_f}, s_{\bgamma_p} \}$, and $r$ is defined in \eqref{eq:approx-property1}.
\end{thm}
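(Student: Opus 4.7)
The plan is to follow closely the strategy developed for the continuous stability bound in Theorem~\ref{thm:stability-bound}, but applied to the discretization error equation \eqref{eq:error-equation}, and to handle the interface terms carefully so as to preserve optimal rates in both the conforming case \textbf{(S1)} and the non-conforming case \textbf{(S2)}.

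First, I would decompose every error as in \eqref{eq:error-decomposition} into an interpolation part $\be^I$ and a discrete part $\be^h$, and test \eqref{eq:error-equation} with $(\ubtau_h,\ubpsi_h,\ubv_h)=(\be_{\ubsi}^h,\be_{\ubvarphi}^h,\be_{\ubu}^h)$. Rewriting $\be_\bullet = \be_\bullet^I + \be_\bullet^h$ puts all $\be^h$ contributions on the left-hand side, while all $\be^I$ contributions appear on the right as consistency terms. The crucial simplifications come from the projection identities \eqref{eq:projection1}--\eqref{eq:projection3}: by \eqref{eq:projection3}, $b_f(\be_{\bsi_f}^I,\bv_{fh}) = b_s(\be_{\bsi_p}^I,\bv_{sh}) = b_p(\be_{\bu_p}^I,w_{ph}) = 0$ and the interface normal-trace duality $\langle I_h^{\X}\sigma \bn_\star,\tau_h\bn_\star\rangle_{\Gamma_{fp}}=\langle\sigma\bn_\star,\tau_h\bn_\star\rangle_{\Gamma_{fp}}$ handles the $b_{\bn_\star}$ and $b_\Gamma$ terms involving $\be_{\bsi_f}^I,\be_{\bsi_p}^I,\be_{\bu_p}^I$; in the non-conforming case \textbf{(S2)}, \eqref{eq:projection2} additionally kills the terms $b_{\bn_f}(\btau_{fh},\be_{\bvarphi}^I)$, $b_{\bn_p}(\btau_{ph},\be_{\btheta}^I)$ and $b_\Gamma(\bv_{ph},\be_\lambda^I)$. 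In the conforming case \textbf{(S1)} these last terms survive but are bounded via \eqref{trace-u}--\eqref{trace-sigma} and the $\H^{1/2}$ approximation property in \eqref{eq:approx-property1} with $r=1/2$.

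After these simplifications, integrating in time from $0$ to $t\in(0,T]$ and using the coercivity estimates \eqref{eq:monotonicity-E-A}--\eqref{eq:pos-sem-def-c} on the left, together with Cauchy--Schwarz and Young's inequalities on the right, I obtain a bound of the form
\begin{equation*}
\|A^{1/2}(\be_{\bsi_p}^h+\alpha_p \be_{p_p}^h\bI)(t)\|_{\bbL^2(\Omega_p)}^2 + s_0\|\be_{p_p}^h(t)\|_{\W_p}^2
+ \int_0^t\Big(\|(\be_{\bsi_f}^h)^{\rd}\|_{\bbL^2(\Omega_f)}^2 + \|\be_{\bu_p}^h\|_{\bL^2(\Omega_p)}^2 + |\be_\bvarphi^h-\be_\btheta^h|_{\BJS}^2\Big)\,ds
\end{equation*}
controlled by the $\L^2(0,T)$ norms of the interpolation errors plus a Gronwall-type term in $\|\be_{p_p}^h\|^2_{\W_p}$ coming from the $\alpha_p$ coupling (as in \eqref{eq:stability-bound-1}). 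Then I would reproduce the steps \eqref{eq:p-lambda-bound}, \eqref{eq:stability-bound-2}, \eqref{eq:stability-bound-3} and \eqref{eq:div-up-sigmaf-bound} at the discrete level, using the discrete inf-sup conditions \eqref{eq:inf-sup-B-semidiscrete}, \eqref{eq:inf-sup-b1-semidiscrete} and \eqref{eq:p-lambda-bound semidiscrete}, to upgrade the control on $\be_{\bsi_f}^h, \be_{\ubvarphi}^h, \be_{\ubu}^h, \be_{p_p}^h$ and $\be_\lambda^h$.

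To pick up the $\L^\infty$ norms of $\bsi_f$, $\bu_p$, $\lambda$, and the control on $\partial_t A^{1/2}(\bsi_p+\alpha_p p_p\bI)$ and $\sqrt{s_0}\,\partial_t p_p$, I would mirror the differentiation-in-time trick of \eqref{eq:stability-bound-5}--\eqref{eq:stability-bound-6}: test \eqref{eq:error-equation} with $\ubtau_h=(\partial_t \be_{\bsi_f}^h,\be_{\bu_p}^h,\partial_t \be_{\bsi_p}^h,\partial_t \be_{p_p}^h)$, $\ubpsi_h=(\be_\bvarphi^h,\be_\btheta^h,\partial_t \be_\lambda^h)$, $\ubv_h=(\be_{\bu_f}^h,\be_{\bu_s}^h,\be_{\bgamma_f}^h,\be_{\bgamma_p}^h)$, after differentiating in time the equations associated with $\bv_p,\bpsi,\bphi,\bv_f,\bv_s,\bchi_f,\bchi_p$; the corresponding $\be^I$ consistency terms are again controlled by the projection properties and, after integration by parts in time, by interpolation errors of $\partial_t$-quantities. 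Finally, applying Gronwall's inequality and the triangle inequality with the approximation bounds \eqref{eq:approx-property1}--\eqref{eq:approx-property2} yields \eqref{eq:error-analysis-result}. I would also use the initial-data bound: from the construction \eqref{eq:auxiliary-initial-condition-semidiscrete-1}--\eqref{eq:auxiliary-initial-condition-semidiscrete-3} of the discrete initial data as an elliptic projection of the continuous initial data, an argument analogous to that of Corollary~\ref{cor:initial-data} together with Theorem~\ref{thm:system_solvability} shows that the initial errors $\be_{\ubsi}^h(0),\be_{\ubvarphi}^h(0),\be_{\ubu}^h(0)$ are of the same optimal order as the interpolation errors, so they do not pollute the final bound.

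The main obstacle will be the conforming case \textbf{(S1)}: since \eqref{eq:projection2} fails, the interface terms $b_{\bn_f}(\btau_{fh},\be_\bvarphi^I)$, $b_{\bn_p}(\btau_{ph},\be_\btheta^I)$ and $b_\Gamma(\bv_{ph},\be_\lambda^I)$ must be handled by the normal-trace bounds \eqref{trace-u}--\eqref{trace-sigma}, which only deliver rate $r=1/2$, explaining why the middle term in \eqref{eq:error-analysis-result} is $h^{s_{\ubvarphi}+r}$ with $r$ as in \eqref{eq:approx-property1}. A secondary technicality is the simultaneous use of the differentiated and non-differentiated error equations to control both $\L^\infty$-in-time and $\L^2$-in-time norms while keeping every constant independent of $s_0$; this forces a careful ordering of the absorption of the $\delta$-terms into the left-hand side, exactly as in \eqref{eq:stability-bound-4}--\eqref{eq:stability-bound-6}.
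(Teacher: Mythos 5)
Your proposal follows essentially the same line of argument as the paper's proof: the same $\be^I+\be^h$ splitting, testing with $(\be^h_{\ubsi},\be^h_{\ubvarphi},\be^h_{\ubu})$, exploiting the projection identities \eqref{eq:projection1}--\eqref{eq:projection3} (and \eqref{eq:projection2} in case \textbf{(S2)}) to cancel consistency terms, upgrading via the discrete inf-sup conditions \eqref{eq:inf-sup-B-semidiscrete}, \eqref{eq:inf-sup-b1-semidiscrete}, \eqref{eq:p-lambda-bound semidiscrete}, the time-differentiated test function mirroring \eqref{eq:stability-bound-5}--\eqref{eq:stability-bound-6}, Gronwall, and the elliptic-projection initial-data bound. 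One small imprecision: the rate $r=1/2$ in case \textbf{(S1)} is governed by the $\H^{1/2}$ approximation order of $P_h^\Lambda$ in \eqref{eq:approx-property1}, not by the trace inequalities \eqref{trace-u}--\eqref{trace-sigma} themselves (those give continuity, and you also need $\bdiv(\be^h_{\bsi_p})=\0$ to avoid paying a full $\bbH(\bdiv)$ norm when bounding $b_{\bn_p}(\be^h_{\bsi_p},\be^I_\btheta)$), but this does not affect the correctness of the overall argument.
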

\begin{proof}
We present in detail the proof for the conforming case \textbf{(S1)}.
The proof in the non-conforming case \textbf{(S2)} is simpler, since
several error terms are zero. We explain the differences at the end of
the proof.
  
We proceed as in Theorem~\ref{thm:stability-bound}.
Taking $(\ubtau_{h}, \ubpsi_h, \ubv_{h}) = (\be^h_{\ubsi}, \be^h_{\ubvarphi}, \be^h_{\ubu})$ in \eqref{eq:error-equation}, we obtain
\begin{align}\label{eq:error-equation3}
& \ds \frac{1}{2}\,\partial_t\left( a_e(\be^h_{\bsi_p},\be^h_{p_p};\be^h_{\bsi_p},\be^h_{p_p})
+ s_0\,(\be^h_{p_p}, \be^h_{p_p})_{\Omega_p} \right) 
+ a_f(\be^h_{\bsi_f}, \be^h_{\bsi_f}) 
+ a_p(\be^h_{\bu_p}, \be^h_{\bu_p}) 
+ c_{\BJS}(\be^h_{\bvarphi}, \be^h_{\btheta};\be^h_{\bvarphi}, \be^h_{\btheta}) \nonumber \\[1ex]
& \ds =\,-\,a_f(\be^I_{\bsi_f}, \be^h_{\bsi_f})
- a_p(\be^I_{\bu_p}, \be^h_{\bu_p})
- a_e(\partial_t\,\be^I_{\bsi_p}, \partial_t\,\be^I_{p_p}; \be^h_{\bsi_p}, \be^h_{p_p})
- \cC(\be^I_{\ubvarphi})(\be^h_{\ubvarphi}) \nonumber \\[1ex]
& \ds\quad\, 
-\,b_{\bn_f}(\be^h_{\bsi_f},\be^I_{\bvarphi})
-b_{\bn_p}(\be^h_{\bsi_p},\be^I_{\btheta})
-b_{\Gamma}(\be^h_{\bu_p},\be^I_{\lambda})
+b_{\bn_f}(\be^I_{\bsi_f},\be^h_{\bvarphi})
+b_{\bn_p}(\be^I_{\bsi_p},\be^h_{\btheta})
+b_{\Gamma}(\be^I_{\bu_p},\be^h_{\lambda}) \\[1ex]
& \ds\quad\, -\,b_\skf(\be^h_{\bsi_f},\be^I_{\bgamma_f}) 
- b_\skp(\be^h_{\bsi_p},\be^I_{\bgamma_p})
+ b_\skf(\be^I_{\bsi_f},\be^h_{\bgamma_f})
+ \, b_\skp(\be^I_{\bsi_p},\be^h_{\bgamma_p}), \nonumber
\end{align}
where, the right-hand side of \eqref{eq:error-equation3} has been
simplified, since the projection properties \eqref{eq:projection1} and
\eqref{eq:projection3}, and the fact that $\div(\be^h_{\bu_p})\in
\W_{ph}$, $\bdiv(\be^h_{\bsi_f})\in \bV_{fh}$, and
$\bdiv(\be^h_{\bsi_p})\in \bV_{sh}$, imply that the following terms
are zero:
\begin{equation}\label{zero-terms}
\ds s_0(\partial_t\,\be^I_{p_p},\be^h_{p_p}),\,
b_p(\be^h_{\bu_p},\be^I_{p_p}),\,
b_p(\be^I_{\bu_p},\be^h_{p_p}),\,
b_f(\be^h_{\bsi_f},\be^I_{\bu_f}),\,
b_f(\be^I_{\bsi_f},\be^h_{\bu_f}),\,
b_s(\be^h_{\bsi_p},\be^I_{\bu_s}),\,
b_s(\be^I_{\bsi_p},\be^h_{\bu_s}).
\end{equation}
In turn, from the equations in \eqref{eq:error-equation} corresponding to
test functions $\bv_{fh}$, $\bv_{sh}$, and $w_{ph}$,
using the projection properties \eqref{eq:projection3}, we find
that
\begin{equation*}
\begin{array}{c}
\ds b_f(\be^h_{\bsi_f}, \bv_{fh}) = 0 \quad \forall\,\bv_{fh}\in \bV_{fh},\quad
b_s(\be^h_{\bsi_p}, \bv_{sh}) = 0 \quad \forall\,\bv_{sh}\in \bV_{sh}, \\ [2ex]
\ds b_p(\be^h_{\bu_p}, w_{ph}) = a_e(\partial_t\,\be^h_{\bsi_p}, \partial_t\,\be^h_{p_p};\0,w_{ph}) +
a_e(\partial_t\,\be^I_{\bsi_p}, \partial_t\,\be^I_{p_p};\0,w_{ph}) +(s_0\,\partial_t\,\be^h_{p_p}, w_{ph})_{\Omega_p} \quad \forall\, w_{ph}\in \W_{ph}.
\end{array}
\end{equation*}
Therefore $\bdiv(\be^h_{\bsi_\star}) = \0$ in $\Omega_\star$, with
$\star\in \{f,p\}$, and using
\eqref{eq:tau-d-H0div-inequality}--\eqref{eq:tau-H0div-Xf-inequality}
we deduce
\begin{equation}\label{eq:error-analysis7}
\begin{array}{c}
\| (\be^h_{\bsi_f})^\rd\|^2_{\bbL^2(\Omega_f)} \,\geq\, C\,\| \be^h_{\bsi_f} \|^2_{\bbX_f},\quad
\|\bdiv(\be^h_{\bsi_p})\|_{\bL^2(\Omega_p)} \,=\, 0\,, \\ [1.5ex]
\ds \|\div(\be^h_{\bu_p})\|_{\L^2(\Omega_p)} 
\,\leq\, C\,\Big( \| \partial_t\,A^{1/2}(\be^I_{\bsi_p} + \alpha_p\,\be^I_{p_p}\bI) \|_{\bbL^2(\Omega_p)} \\[1.5ex]
\ds \qquad \qquad 
+\,\| \partial_t\,A^{1/2}(\be^h_{\bsi_p} + \alpha_p\,\be^h_{p_p}\bI) \|_{\bbL^2(\Omega_p)}
+ \sqrt{s_0}\,\|\partial_t\,\be^h_{p_p} \|_{\W_p} \Big)\,.
\end{array}
\end{equation}
Then, applying the ellipticity and continuity bounds of the bilinear
forms involved in \eqref{eq:error-equation3}
(cf. Lemma~\ref{lem:N-M-properties}) and the Cauchy--Schwarz and
Young's inequalities, in combination with \eqref{eq:error-analysis7},
we get
\begin{equation*}
\begin{array}{l}
\ds \partial_t\left(\|A^{1/2}(\be^h_{\bsi_p} + \alpha_p \be^h_{p_p}\bI)\|^2_{\bbL^2(\Omega_p)}
+ s_0 \|\be^h_{p_p}\|^2_{\W_p} \right)
+ \|\be^h_{\bsi_f}\|^2_{\bbX_f}
+ \|\be^h_{\bu_p}\|^2_{\bV_p}
+ \|\bdiv(\be^h_{\bsi_p})\|^2_{\bL^2(\Omega_p)}
+ |\be^h_{\bvarphi} - \be^h_{\btheta}|^2_{\BJS}  \\[2ex]
\ds \leq\, C\,\Big(
\|\be^I_{\bsi_f}\|^2_{\bbX_f}
+ \|\be^I_{\bu_p}\|^2_{\bV_p} 
+ \|\be^I_{\bsi_p}\|^2_{\bbX_p}
+ |\be^I_{\bvarphi} - \be^I_{\btheta}|^2_{\BJS}
+ \|\be^I_{\ubvarphi}\|^2_{\bY_h}
+ \|\be^I_{\bgamma_f}\|^2_{\bbQ_f}
+ \|\be^I_{\bgamma_p}\|^2_{\bbQ_p}
\\ [2ex]
\ds\quad
+\,\,\|\partial_t\,A^{1/2}\,(\be^I_{\bsi_p} + \alpha_p\,\be^I_{p_p}\bI)\|^2_{\bbL^2(\Omega_p)}
+ \|A^{1/2}\,(\be^h_{\bsi_p} + \alpha_p\,\be^h_{p_p}\bI)\|^2_{\bbL^2(\Omega_p)} 
 \\ [2ex]
\ds\quad 
+\,\,\|\partial_t\,A^{1/2}\,(\be^h_{\bsi_p} + \alpha_p\,\be^h_{p_p}\bI)\|^2_{\bbL^2(\Omega_p)}
+ s_0\|\partial_t\,\be^h_{p_p}\|^2_{\W_p} \Big) \\ [2ex]
\ds +\,\,\delta_1\Big(
\|\be^h_{\bsi_f}\|^2_{\bbX_f}
+ \|\be^h_{\bu_p}\|^2_{\bV_p}
+ |\be^h_{\bvarphi} - \be^h_{\btheta}|^2_{\BJS} \Big)
+ \delta_2\,\Big( 
\|\be^h_{\bsi_p}\|^2_{\bbL^2(\Omega_p)}
+ \|\be^h_{\ubvarphi}\|^2_{\bY_h}
+ \|\be^h_{\bgamma_f}\|^2_{\bbQ_f}
+ \|\be^h_{\bgamma_p}\|^2_{\bbQ_p} \Big),
\end{array}
\end{equation*}
where for the bound on $b_{\bn_p}(\be^h_{\bsi_p},\be^I_{\btheta})$ we
used the trace inequality \eqref{trace-sigma} and the fact that
$\bdiv(\be^h_{\bsi_p}) = \0$. Next, integrating from $0$ to $t\in (0,T]$, using
  \eqref{eq:aux-taup-qp-inequality} to control the term
  $\|\be^h_{\bsi_p}\|^2_{\bbL^2(\Omega_p)}$, and choosing $\delta_1$
  small enough, we find that
  \begin{align}\label{eq:error-analysis1}
& \ds \|A^{1/2}(\be^h_{\bsi_p} + \alpha_p \be^h_{p_p}\bI)(t)\|^2_{\bbL^2(\Omega_p)}
+ s_0\|\be^h_{p_p}(t)\|^2_{\W_p} \nonumber \\
& \ds\quad
+ \int^t_0 \Big( \|\be^h_{\bsi_f}\|^2_{\bbX_f}
+ \|\be^h_{\bu_p}\|^2_{\bV_p}
+ \|\bdiv(\be^h_{\bsi_p})\|^2_{\bL^2(\Omega_p)}
+ |\be^h_{\bvarphi} - \be^h_{\btheta}|^2_{\BJS} \Big)\, ds \nonumber \\ 
& \ds
\leq C\,\Bigg( \int^t_0 \Big( 
\|\be^I_{\bsi_f}\|^2_{\bbX_f}
+ \|\be^I_{\bu_p}\|^2_{\bV_p}
+ |\be^I_{\bvarphi} - \be^I_{\btheta}|^2_{\BJS}
+ \|\be^I_{\ubvarphi}\|^2_{\bY_h} 
+ \|\be^I_{\bgamma_f}\|^2_{\bbQ_f}
+ \|\be^I_{\bgamma_p}\|^2_{\bbQ_p}
+ \|\be^I_{\bsi_p}\|^2_{\bbX_p}
\Big)\,ds \nonumber \\[1ex]
& \ds \quad
+ \int^t_0 \Big( 
\|\partial_t\,A^{1/2}\,(\be^I_{\bsi_p} + \alpha_p\,\be^I_{p_p}\bI)\|^2_{\bbL^2(\Omega_p)}
+ \|A^{1/2}\,(\be^h_{\bsi_p} + \alpha_p\,\be^h_{p_p}\bI)\|^2_{\bbL^2(\Omega_p)} \Big)\,ds \\
& \ds\quad +\,\int^t_0 \Big( 
\|\partial_t\,A^{1/2}\,(\be^h_{\bsi_p} + \alpha_p\,\be^h_{p_p}\bI)\|^2_{\bbL^2(\Omega_p)} 
+ s_0\|\partial_t\,\be^h_{p_p}\|^2_{\W_p} \Big)\,ds 
+ \|A^{1/2}(\be^h_{\bsi_p} + \alpha_p\,\be^h_{p_p}\,\bI)(0)\|^2_{\bbL^2(\Omega_p)} \nonumber \\
& \ds\quad
+\,s_0\|\be^h_{p_p}(0)\|^2_{\W_p} \Bigg)
+ \delta_2 \int^t_0 \Big( 
\|\be^h_{p_p}\|^2_{\W_p}
+ \|\be^h_{\ubvarphi}\|^2_{\bY_h}
+ \|\be^h_{\bgamma_f}\|^2_{\bbQ_f}
+ \|\be^h_{\bgamma_p}\|^2_{\bbQ_p} \Big)\,ds\,. \nonumber
\end{align}

On the other hand, taking $\ubtau_h =
(\btau_{fh},\bv_{ph},\btau_{ph},0)\in \bV_h$
(cf. \eqref{eq:discrete-kernel-B}) in the first equation of
\eqref{eq:error-equation}, we obtain
\begin{equation*}
\cB_1(\ubtau_h)(\be^h_{\ubvarphi})
\,=\, -\,(\partial_t\,\cE + \cA)(\be_{\ubsi})(\ubtau_h) - \cB_1(\ubtau_h)(\be^I_{\ubvarphi})\,,
\end{equation*}
In the above, thanks to the projection properties \eqref{eq:projection1},
the following terms are zero:
$b_p(\bv_{ph},\be^I_{p_p})$, $b_f(\btau_{fh}, \be^I_{\bu_f})$, and $b_s(\btau_{ph},\be^I_{\bu_s})$.
Then the discrete inf-sup condition of $\cB_1$
(cf. \eqref{eq:inf-sup-b1-semidiscrete}) for
$\be^h_{\ubvarphi} = (\be^h_{\bvarphi}, \be^h_{\btheta}, \be^h_{\lambda})\in \bY_h$
gives
\begin{align}\label{eq:error-inf-sup1}
& \ds \|\be^h_{\ubvarphi}\|_{\bY_h} 
\,\leq\, C \Big( 
\|\be^I_{\bsi_f}\|_{\bbX_f} 
+ \|\be^I_{\bu_p}\|_{\bV_p} 
+\|\be^I_{\ubvarphi}\|_{\bY_h}
+ \|\be^I_{\bgamma_f}\|^2_{\bbQ_f}
+ \|\be^I_{\bgamma_p}\|^2_{\bbQ_p}
+ \|\partial_t\,A^{1/2}\,(\be^I_{\bsi_p} + \alpha_p\,\be^I_{p_p}\bI)\|_{\bbL^2(\Omega_p)} \nonumber \\
& \ds\quad+\,
\|\be^h_{\bsi_f}\|_{\bbX_f} 
+ \|\be^h_{\bu_p}\|_{\bV_p}
+ \|\be^h_{\bgamma_f}\|^2_{\bbQ_f}
+ \|\be^h_{\bgamma_p}\|^2_{\bbQ_p}
+ \|\partial_t\,A^{1/2}\,(\be^h_{\bsi_p} + \alpha_p\,\be^h_{p_p}\bI)\|_{\bbL^2(\Omega_p)} 
+ \|\be^h_{p_p}\|_{\W_p} \Big)\,.
\end{align}
In turn, to bound $\|\be^h_{\ubu}\|_{\bZ}$, we test \eqref{eq:error-equation} with $\ubtau_h = (\btau_{fh},\0,\btau_{ph},0)\in \wt{\bX}_h$ (cf. \eqref{eq:subspace-bbXh-tilde}), to find that
\begin{equation*}
\cB(\ubtau_h)(\be^h_{\ubu})
\,=\, -\,\Big( a_f(\be_{\bsi_f},\btau_{fh}) + a_e(\partial_t\,\be_{\bsi_p},\partial_t\,\be_{p_p};\btau_{ph},0) + \cB(\ubtau_h)(\be^I_{\ubu}) \Big).
\end{equation*}
In the above, the terms $b_f(\btau_{fh},\be^I_{\bu_f})$ and
$b_s(\btau_{ph},\be^I_{\bu_s})$ are zero, due to 
the projection property \eqref{eq:projection1}. Then,
the discrete inf-sup condition of
$\cB$ (cf. \eqref{eq:inf-sup-B-semidiscrete}) for $\be^h_{\ubu}\in
\bZ_h$, yields
\begin{equation}\label{eq:error-inf-sup2}
\begin{array}{l}
\ds \| \be^h_{\ubu} \|_{\bZ} 
\,\leq\, 
C\,\Big( 
\|\be^I_{\bsi_f}\|_{\bbX_f} 
+ \|\partial_t\,A^{1/2}\,(\be^I_{\bsi_p} + \alpha_p\,\be^I_{p_p}\bI)\|_{\bbL^2(\Omega_p)} 
+ \|\be^I_{\bgamma_f}\|_{\bbQ_f}
+ \|\be^I_{\bgamma_p}\|_{\bbQ_p} \\ [2ex] 
\ds\qquad\qquad\qquad+\,
\|\be^h_{\bsi_f}\|_{\bbX_f}
+ \|\partial_t\,A^{1/2}\,(\be^h_{\bsi_p} + \alpha_p\,\be^h_{p_p}\bI)\|_{\bbL^2(\Omega_p)}  \Big)\,.
\end{array}
\end{equation}
Finally, to bound $\|\be^h_{p_p}\|_{\W_p}$, we test \eqref{eq:error-equation} with $\ubtau_h = (\btau_{fh},\bv_{ph},\btau_{ph},0)\in \bX_h$ to get
\begin{equation*}
b_p(\bv_{ph}, \be^h_{p_p})+b_{\Gamma}(\bv_{ph},\be^h_{\lambda}) \,=\,-\,\Big(a_p(\be_{\bu_p},\bv_{ph})+b_p(\bv_{ph}, \be^I_{p_p})+b_{\Gamma}(\bv_{ph},\be^I_{\lambda})\Big).
\end{equation*}
Note that $b_p(\bv_{ph}, \be^I_{p_p})=0$ due to the projection property \eqref{eq:projection1}, thus the discrete inf-sup condition \eqref{eq:p-lambda-bound semidiscrete} gives
\begin{equation}\label{eq:error-inf-sup3}
\|\be^h_{p_p}\|_{\W_p}+\|\be^h_{\lambda}\|_{\Lambda_{ph}}\leq C \Big( \|\be^I_{\bu_p}\|_{\bL^2(\Omega_p)}+\|\be^I_{\lambda}\|_{\Lambda_{ph}}+\|\be^h_{\bu_p}\|_{\bL^2(\Omega_p)}\Big).
\end{equation}
Combining \eqref{eq:error-analysis1} with \eqref{eq:error-inf-sup1}, \eqref{eq:error-inf-sup2}, and \eqref{eq:error-inf-sup3}, choosing $\delta_2$ small enough, and employing the Gronwall's inequality to deal with the term $\ds \int^t_0  \|A^{1/2}\,(\be^h_{\bsi_p} + \alpha_p\,\be^h_{p_p}\bI)\|^2_{\bbL^2(\Omega_p)} \,ds $, we obtain
\begin{align}\label{eq:error-analysis2}
&\ds \| A^{1/2}(\be^h_{\bsi_p} + \alpha_p\,\be^h_{p_p}\bI)(t) \|^2_{\bbL^2(\Omega_p)}
+ s_0\,\|\be^h_{p_p}(t)\|^2_{\W_p} \nonumber \\[0.5ex]
&\ds\quad +\,\int^t_0 \Big( 
\|\be^h_{\bsi_f}\|^2_{\bbX_f}
+ \|\be^h_{\bu_p}\|^2_{\bV_p}
+ \|\bdiv(\be^h_{\bsi_p})\|^2_{\bL^2(\Omega_p)} 
+ \|\be^h_{p_p}\|^2_{\W_p}
+ |\be^h_{\bvarphi} - \be^h_{\btheta}|^2_{\BJS} 
+ \|\be^h_{\ubvarphi}\|^2_{\bY_h} 
+ \|\be^h_{\ubu}\|^2_{\bZ}
\Big)\, ds \nonumber \\[0.5ex] 
&\ds
\leq C\,\exp(T)\,\Bigg(\int^t_0 \Big( 
\|\be^I_{\ubsi}\|^2_{\bX}
+ \|\be^I_{\ubvarphi}\|^2_{\bY_h}
+ \|\be^I_{\ubu}\|^2_{\bZ}
+ |\be^I_{\bvarphi} - \be^I_{\btheta}|^2_{\BJS}
+ \|\partial_t\,A^{1/2}\,(\be^I_{\bsi_p} + \alpha_p\,\be^I_{p_p}\,\bI)\|^2_{\bbL^2(\Omega_p)} 
\Big)\,ds \nonumber \\[0.5ex]
&\ds\quad +\,\int^t_0 \Big( 
\|\partial_t\,A^{1/2}\,(\be^h_{\bsi_p} + \alpha_p\,\be^h_{p_p}\bI)\|^2_{\bbL^2(\Omega_p)} 
+ s_0\|\partial_t\,\be^h_{p_p}\|^2_{\W_p}
\Big)\, ds  \\[0.5ex]
&\ds\quad
+\,\,\|A^{1/2}(\be^h_{\bsi_p} + \alpha_p\,\be^h_{p_p}\bI)(0)\|^2_{\bbL^2(\Omega_p)}
+ s_0\|\be^h_{p_p}(0)\|^2_{\W_p} \Bigg). \nonumber
\end{align}

Now, in order to bound $\ds\int^t_0 \Big( \|\partial_t\,A^{1/2}\,(\be^h_{\bsi_p} + \alpha_p\,\be^h_{p_p}\bI) \|^2_{\bbL^2(\Omega_p)} + s_0\|\partial_t\,\be^h_{p_p}\|^2_{\W_p} \Big)\,ds$ on the right-hand side of \eqref{eq:error-analysis2}, we test \eqref{eq:error-equation} with $\ubtau_{h} = (\partial_t \be^h_{\bsi_f}, \be^h_{\bu_p}, \partial_t \be^h_{\bsi_p}, \partial_t \be^h_{p_p})$, $\ubpsi_h = (\be^h_{\bvarphi}, \be^h_{\btheta}, \partial_t \be^h_{\lambda})$, and $\ubv_{h} = (\be^h_{\bu_f}, \be^h_{\bu_s}, \be^h_{\bgamma_f}, \be^h_{\bgamma_p})$, differentiate in 
time the rows in \eqref{eq:error-equation} associated to $\bv_{ph}, \bpsi_h, \bphi_h, \bv_{fh}, \bv_{sh}, \bchi_{fh}, \bchi_{ph}$, and employ the projections properties \eqref{eq:projection1}--\eqref{eq:projection3} to eliminate
some of the terms (cf. \eqref{zero-terms}), obtaining
\begin{align}\label{eq:error-analysis3}
& \ds  
\frac{1}{2} \partial_t \Big( \frac{1}{2 \mu} \|(\be^h_{\bsi_f})^\rd\|^2_{\bbL^2(\Omega_f)} 
+ a_p(\be^h_{\bu_p}, \be^h_{\bu_p}) 
+ c_{\BJS}(\be^h_{\bvarphi}, \be^h_{\btheta};\be^h_{\bvarphi}, \be^h_{\btheta}) \Big) \nonumber \\[0.5ex]
& \ds \quad
+ \|\partial_t A^{1/2} (\be^h_{\bsi_p} {+} \alpha_p \be^h_{p_p}\bI)\|^2_{\bbL^2(\Omega_p)} 
+ s_0 \|\partial_t \be^h_{p_p}\|^2_{\W_p} \nonumber \\[0.5ex] 
& \ds
= -\,a_f(\be^I_{\bsi_f},\partial_t\,\be^h_{\bsi_f})
- a_p(\partial_t\,\be^I_{\bu_p}, \be^h_{\bu_p})
- a_e(\partial_t\,\be^I_{\bsi_p}, \partial_t\,\be^I_{p_p}; \partial_t\,\be^h_{\bsi_p},  \partial_t\,\be^h_{p_p})
- c_{\BJS}(\partial_t \, \be^I_{\bvarphi}, \partial_t \, \be^I_{\btheta}; \be^h_{\bvarphi},  \be^h_{\btheta})
 \nonumber \\[0.5ex] 
& \ds\quad 
+c_{\Gamma}( \be^h_{\bvarphi},  \be^h_{\btheta}; \partial_t \, \be^I_{\lambda})
-c_{\Gamma}( \be^I_{\bvarphi}, \be^I_{\btheta}; \partial_t \, \be^h_{\lambda})
-\,b_{\bn_f}(\partial_t \,\be^h_{\bsi_f},\be^I_{\bvarphi})
-b_{\bn_p}(\partial_t \,\be^h_{\bsi_p},\be^I_{\btheta})
-b_{\Gamma}(\be^h_{\bu_p},\partial_t \,\be^I_{\lambda}) \\[0.5ex] 
& \ds\quad 
+\,b_{\bn_f}(\partial_t \,\be^I_{\bsi_f},\be^h_{\bvarphi})
+b_{\bn_p}(\partial_t \,\be^I_{\bsi_p},\be^h_{\btheta})
+b_{\Gamma}(\be^I_{\bu_p},\partial_t \,\be^h_{\lambda})
-b_\skf(\partial_t\,\be^h_{\bsi_f},\be^I_{\bgamma_f}) 
- b_\skp(\partial_t\,\be^h_{\bsi_p},\be^I_{\bgamma_p}) \nonumber \\[0.5ex] 
& \ds\quad 
+\, b_\skf(\partial_t\,\be^I_{\bsi_f},\be^h_{\bgamma_f}) 
+ b_\skp(\partial_t\,\be^I_{\bsi_p},\be^h_{\bgamma_p})\,. \nonumber
\end{align}
Then, integrating \eqref{eq:error-analysis3} from $0$ to $t\in (0,T]$,
using the identities
\begin{equation}\label{eq:integration-by-parts-time-identity}
\begin{array}{l}
\ds \int^t_0 a_f(\be^I_{\bsi_f}, \partial_t\,\be^h_{\bsi_f})\,ds 
\,=\, a_f(\be^I_{\bsi_f}, \be^h_{\bsi_f})\Big|_0^t 
- \int^t_0 a_f(\partial_t\,\be^I_{\bsi_f}, \be^h_{\bsi_f})\,ds\,, \\ [2.5ex]
\ds \int^t_0 b_{\bn_\star}( \partial_t\,\be^h_{\bsi_\star}, \be^I_{\circ})\,ds 
\,=\, b_{\bn_\star}( \be^h_{\bsi_\star},\be^I_{\circ})\Big|_0^t 
- \int^t_0 b_{\bn_\star}( \be^h_{\bsi_\star}, \partial_t\,\be^I_{\circ})\,ds\,,\,\,\star\in \{f,p\},\,\circ\in \{ \bvarphi, \btheta \}\,, \\ [2.5ex]
\ds \int^t_0 b_{\sk,\star}(\partial_t\,\be^h_{\bsi_\star},\be^I_{\bgamma_\star})\,ds 
\,=\, b_{\sk,\star}(\be^h_{\bsi_\star},\be^I_{\bgamma_\star})\Big|_0^t 
- \int^t_0 b_{\sk,\star}(\be^h_{\bsi_\star},\partial_t\,\be^I_{\bgamma_\star})\,ds\,, \\ [2.5ex]
\ds \int^t_0 \pil \be^I_{\diamond}\cdot\bn_f,\partial_t\,\be^h_{\lambda} \pir_{\Gamma_{fp}} ds 
\,=\, \pil \be^I_{\diamond}\cdot\bn_f, \be^h_{\lambda}\pir_{\Gamma_{fp}} \Big|_0^t 
- \int^t_0 \pil \partial_t\,\be^I_{\diamond}\cdot\bn_f, \be^h_{\lambda}\pir_{\Gamma_{fp}} ds\,,\,\,\diamond\in \{ \bvarphi, \btheta, \bu_p \},
\end{array}
\end{equation}
and applying the ellipticity and continuity bounds of the bilinear
forms involved (cf. Lemma~\ref{lem:N-M-properties}), the Cauchy-Schwarz
and Young's inequalities, and the
fact that $\bdiv(\be^h_{\bsi_\star}) = \0$ in $\Omega_\star$ with
$\star\in \{f,p\}$ 
  (cf. \eqref{eq:error-analysis7}), we obtain
\begin{align}\label{eq:error-analysis4}
&\ds \|\be^h_{\bsi_f}(t)\|^2_{\bbX_f}
+ \|\be^h_{\bu_p}(t)\|^2_{\bL^2(\Omega_p)}
+ \|\bdiv(\be^h_{\bsi_p}(t))\|^2_{\bL^2(\Omega_p)}
+ |(\be^h_\bvarphi - \be^h_\btheta)(t)|^2_{\BJS} \nonumber \\[1ex]
&\ds\quad +\,\int^t_0 \Big( 
\|\partial_t\,A^{1/2}\,(\be^h_{\bsi_p} + \alpha_p\,\be^h_{p_p}\bI)\|^2_{\bbL^2(\Omega_p)} 
+ s_0\|\partial_t\,\be^h_{p_p}\|^2_{\W_p}\Big)\,ds \nonumber \\[1ex]
&\ds\leq\, C\,\Bigg(
\|\be^I_{\bsi_f}(t)\|^2_{\bbL^2(\Omega_f)}
+ \|\be^I_{\bu_p}(t)\|^2_{\bV_p}
+ \|\be^I_{\bsi_p}(t)\|^2_{\bbL^2(\Omega_p)}
+ \|\be^I_{\bvarphi}(t)\|^2_{\bLambda_{fh}}
+ \|\be^I_{\btheta}(t)\|^2_{\bLambda_{sh}}
+ \|\be^I_{\bgamma_f}(t)\|^2_{\bbQ_f}
\nonumber \\[1ex]
&\ds \quad +\,
\|\be^I_{\bgamma_p}(t)\|^2_{\bbQ_p}
+ \int^t_0 \Big(
\|\partial_t\,\be^I_{\bsi_f}\|^2_{\bbX_f}
+ \|\partial_t\,\be^I_{\bu_p}\|^2_{\bV_p}
+|\partial_t\,(\be^I_{\bvarphi} - \be^I_{\btheta})|^2_{\BJS}
+ \| \be^I_{\btheta}\|^2_{\bLambda_{sh}}
+ \|\partial_t\,\be^I_{\ubvarphi}\|^2_{\bY_h}
\nonumber \\[1ex]
& \ds \quad 
+ \|\partial_t\,\be^I_{\bgamma_f}\|^2_{\bbQ_f}
+ \|\partial_t\,\be^I_{\bgamma_p}\|^2_{\bbQ_p}
+ \|\partial_t\,A^{1/2}\,(\be^I_{\bsi_p} + \alpha_p\,\be^I_{p_p}\bI)\|^2_{\bbL^2(\Omega_p)}
+ \|\partial_t\,\be^I_{\bsi_p}\|^2_{\bbX_p} \Big)\,ds \nonumber \\[1ex]
& \ds\quad+\,
\|\be^I_{\bsi_f}(0)\|^2_{\bbL^2(\Omega_f)}
+ \|\be^I_{\bu_p}(0)\|^2_{\bV_p}
+ \|\be^I_{\bvarphi}(0)\|^2_{\bLambda_{fh}}
+ \|\be^I_{\btheta}(0)\|^2_{\bLambda_{sh}}
+ \|\be^I_{\bgamma_f}(0)\|^2_{\bbQ_f}\Bigg) \nonumber \\
&\ds +\,\delta_3\,\Bigg(
\|\be^h_{\bsi_f}(t)\|^2_{\bbX_f}
+ \|\be^h_{\bsi_p}(t)\|^2_{\bbL^2(\Omega_p)}
+ \|\be^h_{\lambda}(t)\|^2_{\Lambda_{ph}}
+ \int^t_0 \Big( 
\|\be^h_{\bsi_f}\|^2_{\bbX_f}
+ \|\be^h_{\bu_p}\|^2_{\bV_p}
+ |\be^h_{\bvarphi} - \be^h_{\btheta}|^2_{\BJS} \Big)\,ds \nonumber \\
&\ds \quad +\,
\int^t_0 \Big( \|\be^h_{\ubvarphi}\|^2_{\bY_h}
+ \|\be^h_{\ubu}\|^2_{\bZ} \Big)\,ds \Bigg)
+ \frac{1}{2}\,\int^t_0 
\|\partial_t\,A^{1/2}\,(\be^h_{\bsi_p} + \alpha_p\,\be^h_{p_p}\bI)\|^2_{\bbL^2(\Omega_p)}\, ds \nonumber \\[1ex]
&\ds\quad+\,
C\,\Big(\|\be^h_{\bsi_f}(0)\|^2_{\bbX_f}
+ \|\be^h_{\bu_p}(0)\|^2_{\bL^2(\Omega_p)}
+ \|\be^h_{\bsi_p}(0)\|^2_{\bbX_p}
+ |(\be^h_{\bvarphi} - \be^h_{\btheta})(0)|^2_{\BJS}
+ \|\be^h_{\lambda}(0)\|^2_{\Lambda_{ph}} \Big)\,. 
\end{align}
We note that $\|\be^h_{\bsi_p}(t)\|^2_{\bbL^2(\Omega_p)} + \|\be^h_{\lambda}(t)\|^2_{\Lambda_{ph}}$ can be bounded by using \eqref{eq:aux-taup-qp-inequality} and \eqref{eq:error-inf-sup3}, whereas all the other terms with $\delta_3$
can be bounded by the left hand side of \eqref{eq:error-analysis2}. 
Thus, combining \eqref{eq:error-analysis2} with \eqref{eq:error-inf-sup3} and \eqref{eq:error-analysis4}, using algebraic manipulations, and choosing $\delta_3$ small enough,  we get 
\begin{align}\label{eq:error-analysis6}
&\ds \|\be^h_{\bsi_f}(t)\|^2_{\bbX_f}
+ \|\be^h_{\bu_p}(t)\|^2_{\bL^2(\Omega_p)}
+ |(\be^h_\bvarphi - \be^h_\btheta)(t)|^2_{\BJS}
+ \|\be^h_{\lambda}(t)\|^2_{\Lambda_{ph}}
+ \|A^{1/2}(\be^h_{\bsi_p}+\alpha_p \, \be^h_{p_p} \, \bI)(t)\|^2_{\bbL^2(\Omega_p)} 
\nonumber \\[1ex]
&\ds\quad
+ \|\bdiv(\be^h_{\bsi_p}(t))\|^2_{\bL^2(\Omega_p)}
+ \|\be^h_{p_p}(t)\|^2_{\W_p}
+ \int^t_0 \Big( 
\|\be^h_{\bsi_f}\|^2_{\bbX_f}
+ \|\be^h_{\bu_p}\|^2_{\bV_p}
+ |\be^h_{\bvarphi} - \be^h_{\btheta}|^2_{\BJS} 
+ \|\be^h_{\ubvarphi}\|^2_{\bY_h} 
\nonumber \\[1ex]
&\ds\quad
+ \|\be^h_{\ubu}\|^2_{\bZ}
+ \|\bdiv(\be^h_{\bsi_p})\|^2_{\bL^2(\Omega_p)}  
+ \|\be^h_{p_p}\|^2_{\W_p} 
+ \|\partial_t\,A^{1/2}\,(\be^h_{\bsi_p}+ \alpha_p\,\be^h_{p_p}\,\bI)\|^2_{\bbL^2(\Omega_p)} 
+ s_0\|\partial_t\,\be^h_{p_p}\|^2_{\W_p} 
\Big)\, ds \nonumber \\[1ex]
&\ds\leq\, 
C\,\exp(T)\Bigg( 
\|\be^I_{\bsi_f}(t)\|^2_{\bbL^2(\Omega_f)}
+ \|\be^I_{\bu_p}(t)\|^2_{\bV_p}
+ \|\be^I_{\bsi_p}(t)\|^2_{\bbL^2(\Omega_p)}
+ \|\be^I_{\bvarphi}(t)\|^2_{\bLambda_{fh}}
+ \|\be^I_{\btheta}(t)\|^2_{\bLambda_{sh}} \nonumber \\[1ex]
&\ds\quad
+ \|\be^I_{\bgamma_f}(t)\|^2_{\bbQ_f}
+ \|\be^I_{\bgamma_p}(t)\|^2_{\bbQ_p}
+ \int^t_0 \Big( 
\|\be^I_{\ubsi}\|^2_{\bX} 
+ \|\be^I_{\ubvarphi}\|^2_{\bY_h}
+ \|\be^I_{\ubu}\|^2_{\bZ}
+ |\be^I_{\bvarphi} - \be^I_{\btheta}|^2_{\BJS}
+ \|\partial_t\,\be^I_{\ubsi}\|^2_{\bX} 
\Big)\,ds \nonumber \\[1ex]
& \ds\quad
+ \int^t_0 \Big( \|\partial_t\,\be^I_{\ubvarphi}\|^2_{\bY_h}
+ |\partial_t \,(\be^I_{\bvarphi} - \be^I_{\btheta})|^2_{\BJS}
+ \|\partial_t\,\be^I_{\bgamma_f}\|^2_{\bbQ_f}
+ \|\partial_t\,\be^I_{\bgamma_p}\|^2_{\bbQ_p} \Big)\,ds
+ \|\be^I_{\bsi_f}(0)\|^2_{\bbL^2(\Omega_f)} \nonumber \\[1ex]
&\ds\quad
+ \|\be^I_{\bu_p}(0)\|^2_{\bV_p}
+ \|\be^I_{\bvarphi}(0)\|^2_{\bLambda_{fh}}
+ \|\be^I_{\btheta}(0)\|^2_{\bLambda_{sh}}
+ \|\be^I_{\bgamma_f}(0)\|^2_{\bbQ_f}
+ \|\be^h_{\bsi_f}(0)\|^2_{\bbX_f}
+ \|\be^h_{\bu_p}(0)\|^2_{\bL^2(\Omega_p)} \nonumber \\[1ex]
&\ds\quad
+ \|\be^h_{\bsi_p}(0)\|^2_{\bbX_p}
+ (1 + s_0)\|\be^h_{p_p}(0)\|^2_{\W_p}
+ |(\be^h_{\bvarphi} - \be^h_{\btheta})(0)|^2_{\BJS}
+ \|\be^h_{\lambda}(0)\|^2_{\Lambda_{ph}} \Bigg).
\end{align}

Finally, we establish a bound on the initial data terms above. 
In fact, proceeding as in \eqref{init-data-bound}, recalling from Corollary~\ref{cor:initial-data} and Theorem~\ref{thm:well-posedness-semidiscrete} that
$(\ubsi(0), \ubvarphi(0))
= (\ubsi_{0}, \ubvarphi_0)$ and $(\ubsi_h(0),\ubvarphi_h(0)) = (\ubsi_{h,0},\ubvarphi_{h,0})$, using similar arguments to \eqref{eq:error-analysis2} in combination with the error system derived from \eqref{eq:auxiliary-initial-condition-semidiscrete-1}--\eqref{eq:auxiliary-initial-condition-semidiscrete-2}, we deduce
\begin{equation}\label{eq:error-analysis-0}
\begin{array}{l}
\ds \|\be^h_{\bsi_f}(0)\|^2_{\bbX_f} 
+ \|\be^h_{\bu_p}(0)\|^2_{\bV_p} 
+ \|A^{1/2}\,(\be^h_{\bsi_p}(0))\|^2_{\bbL^2(\Omega_p)}
+ \|\bdiv(\be^h_{\bsi_p}(0))\|^2_{\bL^2(\Omega_p)}
+ \|\be^h_{p_p}(0)\|^2_{\W_p} \\ [2ex]
\ds\quad 
+\,\,|(\be^h_{\bvarphi} - \be^h_{\btheta})(0)|^2_{\BJS}
+ \|\be^h_{\lambda}(0)\|^2_{\Lambda_{ph}}
\,\leq\,
C\,\Big(
\|\be^I_{\ubsi_0}\|^2_{\bX} 
+ \|\be^I_{\wt{\ubvarphi}_0}\|^2_{\bY_h}
+ \|\be^I_{\wt{\ubu}_0}\|^2_{\bZ} \Big)\,,
\end{array}
\end{equation}
where $\ubsi_0 = (\bsi_{f,0},\bu_{p,0},\bsi_{p,0}, p_{p,0})$, $\wt{\ubvarphi}_0 = (\bvarphi_0,\bomega_0,\lambda_0)$ and $\wt{\ubu}_0 = (\bu_{f,0},\bbeta_{p,0},\bgamma_{f,0},\brho_{p,0})$, and $\be^I_{\bsi_0}, \be^I_{\wt{\ubvarphi}_0}, \be^I_{\wt{\ubu}_0}$ denote their corresponding approximation errors.
Thus, using the error decomposition \eqref{eq:error-decomposition} 
in combination with \eqref{eq:error-analysis6}--\eqref{eq:error-analysis-0}, 
the triangle inequality, \eqref{eq:aux-taup-qp-inequality} and
the approximation 
properties \eqref{eq:approx-property1} and \eqref{eq:approx-property2}, 
we obtain \eqref{eq:error-analysis-result} with a positive constant
$C$ depending 
on parameters $\mu, \lambda_p, \mu_p, \alpha_p, k_{\min}, k_{\max}, \alpha_{\BJS}$, 
and the extra regularity assumptions for $\ubsi, \ubvarphi$, and $\ubu$
whose expressions are obtained from the right-hands side of
\eqref{eq:approx-property1} and \eqref{eq:approx-property2}.
This completes the proof in the conforming case \textbf{(S1)}.

The proof in the non-conforming case \textbf{(S2)} follows by using
similar arguments. We exploit the projection property
\eqref{eq:projection2} to conclude that some terms in
\eqref{eq:error-equation3} are zero, namely
$b_{\bn_f}(\be^h_{\bsi_f},\be^I_{\bvarphi})$,
$b_{\bn_p}(\be^h_{\bsi_p},\be^I_{\btheta})$, and
$b_{\Gamma}(\be^h_{\bu_p},\be^I_{\lambda})$, as well as terms
appearing in the operator $\cC$ (cf. \eqref{eq:bilinear-forms-2}):
$\pil \be^h_{\bvarphi}\cdot\bn_f,\be^I_\lambda\pir_{\Gamma_{fp}}$,
$\pil \be^I_\bvarphi\cdot\bn_f,\be^h_{\lambda} \pir_{\Gamma_{fp}}$,
$\pil \be^h_{\btheta}\cdot\bn_p,\be^I_\lambda\pir_{\Gamma_{fp}}$, and
$\pil \be^I_\btheta\cdot\bn_p,\be^h_{\lambda} \pir_{\Gamma_{fp}}$.  In
addition, in the non-conforming version of \eqref{eq:error-inf-sup1}
the terms $\|\be^I_{\lambda} \|_{\Lambda_{ph}}$,
$\|\be^I_{\bvarphi}\|_{\bLambda_{fh}}$, and
$\|\be^I_{\btheta}\|_{\bLambda_{sh}}$ do not appear, since the
bilinear forms $b_{\Gamma}(\bv_{ph}, \be^I_{\lambda})$,
$b_{\bn_f}(\btau_{fh},\be^I_{\bvarphi})$, and
$b_{\bn_p}(\btau_{ph},\be^I_{\btheta})$ are zero by a direct
application of the projection property \eqref{eq:projection2}.
\end{proof}

\section{A multipoint stress-flux mixed finite element method}\label{sec:multipoint-fem}

In this section, inspired by previous works on the multipoint flux
mixed finite element method for Darcy flow
\cite{iwy2010,wxy2012,wy2006,Brezzi.F;Fortin.M;Marini.L2006}
and the multipoint stress mixed finite element method for elasticity
\cite{msmfe-simpl,msmfe-quads,msfmfe-Biot}, we present a vertex
quadrature rule that allows for local elimination of the stresses,
rotations, and Darcy fluxes, leading to a positive-definite
cell-centered pressure-velocities-traces system. We emphasize that,
to the best of our knowledge, this is the first time such method
is developed for the Stokes equations. To that end, the
finite element spaces to be considered for both $(\bbX_{fh}, \bV_{fh},
\bbQ_{fh})$ and $(\bbX_{ph}, \bV_{sh}, \bbQ_{ph})$ are the triple
$\bbBDM_1 - \bP_0 - \bbP_1$, which have been shown to be stable
for mixed elasticity with weak stress symmetry in
\cite{brezzi2008mixed,BBF-reduced,FarFor}, whereas
$(\bV_{ph}, \W_{ph})$ is chosen
to be $\bBDM_1 - \rP_0$ \cite{bdm1985}, and the Lagrange multiplier spaces
$(\bLambda_{fh}, \bLambda_{sh}, \Lambda_{ph})$ are either
$\bP_1-\bP_1-\rP_1$ or $\bP^{\dc}_1-\bP^{\dc}_1-\rP^{\dc}_1$
satisfying \textbf{(S1)} or \textbf{(S2)}
(cf. \eqref{eq:conforming-spaces-Yh},
\eqref{eq:non-conforming-spaces-Yh}), respectively, where
$\rP^{\dc}_1$ denotes the piecewise linear discontinuous finite
element space and $\bP^{\dc}_1$ is its corresponding vector version.

\subsection{A quadrature rule setting}\label{seq:quadrature}
Let $S_\star$ denote the space of elementwise continuous functions on
$\cT_h^\star$. For any pair of tensor or vector valued functions
$\varphi$ and $\psi$ with elements in $S_\star$, we define the vertex
quadrature rule as in \cite{wy2006} (see also
\cite{msmfe-simpl,msfmfe-Biot}):
\begin{equation}\label{quad-rule}
(\varphi, \psi)_{Q,\Omega_\star}
\,:=\, \sum_{E\in \cT_h^{\star}}(\varphi, \psi)_{Q,E}
\,=\, \sum_{E\in \cT_h^{\star}} \frac{|E|}{s}
\sum^{s}_{i=1}\,\varphi(\br_i)\cdot\psi(\br_i),
\end{equation}
where $\star\in \{f, p\}$, $s=3$ on triangles and $s=4$ on tetrahedra,
$\br_i$, $i=1,\ldots,s$, are the vertices of the element $E$, and
$\cdot$ denotes the inner product for both vectors and tensors.

We will apply the quadrature rule for the bilinear forms $a_f$, $a_p$,
$a_e$ and $b_{\sk,\star}$, which will be denoted by $a^h_f$, $a^h_p$,
$a^h_e$ and $b^h_{\sk,\star}$, respectively. These bilinear forms
involve the stress spaces $\bbX_{fh}$ and $\bbX_{ph}$, the vorticity
space $\bbQ_{fh}$ and rotation space $\bbQ_{ph}$, and the Darcy
velocity space $\bV_{ph}$. The $\bBDM_1$ spaces have for degrees of
freedom $s-1$ normal components on each element edge (face), which can
be associated with the vertices of the edge (face).  At any element
vertex $\br_i$, the value of a tensor or vector function is uniquely
determined by its normal components at the associated two edges or
three faces. Also, the vorticity space $\bbQ_{fh}$ and the rotation
space $\bbQ_{ph}$ are vertex-based. Therefore the application of the
vertex quadrature rule \eqref{quad-rule} for the bilinear forms
involving the above spaces results in coupling only the degrees of
freedom associated with a mesh vertex, which allows for local
elimination of these variables. Next, we state a preliminary lemma to
be used later on, which has been proved in
\cite[Lemma~3.1]{msfmfe-Biot} and \cite[Lemma~2.2]{msmfe-simpl}.
\begin{lem}\label{lem:quadrature-error-analysis-1}
There exist positive constants $C_0$ and $C_1$ independent of $h$,
such that for any linear uniformly bounded and positive-definite
operator $L$, there hold
\begin{equation*}
(L(\varphi), \varphi)_{Q,\Omega_\star} \,\geq\, C_0\,\|\varphi\|_{\Omega_\star}^2,\quad 
  (L(\varphi), \psi)_{Q,\Omega_\star} \,\leq\,
  C_1\,\|\varphi\|_{\Omega_\star} \|\psi\|_{\Omega_\star},
  \quad \forall \, \varphi,\psi \in S_\star, \ \
  \star\in \{f,p\}.
\end{equation*}
Consequently, the bilinear form
$(L(\varphi),\varphi)_{Q,\Omega_\star}$ is an inner product in $\L^2(\Omega_\star)$
and $(L(\varphi),\varphi)^{1/2}_{Q,\Omega_\star}$ is a norm equivalent
to $\| \varphi\|_{\Omega_\star}$.
\end{lem}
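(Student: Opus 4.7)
The plan is to reduce the claim to a local element-wise statement, establish norm equivalence on a reference element by finite-dimensional arguments, and then transfer back by a standard scaling argument. First, observe that both the $\L^2$ inner product and the vertex quadrature inner product decompose as sums over elements $E \in \cT_h^\star$, so it suffices to prove the two bounds
\[
C_0\,\|\varphi\|_E^2 \le (L(\varphi),\varphi)_{Q,E},
\qquad (L(\varphi),\psi)_{Q,E} \le C_1\,\|\varphi\|_E\,\|\psi\|_E,
\]
element by element with constants independent of $E$ and $h$.

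For a fixed element $E$, I would use the affine map $F_E : \hat E \to E$ and work on the reference element $\hat E$. The key point is that both $\|\cdot\|_E^2$ and $(\cdot,\cdot)_{Q,E}$ scale as $|E|$ times their counterparts on $\hat E$ (the quadrature weights are $|E|/s$ by definition \eqref{quad-rule}, while $\|\varphi\|_E^2 = |E|/|\hat E|\,\|\hat\varphi\|_{\hat E}^2$ for the affine pullback $\hat\varphi = \varphi\circ F_E$). Therefore the $h$-independent bounds on $E$ follow from corresponding $h$-independent bounds on $\hat E$. On $\hat E$, the space of elementwise continuous functions relevant to our discretization (traces on $\hat E$ of $\bbX_{\star h}$, $\bV_{ph}$, $\bbQ_{\star h}$, which are all subspaces of $\bbBDM_1$, $\bBDM_1$, or piecewise linears) is finite-dimensional, and the quadrature form $(\hat L(\hat\varphi),\hat\varphi)_{Q,\hat E}$ is a bona fide inner product on it: this is because the vertex values of the basis functions in these spaces are unisolvent (the $\bbBDM_1$ and $\bBDM_1$ degrees of freedom at edge/face midpoints can be reassembled as pairs of linearly independent vertex-based normal components, see the standard reformulation used in the MFMFE/MSMFE methods). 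Uniform positive-definiteness and boundedness of $L$ then give equivalence of $(\hat L(\hat\varphi),\hat\varphi)_{Q,\hat E}^{1/2}$ and $\|\hat\varphi\|_{\hat E}$ on this finite-dimensional space. The continuity bound follows by Cauchy--Schwarz applied to the quadrature inner product together with the same equivalence. Summing over all $E \in \cT_h^\star$ yields the desired global inequalities.

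The main obstacle is justifying that $(\hat L(\cdot),\cdot)_{Q,\hat E}$ is genuinely an inner product, i.e.\ non-degenerate, on the appropriate finite-dimensional subspace of $S_\star|_{\hat E}$. This hinges on verifying that, for each of the finite element spaces in question, vanishing at all vertices of $\hat E$ forces the function to vanish identically. This is immediate for the piecewise linear vorticity/rotation spaces and is the standard observation underlying the MFMFE and MSMFE methods for $\bBDM_1$ and $\bbBDM_1$; since the result is quoted from \cite[Lemma~3.1]{msfmfe-Biot} and \cite[Lemma~2.2]{msmfe-simpl}, I would simply cite those references for this unisolvence fact. The last assertion of the lemma, that $(L(\cdot),\cdot)_{Q,\Omega_\star}^{1/2}$ defines a norm equivalent to $\|\cdot\|_{\Omega_\star}$, is then an immediate consequence of the two displayed inequalities together with Cauchy--Schwarz.
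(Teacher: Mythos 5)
Your sketch is correct and follows the standard MFMFE/MSMFE argument: localize to elements, pull back to the reference simplex where both the $\L^2$ and quadrature forms scale identically by $|E|/|\hat E|$ under affine composition, then invoke finite-dimensional norm equivalence together with unisolvence of vertex degrees of freedom on the $\bbBDM_1$, $\bBDM_1$, and piecewise-polynomial reference spaces. The paper gives no proof of this lemma and defers entirely to \cite[Lemma~3.1]{msfmfe-Biot} and \cite[Lemma~2.2]{msmfe-simpl}, so your outline --- including the correct observation that the bounds are meaningful on the relevant piecewise-polynomial subspaces rather than on all of $S_\star$ literally --- is a faithful reconstruction of the argument behind that citation.
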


The semidiscrete coupled multipoint stress-flux mixed finite element
method for \eqref{eq:evolution-problem-in-operator-form}
reads: Find $(\ubsi_h,\ubvarphi_h,\ubu_h):[0,T]\to \bX_h\times
\bY_h\times \bZ_h$ such that for all $(\ubtau_h,\ubpsi_h,\ubv_h)\in
\bX_h\times \bY_h\times \bZ_h$, and for a.e. $t\in (0,T)$,
\begin{equation}\label{eq:Stokes-Biot-formulation-multipoint}
\begin{array}{lll}
\ds \frac{\partial}{\partial t}\,\cE_h(\ubsi_h)(\ubtau_h) + \cA_h(\ubsi_h)(\ubtau_h) + \cB_1(\ubtau_h)(\ubvarphi_h) + \cB_h(\ubtau_h)(\ubu_h) & = & \bF(\ubtau_h) , \\ [1.5ex]
\ds -\,\cB_1(\ubsi_h)(\ubpsi_h) + \cC(\ubvarphi_h)(\ubpsi_h) & = & 0 , \\ [1.5ex]
\ds -\,\cB_h(\ubsi_h)(\ubv_h) & = & \bG(\ubv_h) ,
\end{array}
\end{equation}
where 
\begin{equation*}
\begin{array}{l}
\ds \cA_h(\ubsi_h)(\ubtau_h) \,:=\, a^h_f(\bsi_{fh},\btau_{fh}) + a^h_p(\bu_{ph},\bv_{ph}) + b_p(\bv_{ph},p_{ph}) - b_p(\bu_{ph},w_{ph}),\\[1.5ex]
\ds \cE_h(\ubsi_h)(\ubtau_h) \,:=\, a^h_e(\bsi_{ph},p_{ph};\btau_{ph},w_{ph}) + (s_0\,p_{ph},w_{ph})_{\Omega_p}, \\[1.5ex]
\ds \cB_h(\ubtau_h)(\ubv_h) 
\,:=\, b_f(\btau_{fh},\bv_{fh}) + b_s(\btau_{ph},\bv_{sh})
+ b^h_{\skf}(\btau_{fh},\bchi_{fh}) + b^h_{\skp}(\btau_{ph},\bchi_{ph}).
\end{array}
\end{equation*}

We next discuss the discrete inf-sup conditions. We recall the space
$\wt{\bX}_h$ defined in \eqref{eq:subspace-bbXh-tilde}. We also define
the discrete kernel of the operator $\cB_h$ as
\begin{equation}\label{eq:discrete-kernel-Bh}
  \wh\bV_h := \Big\{ \ubtau_h\in \bX_h :\quad \cB_h(\ubtau_h)(\ubv_h) = 0
  \quad \forall\,\bv_h\in \bZ_h \Big\} 
= \wh{\bbX}_{fh}\times \bV_{ph}\times \wh{\bbX}_{ph}\times \W_{ph},
\end{equation}
where
\begin{equation*}
\wh{\bbX}_{\star h} := \Big\{ \btau_{\star h}\in \bbX_{\star h} :\,
(\btau_{\star h}, \bxi_{\star h})_{Q,\Omega_\star} = 0 \ \
\forall \, \bxi_{\star h} \in \bbQ_{\star h}
\qan \bdiv(\btau_{\star h}) = \0 \qin \Omega_\star \Big\},\quad \star\in \{f,p\}, 
\end{equation*}
emphasizing the difference from the discrete kernel of $\cB$ defined
in \eqref{eq:discrete-kernel-B}.

\begin{lem}\label{lem:quadrature-error-analysis-2}
There exist positive constants $\wh{\beta}$ and $\wh{\beta}_1$, such that
\begin{equation}\label{eq:inf-sup-Bh} 
\sup_{\0\neq \ubtau_h\in \wt{\bX}_h} \, \frac{\cB_h(\ubtau_h)(\ubv_h)}{\|\ubtau_h\|_{\bX}} 
\,\geq\, \wh{\beta}\,\|\ubv_h\|_{\bZ} \quad \forall\,\ubv_h\in \bZ_h,
\end{equation}
\begin{equation}\label{eq:inf-sup-B1h}
\sup_{\0\neq \ubtau_h \in \wh\bV_h} \frac{\cB_1(\ubtau_h)(\ubpsi_h)}{\|\ubtau_h \|_{\bX}} 
\,\geq\, \wh{\beta}_{1}\,\|\ubpsi_h\|_{\bY_h} \quad \forall\,\ubpsi_h\in \bY_h.
\end{equation}
\end{lem}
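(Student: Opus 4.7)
The plan is to follow closely the arguments used in Lemma~\ref{lem:discr-inf-sup}, with the role of the standard $L^2$-based bilinear form $b_{\sk,\star}$ replaced by its vertex-quadrature counterpart $b^h_{\sk,\star}$. The proof rests on two ingredients that are essentially independent of the quadrature rule: the fact that $\bdiv(\bbX_{\star h}) = \bV_{\star h}$ (still true since this only involves the $\bbBDM_1$/$\bP_0$ pair), and the inf-sup stability of the skew-symmetry constraint under vertex quadrature, which has been established for the $\bbBDM_1 - \bP_0 - \bbP_1$ triple in \cite{msmfe-simpl,msmfe-quads} and used in \cite{msfmfe-Biot} for the Biot problem. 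We treat the two inf-sup conditions separately.

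For \eqref{eq:inf-sup-Bh}, given $\ubv_h = (\bv_{fh},\bv_{sh},\bchi_{fh},\bchi_{ph}) \in \bZ_h$, I would construct $\ubtau_h = (\btau_{fh},\mathbf{0},\btau_{ph},0) \in \wt{\bX}_h$ with $\bdiv(\btau_{\star h}) = \bv_{\star h}$ and $b^h_{\sk,\star}(\btau_{\star h},\bchi_{\star h}) \gtrsim \|\bchi_{\star h}\|^2_{\bbQ_\star}$, subdomain by subdomain. This is exactly the statement of the quadrature-based weakly-symmetric mixed elasticity stability shown in \cite[Theorem~4.2]{msmfe-simpl} and \cite{msmfe-quads} for the $\bbBDM_1 - \bP_0 - \bbP_1$ triple, with the only extra issue being the enforcement of $\btau_{\star h}\bn_\star = \mathbf{0}$ on $\Gamma_{fp}$; this restriction is handled by the Fortin-type argument in \cite[Theorem~4.2]{msmfe-simpl}, which relies only on the bubble/vertex structure of the $\bbBDM_1$ degrees of freedom and is therefore insensitive to replacing $b_{\sk,\star}$ with $b^h_{\sk,\star}$.

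For \eqref{eq:inf-sup-B1h}, the diagonal structure of $\cB_1$ again reduces matters to separate inf-sup conditions for $b_{\bn_f}$, $b_{\bn_p}$, and $b_\Gamma$, exactly as in Lemma~\ref{lem:discr-inf-sup}. The argument for $b_\Gamma$ is unchanged, since it concerns only the unmodified Darcy pair $(\bV_{ph},\W_{ph})$. For $b_{\bn_f}$ and $b_{\bn_p}$, I would reproduce the construction used in Lemma~\ref{lem:discr-inf-sup}, but with the elliptic-projection interpolant $\tilde\Pi_h^\star$ from \eqref{pi-tilde} replaced by a quadrature-based interpolant $\wh\Pi_h^\star : \bbH^1(\Omega_\star) \to \bbX_{\star h}$ satisfying
\begin{equation*}
b_\star(\wh\Pi_h^\star\btau_\star - \btau_\star,\bv_{\star h}) = 0, \quad
b^h_{\sk,\star}(\wh\Pi_h^\star\btau_\star - \btau_\star,\bchi_{\star h}) = 0, \quad
\langle(\wh\Pi_h^\star\btau_\star - \btau_\star)\bn_\star,\btau_{\star h}\bn_\star\rangle_{\Gamma_{fp}\cup\Gamma_\star^\rN} = 0,
\end{equation*}
so that $\wh\Pi_h^\star\btau_\star \in \wh{\bbX}_{\star h}$. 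The well-posedness of the elliptic projection defining $\wh\Pi_h^\star$ is precisely guaranteed by \eqref{eq:inf-sup-Bh} (already established) together with Lemma~\ref{lem:quadrature-error-analysis-1}, which ensures that the quadrature bilinear forms are uniformly continuous and coercive.

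The main obstacle lies in constructing $\wh\Pi_h^\star$ and showing that it has the right boundedness and commuting-diagram properties so that the inf-sup argument of \cite[Lemma~4.4]{ejs2009} (in the conforming case \textbf{(S1)}) and \cite[Section~5.1]{aeny2019} (in the non-conforming case \textbf{(S2)}) goes through verbatim. This is not a serious difficulty: the projection is built by solving a local saddle-point problem on $\Omega_\star$ whose solvability and $\bbH^1$-bounded dependence on $\btau_\star$ follow from \eqref{eq:inf-sup-Bh} and the norm equivalence in Lemma~\ref{lem:quadrature-error-analysis-1}, completely analogously to the construction in \cite[Section~3]{msfmfe-Biot} for the poroelasticity case. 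Once $\wh\Pi_h^\star\btau_\star \in \wh\bV_h$ is available, the remainder of the proof is identical to Lemma~\ref{lem:discr-inf-sup}.
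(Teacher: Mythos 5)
Your proposal matches the paper's argument: deduce \eqref{eq:inf-sup-Bh} by adapting \cite[Theorem~4.2]{msmfe-simpl}, then obtain \eqref{eq:inf-sup-B1h} by repeating the proof of Lemma~\ref{lem:discr-inf-sup}, replacing the interpolant of \eqref{pi-tilde} with a quadrature-based analogue whose well-posedness follows from the already-established \eqref{eq:inf-sup-Bh} (together with the norm equivalence of Lemma~\ref{lem:quadrature-error-analysis-1}). The only difference is that you spell out a few steps — the diagonal decomposition of $\cB_1$ and the explicit invocation of Lemma~\ref{lem:quadrature-error-analysis-1} — that the paper leaves implicit, so this is essentially the same proof.
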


\begin{proof}
The proof of \eqref{eq:inf-sup-Bh} follows from a slight adaptation of the argument
in \cite[Theorem~4.2]{msmfe-simpl}. The proof of \eqref{eq:inf-sup-B1h} is similar
to the proof of \eqref{eq:inf-sup-b1-semidiscrete}. The main difference is
replacing the interpolant satisfying \eqref{pi-tilde} by an interpolant
$\hat\Pi_h^f: \bbH^1(\Omega_f) \to \bbX_{fh}$ satisfying
\begin{equation*}
  \begin{array}{c}
  b_f(\hat\Pi_h^f\btau_f - \btau_f,\bv_{fh}) = 0 \quad
  \forall \, \bv_{fh} \in \bV_{fh}, \quad
  b_{\sk,f}^h(\hat\Pi_h^f\btau_f - \btau_f,\bchi_{fh}) = 0 \quad
  \forall \, \bchi_{fh} \in \bbQ_{fh}, \\ [1ex]
  \langle(\hat\Pi_h^f\btau_f - \btau_f)\bn_f,
  \btau_{fh}\bn_f\rangle_{\Gamma_{fp}\cup\Gamma_f^N} = 0
  \quad \forall \, \btau_{fh} \in \bbX_{fh},
  \end{array}
\end{equation*}
whose existence follows from the inf-sup condition for $\cB_h$
\eqref{eq:inf-sup-Bh}.
\end{proof}

We can establish the following well-posedness result.

\begin{thm}\label{thm:well-posdness-multipoint}
For each compatible initial data $(\ubsi_{h,0}, \ubvarphi_{h,0},
\ubu_{h,0})$ satisfying \eqref{eq:initial-condition-semidiscrete} and
\begin{equation*}
\f_f\in \W^{1,1}(0,T;\bV_f'),\quad \f_p\in \W^{1,1}(0,T;\bV_s'),\quad q_f\in \W^{1,1}(0,T;\bbX'_f),\quad q_p\in \W^{1,1}(0,T;\W'_p),
\end{equation*}
there exists a unique solution of
\eqref{eq:Stokes-Biot-formulation-multipoint},
$(\ubsi_h,\ubvarphi_h,\ubu_h):[0,T]\to \bX_h\times \bY_h\times \bZ_h$ such that
$(\bsi_{ph},p_{ph}) \in \W^{1,\infty}(0,T;\bbX_{ph}) \times \W^{1,\infty}(0,T;\W_{ph})$, and $(\ubsi_h(0), \ubvarphi_h(0), \bu_{fh}(0), \bgamma_{fh}(0)) = (\ubsi_{h,0}, \ubvarphi_{h,0}, \bu_{fh,0}, \bgamma_{fh,0})$. Moreover, assuming sufficient regularity of the data, a stability bound as in \eqref{eq:discrete-stability-bound} also holds.
\end{thm}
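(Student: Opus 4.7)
The plan is to follow the proof strategy of Theorem~\ref{thm:well-posedness-semidiscrete} almost verbatim, making only the adjustments needed to accommodate the quadrature-modified bilinear forms $a_f^h$, $a_p^h$, $a_e^h$, $b_{\sk,\star}^h$ and the resulting operators $\cA_h$, $\cE_h$, $\cB_h$. The entire chain of argument --- reduction to a parabolic problem of the form \eqref{eq:general-parabolic-problem}, verification of the hypotheses of Theorem~\ref{thm:solvability-parabolic-problem}, construction of compatible initial data, and derivation of the stability bound via Theorem~\ref{thm:stability-bound}'s argument --- carries over, because the quadrature rule preserves all the structural ingredients.

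Concretely, I would first show that $\cN_h$ and $\cM_h$ (defined as in \eqref{eq:E-u-N-M-f} with the quadrature-modified operators) are continuous, monotone, and that $\cN_h$ is symmetric. The key tool here is Lemma~\ref{lem:quadrature-error-analysis-1}, which gives uniform norm equivalence between $(L(\cdot),\cdot)_{Q,\Omega_\star}^{1/2}$ and $\|\cdot\|_{\bL^2(\Omega_\star)}$ for a bounded positive-definite operator $L$; this applies with $L$ being, respectively, the deviatoric operator combined with $\tfrac{1}{2\mu}$ for $a_f^h$, the tensor $\mu \bK^{-1}$ for $a_p^h$, and the compliance tensor $A$ for $a_e^h$. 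Consequently the analogs of \eqref{eq:monotonicity-E-A}, \eqref{eq:pos-sem-def-c}, and of the ellipticity Lemma~\ref{lem:ellipticity-tilde-a} on the modified kernel $\wh{\bV}_h$ (cf.\ \eqref{eq:discrete-kernel-Bh}) hold, with constants that are independent of $h$ and $s_0$. The inf-sup conditions \eqref{eq:inf-sup-Bh}--\eqref{eq:inf-sup-B1h} from Lemma~\ref{lem:quadrature-error-analysis-2} provide the remaining hypotheses of Theorem~\ref{thm:system_solvability}, yielding well-posedness of the resolvent system and hence the range condition $Rg(\cN_h+\cM_h)=E_{b,h}'$, in direct analogy with Lemma~\ref{lem:resolvent-system}.

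Next, I would construct the compatible discrete initial data by repeating the four-step procedure \eqref{eq:auxiliary-initial-condition-semidiscrete-1}--\eqref{eq:auxiliary-initial-condition-semidiscrete-3}, with the bilinear forms $a_f, a_p, b_{\sk,\star}$ replaced by their quadrature versions. Each auxiliary subproblem is well-posed by Theorem~\ref{thm:system_solvability} applied together with Lemmas~\ref{lem:quadrature-error-analysis-1} and \ref{lem:quadrature-error-analysis-2}. With compatible data in hand, Theorem~\ref{thm:solvability-parabolic-problem} produces the solution with the asserted regularity and initial values. Uniqueness follows by testing the homogeneous problem with $(\ubsi_h,\ubvarphi_h,\ubu_h)$, integrating in time, and using positive semi-definiteness of $\cC$ together with the coercivity of $\cE_h+\cA_h$ on $\wh{\bV}_h$, then invoking the inf-sup conditions of Lemma~\ref{lem:quadrature-error-analysis-2} to conclude that the Lagrange multipliers and $\bZ_h$-components also vanish, exactly as in Theorem~\ref{thm:well-posdness-continuous}.

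For the stability bound, I would reproduce the three-tier testing argument of Theorem~\ref{thm:stability-bound}: first testing \eqref{eq:Stokes-Biot-formulation-multipoint} with the solution itself to control $\|A^{1/2}(\bsi_{ph}+\alpha_p p_{ph}\bI)\|$, $\sqrt{s_0}\|p_{ph}\|$, and the dissipative terms; then using the inf-sup conditions \eqref{eq:inf-sup-Bh}--\eqref{eq:inf-sup-B1h} and the analog of \eqref{eq:p-lambda-bound semidiscrete} to extract bounds on $\|\ubvarphi_h\|_{\bY_h}$, $\|\ubu_h\|_{\bZ}$, $\|p_{ph}\|_{\W_p}$, and $\|\lambda_h\|_{\Lambda_{ph}}$; and finally differentiating in time and testing with $(\partial_t \bsi_{fh},\bu_{ph},\partial_t\bsi_{ph},\partial_t p_{ph})$, etc., to control the $\L^2(0,T;\bbL^2)$ norm of $\partial_t A^{1/2}(\bsi_{ph}+\alpha_p p_{ph}\bI)$ and $\sqrt{s_0}\,\partial_t p_{ph}$. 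The bound on the initial data terms uses the stability of the continuous initial value problem as in \eqref{init-data-bound}. The main technical point to verify --- and in my view the only real obstacle --- is that the norm equivalence constants in Lemma~\ref{lem:quadrature-error-analysis-1} and the inf-sup constants in Lemma~\ref{lem:quadrature-error-analysis-2} are independent of both $h$ and $s_0$, so that the resulting stability constant matches \eqref{eq:discrete-stability-bound}. This is indeed the case, since Lemma~\ref{lem:quadrature-error-analysis-1} produces geometry-dependent but mesh-independent constants and the inf-sup bounds depend only on the subdomain geometry and the shape-regularity of $\cT_h^f,\cT_h^p$.
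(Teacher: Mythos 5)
Your proposal is correct and follows essentially the same route as the paper, which in fact proves this theorem in one line by citing the proof of Theorem~\ref{thm:well-posedness-semidiscrete} together with Lemmas~\ref{lem:quadrature-error-analysis-1} and \ref{lem:quadrature-error-analysis-2}. Your explicit observation that the compatible-initial-data construction \eqref{eq:auxiliary-initial-condition-semidiscrete-1}--\eqref{eq:auxiliary-initial-condition-semidiscrete-3} should be repeated with the quadrature forms $a_f^h, a_p^h, a_e^h, b_{\sk,\star}^h$ (so that the data indeed lies in the domain $\cD$ associated with $\cM_h$ rather than $\cM$) is a useful clarification that the paper leaves implicit in its appeal to ``similar arguments.''
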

\begin{proof}
The theorem follows from similar arguments to the proof of 
Theorem~\ref{thm:well-posedness-semidiscrete}, in conjunction with
Lemmas~\ref{lem:quadrature-error-analysis-1} and
\ref{lem:quadrature-error-analysis-2}.
\end{proof}

\subsection{Error analysis}

Now, we obtain the error estimates and theoretical rates of
convergence for the multipoint stress-flux mixed scheme
\eqref{eq:Stokes-Biot-formulation-multipoint}.  To that end, for each
$\bsi_{fh}$, $\btau_{fh} \in \bbX_{fh}$, $\bu_{ph}$, $\bv_{ph} \in
\bV_{ph}$, $\bsi_{ph}$, $\btau_{ph} \in \bbX_{ph}$, $p_{ph}$, $w_{ph}
\in \W_{ph}$, $\bchi_{fh} \in \bbQ_{fh}$, and $\bchi_{ph} \in
\bbQ_{ph}$, we denote the quadrature errors by
\begin{equation}\label{eq:quadrature-local-errors}
\begin{array}{rl}
\delta_f(\bsi_{fh}, \btau_{fh}) 
\,= & a_f(\bsi_{fh}, \btau_{fh}) - a_f^{h}(\bsi_{fh}, \btau_{fh}), \\ [1.5ex]
\delta_p(\bu_{ph}, \bv_{ph}) 
\,= & a_p(\bu_{ph}, \bv_{ph}) - a_p^{h}(\bu_{ph}, \bv_{ph}), \\ [1.5ex]
\delta_e(\bsi_{ph}, p_{ph}; \btau_{ph}, w_{ph}) 
\,= & a_e(\bsi_{ph}, p_{ph}; \btau_{ph}, w_{ph}) - a_e^{h}(\bsi_{ph}, p_{ph}; \btau_{ph}, w_{ph}), \\ [1.5ex]
\delta_{\sk,\star}(\bchi_{\star h}, \btau_{\star h}) 
\,= & b_{\sk,\star}(\bchi_{\star h}, \btau_{\star h})
- b_{\sk,\star}^{h}(\bchi_{\star h}, \btau_{\star h}),\quad \star\in \{f,p\}.
\end{array}
\end{equation}

Next, for the operator $A$ (cf. \eqref{eq:operator-A-definition}) we
will say that $A\in \bbW^{1,\infty}_{\cT_h^p}$ if $A\in
\bbW^{1,\infty}(E)$ for all $E \in \cT_h^p$ and
$\|A\|_{\bbW^{1,\infty}(E)}$ is uniformly bounded independently of
$h$.  Similar notation holds for $\bK^{-1}$. In the next lemma we
establish bounds on the quadrature errors. The proof follows from a
slight adaptation of \cite[Lemma~5.2]{msmfe-simpl} to our context (see
also \cite{wy2006,msfmfe-Biot}).
\begin{lem}\label{lem:quadrature-error-analysis-3}
  If $\bK^{-1}\in \bbW^{1,\infty}_{\cT_h^p}$ and $A \in \bbW^{1,\infty}_{\cT_h^p}$,
  then there is a constant $C>0$ independent of $h$ such that
\begin{align*}
|\delta_f(\bsi_{fh}, \btau_{fh})| 
& \,\leq\,  
\ds C \sum_{E\in \cT_h^f} h\,\|\bsi_{fh}\|_{\bbH^1(E)}\, \|\btau_{fh}\|_{\bbL^2(E)}, \\
|\delta_p(\bu_{ph}, \bv_{ph})| 
& \,\leq\,
\ds C \sum_{E \in \cT_h^p} h\,\|\bK^{-1}\|_{\bbW^{1,\infty}(E)}\, \|\bu_{ph}\|_{\bH^1(E)}\, \|\bv_{ph}\|_{\bL^2(E)}, \\
|\delta_e(\bsi_{ph}, p_{ph}; \btau_{ph}, w_{ph})| 
& \,\leq\,
\ds C \sum_{E \in \cT_h^p} h\,\|A\|_{\bbW^{1,\infty}(E)} \|(\bsi_{ph},p_{ph})\|_{\bbH^1(E)\times \L^2(E)} \|(\btau_{ph},w_{ph})\|_{\bbL^2(E)\times \L^2(E)}, \\
|\delta_{\sk,\star}(\btau_{\star h},\bchi_{\star h})| 
& \,\leq\, 
\ds C \sum_{E \in \cT_h^\star} h\,\|\btau_{\star h}\|_{\bbL^2(E)}\,
\|\bchi_{\star h}\|_{\bbH^1(E)}, \quad \star\in \{f,p\}, \\
|\delta_{\sk,\star}(\btau_{\star h},\bchi_{\star h})| 
& \,\leq\,  
\ds C \sum_{E \in \cT_h^\star} h\,\|\btau_{\star h}\|_{\bbH^1(E)}\,
\|\bchi_{\star h}\|_{\bbL^2(E)}, \quad \star\in \{f,p\}, 
\end{align*}
for all $\bsi_{fh}, \btau_{fh}\in \bbX_{fh}$, $\bu_{ph}, \bv_{ph} \in \bV_{ph}$,
$\bsi_{ph}, \btau_{ph}\in \bbX_{ph}$, $p_{ph}, w_{ph}\in \W_{ph}$,
$\bchi_{fh}\in \bbQ_{fh}$, $\bchi_{ph}\in \bbQ_{ph}$.
\end{lem}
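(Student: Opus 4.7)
The plan is to establish all four bounds by a unified element-by-element argument following the framework of \cite{wy2006,msmfe-simpl,msfmfe-Biot}. The central observation is that the vertex quadrature rule \eqref{quad-rule} is exact on $\mathbb{P}_1$: on each element $E\in\cT_h^\star$, the local error functional
\begin{equation*}
\sigma_E(\varphi,\psi) \,:=\, (\varphi,\psi)_E - (\varphi,\psi)_{Q,E}
\end{equation*}
vanishes whenever $\varphi\psi$ is affine. Since every finite element space in play is $\mathbb{P}_1$-based on each element, this vanishing property combined with a one-term Bramble--Hilbert argument will produce the first-order factor of $h$.

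First I will establish the key local estimate: for $\psi$ linear on $E$ (or linear componentwise, in the tensor case),
\begin{equation*}
|\sigma_E(\varphi,\psi)| \,\le\, C\,h\,|\varphi|_{\bbH^1(E)}\,\|\psi\|_{\bbL^2(E)}.
\end{equation*}
On a reference element $\hat E$, for fixed linear $\hat\psi$, the functional $\hat\varphi\mapsto\sigma_{\hat E}(\hat\varphi,\hat\psi)$ annihilates constants, because then the integrand is linear and the rule is exact. The boundedness of both the integral and the quadrature by $L^2$-norms on the finite-dimensional polynomial space used (equivalence with $L^\infty$), together with Bramble--Hilbert applied in $\hat\varphi$, yields $|\sigma_{\hat E}(\hat\varphi,\hat\psi)|\le C|\hat\varphi|_{\bbH^1(\hat E)}\|\hat\psi\|_{\bbL^2(\hat E)}$. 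Standard affine (or Piola, for $\bbBDM_1$) scaling back to $E$ introduces the factor $h$, as the $H^1$-seminorm scaling absorbs one power more than the $L^2$-norm.

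Next, I would deduce the four stated bounds from this local estimate. For $\delta_f$ the coefficient $1/(2\mu)$ is constant, so the estimate is applied directly to $\varphi=\bsi_{fh}^{\mathrm d}$, $\psi=\btau_{fh}^{\mathrm d}$, with the bound $\|\btau_{fh}^{\mathrm d}\|_{\bbL^2(E)}\le\|\btau_{fh}\|_{\bbL^2(E)}$, then summed over $E\in\cT_h^f$. For $\delta_{\sk,\star}$ there is no coefficient; the stated two alternative bounds follow by placing the $H^1$-seminorm on either argument, using the symmetric roles of $\btau_{\star h}$ and $\bchi_{\star h}$ in the local estimate. For the variable-coefficient forms $\delta_p$ and $\delta_e$, I use the splitting $L=L_0+(L-L_0)$ on each element $E$, where $L_0$ is the value of $L$ at the centroid and $L\in\{\bK^{-1},A\}$. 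The $L_0$-term reduces to the constant-coefficient case, while the perturbation is bounded via the continuity of the quadrature (Lemma~\ref{lem:quadrature-error-analysis-1}) together with $\|L-L_0\|_{L^\infty(E)}\le C\,h\,\|L\|_{\bbW^{1,\infty}(E)}$, yielding the explicit $\|\bK^{-1}\|_{\bbW^{1,\infty}(E)}$ and $\|A\|_{\bbW^{1,\infty}(E)}$ factors.

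The principal technical point, rather than a genuine obstacle, lies in the compact form of the bound for $\delta_e$: the form involves the combinations $\bsi_{ph}+\alpha_p p_{ph}\bI$ and $\btau_{ph}+\alpha_p w_{ph}\bI$, so after applying the local estimate to these combined tensor-valued arguments, the triangle inequality and the boundedness of $\alpha_p$ allow one to rewrite the right-hand side as $\|(\bsi_{ph},p_{ph})\|_{\bbH^1(E)\times L^2(E)}\|(\btau_{ph},w_{ph})\|_{\bbL^2(E)\times L^2(E)}$, which is the stated form. Summing over $E\in\cT_h^p$ completes the proof; the reference \cite[Lemma~5.2]{msmfe-simpl} treats the analogous elasticity estimates in detail, and the present proof requires only the minor modification of accommodating the Stokes contribution $\delta_f$ and the coupled Biot form $\delta_e$.
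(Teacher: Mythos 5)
Your proposal is correct and follows essentially the same approach as the paper, which itself only cites \cite[Lemma~5.2]{msmfe-simpl} (see also \cite{wy2006,msfmfe-Biot}): exactness of the vertex rule on $\mathbb{P}_1$, a Bramble--Hilbert argument on the reference element for the constant-coefficient local error, scaling back via the affine/Piola map to produce the factor $h$, and a centroid-value coefficient split $L = L_0 + (L - L_0)$ combined with Lemma~\ref{lem:quadrature-error-analysis-1} to handle the variable coefficients $\bK^{-1}$ and $A$. You have also correctly handled the only genuinely new point relative to \cite{msmfe-simpl}, namely the coupled Biot form $\delta_e$, by applying the argument to the $\mathbb{P}_1$ combinations $\bsi_{ph}+\alpha_p p_{ph}\bI$ and $\btau_{ph}+\alpha_p w_{ph}\bI$ (noting $p_{ph}\in\rP_0$) and then using the triangle inequality to recover the stated product norms.
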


We are ready to establish the convergence of the multipoint stress-flux
mixed finite element method.

\begin{thm}\label{thm:error-estimate-multipoint}
For the solutions of the continuous and semidiscrete problems
\eqref{eq:evolution-problem-in-operator-form} and
\eqref{eq:Stokes-Biot-formulation-multipoint}, respectively,
assuming sufficient regularity of the true solution
according to \eqref{eq:approx-property1} and \eqref{eq:approx-property2},
there exists a positive constant $C$ independent of $h$ and $s_0$, such that
\begin{align}\label{eq:error-analysis-result-quadrature}
& \ds \|\be_{\bsi_f}\|_{\L^\infty(0,T;\bbX_{f})}
+ \|\be_{\bsi_f}\|_{\L^2(0,T;\bbX_{f})}
+ \|\be_{\bu_p}\|_{\L^\infty(0,T;\bL^2(\Omega_p))}
+ \|\be_{\bu_p}\|_{\L^2(0,T;\bV_{p})} 
+ |\be_{\bvarphi} - \be_{\btheta}|_{\L^\infty(0,T;\BJS)} \nonumber \\[0.5ex]
& \ds\quad
+ |\be_{\bvarphi} - \be_{\btheta}|_{\L^2(0,T;\BJS)}
+ \|\be_{\lambda}\|_{\L^\infty(0,T;\Lambda_{ph})}
+ \|\be_{\ubvarphi}\|_{\L^2(0,T;\bY_h)}
+ \|\be_{\ubu}\|_{\L^2(0,T;\bZ)}
+ \|A^{1/2}(\be_{\bsi_p})\|_{\L^\infty(0,T;\bbL^2(\Omega_{p}))} \nonumber \\[0.5ex]
& \ds\quad
+ \|\bdiv(\be_{\bsi_p})\|_{\L^\infty(0,T;\bL^2(\Omega_p))}
+ \|\be_{p_p}\|_{\L^\infty(0,T;\W_{p})} 
+ \|\bdiv(\be_{\bsi_p})\|_{\L^2(0,T;\bL^2(\Omega_p))}
+ \|\be_{p_p}\|_{\L^2(0,T;\W_{p})} \nonumber \\[0.5ex]
& \ds\quad 
+ \|\partial_t\,A^{1/2}(\be_{\bsi_p} + \alpha_p \be_{p_p}\bI)\|_{\L^2(0,T;\bbL^2(\Omega_p))}
+ \sqrt{s_0}\|\partial_t\,\be_{p_p}\|_{\L^2(0,T;\W_{p})} \nonumber \\[0.5ex]
& \ds \leq\,\, 
C\,\Big( h + h^{1+r} \Big)\,,
\end{align}
where $r$ is defined in \eqref{eq:approx-property1}.
\end{thm}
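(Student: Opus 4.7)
The argument follows the structure of the proof of Theorem~\ref{thm:error-analysis-result}, with the principal new ingredient being the control of the quadrature errors $\delta_f$, $\delta_p$, $\delta_e$, $\delta_{\sk,f}$, $\delta_{\sk,p}$ introduced in \eqref{eq:quadrature-local-errors}. Subtracting \eqref{eq:Stokes-Biot-formulation-multipoint} from \eqref{eq:evolution-problem-in-operator-form} and using the error decomposition \eqref{eq:error-decomposition}, I would obtain an error system analogous to \eqref{eq:error-equation}, but with the quadrature modifications absorbed into the right-hand side as
\begin{equation*}
\delta_f(I_h^{\bbX_f}\bsi_f,\btau_{fh}) + \delta_p(I_h^{\bV_p}\bu_p,\bv_{ph}) + \delta_e(I_h^{\bbX_p}\bsi_p, P_h^{\W_p}p_p;\btau_{ph},w_{ph}) + \delta_{\sk,f}(\btau_{fh}, P_h^{\bbQ_f}\bgamma_f) + \delta_{\sk,p}(\btau_{ph}, P_h^{\bbQ_p}\bgamma_p),
\end{equation*}
plus the time-differentiated version of $\delta_e$. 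Lemma~\ref{lem:quadrature-error-analysis-1} guarantees that the coercivity bounds and continuity estimates underlying \eqref{eq:monotonicity-E-A}--\eqref{eq:pos-sem-def-c} remain valid for $a_f^h$, $a_p^h$, $a_e^h$, and Lemma~\ref{lem:quadrature-error-analysis-2} provides the discrete inf-sup conditions for $\cB_h$ and $\cB_1$ on $\wh{\bV}_h$ needed to mimic \eqref{eq:error-inf-sup1}--\eqref{eq:error-inf-sup3}.

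The plan is then to reproduce, step by step, the energy argument that led to \eqref{eq:error-analysis2} and \eqref{eq:error-analysis4}. Testing the error system with $(\be^h_{\ubsi},\be^h_{\ubvarphi},\be^h_{\ubu})$ yields the analogue of \eqref{eq:error-equation3}, now with extra terms of the form $\delta_\ast(\cdot,\be^h_\ast)$ on the right-hand side. Each such term is bounded via Lemma~\ref{lem:quadrature-error-analysis-3}; for instance,
\begin{equation*}
|\delta_f(I_h^{\bbX_f}\bsi_f,\be^h_{\bsi_f})| \le C h \,\|I_h^{\bbX_f}\bsi_f\|_{\bbH^1(\cT_h^f)}\,\|\be^h_{\bsi_f}\|_{\bbL^2(\Omega_f)} \le C h \,\|\bsi_f\|_{\bbH^1(\Omega_f)}\,\|\be^h_{\bsi_f}\|_{\bbX_f},
\end{equation*}
using the $\bbH^1$-stability of $I_h^{\bbX_f}$ on each element, together with Young's inequality to absorb $\|\be^h_{\bsi_f}\|^2_{\bbX_f}$ into the left-hand side. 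The analogous bounds apply to $\delta_p$, $\delta_e$, $\delta_{\sk,f}$, and $\delta_{\sk,p}$, where for the last two one uses the second variant of Lemma~\ref{lem:quadrature-error-analysis-3} so as to place the derivative on $\btau_{\star h}$, for which $\bdiv(\be^h_{\bsi_\star})=\0$ and the $\bbH^1$-norm of $\be^h_{\bsi_\star}$ is controlled via a discrete inverse/elementwise equivalence together with the stability of the stress projection.

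The second energy estimate, analogous to \eqref{eq:error-analysis3}--\eqref{eq:error-analysis4}, requires testing with $\partial_t$ of the stress and pressure variables. Here the corresponding quadrature error terms take the form $\delta_\ast(I_h^{\X}\sigma,\partial_t\be^h_\ast)$, which must be handled through the integration-by-parts-in-time identities \eqref{eq:integration-by-parts-time-identity}, transferring the time derivative onto $I_h^{\X}\sigma$ and producing boundary-in-time terms that are absorbed with $\delta_3$-type constants. After choosing $\delta_1,\delta_2,\delta_3$ small enough and applying Gronwall's inequality to the term $\int_0^t \|A^{1/2}(\be^h_{\bsi_p}+\alpha_p\be^h_{p_p}\bI)\|^2_{\bbL^2(\Omega_p)}\,ds$, one obtains the analogue of \eqref{eq:error-analysis6} augmented by quadrature contributions of the form
\begin{equation*}
C h^2 \int_0^t \big(\|\bsi_f\|^2_{\bbH^1(\Omega_f)} + \|\bu_p\|^2_{\bH^1(\Omega_p)} + \|\bsi_p\|^2_{\bbH^1(\Omega_p)} + \|\bgamma_f\|^2_{\bbH^1(\Omega_f)} + \|\bgamma_p\|^2_{\bbH^1(\Omega_p)} + \text{time derivatives}\big)\,ds.
\end{equation*}
Combining with the approximation bounds \eqref{eq:approx-property1}--\eqref{eq:approx-property2} applied to the $\bBDM_1$ and $\bP_1^{\dc}/\bP_1$ spaces used here (so that $s_{\ubsi}=s_{\ubu}=0$ and $s_{\ubvarphi}=0$), the interpolation error contribution is $O(h)$ from $\be^I_{\ubsi}$ and $\be^I_{\ubu}$, and $O(h^{1+r})$ from $\be^I_{\ubvarphi}$. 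Together with the $O(h)$ quadrature contribution, this yields the stated bound $C(h + h^{1+r})$.

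The main technical obstacle is the consistent treatment of the symmetry-imposing quadrature error $\delta_{\sk,\star}(\btau_{\star h},\be^I_{\bgamma_\star})$ and its time-differentiated counterpart, since the natural bound from Lemma~\ref{lem:quadrature-error-analysis-3} places an $\bbH^1$ norm either on the stress (where one then invokes $\bdiv(\be^h_{\bsi_\star})=\0$ plus local inverse estimates to bound $\|\be^h_{\bsi_\star}\|_{\bbH^1}$ by $\|\be^h_{\bsi_\star}\|_{\bbX_\star}$ up to $h^{-1}$) or on the rotation interpolant (where the needed $\bbH^1$-stability of $P_h^{\bbQ_\star}$ on shape-regular meshes must be invoked). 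Once this is handled in the same way as in \cite{msmfe-simpl,msfmfe-Biot}, the remainder of the argument is a direct transcription of the proof of Theorem~\ref{thm:error-analysis-result}.
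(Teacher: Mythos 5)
Your proposal takes essentially the same route as the paper: form the error system with the quadrature errors $\bdelta_{fep}$ and $\bdelta_{fp}$ on the right-hand side, mimic the two energy estimates and the inf-sup arguments from the proof of Theorem~\ref{thm:error-analysis-result} using Lemmas~\ref{lem:quadrature-error-analysis-1}--\ref{lem:quadrature-error-analysis-3}, and absorb the additional $O(h)$ quadrature contributions via the $\bbH^1$-stability of the interpolants. That is precisely what the paper does.

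One part of your discussion, however, is misleading and should be dropped: the ``main technical obstacle'' paragraph. You describe needing to bound $\|\be^h_{\bsi_\star}\|_{\bbH^1}$ via a local inverse estimate, which would cost a factor of $h^{-1}$ and would therefore ruin the $O(h)$ rate. In fact no inverse estimate is required. The two families of $\delta_{\sk,\star}$ terms that actually arise are $\delta_{\sk,\star}(\be^h_{\bsi_\star}, P^{\bbQ_\star}_h(\bgamma_\star))$ and $\delta_{\sk,\star}(I^{\bbX_\star}_h(\bsi_\star), \be^h_{\bgamma_\star})$; for the first you use the variant of Lemma~\ref{lem:quadrature-error-analysis-3} that places $\bbH^1$ on the rotation side together with the $\bbH^1$-stability of $P^{\bbQ_\star}_h$, and for the second the variant that places $\bbH^1$ on the stress side together with the $\bbH^1$-stability of $I^{\bbX_\star}_h$. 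Also note that $\delta_{\sk,\star}(\btau_{\star h}, \be^I_{\bgamma_\star})$ does not occur (and is not even well-defined, since $\be^I_{\bgamma_\star}$ is not a discrete function): the quadrature error enters through the projections $P^{\bbQ_\star}_h(\bgamma_\star)$ themselves, not through the approximation errors. Finally, a small slip: for $\bP_1$ Lagrange multipliers $s_{\ubvarphi}=1$, not $0$; with $r\in\{1/2,1\}$ this is what gives the stated $O(h^{1+r})$ contribution.
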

\begin{proof}
To obtain the error equations, we subtract the
multipoint stress-flux mixed finite element formulation
\eqref{eq:Stokes-Biot-formulation-multipoint} from
the continuous one \eqref{eq:evolution-problem-in-operator-form}.
Using the error decomposition \eqref{eq:error-decomposition} and applying some
algebraic manipulations, we obtain the error system:
\begin{equation}\label{eq:error-equation-multipoint}
\begin{array}{l}
\ds \big(\partial_t\,\cE_h + \cA_h\big)(\be^h_{\ubsi})(\ubtau_h) + \cB_1(\ubtau_h)(\be^h_{\ubvarphi}) + \cB_h(\ubtau_h)(\be^h_{\ubu}) \\ [2ex] 
\ds\quad\,=\, - \big(\partial_t\,\cE + \cA\big)(\be^I_{\ubsi})(\ubtau_h) - \cB_1(\ubtau_h)(\be^I_{\ubvarphi}) - \cB(\ubtau_h)(\be^I_{\ubu}) - \bdelta_{fep}(I_h(\ubsi),P_h(\ubu))(\ubtau_h), \\ [2ex]
\ds -\,\cB_1(\be^h_{\ubsi})(\ubpsi_h) + \cC(\be^h_{\ubvarphi})(\ubpsi_h) \,=\, \cB_1(\be^I_{\ubsi})(\ubpsi_h) - \cC(\be^I_{\ubvarphi})(\ubpsi_h) \\ [2ex]
\ds -\,\cB_h(\be^h_{\ubsi})(\ubv_h) \,=\, \cB(\be^I_{\ubsi})(\ubv_h) + \bdelta_{fp}(I_h(\ubsi))(\ubv_h)\,,
\end{array}
\end{equation}
for all $(\ubtau_h,\ubpsi_h,\ubv_h)\in \bX_h\times \bY_h\times \bZ_h$, where
\begin{equation*}
\begin{array}{l}
\ds \bdelta_{fep}(I_h(\ubsi),P_h(\ubu))(\ubtau_h) 
\,:=\, -\,\delta_f(I^{\bbX_f}_h(\bsi_{f}),\btau_{fh}) - \delta_e(I^{\bbX_p}_h(\bsi_{p}),p_{p};\btau_{ph},w_{ph})\\ [2ex]
\ds\quad -\,\delta_p(I^{\bV_p}_h(\bu_{p}),\bv_{ph})  - \delta_{\sk,f}(\btau_{fh},P^{\bbQ_f}_h(\bgamma_{f})) - \delta_{\sk,p}(\btau_{ph},P^{\bbQ_p}_h(\bgamma_{p}))
\end{array}
\end{equation*}
and
\begin{equation*}
\bdelta_{fp}(I_h(\ubsi))(\ubv_h) \,:=\, 
\delta_{\sk,f}(I^{\bbX_f}_h(\bsi_{f}),\bchi_{fh}) 
+ \delta_{\sk,p}(I^{\bbX_p}_h(\bsi_{p}),\bchi_{ph})\,.
\end{equation*}
Notice that the error system \eqref{eq:error-equation-multipoint} is
similar to \eqref{eq:error-equation}, except for the additional
quadrature error terms. The rest of the proof follows from the
arguments in the proof of \eqref{eq:error-analysis-result}, using
Lemmas~\ref{lem:quadrature-error-analysis-1},
\ref{lem:quadrature-error-analysis-2} and
\ref{lem:quadrature-error-analysis-3}, and utilizing
the continuity bounds of the interpolation operators
$I^{\bX_{\star}}_h, I^{\bv_p}_h, P^{\bbQ_{\star}}_h$
\cite[Lemma~5.1]{msmfe-simpl}:
\begin{align*}
\ds \|I^{\bbX_\star}_h(\btau_{\star h})\|_{\bbH^1(E)} 
& \,\leq\, C\,\|\btau_{\star h}\|_{\bbH^1(E)} \quad \forall\,\btau_{\star h}\in \bbH^1(E)\,,\quad \star\in \{f,p\}\,, \\[0.5ex] 
\ds \|P^{\bbQ_\star}_h(\bchi_{\star h})\|_{\bbH^1(E)} 
& \,\leq\, C\,\|\bchi_{\star h}\|_{\bbH^1(E)} \quad \forall\,\bchi_{\star h}\in \bbH^1(E)\,, \\[0.5ex]
\ds \|I^{\bV_p}_h(\bv_{ph})\|_{\bH^1(E)} 
& \,\leq\, C\,\|\bv_{ph}\|_{\bH^1(E)} \quad \forall\,\bv_{ph} \in \bH^1(E)\,.
\end{align*}
We omit further details, and refer to
\cite{msmfe-simpl,wy2006,msfmfe-Biot} for more details on the error
analysis of the multipoint flux and multipoint stress mixed finite
element methods on simplicial grids.
\end{proof}

\subsection{Reduction to a cell-centered pressure-velocities-traces system}

In this section we focus on the fully discrete problem associated to
\eqref{eq:Stokes-Biot-formulation-multipoint}
(cf. \eqref{eq:evolution-problem-in-operator-form},
\eqref{eq:Stokes-Biot-formulation-semidiscrete}), and describe how to
obtain a reduced cell-centered system for the algebraic problem at
each time step. For the time discretization we employ the backward
Euler method.  Let $\Delta t$ be the time step, $T = M\,\Delta t$,
$t_m = m\,\Delta t$, $m=0,\dots,M$.  Let $d_t\,u^m :=
(\Delta t)^{-1}(u^m - u^{m-1})$ be the first order (backward)
discrete time derivative, where $u^m := u(t_m)$.  Then the fully
discrete model reads: given $(\ubsi^0_h, \ubvarphi^0_h, \ubu^0_h) =
(\ubsi_{h,0}, \ubvarphi_{h,0}, \ubu_{h,0})$ satisfying
\eqref{eq:initial-condition-semidiscrete}, find $(\ubsi^m_h,
\ubvarphi^m_h, \ubu^m_h)\in \bX_h\times \bY_h\times \bZ_h$,
$m=1,\dots,M$, such that for all $(\ubtau_h, \ubpsi_h, \ubv_h)\in
\bX_h\times \bY_h\times \bZ_h$,
\begin{equation}\label{eq:Stokes-Biot-fully-discrete-formulation}
\begin{array}{lll}
\ds d_t\,\cE_h(\ubsi^m_h)(\ubtau_h) + \cA_h(\ubsi^m_h)(\ubtau_h) + \cB_1(\ubtau_h)(\ubvarphi^m_h) + \cB_h(\ubtau_h)(\ubu^m_h) & = & \ds \bF(\ubtau_h)\,, \\ [2ex]
\ds -\,\cB_1(\ubsi^m_h)(\ubpsi_h) + \cC(\ubvarphi^m_h)(\ubpsi_h) & = & 0\,,  \\ [2ex]
\ds -\,\cB_h(\ubsi^m_h)(\ubv_h) & = & \bG(\ubv_h)\,.
\end{array}
\end{equation}

\begin{rem}
The well-posedness and error estimate associated to the fully discrete problem 
\eqref{eq:Stokes-Biot-fully-discrete-formulation} can be derived employing similar 
arguments to Theorems~\ref{thm:well-posdness-multipoint} and 
\ref{thm:error-estimate-multipoint} in combination with the theory developed in 
\cite[Sections~6 and 9]{akyz2018}.
In particular, we note that at each time step the well-posedness of the fully discrete 
problem \eqref{eq:Stokes-Biot-fully-discrete-formulation}, with $m=1,\dots,M$, follows from 
similar arguments to the proof of Lemma~\ref{lem:resolvent-system}.
\end{rem}

Notice that the first row in \eqref{eq:Stokes-Biot-fully-discrete-formulation} can be rewritten equivalently as
\begin{equation}\label{eq:SB-fdf-first-row}
\left( (\Delta t)^{-1}\cE_h + \cA_h \right)(\ubsi^m_h)(\ubtau_h) + \cB_1(\ubtau_h)(\ubvarphi^m_h) + \cB_h(\ubtau_h)(\ubu^m_h) \,=\, \bF(\ubtau_h) + (\Delta t)^{-1}\cE_h(\ubsi^{m-1}_h)(\ubtau_h)\,.
\end{equation}
Let us associate with the operators in 
\eqref{eq:Stokes-Biot-fully-discrete-formulation}--\eqref{eq:SB-fdf-first-row}
matrices denoted in the same way. We then have
\begin{equation*}
\left( (\Delta t)^{-1}\,\cE_h + \cA_h \right) = 
\left(\begin{array}{cccc}
A_{\bsi_f \bsi_f} & 0 & 0 & 0 \\ [1ex]
0 & A_{\bu_p \bu_p} & 0 & A^\rt_{\bu_p p_p} \\ [1ex]
0 & 0 & A_{\bsi_p \bsi_p} & A^\rt_{\bsi_p p_p} \\ [1ex]
0 & -A_{\bu_p p_p} & A_{\bsi_p p_p} & A_{p_p p_p}
\end{array}\right),\quad
\cB_h =
\left(\begin{array}{cccc}
A_{\bsi_f \bu_f} & 0 & 0 & 0 \\ [1ex]
0 & 0 & A_{\bsi_p \bu_s} & 0 \\ [1ex]
A_{\bsi_f \bgamma_f} & 0 & 0 & 0 \\ [1ex]
0 & 0 & A_{\bsi_p \bgamma_p} & 0 
\end{array}\right),
\end{equation*}
\begin{equation*}
\ds \cB_1 =
\left(\begin{array}{cccc}
A_{\bsi_f \bvarphi} & 0 & 0 & 0 \\ [1ex]
0 & 0 & A_{\bsi_p \btheta} & 0 \\ [1ex]
0 & A_{\bu_p \lambda} & 0 & 0 
\end{array}\right),\quad
\cC =
\left(\begin{array}{ccc}
A_{\bvarphi \bvarphi} & A^\rt_{\bvarphi \btheta} & A^\rt_{\bvarphi \lambda} \\ [1ex]
A_{\bvarphi \btheta} & A_{\btheta \btheta} & A^\rt_{\btheta \lambda} \\ [1ex]
-A_{\bvarphi \lambda} & -A_{\btheta \lambda} & 0 
\end{array}\right),
\end{equation*}
with
\begin{align*}
&\ds A_{\bsi_f \bsi_f} \sim a^h_f(\cdot, \cdot),\,\,
A_{\bu_p \bu_p} \sim a^h_p(\cdot, \cdot),\,\,
A_{\bsi_p \bsi_p} \sim (\Delta t)^{-1}\,a^h_e(\cdot,0;\cdot,0), \,\,
\ds A_{\bsi_p p_p} \sim (\Delta t)^{-1} a^h_e(\cdot,0;\0,\cdot), \\[1ex]
& A_{p_p p_p} \sim (\Delta t)^{-1} a^h_e(\0,\cdot;\0,\cdot)
+ (\Delta t)^{-1}(s_0\,\cdot,\cdot)_{\Omega_p}, 
\ds A_{\bu_p p_p} \sim b_p(\cdot,\cdot),\,\,
A_{\bsi_f \bvarphi} \sim b_{\bn_f}(\cdot,\cdot),\,\,
A_{\bu_p \lambda} \sim b_{\Gamma}(\cdot,\cdot),  \\[1ex]
& A_{\bsi_p \btheta} \sim b_{\bn_p}(\cdot,\cdot),\,\,
\ds A_{\bvarphi \bvarphi} \sim c_{\BJS}(\cdot,\0;\cdot,\0),\,\,
A_{\bvarphi \btheta} \sim c_{\BJS}(\cdot,\0;\0,\cdot),\,\,
A_{\btheta \btheta} \sim c_{\BJS}(\0,\cdot;\0,\cdot),\,\,
A_{\bvarphi \lambda} \sim c_{\Gamma}(\cdot,\0;\cdot),\\[1ex]
& \ds
A_{\btheta \lambda} \sim c_{\Gamma}(\0,\cdot;\cdot),\,\,
A_{\bsi_f \bu_f} \sim b_f(\cdot,\cdot), \,\,
A_{\bsi_f \bgamma_f} \sim b^h_{\skf}(\cdot,\cdot),\,\,
A_{\bsi_p \bu_s} \sim b_s(\cdot,\cdot),\,\,
A_{\bsi_p \bgamma_p} \sim b^h_{\skp}(\cdot,\cdot), 
\end{align*}
where the notation $A \sim a$ means that the matrix $A$ is associated with
the bilinear form $a$. Denoting the algebraic vectors corresponding to the
variables $\ubsi^m_h$, $\ubvarphi^m_h$, and $\ubu^m_h$ in the same way,
we can then write the system
\eqref{eq:Stokes-Biot-fully-discrete-formulation} in a matrix-vector form as
\begin{equation}\label{matrix-form}
 \left( \begin{array}{ccc}
   (\Delta t)^{-1}\,\cE_h + \cA_h & \cB_1^\rt & \cB_h^\rt \\
   -\cB_1 & \cC & 0 \\
   -\cB_h & 0 & 0
 \end{array} \right)
 \left(\begin{array}{c}
   \ubsi^m_h \\ \ubvarphi^m_h \\ \ubu^m_h \end{array} \right) =
 \left( \begin{array}{c}
   \bF + (\Delta t)^{-1}\cE_h(\ubsi^{m-1}_h) \\ 0 \\ \bG \end{array} \right).
  \end{equation}

As we noted in Section~\ref{seq:quadrature}, due to the the use of the
vertex quadrature rule, the degrees of freedom (DOFs) of the Stokes
stress $\bsi^m_{fh}$, Darcy velocity $\bu^m_{ph}$ and poroelastic
stress tensor $\bsi^m_{ph}$ associated with a mesh vertex become
decoupled from the rest of the DOFs.  As a result, the assembled mass
matrices have a block-diagonal structure with one block per mesh
vertex.  The dimension of each block equals the number of DOFs
associated with the vertex.  These matrices can then be easily
inverted with local computations. Inverting each local
block in $A_{\bu_p \bu_p}$ allows for expressing the Darcy
velocity DOFs associated with a vertex in terms of the Darcy pressure
$p^m_{ph}$ at the centers of the elements that share the vertex, as
well as the trace unknown $\lambda^m_h$ on neighboring edges (faces)
for vertices on $\Gamma_{fp}$.
Similarly, inverting each local block in $A_{\bsi_f \bsi_f}$
allows for expressing the Stokes stress DOFs
associated with a vertex in terms of neighboring Stokes velocity
$\bu^m_{fh}$, vorticity $\bgamma^m_{fh}$, and trace $\bvarphi^m_h$.
Finally, inverting each local block in $A_{\bsi_p \bsi_p}$
allows for expressing the poroelastic stress DOFs
associated with a vertex in terms of neighboring
Darcy pressure $p^m_{ph}$, structure velocity $\bu^m_{sh}$,
structure rotation $\bgamma^m_{ph}$, and trace $\btheta^m_h$.
Then we have
\begin{equation}\label{eq:sigma-f-sigmap-formulae}
\begin{array}{l}
  \bu^m_{ph} \,=\, -\,A^{-1}_{\bu_p \bu_p} A^\rt_{\bu_p p_p}\,p^m_{ph} - A^{-1}_{\bu_p \bu_p} A^\rt_{\bu_p \lambda}\,\lambda^m_h, \\ [2ex]
  \bsi^m_{fh} \,=\, -\,A^{-1}_{\bsi_f \bsi_f} A^\rt_{\bsi_f \bvarphi}\,\bvarphi^m_h - A^{-1}_{\bsi_f \bsi_f} A^\rt_{\bsi_f \bu_f}\,\bu^m_{fh} - A^{-1}_{\bsi_f \bsi_f} A^\rt_{\bsi_f \bgamma_f}\,\bgamma^m_{fh}, \\ [2ex]
\bsi^m_{ph} \,=\, -\,A^{-1}_{\bsi_p \bsi_p}A^\rt_{\bsi_p p_p}\,p^m_{ph} - A^{-1}_{\bsi_p \bsi_p} A^\rt_{\bsi_p \btheta}\,\btheta^m_h - A^{-1}_{\bsi_p \bsi_p} A^\rt_{\bsi_p \bu_s}\,\bu^m_{sh} - A^{-1}_{\bsi_p \bsi_p} A^\rt_{\bsi_p \bgamma_p}\,\bgamma^m_{ph}.
\end{array}
\end{equation}
The reduced matrix associated to \eqref{matrix-form} in terms of  
$(p^m_{ph}, \bvarphi^m_h, \btheta^m_h, \lambda^m_h, \bu^m_{fh}, \bu^m_{sh}, \bgamma^m_{fh}, \bgamma^m_{ph})$ is given by
\begin{equation}\label{matrix1}
    \arraycolsep 3pt
\left(\begin{array}{cccccccc}
A_{p_p \bsi_p p_p} + A_{p_p \bu_p p_p} & 0 & -A_{p_p \bsi_p \btheta} & A_{p_p \bu_p \lambda} & 0 & -A_{p_p \bsi_p \bu_s} & 0 & -A_{p_p \bsi_p \bgamma_p} \\ [1ex]
0 & A_{\bvarphi \bvarphi}{+}A_{\bvarphi \bsi_f \bvarphi} & A^\rt_{\bvarphi \btheta} & A^\rt_{\bvarphi \lambda} & A_{\bu_f \bsi_f \bvarphi} & 0 & A_{\bgamma_f \bsi_f \bvarphi} & 0 \\ [1ex]
A^\rt_{p_p \bsi_p \theta} & A_{\bvarphi \btheta} &A_{\btheta \btheta}{+}A_{\btheta \bsi_p \btheta} & A^\rt_{\btheta \lambda} & 0 & A_{\bu_s \bsi_p \btheta} & 0 & A_{\bgamma_p \bsi_p \btheta} \\ [1ex]
A^\rt_{p_p \bu_p \lambda} & -A_{\bvarphi \lambda} & -A_{\btheta \lambda} & A_{\lambda \bu_p \lambda} & 0 & 0 & 0 & 0 \\ [1ex] 
0 & A^\rt_{\bu_f \bsi_f \bvarphi} & 0 & 0 & A_{\bu_f \bsi_f \bu_f} & 0 & A_{\bu_f \bsi_f \bgamma_f} & 0 \\ [1ex]
A^\rt_{p_p \bsi_p \bu_s} & 0 & A^\rt_{\bu_s \bsi_p \btheta} & 0 & 0 & A_{\bu_s \bsi_p \bu_s} & 0 & A_{\bu_s \bsi_p \bgamma_p} \\ [1ex]
0 & A^\rt_{\bgamma_f \bsi_f \bvarphi} & 0 & 0 & A^\rt_{\bu_f \bsi_f \bgamma_f} & 0 & A_{\bgamma_f\bsi_f \bgamma_f} & 0 \\ [1ex]
A^\rt_{p_p \bsi_p \bgamma_p} & 0 & A^\rt_{\bgamma_p \bsi_p \btheta} & 0 & 0 & A^\rt_{\bu_s \bsi_p \bgamma_p} & 0 & A_{\gamma_p \bsi_p \bgamma_p}
\end{array}\right)
\end{equation}
where
\begin{align}\label{eq:matrices-1}
  & \ds A_{p_p \bsi_p p_p} = A_{p_p p_p} - A_{\bsi_p p_p} A^{-1}_{\bsi_p \bsi_p} A^\rt_{\bsi_p p_p}, \,\,
A_{p_p \bu_p p_p} =   A_{\bu_p p_p} A^{-1}_{\bu_p \bu_p} A^\rt_{\bu_p p_p},\,\,
A_{p_p \bsi_p \btheta} = A_{\bsi_p p_p} A^{-1}_{\bsi_p \bsi_p} A^\rt_{\bsi_p \btheta}, \nonumber \\[0.5ex]
& \ds A_{p_p \bu_p \lambda} = A_{\bu_p p_p} A^{-1}_{\bu_p \bu_p} A^\rt_{\bu_p \lambda},\,\,
A_{p_p \bsi_p \bu_s} = A_{\bsi_p p_p} A^{-1}_{\bsi_p \bsi_p} A^\rt_{\bsi_p \bu_s},\,\,
A_{p_p \bsi_p \bgamma_p} = A_{\bsi_p p_p} A^{-1}_{\bsi_p \bsi_p} A^\rt_{\bsi_p \bgamma_p}, \nonumber \\[0.5ex]  
& \ds A_{\bvarphi \bsi_f \bvarphi} = A_{\bsi_f \bvarphi} A^{-1}_{\bsi_f \bsi_f}A^\rt_{\bsi_f \bvarphi},\,\, 
A_{\btheta \bsi_p \btheta} = A_{\bsi_p \btheta} A^{-1}_{\bsi_p \bsi_p} A^\rt_{\bsi_p \btheta}, \nonumber \\[0.5ex] 
& \ds A_{\lambda \bu_p \lambda} = A_{\bu_p \lambda} A^{-1}_{\bu_p \bu_p} A^\rt_{\bu_p \lambda},\,\,
A_{\bu_f \bsi_f \bvarphi} = A_{\bsi_f \bvarphi} A^{-1}_{\bsi_f \bsi_f} A^\rt_{\bsi_f \bu_f},\,\, 
A_{\bu_f \bsi_f \bu_f} = A_{\bsi_f \bu_f} A^{-1}_{\bsi_f \bsi_f} A^\rt_{\bsi_f \bu_f}, \\[0.5ex]
& \ds A_{\bu_f \bsi_f \bgamma_f} = A_{\bsi_f \bu_f} A^{-1}_{\bsi_f \bsi_f} A^\rt_{\bsi_f \bgamma_f},\,\,
A_{\bu_s \bsi_p \btheta} = A_{\bsi_p \btheta} A^{-1}_{\bsi_p \bsi_p} A^\rt_{\bsi_p \bu_s},\,\,
A_{\bu_s \bsi_p \bu_s} = A_{\bsi_p \bu_s} A^{-1}_{\bsi_p \bsi_p} A^\rt_{\bsi_p \bu_s}, \nonumber \\[0.5ex]
& \ds A_{\bu_s \bsi_p \bgamma_p} = A_{\bsi_p \bu_s} A^{-1}_{\bsi_p \bsi_p} A^\rt_{\bsi_p \bgamma_p},\,\,
A_{\bgamma_p \bsi_p \bgamma_p} = A_{\bsi_p \bgamma_p} A^{-1}_{\bsi_p \bsi_p} A^\rt_{\bsi_p \bgamma_p},\,\,
A_{\bgamma_p \bsi_p \btheta} = A_{\bsi_p \btheta} A^{-1}_{\bsi_p \bsi_p} A^\rt_{\bsi_p \bgamma_p}, \nonumber \\[0.5ex]
& \ds A_{\bgamma_f \bsi_f \bgamma_f} = A_{\bsi_f \bgamma_f} A^{-1}_{\bsi_f \bsi_f} A^\rt_{\bsi_f \bgamma_f},\,\,
A_{\bgamma_f \bsi_f \bvarphi} = A_{\bsi_f \bvarphi} A^{-1}_{\bsi_f \bsi_f} A^\rt_{\bsi_f \bgamma_f}. \nonumber
\end{align}
Furthermore, due to the vertex quadrature rule, the vorticity and
structure rotation DOFs corresponding to each vertex of the
grid become decoupled from the rest of the DOFs, leading
to block-diagonal matrices $A_{\bgamma_f \bsi_f \bgamma_f}$ and
$A_{\bgamma_p \bsi_p \bgamma_p}$. Recalling the matrix definitions in 
\eqref{eq:matrices-1}, each block is symmetric and positive definite
and thus locally invertible, due
the positive definiteness of $A^{-1}_{\bsi_f \bsi_f}$ and $A^{-1}_{\bsi_p \bsi_p}$
and the inf-sup condition \eqref{eq:inf-sup-B-semidiscrete}. We then have
\begin{equation}\label{eq:gammaf-gammap-formulae}
\begin{array}{l}
\bgamma^m_{fh} \,=\, -\,A^{-1}_{\bgamma_f \bsi_f \bgamma_f} A_{\bgamma_f \bsi_f \bvarphi}\,\bvarphi^m_h - A^{-1}_{\bgamma_f \bsi_f \bgamma_f} A^\rt_{\bu_f \bsi_f \bgamma_f}\,\bu^m_{fh}, \\ [2ex]
\bgamma^m_{ph} \,=\, -\,A^{-1}_{\bgamma_p \bsi_p \bgamma_p} A^\rt_{p_p \bsi_p \bgamma_p}\,p^m_{ph} - A^{-1}_{\bgamma_p \bsi_p \bgamma_p} A_{\bgamma_p \bsi_p \btheta}\,\btheta^m_h - A^{-1}_{\bgamma_p \bsi_p \bgamma_p} A^\rt_{\bu_s \bsi_p \bgamma_p}\,\bu^m_{sh},
\end{array}
\end{equation}
and using some algebraic manipulation, we obtain the reduced problem
$\bA \vec{\bp}^m_h = \vec{\bF}$, with vector solution $\vec{\bp}^m_h
:= (p^m_{ph}, \bvarphi^m_h, \btheta^m_h, \lambda^m_h, \bu^m_{fh},
\bu^m_{sh})$ and matrix
\begin{equation}\label{eq:cell-centered-system-2}
\bA = 
\left(\begin{array}{cccccc}
  \wt{A}_{p_p \bsi_p p_p} + A_{p_p \bu_p p_p}
  & 0
  & -\wt{A}_{p_p \bsi_p \theta}
  & A_{p_p \bu_p \lambda}
  & 0
  & -\wt{A}_{p_p \bsi_p \bu_s}
  \\ [1ex]

  0
  & \wt{A}_{\bvarphi \bsi_f \bvarphi}{+}A_{\bvarphi \bvarphi}
  & A^\rt_{\bvarphi \btheta}
  & A^\rt_{\bvarphi \lambda}
  & \wt{A}_{\bu_f \bsi_f \bvarphi}
  & 0
  \\ [1ex]

  \wt{A}^\rt_{p_p \bsi_p \btheta}
  & A_{\bvarphi \btheta}
  & \wt{A}_{\btheta \bsi_p \btheta}{+}A_{\btheta \btheta}
  & A^\rt_{\btheta \lambda}
  & 0
& \wt{A}_{\bu_s \bsi_p \btheta}
  \\ [1ex]

  A^\rt_{p_p \bu_p \lambda}
  & -A_{\bvarphi \lambda}
  & -A_{\btheta \lambda}
  & A_{\lambda \bu_p \lambda}
  & 0
  & 0
  \\ [1ex]

  0
  & \wt{A}^\rt_{\bu_f \bsi_f \bvarphi}
  & 0
  & 0
  & \wt{A}_{\bu_f \bsi_f \bu_f}
  & 0
  \\ [1ex]

  \wt{A}^\rt_{p_p \bsi_p \bu_s}
  & 0
  & \wt{A}^\rt_{\bu_s \bsi_p \btheta}
  & 0
  & 0
  & \wt{A}_{\bu_s \bsi_p \bu_s}
  
\end{array}\right)
\end{equation}
where 
\begin{align}\label{eq:matrices-2}
  & \wt{A}_{p_p \bsi_p p_p} = A_{p_p \bsi_p p_p}
  + A_{p_p \bsi_p \bgamma_p} A^{-1}_{\bgamma_p \bsi_p \bgamma_p} A^\rt_{p_p \bsi_p \bgamma_p},\,\,
\wt{A}_{p_p \bsi_p \btheta} = A_{p_p \bsi_p \btheta} - A_{p \bsi_p \btheta} A^{-1}_{\bgamma_p \bsi_p \bgamma_p} A^\rt_{\bgamma_p \bsi_p \btheta}, \nonumber \\[0.5ex]
& \wt{A}_{p_p \bsi_p \bu_s} = A_{p_p \bsi_p \bu_s} - A_{p_p \bsi_p \bgamma_p} A^{-1}_{\bgamma_p \bsi_p \bgamma_p} A^\rt_{\bu_s \bsi_p \bgamma_p},\,\,
\wt{A}_{\bvarphi \bsi_f \bvarphi} = A_{\bvarphi \bsi_f \bvarphi} - A_{\bgamma_f \bsi_f \bvarphi} A^{-1}_{\bgamma_f \bsi_f \bgamma_f} A^\rt_{\bgamma_f \bsi_f \bvarphi}, \nonumber \\[0.5ex]
& \wt{A}_{\bu_f \bsi_f \bvarphi} = A_{\bu_f \bsi_f \bvarphi} - A_{\bgamma_f \bsi_f \bvarphi} A^{-1}_{\bgamma_f \bsi_f \bgamma_f} A^\rt_{\bu_f \bsi_f \bgamma_f},\,\,
\wt{A}_{\btheta \bsi_p \btheta} = A_{\btheta \bsi_p \btheta} - A_{\bgamma_p \bsi_p \btheta} A^{-1}_{\bgamma_p \bsi_p \bgamma_p} A^\rt_{\bgamma_p \bsi_p \btheta},\qquad \\[0.5ex]
&\wt{A}_{\bu_s \bsi_p \btheta} = A_{\bu_s \bsi_p \btheta} - A_{\bgamma_p \bsi_p \btheta} A^{-1}_{\bgamma_p \bsi_p \bgamma_p} A^\rt_{\bu_s \bsi_p \bgamma_p},\,\,
\wt{A}_{\bu_f \bsi_f \bu_f} = A_{\bu_f \bsi_f \bu_f} - A_{\bu_f \bsi_f \bgamma_f} A^{-1}_{\bgamma_f \bsi_f \bgamma_f} A^\rt_{\bu_f \bsi_f \bgamma_f}, \nonumber \\[0.5ex]
& \wt{A}_{\bu_s \bsi_p \bu_s} = A_{\bu_s \bsi_p \bu_s} - A_{\bu_s \bsi_p \bgamma_p} A^{-1}_{\bgamma_p \bsi_p \bgamma_p} A^\rt_{\bu_s \bsi_p \bgamma_p}, \nonumber
\end{align}
and the right hand side vector $\vec{\bF}$ has been obtained by
transforming the right-hand side in
\eqref{eq:Stokes-Biot-fully-discrete-formulation} accordingly to the
procedure above. Note that, after solving the problem with matrix
\eqref{eq:cell-centered-system-2}, we can recover
$\bu^m_{ph}, \bsi^m_{fh}, \bsi^m_{ph}$ and $\bgamma^m_{fh},
\bgamma^m_{ph}$ through the formulae 
\eqref{eq:sigma-f-sigmap-formulae} and
\eqref{eq:gammaf-gammap-formulae}, respectively, thus
obtaining the full solution to
\eqref{eq:Stokes-Biot-fully-discrete-formulation}.

\begin{lem}
  The cell-centered finite difference system for the pressure-velocities-traces
  problem \eqref{eq:cell-centered-system-2} is positive definite.	
\end{lem}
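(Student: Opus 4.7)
The plan is to recognize $\bA$ as a Schur complement of the fully-discrete saddle-point operator
\begin{equation*}
\cK := \begin{pmatrix} (\Delta t)^{-1}\cE_h + \cA_h & \cB_1^\rt & \cB_h^\rt \\ -\cB_1 & \cC & 0 \\ -\cB_h & 0 & 0 \end{pmatrix}
\end{equation*}
acting on $\bX_h\times \bY_h\times \bZ_h$, obtained by the two-stage local elimination summarized in \eqref{eq:sigma-f-sigmap-formulae}--\eqref{eq:gammaf-gammap-formulae}, and then to combine a Schur-complement energy identity with the discrete monotonicity and inf-sup tools furnished by Lemmas~\ref{lem:quadrature-error-analysis-1}--\ref{lem:quadrature-error-analysis-2} and \eqref{eq:p-lambda-bound semidiscrete}.

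Given any $\vec{\bp} := (p_{ph},\bvarphi_h,\btheta_h,\lambda_h,\bu_{fh},\bu_{sh})$, I would reconstruct $\bgamma_{fh},\bgamma_{ph}$ via \eqref{eq:gammaf-gammap-formulae} and then $\bsi_{fh},\bu_{ph},\bsi_{ph}$ via \eqref{eq:sigma-f-sigmap-formulae} taken with zero data. By construction, the resulting full vector $(\ubsi_h,\ubvarphi_h,\ubu_h) \in \bX_h\times \bY_h\times \bZ_h$, with $\ubsi_h = (\bsi_{fh},\bu_{ph},\bsi_{ph},p_{ph})$ and $\ubu_h = (\bu_{fh},\bu_{sh},\bgamma_{fh},\bgamma_{ph})$, annihilates the five algebraic equations of $\cK(\ubsi_h,\ubvarphi_h,\ubu_h)=\0$ whose test functions lie in $\bbX_{fh},\bV_{ph},\bbX_{ph},\bbQ_{fh},\bbQ_{ph}$. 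Adding these vanishing contributions to $\vec{\bp}^\rt\bA\vec{\bp}$ and exploiting the pairwise cancellation of the $\pm\cB_1,\pm\cB_h$ off-diagonal blocks of $\cK$, together with $\cC(\ubvarphi_h)(\ubvarphi_h)=c_{\BJS}(\bvarphi_h,\btheta_h;\bvarphi_h,\btheta_h)$ (cf.~\eqref{eq:pos-sem-def-c}), yields the identity
\begin{equation*}
\vec{\bp}^\rt\bA\vec{\bp} \,=\, \big((\Delta t)^{-1}\cE_h + \cA_h\big)(\ubsi_h)(\ubsi_h) + c_{\BJS}(\bvarphi_h,\btheta_h;\bvarphi_h,\btheta_h).
\end{equation*}
The discrete analogue of the monotonicity bound \eqref{eq:monotonicity-E-A}, combined with the norm equivalence from Lemma~\ref{lem:quadrature-error-analysis-1}, then delivers
\begin{equation*}
\vec{\bp}^\rt\bA\vec{\bp} \,\ge\, C\Big(\|\bsi_{fh}^\rd\|^2_{\bbL^2(\Omega_f)} + \|\bu_{ph}\|^2_{\bL^2(\Omega_p)} + (\Delta t)^{-1}\|A^{1/2}(\bsi_{ph}+\alpha_p p_{ph}\bI)\|^2_{\bbL^2(\Omega_p)} + (\Delta t)^{-1}s_0\|p_{ph}\|^2_{\W_p} + |\bvarphi_h-\btheta_h|_{\BJS}^2\Big) \,\ge\, 0,
\end{equation*}
so $\bA$ is positive semi-definite.

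To upgrade to strict positive definiteness, I would assume $\vec{\bp}^\rt\bA\vec{\bp}=0$ and conclude $\vec{\bp}=\0$. Vanishing of each term in the lower bound above forces $\bsi_{fh}^\rd=\0$, $\bu_{ph}=\0$, $p_{ph}=0$ (using $s_0>0$) and hence $\bsi_{ph}=\0$, together with $(\bvarphi_h-\btheta_h)\cdot\bt_{f,j}=0$ on $\Gamma_{fp}$. Substituting $\bu_{ph}=\0$ into the $\bv_{ph}$-row of the homogeneous full system collapses it to $b_p(\bv_{ph},p_{ph})+b_\Gamma(\bv_{ph},\lambda_h)=0$ for all $\bv_{ph}\in\bV_{ph}$, and \eqref{eq:p-lambda-bound semidiscrete} delivers $\lambda_h=0$. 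Successively applying the discrete inf-sup conditions \eqref{eq:inf-sup-Bh}--\eqref{eq:inf-sup-B1h} of Lemma~\ref{lem:quadrature-error-analysis-2} to the remaining eliminated equations---noting that the reconstructed $\bsi_{fh},\bsi_{ph}$ lie in $\wh{\bbX}_{fh},\wh{\bbX}_{ph}$ (cf.~\eqref{eq:discrete-kernel-Bh}) by the $\bchi_{fh},\bchi_{ph}$ relations, and that $\bdiv(\bsi_{fh})=\0,\bdiv(\bsi_{ph})=\0$ by the $\bv_{fh},\bv_{sh}$ relations---mirrors the discrete uniqueness argument behind Theorem~\ref{thm:well-posdness-multipoint} and sequentially forces $\bvarphi_h=\btheta_h=\0$, $\bsi_{fh}=\0$, and $\bu_{fh}=\bu_{sh}=\0$, yielding $\vec{\bp}=\0$. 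The main obstacle is the careful bookkeeping in the energy identity and in the sequential inf-sup arguments: one must verify that the $\cB_1,\cB_h$ cross-terms cancel pairwise under the $\pm$ sign pattern of $\cK$, and that the vertex-quadrature local blocks $A_{\bsi_f\bsi_f},A_{\bu_p\bu_p},A_{\bsi_p\bsi_p},A_{\bgamma_f\bsi_f\bgamma_f},A_{\bgamma_p\bsi_p\bgamma_p}$ are genuinely invertible on each vertex patch, which is granted by Lemma~\ref{lem:quadrature-error-analysis-1} and the inf-sup condition \eqref{eq:inf-sup-Bh}.
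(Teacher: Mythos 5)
Your proof is correct and follows a genuinely different route from the paper's. The paper works purely at the matrix level: it expands $\vec{\bq}^\rt\bA\vec{\bq}$ directly into the six terms of \eqref{eq:pd-reduced-matrix}, each of which it recognizes as either a Schur complement of a positive semi-definite block matrix or a quadratic form in $A^{-1}_{\bu_p\bu_p}$, $A^{-1}_{\bsi_f\bsi_f}$, or $A^{-1}_{\bsi_p\bsi_p}$, and then checks that the third, fifth and sixth terms are strictly positive on the sub-vectors $(w_{ph},\xi_h)$, $(\bpsi_h,\bv_{fh})$, $(\bphi_h,\bv_{sh})$ respectively (using the inf-sup conditions), so that at least one of them is positive whenever $\vec{\bq}\ne\vec{\0}$. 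You instead reconstruct the full vector $\vec{\bw}$ from $\vec{\bp}$ via the elimination formulas \eqref{eq:sigma-f-sigmap-formulae}--\eqref{eq:gammaf-gammap-formulae}, and use the Schur-complement identity $\vec{\bp}^\rt\bA\vec{\bp}=\vec{\bw}^\rt\cK\vec{\bw}$ together with the pairwise cancellation of the $\pm\cB_1,\pm\cB_h$ and $\pm c_\Gamma$ blocks to arrive at the energy identity $\vec{\bp}^\rt\bA\vec{\bp}=a_f^h(\bsi_{fh},\bsi_{fh})+a_p^h(\bu_{ph},\bu_{ph})+(\Delta t)^{-1}\big[a_e^h(\bsi_{ph},p_{ph};\bsi_{ph},p_{ph})+s_0\|p_{ph}\|^2_{\W_p}\big]+c_{\BJS}(\bvarphi_h,\btheta_h;\bvarphi_h,\btheta_h)$; positive semi-definiteness then follows from Lemma~\ref{lem:quadrature-error-analysis-1} and \eqref{eq:pos-sem-def-c}, and strict positive definiteness from a sequential uniqueness argument mirroring the one in Theorem~\ref{thm:well-posdness-continuous}, using that the reconstructed $\bsi_{fh},\bsi_{ph}$ lie in the restricted kernel spaces and the inf-sup conditions \eqref{eq:inf-sup-Bh}--\eqref{eq:inf-sup-B1h} and \eqref{eq:p-lambda-bound semidiscrete}. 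Your route is the discrete analogue of the stability identity used repeatedly in the well-posedness proofs and is conceptually cleaner, avoiding the explicit six-term decomposition; the paper's algebraic route makes the source of positivity for each block of unknowns explicit, which can be advantageous for designing preconditioners. One small remark: the energy identity requires that the eliminated variables be obtained by solving exactly the five rows of $\cK$ corresponding to $(\btau_{fh},\bv_{ph},\btau_{ph},\bchi_{fh},\bchi_{ph})$ (with zero data); since the paper's two-stage elimination coincides with the single-stage Schur elimination of those five blocks, this is consistent, but it is worth stating that the nested-Schur-complement property is what makes the two agree.
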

\begin{proof}
Consider a vector $\vec{\bq}^\rt = (w^\rt_{ph}\,\, \bpsi^\rt_h\,\, \bphi^\rt_h\,\,
\xi^\rt_h\,\, \bv^\rt_{fh}\,\, \bv^\rt_{sh}) \neq \vec{\0}$. Employing the matrices
in \eqref{eq:matrices-1} and \eqref{eq:matrices-2} and some algebraic
manipulations, we obtain
\begin{equation}\label{eq:pd-reduced-matrix}
  \begin{array}{l}
\ds \vec{\bq}^\rt\,\bA\,\vec{\bq} \,=\,
w^\rt_{ph}\big( A_{p_p p_p} - A_{\bsi_p p_p} A^{-1}_{\bsi_p \bsi_p} A^\rt_{\bsi_p p_p}\big)w_{ph}
+ w^\rt_{ph} A_{p_p \bsi_p \bgamma_p} A^{-1}_{\bgamma_p \bsi_p \bgamma_p}
A^\rt_{p_p \bsi_p \bgamma_p} w_{ph}
\\ [2ex]
\ds\quad +\,\,\big( A^\rt_{\bu_p p_p}\,w_{ph}
+ A^\rt_{\bu_p \lambda}\,\xi_h \big)^\rt A^{-1}_{\bu_p \bu_p}
\big( A^\rt_{\bu_p p_p}\,w_{ph} + A^\rt_{\bu_p \lambda}\,\xi_h \big) 
+ \,\, (\bpsi^\rt_h \,\, \bphi^\rt_h)
\left(\begin{array}{cc} A_{\bvarphi \bvarphi} & A^\rt_{\bvarphi \btheta} \\
  A_{\bvarphi \btheta} & A_{\btheta \btheta} \end{array} \right)
\left( \begin{array}{c} \bpsi_h \\ \bphi_h \end{array} \right)\\ [2ex]
\ds\quad +\,\, (\bpsi^\rt_h \,\, \bv^\rt_{fh})
\left(\begin{array}{cc} \wt{A}_{\bvarphi \bsi_f \bvarphi}
  & \wt{A}_{\bu_f \bsi_f \bvarphi} \\
  \wt{A}^\rt_{\bu_f \bsi_f \bvarphi} & \wt{A}_{\bu_f \bsi_f \bu_f} \end{array} \right)
\left( \begin{array}{c} \bpsi_h \\ \bv_{fh} \end{array} \right)
+\,\, (\bphi^\rt_h \,\, \bv^\rt_{sh})
\left(\begin{array}{cc} \wt{A}_{\btheta \bsi_p \btheta}
  & \wt{A}_{\bu_s \bsi_p \btheta} \\
  \wt{A}^\rt_{\bu_s \bsi_p \btheta} & \wt{A}_{\bu_s \bsi_p \bu_s} \end{array} \right)
\left( \begin{array}{c} \bphi_h \\ \bv_{sh} \end{array} \right).
\end{array} 
\end{equation}
Now, we focus on analyzing the six terms in the right-hand side of
\eqref{eq:pd-reduced-matrix}.  The first term is non-negative due to
\cite[Theorem~7.7.6]{Horn-Johnson} and the fact that the matrix
$A_{p_p p_p} - A_{\bsi_p p_p} A^{-1}_{\bsi_p \bsi_p} A^\rt_{\bsi_p p_p}$ is a Schur
complement of the matrix
\begin{equation*}
\left(\begin{array}{cc}
A_{\bsi_p \bsi_p} & A^\rt_{\bsi_p p_p} \\[1ex] 
A_{\bsi_p p_p} & A_{p_p p_p}
\end{array}\right),
\end{equation*}
which is positive semi-definite as a consequence of the ellipticity
property of the operator $a_e$ (cf. \eqref{eq:bilinear-forms-1} and
\eqref{eq:monotonicity-E-A}). The second term is nonnegative, since
the matrix $A_{\bgamma_p \bsi_p \bgamma_p}$ is positive definite, as noted
in \eqref{eq:gammaf-gammap-formulae}.
The third term is positive for $(w^\rt_{ph}\,\, \xi^\rt_h) \neq \vec{\0}$,
due to the positive-definiteness of
$A^{-1}_{\bu_p \bu_p}$ and the inf-sup condition
\eqref{eq:p-lambda-bound semidiscrete}.
The fourth term is non-negative since the operator $\cC$
(cf. \eqref{eq:pos-sem-def-c}) is positive semi-definite. The matrices in the
last two terms are Schur complements of the matrices
$$
A_f := \left( \begin{array}{ccc}
  A_{\bvarphi \bsi_f \bvarphi} & A_{\bu_f \bsi_f \bvarphi} & A_{\bgamma_f \bsi_f \bvarphi}\\
  A^\rt_{\bu_f \bsi_f \bvarphi} & A_{\bu_f \bsi_f \bu_f} & A_{\bu_f \bsi_f \bgamma_f}\\
  A^\rt_{\bgamma_f \bsi_f \bvarphi} & A^\rt_{\bu_f \bsi_f \bgamma_f} & A_{\bgamma_f \bsi_f \bgamma_f}
\end{array}\right)
\quad \mbox{and} \quad
A_p := \left( \begin{array}{ccc}
  A_{\btheta \bsi_p \btheta} & A_{\bu_s \bsi_p \btheta} & A_{\bgamma_p \bsi_p \btheta}\\
  A^\rt_{\bu_s \bsi_p \btheta} & A_{\bu_s \bsi_p \bu_s} & A_{\bu_s \bsi_p \bgamma_p}\\
  A^\rt_{\bgamma_p \bsi_p \btheta} & A^\rt_{\bu_s \bsi_p \bgamma_p} & A_{\bgamma_p \bsi_p \bgamma_p}
\end{array}\right),
$$
respectively, which are positive definite. In particular, for
$\vec{\bv}_f^\rt = (\bpsi^\rt_h\,\, \bv^\rt_{fh}\,\, \bchi^\rt_{fh}) \neq \vec{\0}$
and $\vec{\bv}_p^\rt = (\bphi^\rt_h\,\, \bv^\rt_{sh}\,\, \bchi^\rt_{ph})
\neq \vec{\0}$, we have
\begin{align*}
& \vec{\bv}_f^\rt A_f \vec{\bv}_f = \big(A^\rt_{\bsi_f \bvarphi}\,\bpsi_h
+ A^\rt_{\bsi_f \bu_f}\,\bv_{fh}
+ A^\rt_{\bsi_f \bgamma_f}\,\bchi_{fh} \big)^\rt A^{-1}_{\bsi_f \bsi_f}
\big(A^\rt_{\bsi_f \bvarphi}\,\bpsi_h
+ A^\rt_{\bsi_f \bu_f}\,\bv_{fh}
+ A^\rt_{\bsi_f \bgamma_f}\,\bchi_{fh} \big) > 0, \\
& \vec{\bv}_p^\rt A_p \vec{\bv}_p = \big( A^\rt_{\bsi_p \btheta}\,\bphi_h +
A^\rt_{\bsi_p \bu_s}\,\bv_{sh} + A^\rt_{\bsi_p \bgamma_p}\,\bchi_{ph} \big)^\rt
A^{-1}_{\bsi_p \bsi_p} \big( A^\rt_{\bsi_p \btheta}\,\bphi_h +
A^\rt_{\bsi_p \bu_s}\,\bv_{sh} + A^\rt_{\bsi_p \bgamma_p}\,\bchi_{ph} \big) > 0,
\end{align*}
due to the positive-definiteness of $A^{-1}_{\bsi_f \bsi_f}$ and $A^{-1}_{\bsi_p \bsi_p}$,
along with the combined inf-sup condition for
$\cB_h(\ubtau_h)(\ubv_h) + \cB_1(\ubtau_h)(\ubpsi_h)$. The latter follows from
the inf-sup conditions \eqref{eq:inf-sup-Bh} and \eqref{eq:inf-sup-B1h},
using that \eqref{eq:inf-sup-B1h} holds in the kernel of $\cB_h$.
Then, applying again
\cite[Theorem~7.7.6]{Horn-Johnson}, we conclude that the last two terms
in \eqref{eq:pd-reduced-matrix} are positive for
$(\bpsi^\rt_h \,\, \bv^\rt_{fh}) \neq \vec{\0}$ and
$(\bphi^\rt_h \,\, \bv^\rt_{sh}) \neq \vec{\0}$. Therefore
$\vec{\bq}^\rt\,\bA\,\vec{\bq} > 0$ for all $\vec{\bq}\neq \vec{\0}$,
implying that the matrix $\bA$ from \eqref{eq:cell-centered-system-2}
is positive definite.
\end{proof}

\begin{rem}
The solution of the reduced system with the matrix $\bA$ from
\eqref{eq:cell-centered-system-2} results in significant computational
savings compared to the original system \eqref{matrix-form}. In
particular, five of the eleven variables have been eliminated. Three of
the remaining variables are Lagrange multipliers that appear only on
the interface $\Gamma_{fp}$. The other three are the cell-centered
velocities and Darcy pressure, with only $n$ DOFs per element in the
Stokes region and $n + 1$ DOFs per element in the Biot region, which are
the smallest possible number of DOFs for the sub-problems. Furthermore,
since the reduced system is positive definite, efficient iterative solvers
such as GMRES can be utilized for its solution.
\end{rem}

\section{Numerical results}\label{sec:numerical-results}

In this section we present numerical results that illustrate the
behavior of the fully discrete multipoint stress-flux mixed finite element method
\eqref{eq:Stokes-Biot-fully-discrete-formulation}.  Our implementation
is in two dimensions and it is based on {\tt FreeFem++}
\cite{freefem}, in conjunction with the direct linear solver {\tt
  UMFPACK} \cite{umfpack}.  For spatial discretization, we use the
$(\bbBDM_1-\bP_0-\bbP_1)$ spaces for Stokes, the
$(\bbBDM_1-\bP_0-\bbP_1)-(\bBDM_1-\rP_0)$ spaces for Biot, and either
$(\bP_1 - \bP_1 - \rP_1)$ or $\bP^\dc_1-\bP^\dc_1-\rP^\dc_1$ for the
Lagrange multipliers.
We present three examples. Example~1 is used to corroborate the rates
of convergence.  Example 2 is a simulation of the coupling of surface
and subsurface hydrological systems, focusing on the qualitative
behavior of the solution.  Example 3 illustrates an application to
flow in a poroelastic medium with an irregularly shaped cavity, using
physically realistic parameters.

\subsection{Example 1: convergence test}

In this test we study the convergence rates for the space discretization
using an analytical solution.  The domain is $\overline\Omega =
\overline\Omega_f\cup\overline\Omega_p$, where $\Omega_f = (0,1)\times
(0,1)$ and $\Omega_p = (0,1)\times (-1,0)$.  In particular, the upper half is associated with the Stokes flow, while the
lower half represents the flow in the poroelastic structure governed
by the Biot system, see Figure~\ref{fig:ex1} (left). 
The interface conditions are enforced
along the interface $\Gamma_{fp}$.  The parameters and analytical solution are
given in Figure~\ref{fig:ex1} (right). The solution is designed to 
satisfy the interface conditions \eqref{eq:interface-conditions}.
The right hand side functions $\f_f, q_f, \f_p$ and $q_p$ are computed
from \eqref{eq:Stokes-2}--\eqref{eq:Biot-model} using the true
solution.  The model problem is then complemented with the appropriate
boundary conditions, which are described in Figure~\ref{fig:ex1} (left),
and initial data.  Notice that the boundary
conditions for $\bsi_f, \bu_f, \bu_p, \bsi_p$, and $\bbeta_p$
(cf. \eqref{eq:Stokes-2}--\eqref{eq:Biot-model}) are not homogeneous
and therefore the right-hand side of the resulting system must be
modified accordingly. The total simulation time for this example is
$T=0.01$ and the time step is $\Delta t = 10^{-3}$.  The time
step is sufficiently small, so that the time discretization error does
not affect the convergence rates.

\begin{figure}
  \begin{minipage}{0.25\textwidth}
\begin{center}
  \includegraphics[width=0.9\textwidth]{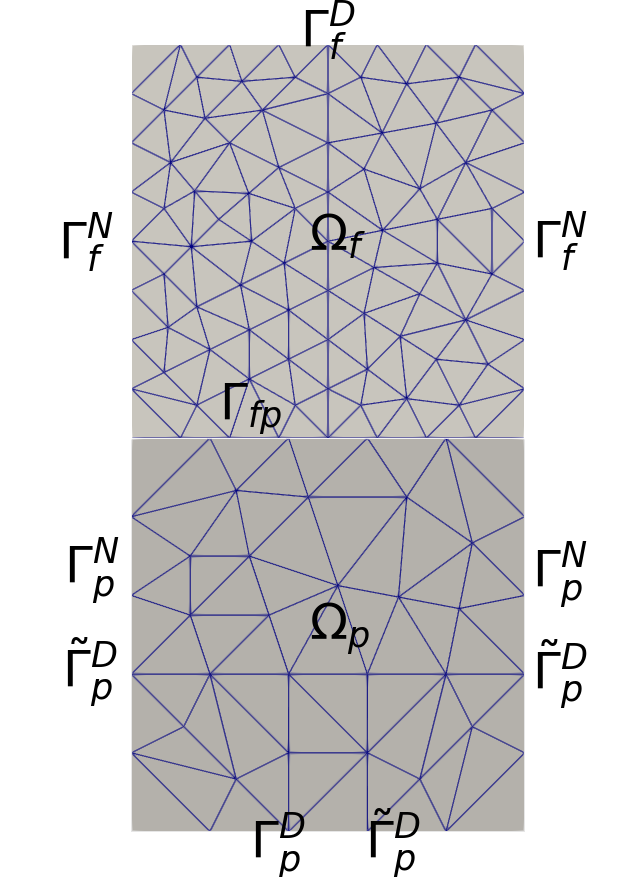}
\end{center}
  \end{minipage}
  \begin{minipage}{0.75\textwidth}
    \begin{align*}
& \mu = 1,\quad
\alpha_p = 1,\quad
\lambda_p = 1,\quad
\mu_p = 1,\quad
s_0 = 1,\quad
\bK = \bI,\quad
\alpha_{\BJS} = 1, \\
& \bu_f = \pi\,\cos(\pi\,t)
\left(\begin{array}{c}
-3x + \cos(y) \\ y + 1
\end{array}\right), \\
& p_f = \exp(t)\,\sin(\pi\,x)\,\cos\left(\frac{\pi\,y}{2}\right) + 2\,\pi\,\cos(\pi\,t),\\
&p_p = \exp(t)\,\sin(\pi\,x)\,\cos\left(\frac{\pi\,y}{2}\right), \\
&\bu_p = -\frac{1}{\mu}\,\bK\,\nabla p_p, \quad
\bbeta_p = \sin(\pi\,t)\,
\left(\begin{array}{c}
-3x + \cos(y) \\ y + 1
\end{array}\right).
\end{align*}
    \end{minipage}
\caption{Example 1, domain and coarsest mesh level (left), parameters and analytical solution (right).}
\label{fig:ex1}
\end{figure}

Tables~\ref{table2-example1} and \ref{table1-example1} show the convergence history for a sequence of quasi-uniform mesh refinements with non-matching grids along
the interface
employing conforming and non-conforming spaces for the Lagrange multipliers (cf. \eqref{eq:conforming-spaces-Yh}--\eqref{eq:non-conforming-spaces-Yh}), respectively. In the tables, $h_{f}$ and $h_p$ denote the mesh sizes in $\Omega_f$ and $\Omega_p$, respectively, while the mesh sizes for their traces on $\Gamma_{fp}$ are
$h_{tf}$ and $h_{tp}$, satisfying $h_{tf} = \frac{5}{8}\,h_{tp}$.
We note that the Stokes pressure and the displacement at time $t_m$ are recovered by the post-processed formulae $p^m_f = -\frac{1}{n} (\tr(\bsi^m_f) - 2\,\mu\,q^m_f)$ (cf. \eqref{eq:Stokes-2}) and $\bbeta^m_p = \bbeta^{m-1}_p + \Delta t\,\bu^m_s$ (cf.
  Remark~\ref{rem:post-processed-formulae}), respectively.
The results illustrate that spatial rates of convergence $\cO(h)$, as provided by Theorem~\ref{thm:error-estimate-multipoint}, are attained for all subdomain variables in their natural norms.
The Lagrange multiplier variables, which are approximated in $\bP_1-\bP_1-\rP_1$ and $\bP^\dc_1-\bP^\dc_1-\rP^\dc_1$, exhibit rates of convergence $\cO(h^{3/2})$ and $\cO(h^2)$ in the $\H^{1/2}$ and $\L^2$-norms on $\Gamma_{fp}$, respectively, which is consistent with the order of approximation.

\begin{table}[h]
\begin{center}
\begin{tabular}{c||cc|cc|cc|cc}
\hline
& \multicolumn{2}{|c|}{$\|\be_{\bsi_f}\|_{\ell^2(0,T;\bbX_f)}$}  & \multicolumn{2}{|c|}{$\|\be_{\bu_f}\|_{\ell^2(0,T;\bV_f)}$} & \multicolumn{2}{|c|}{$\|\be_{\bgamma_f}\|_{\ell^2(0,T;\bbQ_f)}$} & \multicolumn{2}{|c}{$\|\be_{p_f}\|_{\ell^2(0,T;\L^2(\Omega_f))}$} \\ 
$h_f$ & error & rate & error & rate & error & rate & error & rate \\  \hline
0.1964 & 2.2E-02 &   --   & 2.7E-02 &   --   & 2.4E-03 &   --   & 6.3E-03 &   --   \\ 
0.0997 & 1.2E-02 & 0.95 & 1.4E-02 & 1.00 & 9.3E-04 & 1.41 & 3.1E-03 & 1.05 \\ 
0.0487 & 5.7E-03 & 0.99 & 6.8E-03 & 0.99 & 4.2E-04 & 1.11 & 1.6E-03 & 0.93 \\
0.0250 & 2.9E-03 & 1.04 & 3.4E-03 & 1.04 & 2.0E-04 & 1.13 & 7.8E-04 & 1.07 \\
0.0136 & 1.4E-03 & 1.14 & 1.7E-03 & 1.15 & 9.4E-05 & 1.23 & 3.9E-04 & 1.15 \\
0.0072 & 7.1E-04 & 1.08 & 8.4E-04 & 1.10 & 4.7E-05 & 1.09 & 2.0E-04 & 1.02 \\ 
\hline 
\end{tabular}
			
\medskip
	
\begin{tabular}{c||cc|cc|cc|cc|cc}
\hline
& \multicolumn{2}{|c|}{$\|\be_{\bsi_p}\|_{\ell^\infty(0,T;\bbX_p)}$}  & \multicolumn{2}{|c|}{$\|\be_{\bu_s}\|_{\ell^2(0,T;\bV_s)}$} & \multicolumn{2}{|c|}{$\|\be_{\bgamma_p}\|_{\ell^2(0,T;\bbQ_p)}$} & \multicolumn{2}{|c}{$\|\be_{\bu_p}\|_{\ell^2(0,T;\bV_p)}$} & \multicolumn{2}{|c}{$\|\be_{p_p}\|_{\ell^\infty(0,T;\W_p)}$} \\ 
$h_p$ & error & rate & error & rate & error & rate & error & rate & error & rate \\  \hline
0.2828 & 2.7E-01 &   --   & 4.3E-02 &   --   & 3.4E-02 &   --   & 1.0E-01 &   --   & 7.5E-02 &   --   \\ 
0.1646 & 1.4E-01 & 1.27 & 2.2E-02 & 1.23 & 9.4E-03 & 2.38 & 5.2E-02 & 1.27 & 3.8E-02 & 1.25 \\ 
0.0779 & 6.7E-02 & 0.97 & 1.1E-02 & 0.96 & 2.2E-03 & 1.96 & 2.5E-02 & 1.00 & 1.9E-02 & 0.93 \\
0.0434 & 3.4E-02 & 1.17 & 5.4E-03 & 1.19 & 5.8E-04 & 2.25 & 1.2E-02 & 1.24 & 9.4E-03 & 1.22 \\
0.0227 & 1.7E-02 & 1.06 & 2.7E-03 & 1.07 & 2.0E-04 & 1.68 & 5.9E-03 & 1.08 & 4.7E-03 & 1.07 \\
0.0124 & 8.4E-03 & 1.15 & 1.4E-03 & 1.15 & 8.1E-05 & 1.48 & 2.9E-03 & 1.15 & 2.4E-03 & 1.14 \\ 
\hline 
\end{tabular}
			
\medskip
			
\begin{tabular}{cc||c||cc||c||cc|cc}
\hline
\multicolumn{2}{c||}{$\|\be_{\bbeta_p}\|_{\ell^2(0,T;\bL^2(\Omega_p))}$} &
& \multicolumn{2}{|c||}{$\|\be_{\bvarphi}\|_{\ell^2(0,T;\bLambda_f)}$} & & \multicolumn{2}{|c|}{$\|\be_{\btheta}\|_{\ell^2(0,T;\bLambda_s))}$} & \multicolumn{2}{|c}{$\|\be_{\lambda}\|_{\ell^2(0,T;\Lambda_p)}$} \\ 
error & rate & $h_{tf}$ & error & rate & $h_{tp}$ & error & rate & error & rate \\  \hline
2.7E-04 &   --   & 1/8   & 1.6E-03 &   --   & 1/5   & 1.6E-02 &   --   & 6.9E-03 &   --   \\ 
1.4E-04 & 1.23 & 1/16  & 3.7E-04 & 2.11 & 1/10  & 5.7E-03 & 1.49 & 2.5E-03 & 1.49 \\
6.7E-05 & 0.96 & 1/32  & 1.3E-04 & 1.45 & 1/20  & 1.2E-03 & 2.31 & 8.5E-04 & 1.52 \\
3.4E-05 & 1.19 & 1/64  & 4.6E-05 & 1.54 & 1/40  & 3.4E-04 & 1.76 & 3.0E-04 & 1.50 \\
1.7E-05 & 1.07 & 1/128 & 1.2E-05 & 1.96 & 1/80  & 1.1E-04 & 1.62 & 1.1E-04 & 1.50 \\
8.4E-06 & 1.15 & 1/256 & 3.6E-06 & 1.70 & 1/160 & 2.2E-05 & 2.34 & 3.7E-05 & 1.54 \\
\hline 
\end{tabular}
\caption{Example 1, errors and convergence rates with $\bP_1 - \bP_1 - \rP_1$ Lagrange multipliers.}\label{table2-example1}
\end{center}
\end{table}

\begin{table}[h]
\begin{center}
\begin{tabular}{c||cc|cc|cc|cc}
\hline
& \multicolumn{2}{|c|}{$\|\be_{\bsi_f}\|_{\ell^2(0,T;\bbX_f)}$}  & \multicolumn{2}{|c|}{$\|\be_{\bu_f}\|_{\ell^2(0,T;\bV_f)}$} & \multicolumn{2}{|c|}{$\|\be_{\bgamma_f}\|_{\ell^2(0,T;\bbQ_f)}$} & \multicolumn{2}{|c}{$\|\be_{p_f}\|_{\ell^2(0,T;\L^2(\Omega_f))}$} \\ 
$h_f$ & error & rate & error & rate & error & rate & error & rate \\  \hline
0.1964 & 2.2E-02 &   --   & 2.7E-02 &   --   & 2.4E-03 &   --   & 6.1E-03 &   --   \\ 
0.0997 & 1.2E-02 & 0.94 & 1.4E-02 & 1.00 & 9.7E-04 & 1.31 & 3.1E-03 & 1.02 \\ 
0.0487 & 5.7E-03 & 0.99 & 6.8E-03 & 0.99 & 4.2E-04 & 1.16 & 1.6E-03 & 0.92 \\
0.0250 & 2.8E-03 & 1.04 & 3.4E-03 & 1.04 & 2.0E-04 & 1.13 & 7.8E-04 & 1.07 \\
0.0136 & 1.4E-03 & 1.14 & 1.7E-03 & 1.15 & 9.4E-05 & 1.23 & 3.9E-04 & 1.15 \\
0.0072 & 7.1E-04 & 1.08 & 8.4E-04 & 1.09 & 4.7E-05 & 1.09 & 2.0E-04 & 1.02 \\ 
\hline 
\end{tabular}
		
\medskip
\begin{tabular}{c||cc|cc|cc|cc|cc}
\hline
& \multicolumn{2}{|c|}{$\|\be_{\bsi_p}\|_{\ell^\infty(0,T;\bbX_p)}$}  & \multicolumn{2}{|c|}{$\|\be_{\bu_s}\|_{\ell^2(0,T;\bV_s)}$} & \multicolumn{2}{|c|}{$\|\be_{\bgamma_p}\|_{\ell^2(0,T;\bbQ_p)}$} & \multicolumn{2}{|c}{$\|\be_{\bu_p}\|_{\ell^2(0,T;\bV_p)}$} & \multicolumn{2}{|c}{$\|\be_{p_p}\|_{\ell^\infty(0,T;\W_p)}$} \\ 
$h_p$ & error & rate & error & rate & error & rate & error & rate & error & rate \\  \hline
0.2828 & 2.7E-01 &   --   & 4.3E-02 &   --   & 3.4E-02 &   --   & 1.0E-01 &   --   & 7.5E-02 &   --   \\ 
0.1646 & 1.4E-01 & 1.27 & 2.2E-02 & 1.23 & 9.4E-03 & 2.39 & 5.2E-02 & 1.26 & 3.8E-02 & 1.25 \\ 
0.0779 & 6.7E-02 & 0.97 & 1.1E-02 & 0.96 & 2.2E-03 & 1.96 & 2.5E-02 & 1.00 & 1.9E-02 & 0.93 \\
0.0434 & 3.4E-02 & 1.17 & 5.4E-03 & 1.19 & 5.8E-04 & 2.25 & 1.2E-02 & 1.24 & 9.4E-03 & 1.22 \\
0.0227 & 1.7E-02 & 1.06 & 2.7E-03 & 1.07 & 2.0E-04 & 1.67 & 5.9E-03 & 1.08 & 4.7E-03 & 1.07 \\
0.0124 & 8.4E-03 & 1.15 & 1.4E-03 & 1.15 & 8.1E-05 & 1.48 & 2.9E-03 & 1.15 & 2.4E-03 & 1.14 \\ 
\hline 
\end{tabular}
	
\medskip

\begin{tabular}{cc||c||cc||c||cc|cc}
\hline
\multicolumn{2}{c||}{$\|\be_{\bbeta_p}\|_{\ell^2(0,T;\bL^2(\Omega_p))}$} &
& \multicolumn{2}{|c||}{$\|\be_{\bvarphi}\|_{\ell^2(0,T;\bL^2(\Gamma_{fp}))}$} & & \multicolumn{2}{|c|}{$\|\be_{\btheta}\|_{\ell^2(0,T;\bL^2(\Gamma_{fp}))}$} & \multicolumn{2}{|c}{$\|\be_{\lambda}\|_{\ell^2(0,T;\L^2(\Gamma_{fp}))}$} \\ 
error & rate & $h_{tf}$ & error & rate & $h_{tp}$ & error & rate & error & rate \\  \hline
2.7E-04 &   --   & 1/8   & 4.1E-04 &   --   & 1/5   & 7.9E-03 &   --   & 1.1E-03 &   --   \\ 
1.4E-04 & 1.23 & 1/16  & 2.0E-04 & 1.04 & 1/10  & 2.9E-03 & 1.46 & 3.1E-04 & 1.87 \\ 
6.7E-05 & 0.96 & 1/32  & 2.4E-05 & 3.07 & 1/20  & 5.7E-04 & 2.34 & 7.7E-05 & 2.01 \\
3.4E-05 & 1.19 & 1/64  & 6.4E-06 & 1.89 & 1/40  & 1.5E-04 & 1.89 & 1.9E-05 & 2.00 \\
1.7E-05 & 1.07 & 1/128 & 1.6E-06 & 1.97 & 1/80  & 3.8E-05 & 2.01 & 4.9E-06 & 1.98 \\
8.4E-06 & 1.15 & 1/256 & 4.0E-07 & 2.02 & 1/160 & 9.0E-06 & 2.09 & 1.2E-06 & 2.09 \\ 
\hline 
\end{tabular}
\caption{Example 1, errors and convergence rates with 
  $\bP^\dc_1 - \bP^\dc_1 - \rP^\dc_1$ Lagrange multipliers.} \label{table1-example1}
\end{center}
\end{table}

\subsection{Example 2: coupled surface and subsurface flows}

In this example, we simulate coupling of surface and subsurface flows, which could be used to describe the interaction between a river and an aquifer. We consider the domain $\Omega = (0,2)\times(-1,1)$.  We associate the upper half with the river flow modeled by Stokes equations, while the lower half represents the flow in the aquifer governed by the Biot system. The appropriate interface conditions are enforced along the interface $y = 0$. In this example we focus on the qualitative behavior of the solution and use unit physical parameters:
\begin{equation*}
\mu = 1,\quad
\alpha_p = 1,\quad
\lambda_p = 1,\quad
\mu_p = 1,\quad
s_0 = 1,\quad
\bK = \bI,\quad
\alpha_{\BJS} = 1.
\end{equation*}
The body forces terms and external source are set to zero, as well as the initial conditions. 
The flow is driven through a parabolic fluid velocity on the left boundary of the fluid region with boundary conditions specified as follows:
\begin{equation*}
\begin{array}{lll}
\ds \bu_f = (-40y(y-1) \ \  0)^{t} & \mbox{on} & \Gamma_{f,left}, \\ [1ex]
\ds \bu_f = \0 & \mbox{on} & \Gamma_{f,top}, \\ [1ex]
\ds \bsi_f\bn_f = \0 & \mbox{on} & \Gamma_{f,right}, \\ [1ex]
\ds p_p = 0 \qan \bsi_p\bn_p = \0  & \mbox{on} & \Gamma_{p,bottom}, \\ [1ex]
\ds \bu_p\cdot\bn_p = 0 \qan \bu_{s} = \0 & \mbox{on} & \Gamma_{p,left}\cup \Gamma_{p,right}.
\end{array}
\end{equation*}
The simulation is run for a total time $T=3$ with a time step $\Delta t = 0.06$. 
The computed solution is presented in Figure~\ref{fig:example2}. 
\begin{figure}[h]
\begin{center}
\includegraphics[width=0.32\textwidth]{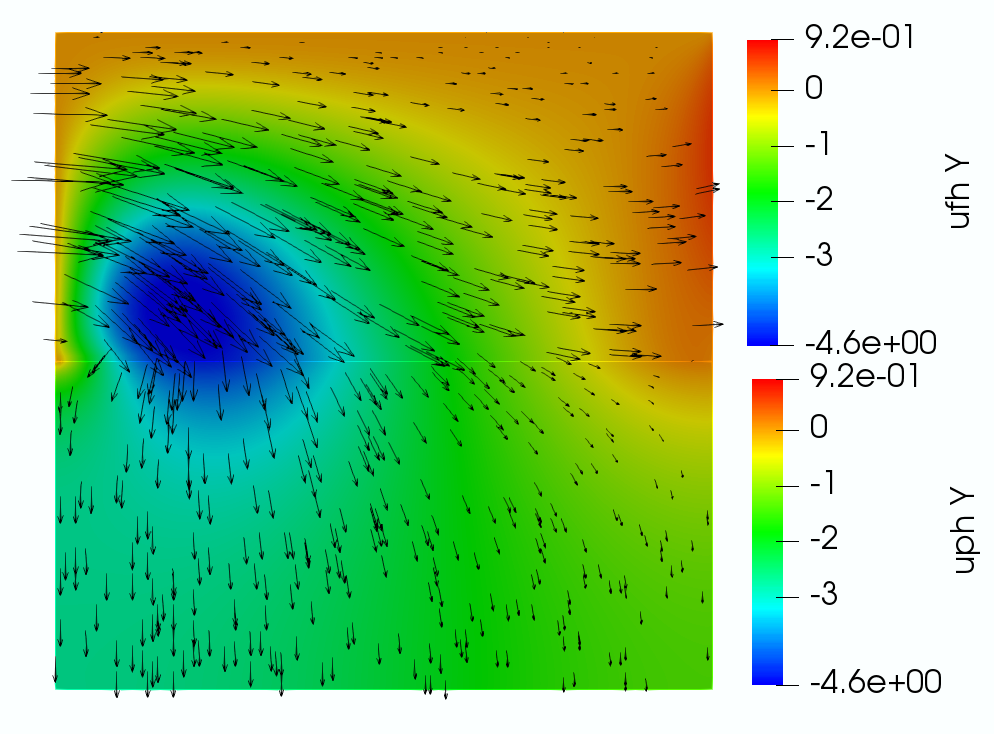}
\includegraphics[width=0.32\textwidth]{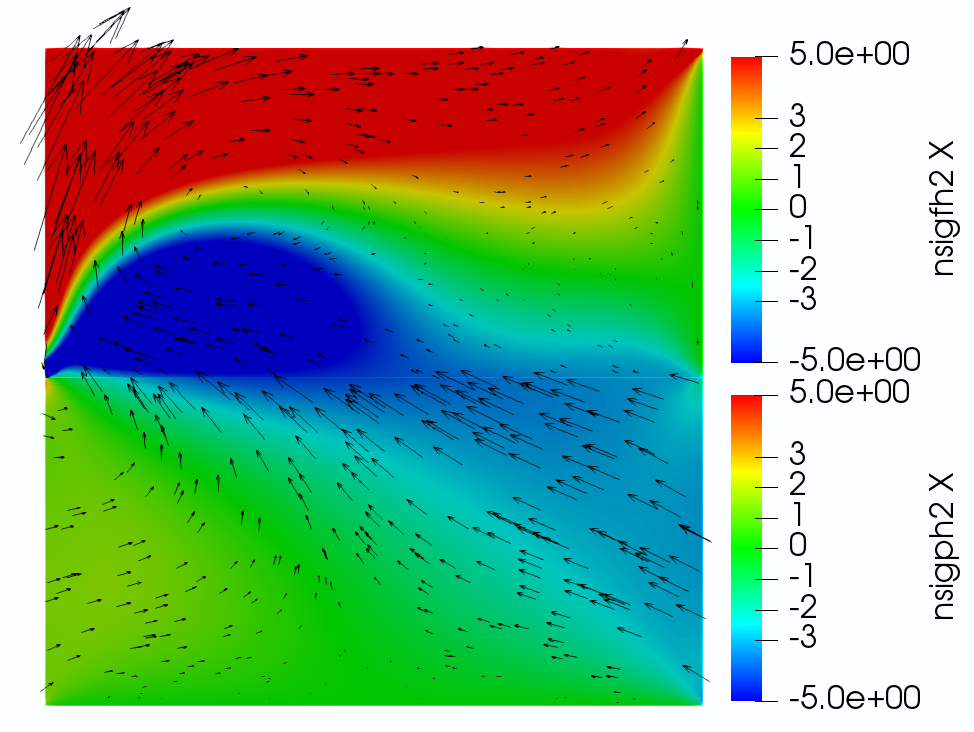}
\includegraphics[width=0.32\textwidth]{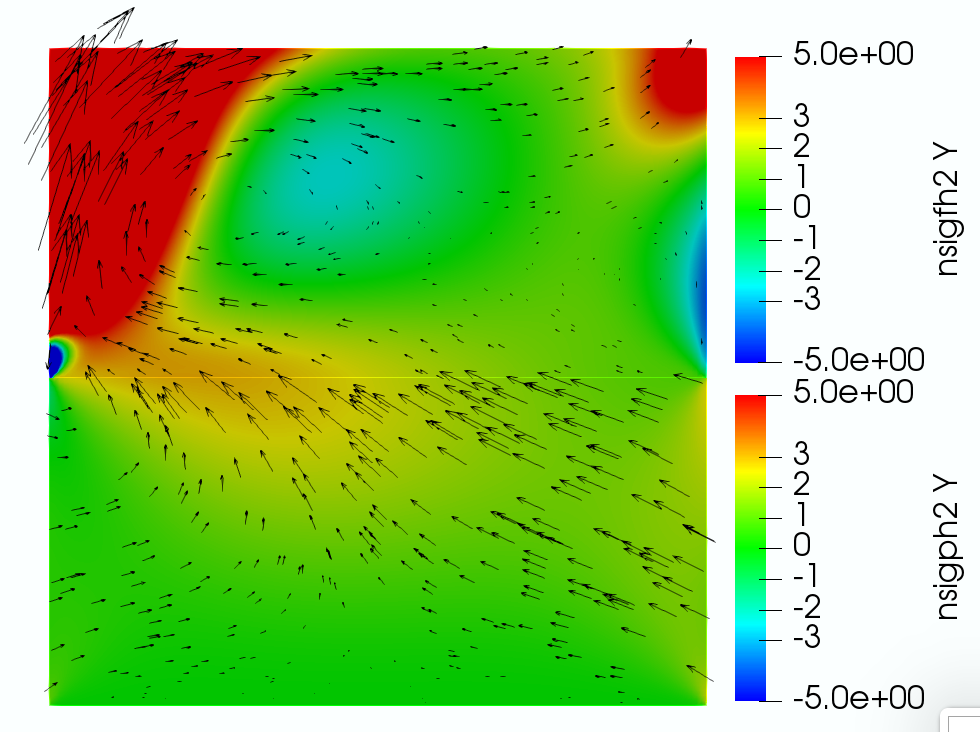}

\includegraphics[width=0.32\textwidth]{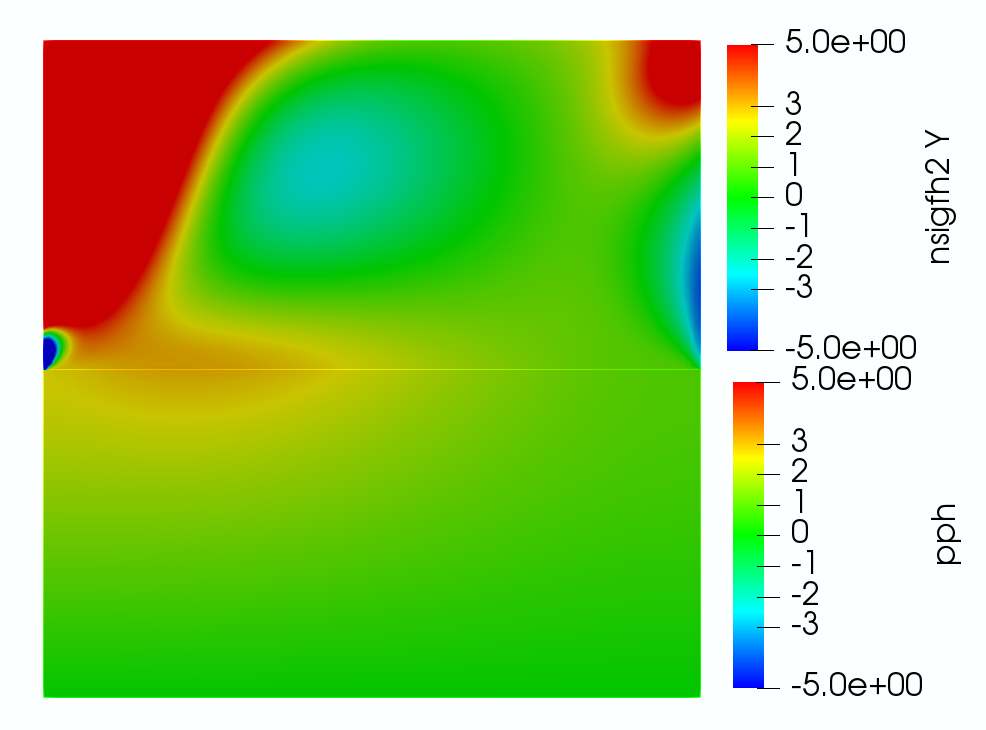}
\includegraphics[width=0.32\textwidth]{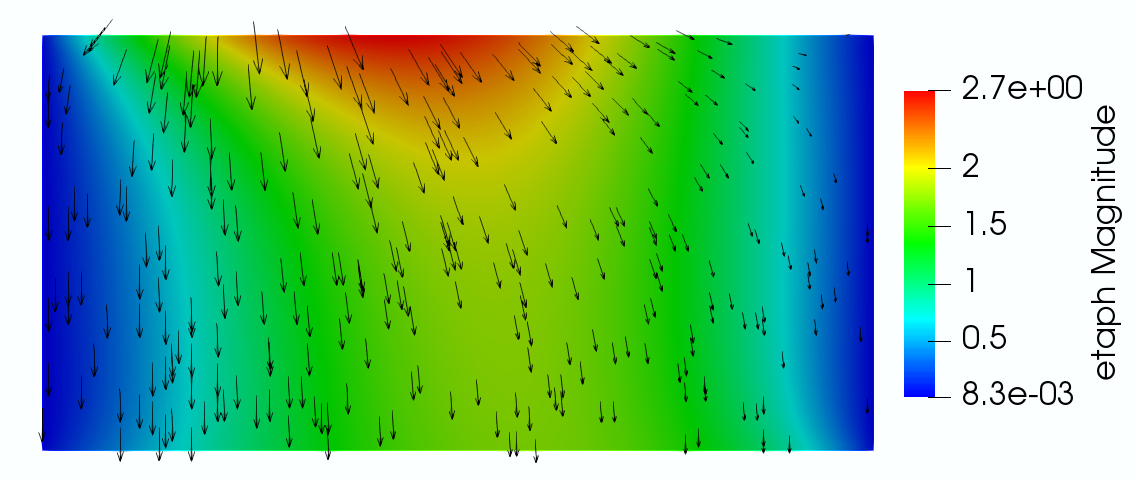}
\caption{Example 2, computed solution at $T=3$. Top left: 
  velocities $\bu_{fh}$ and $\bu_{ph}$ (arrows), $\bu_{fh,2}$ and $\bu_{ph,2}$ (color). Top middle and right: negative stresses $-(\bsi_{fh,12},\bsi_{fh,22})^\rt$ and
  $-(\bsi_{ph,12},\bsi_{ph,22})^\rt$ (arrows);
  middle: $-\bsi_{fh,12}$ and $-\bsi_{ph,12}$ (color); right:
  $-\bsi_{fh,22}$ and $-\bsi_{ph,22}$ (color).
Bottom left: negative Stokes stress $-\bsi_{fh,22}$ and Darcy pressure $p_{ph}$. Bottom right: displacement $\bbeta_{ph}$ (arrows) and its magnitude (color).} \label{fig:example2}
\end{center}
\end{figure}
From the velocity plot (top left), we see that the flow in the Stokes region is moving primarily from left to right, driven by the parabolic inflow condition,
with some of the fluid percolating downward into the poroelastic medium due to the zero pressure at the bottom, which simulates gravity. The mass conservation
$\bu_{f}\cdot \bn_f + \left(\partial_t\bbeta_p+\bu_{p}\right)\cdot \bn_p = 0$ on the interface with $\bn_p = (0,1)^\rt$ indicates the continuity of
the second components of the fluid velocity and Darcy velocity when
the displacement becomes steady, which is observed from the color plot of
the vertical velocity. The stress plots (top middle and right) illustrate the
ability of our fully mixed formulation to compute accurate $\bbH(\bdiv)$ stresses in both the fluid and poroelastic regions, without the need for numerical differentiation. In addition, the conservation of momentum $\bsi_f \bn_f
+\bsi_p \bn_p = \0$ and balance of normal stress $(\bsi_f \bn_f)\cdot
\bn_f = -p_p$ imply that $\bsi_{f,12}=\bsi_{p,12}$,
$\bsi_{f,22}=\bsi_{p,22}$ and $-\bsi_{f,22}=p_p$ on the
interface. These conditions are verified from the top middle and right color plots, as well as the bottom left plot. Furthermore, the arrows in the stress plots are formed by the second columns of the stresses, whose traces on the interface are $\bsi_f\bn_f$ and $-\bsi_p \bn_p$, respectively. For visualization purpose, the
Stokes stress is scaled by a factor of $1/5$ compared to the poroelastic stress,
due to large difference in their magnitudes away from the interface. Nevertheless, the continuity of the vector field across the interface is evident, consistent with the conservation of momentum condition $\bsi_f \bn_f + \bsi_p \bn_p = \0$. 
The overall qualitative behavior of the computed stresses is consistent with the
specified boundary and interface conditions. In particular, we observe
large fluid stress along the top boundary due to the no slip condition, as well as along the interface due to the slip with friction condition. The singularity near
the lower left corner of the Stokes region is due to the mismatch in boundary
conditions between the fluid and poroelastic regions. Finally, the last plot shows
that the inflow from the Stokes region causes deformation of the poroelastic medium.

\subsection{Example 3: irregularly shaped fluid-filled cavity}

This example features highly irregularly shaped cavity motivated by
modeling flow through vuggy or naturally fractured reservoirs or
aquifers. It uses physical units and realistic parameter values taken
from the reservoir engineering literature \cite{gr2015}:
\begin{equation*}
\begin{array}{c}
\mu = 10^{-6} \text{ kPa s},\quad
\alpha_p = 1,\quad
\lambda_p = 5/18 \times 10^7 \text{ kPa},\quad
\mu_p = 5/12 \times 10^7 \text{ kPa},\\[2ex]
s_0 = 6.89 \times 10^{-2} \text{ kPa}^{-1},\quad
\bK = 10^{-8}\times \bI \text{ m}^2,\quad
\alpha_{\BJS} = 1.
\end{array}
\end{equation*}
We emphasize that the problem features very small permeability and storativity, as well as large Lam\'e parameters. These are parameter regimes that are known to lead
locking in modeling of the Biot system of poroelasticity \cite{Lee-Biot-five-field,Yi-Biot-locking}.
The domain is $\Omega = (0,1)\times(0,1)$, with a large fluid-filled cavity in the interior. The body forces and external sources are set to zero. The flow is driven from left to right via a pressure drop of 1 kPa, with boundary conditions specified as follows:
\begin{equation*}
\begin{array}{l}
  \bsi_{f} \bn_f\cdot\bn_f = 1000, \quad \bu_f\cdot\bt_f = 0 \qon \Gamma_{f, right}, \\[2ex]
p_p = 1001 \qon \Gamma_{p,left}, \quad p_p= 1000 \qon \Gamma_{p,right} 
\qan \bu_p\cdot \bn_p = 0  \qon \Gamma_{p,top}\cup \Gamma_{p,bottom},\\[2ex]
\bsi_p \, \bn_p=-\alpha_p \, p_p \, \bn_p \qon \Gamma_{p,left} \cup \Gamma_{p,right} \qan
\bu_s = \0  \qon \Gamma_{p,top}\cup \Gamma_{p,bottom}.
\end{array}
\end{equation*}
The total simulation time is $T=10 \,$s with a time step of size $\Delta t = 0.05 \,$s. To avoid inconsistency between the initial and boundary conditions for $p_p$, we start with $p_p=1000$ on $\Gamma_{p,left}$ and gradually increase it to reach $p_p=1001$ at $t = 0.5 \,$s. Similar adjustment is done for $\bsi_p \bn_p$.

The simulation results at the final time $T=10 \,$s are shown in
Figure~\ref{fig:example3}. In the top plots, we present the Darcy
pressure and Darcy velocity vector, the displacement vector with its
magnitude, and the first row of the poroelastic stress with its
magnitude. Since the pressure variation is small relative to its
value, for visualization purpose we plot its difference from the
reference pressure, $p_p - 1000$. The Darcy velocity and the pressure
drop are largest in the region between the left inflow boundary and
the cavity. The displacement is largest around the cavity, due to the
large fluid velocity within the cavity and the slip with friction
interface condition. The poroelastic stress exhibits singularities
near some of the sharp tips of the cavity. The bottom plots show the
fluid pressure and velocity vector, the velocity vector with its
magnitude, and the first row of the fluid stress with its
magnitude. Similarly to the Darcy pressure, we plot $p_f - 1000$.  A channel-like flow profile is clearly visible
within the cavity, with the largest velocity along a central path away
from the cavity walls. The fluid pressure is decreasing
from left to right along the central path of the cavity.  Consistent
with the poroelastic stress, the fluid stress near the tips 
of the cavity is relatively larger. We emphasize that, despite
the locking regime of the parameters, the computed solution is free of
locking and spurious oscillations. This example illustrates the ability of our method to handle computationally challenging problems with physically realistic parameters in poroelastic locking regimes.

\begin{figure}[ht]
\begin{center}
\includegraphics[width=0.32\textwidth]{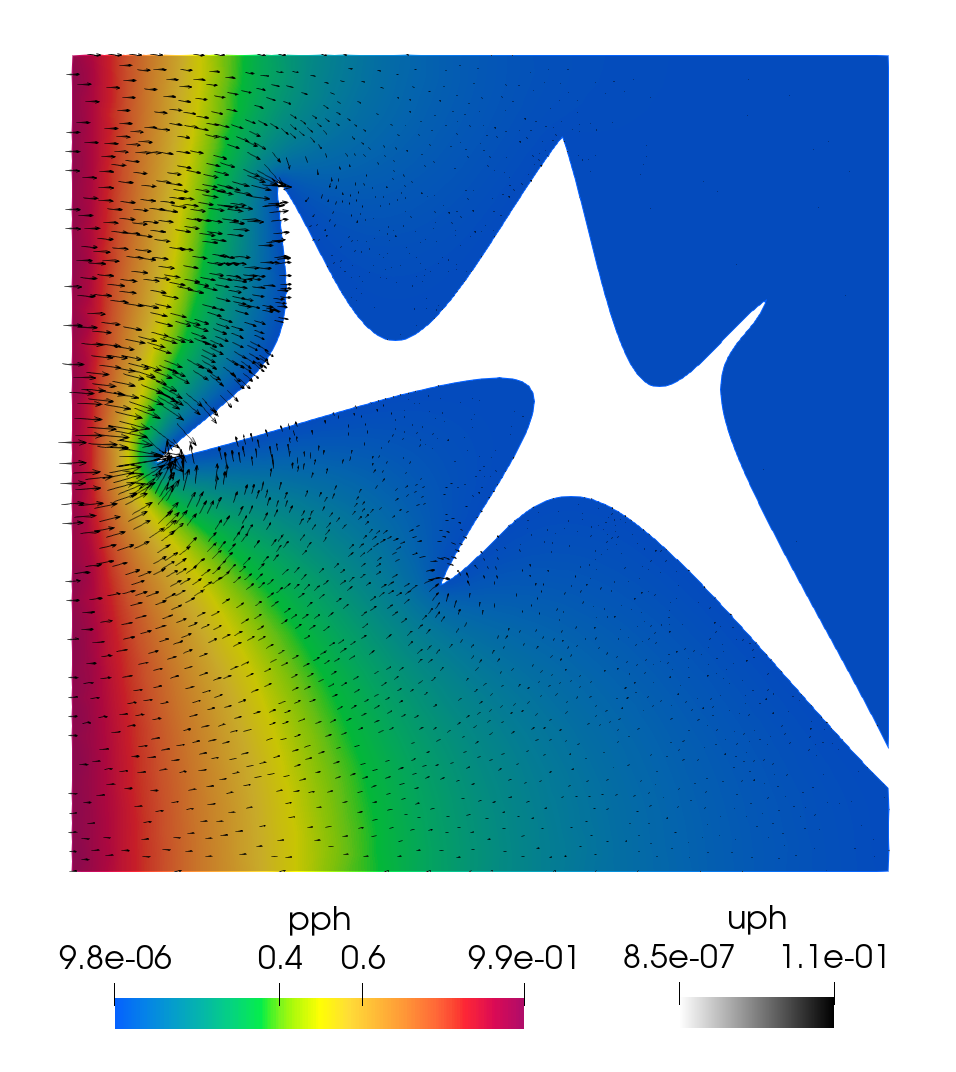}
\includegraphics[width=0.32\textwidth]{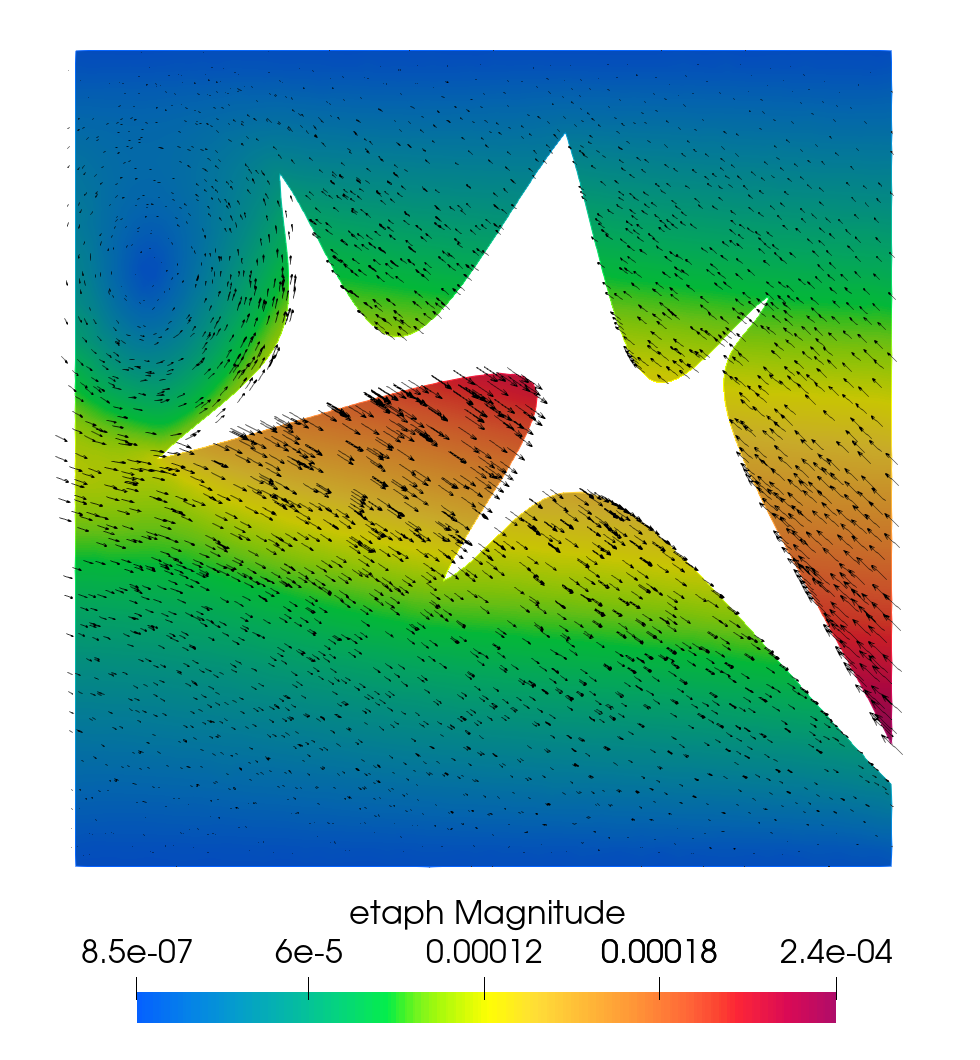}
\includegraphics[width=0.32\textwidth]{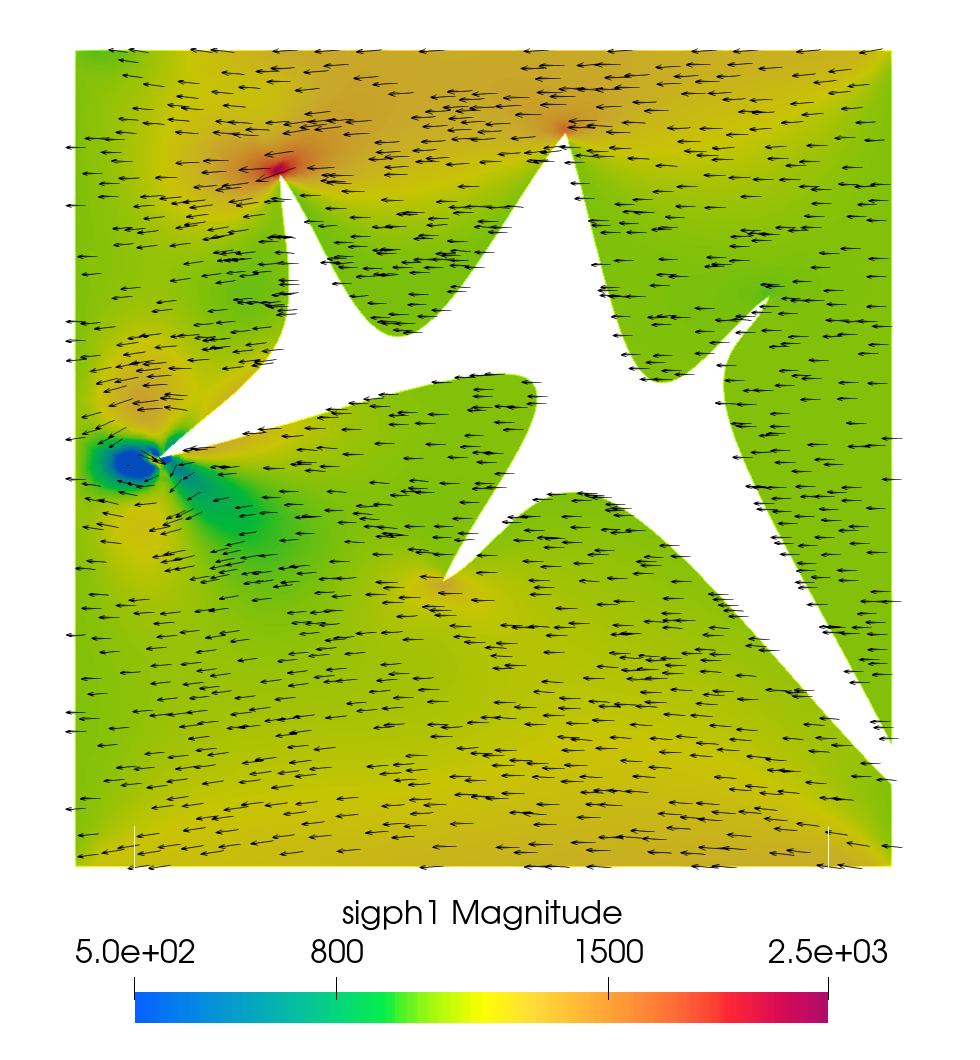}

\includegraphics[width=0.32\textwidth]{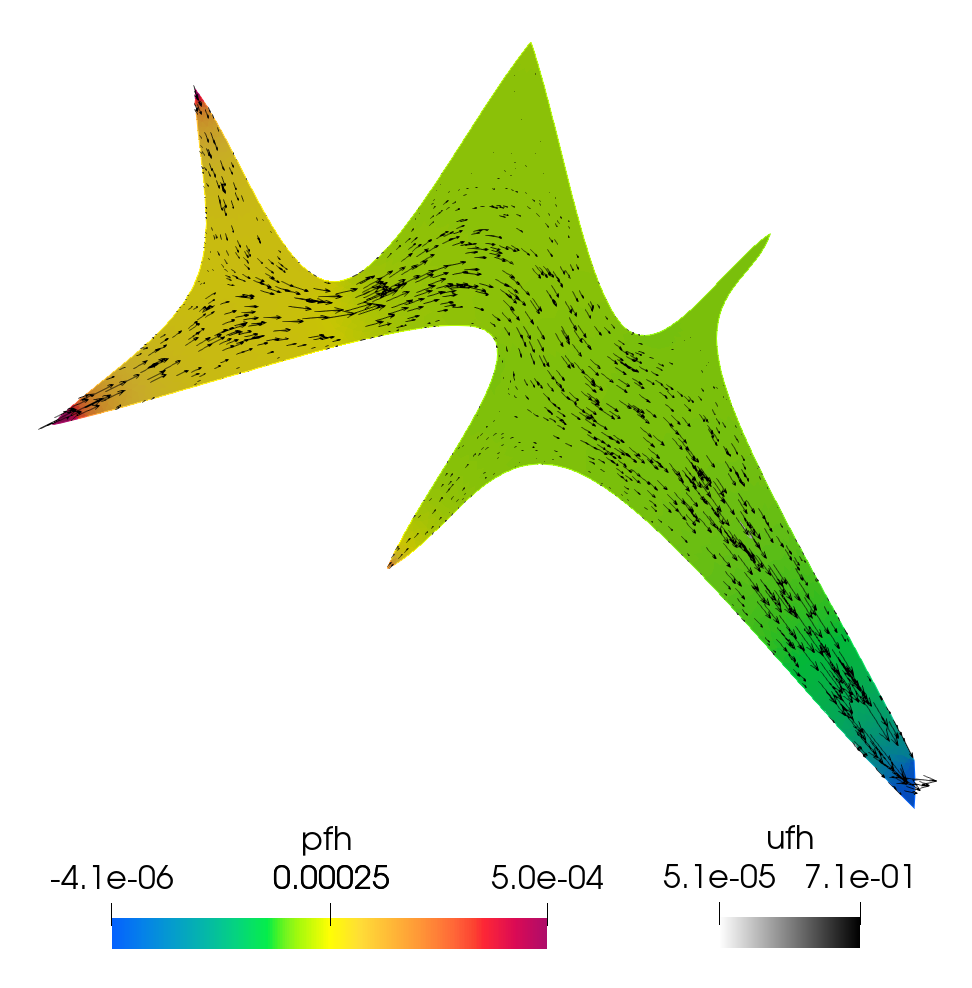}
\includegraphics[width=0.32\textwidth]{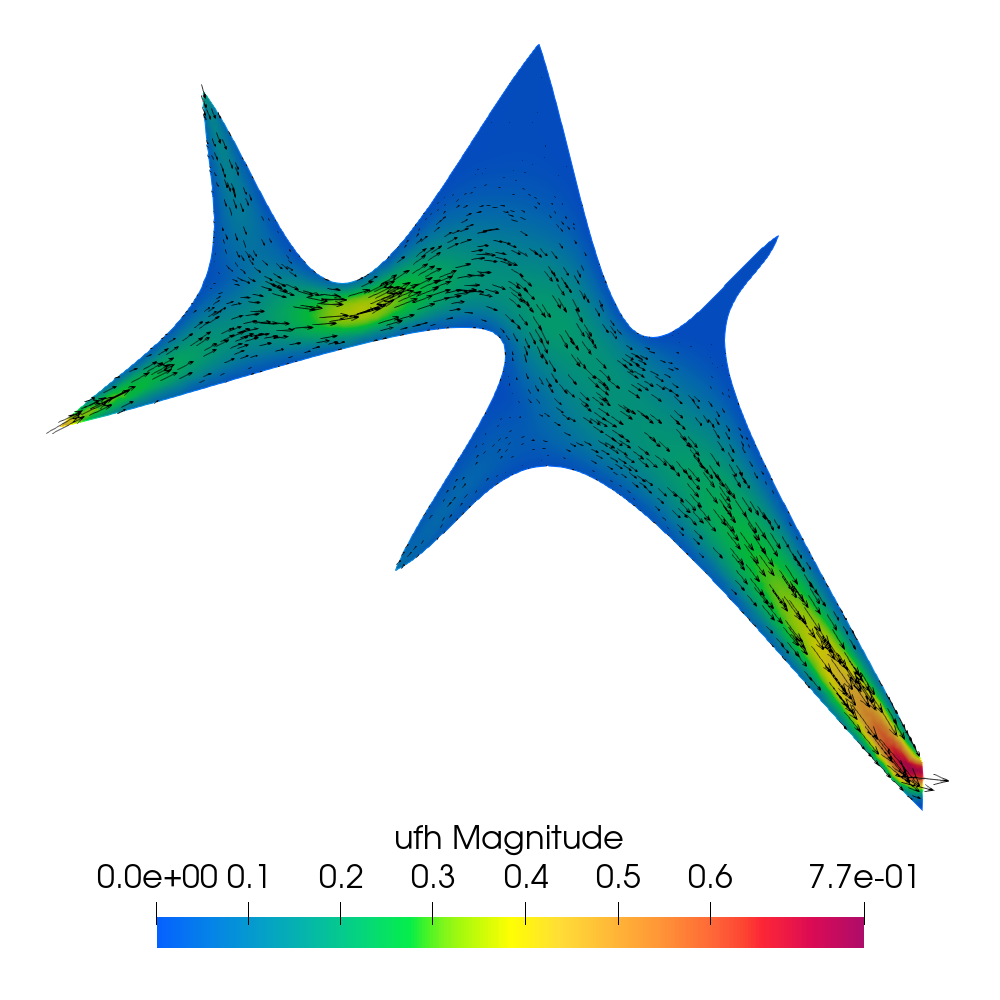}
\includegraphics[width=0.32\textwidth]{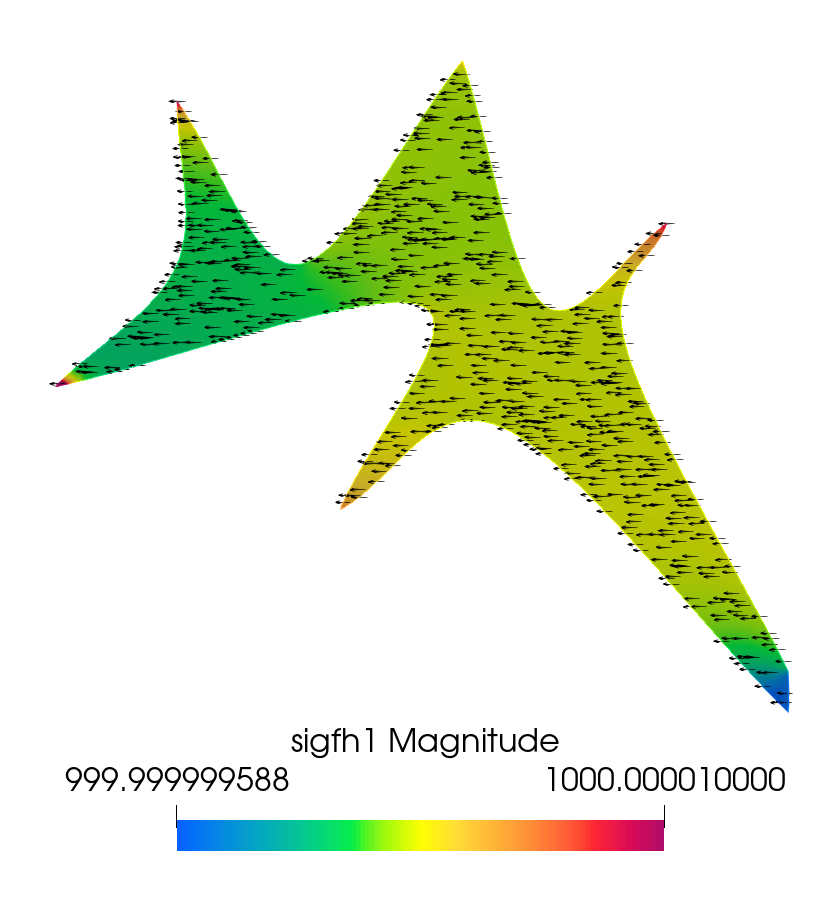}
\caption{Example 3, computed solution at $T = 10\,$s. Top left: Darcy velocity (arrows) and pressure (color). Top middle: displacement (arrows) and its magnitude (color). Top right: first row of the poroelastic stress tensor (arrows) and its magnitude (color). Bottom left: Stokes velocity (arrows) and pressure (color). Bottom middle: Stokes velocity (arrows) and its magnitude (color). Bottom right: first row of the Stokes stress (arrows) and its magnitude (color).} \label{fig:example3}
\end{center}
\end{figure}

\section{Conclusions}
In this paper we present and analyze the first, to the best of our
knowledge, fully dual mixed formulation of the quasi-static
Stokes-Biot model, and its mixed finite element approximation, using a
velocity-pressure Darcy formulation, a weakly symmetric
stress-displacement-rotation elasticity formulation, and a weakly
symmetric stress-velocity-vorticity Stokes formulation. Essential-type
interface conditions are imposed via suitable Lagrange
multipliers. The numerical method features accurate stresses and Darcy
velocity with local mass and momentum conservation. Furthermore, a new
multipoint stress-flux mixed finite element method is developed that
allows for local elimination of the Darcy velocity, the fluid and
poroelastic stresses, the vorticity, and the rotation, resulting in a
reduced positive definite cell-centered pressure-velocities-traces
system. The theoretical results are complemented by a series of numerical experiments that illustrate the convergence rates for all variables in their natural norms, as well as the ability of the method to simulate physically realistic problems motivated by applications to coupled surface-subsurface flows and flows in fractured poroelastic media with parameter values in locking regimes.

\bibliographystyle{abbrv}
\bibliography{caucao-li-yotov-1}

\end{document}